\author{Johan Andersson\thanks{Email:johan.andersson@oru.se \, Address:Department of Mathematics, School of Science and Technology, {\"O}rebro University, {\"O}rebro, SE-701 82 Sweden. \, This research has been supported by a Crafoord prize research grant.}}
\title{Voronin Universality in several  complex variables}
\theoremstyle{plain} 
\newtheorem{thm}{Theorem}  
\newtheorem{lem}{Lemma}
\newtheorem{cor}{Corollary} 
\theoremstyle{definition}
\newtheorem{defn}{Definition}
\newtheorem{voronin}{The Voronin universality theorem}
\newtheorem{garunkstis}{Garunk\v{s}tis' effective version of the Voronin universality theorem}
\newtheorem{lam}{Lamzouri-Lester-Radziwill's  version of the Voronin universality theorem}
\newcommand{\nicefrac}[2]{\leave§ vmode\kern.1em
\raise.5ex\hbox{\the\scriptfont0 #1}\kern-.1em
/\kern-.15em\lower.25ex\hbox{\the\scriptfont0 #2}}
\def\halv{\mathchoice{{ \frac 1 2}}{1/2}{1/2}{1/2}}
\def\cprime{$'$}
\renewcommand{\pod}[1]{\allowbreak\mathchoice
  {\if@display \mkern 18mu\else \mkern 8mu\fi (#1)}
  {\if@display \mkern 18mu\else \mkern 8mu\fi (#1)}
  {\mkern4mu(#1)}
  {\mkern4mu(#1)}
}
\renewcommand{\mod}{ \, \operatorname{mod} \,}
\newcommand{\dddd}{{\delta}}
\newcommand{\C}{{\mathbb C}} 
\newcommand{\R}{{\mathbb R}}
\newcommand{\N}{{\mathbb N}}
\newcommand{\Z}{{\mathbb Z}}
\newcommand{\Q}{{\mathbb Q}}
\newcommand{\ddd}{{\eta}}
\newcommand{\ddddd}{{\xi}}
\newcommand{\NC}{{\mathcal N}}
\newcommand{\BC}{{\mathcal B}}
\newcommand{\Pri}{{\mathcal P}}
\newcommand{\M}{{\mathcal M}}
\newcommand{\cA}{{\mathcal A}}
\newcommand{\cB}{{\mathcal B}}
\newcommand{\cC}{{\mathcal C}}
\newcommand{\val}{{\bm \alpha}}
\newcommand{\va}{{\bm a}}
\newcommand{\ett}{{\bm 1}}
\newcommand{\vb}{{\bm b}}
\newcommand{\vs}{{\bm s}} 
\newcommand{\vv}{{\bm v}}
\newcommand{\vz}{{\bm z}}
\newcommand{\LL}{{M}}
\newcommand{\vt}{{\bm t}}\newcommand{\vx}{{\bm x}}
\newcommand{\vsigma}{{\boldsymbol{\sigma}}}
\newcommand{\norm}[1]{\left \Vert {#1} \right \Vert}
\newcommand{\abs}[1]{{\left| {#1} \right|}} \newcommand{\p}[1]{{\left(
     {#1} \right)}} 
\newcommand{\meas}{\operatorname{meas}}
\newcommand{\GCD}{\operatorname{GCD}}
\renewcommand{\Re}{\operatorname{Re}} \renewcommand{\Im}{\operatorname{Im}} 
\begin{document}
 \date{}

\maketitle        
\begin{abstract} 
 We prove the Voronin universality theorem for the   
multiple Hurwitz zeta-function with rational or transcendental parameters in $\C^n$ answering  a question of Matsumoto. In particular this implies that the Euler-Zagier multiple zeta-function is universal in several complex variables and gives the first example of a Dirichlet series that is universal in more than one variable. 
\end{abstract}
   
\tableofcontents  
\listofsymbols 

\section*{Notes for the reader - Changes in  v2 of paper}
\begin{itemize}
\item I have changed the font and size of the paper for  better readability on a 10 inch ebook reader. In order to get the paper properly formatted for this, remove the margins of the pdf-file of the paper or please uncomment the 10th line of the latex-file source and comment out the 11th line. 
\item I have removed the joint universality  theorems and discrete universality theorems in order for the paper to be more focused. These results are a better fit for a sequel. 
\item I have removed the conjectural part on algebraic irrational parameters. When revisiting the argument I can not currently see how even the strong Conjecture 1 of v1 of this paper implies universality for Hurwitz zeta-function with an algebraic irrational parameter.
 \item I have proof read the paper and corrected a number of minor mistakes.
 \item Addition of the references \cite{Aold,Anew1,Anew2,Anew3,Rudin2}
 \item I have slightly changed the statement and proof of Lemma  1
 \item I have changed the discussion of upcoming work on Weyl-group multiple Dirichlet series. This is because I discovered a mistake in my original approach. 
 \item As in v1 of the paper. Comments are appreciated!
 \end{itemize}

 \section{Introduction and Main results}
 
\subsection{Classical Voronin universality}
One of the deep theorems on the Riemann zeta-functions is the following result of Voronin \cite{Voronin2, Voronin0}.
\begin{voronin}  Let 
 $K=\{ s \in \C :|s-3/4| \leq r\}$ for some $r<1/4$, and suppose that $f$ is any continuous zero free function on $K$ that is analytic  in the interior of $K$. Then 
$$\liminf_{T \to \infty} \frac 1 T \mathop{\rm meas} \left \{t \in [0,T]:\max_{s \in K} \abs{\zeta(s+it)-f(s)}<\varepsilon \right \}>0. $$
\end{voronin} 
This theorem has been generalized to a lot of settings. 
In particular the set $K$ can be chosen as any compact set in the strip  
\begin{gather} 
  \label{Ddef} \newsym{$\{s \in \C:1/2<\Re(s)<1\}$}D=
  \{s \in \C:1/2<\Re(s)<1\} 
\end{gather} 
with connected complement (Reich \cite{Reich}, see also  \cite[pp. 18-19]{Steuding}), and it has been proved for different zeta and $L$-functions such as Dirichlet $L$-functions,  more generally a variant of the Selberg class \cite{Steuding} assuming some standard conjectures, and the Selberg zeta-function \cite{DruGarKac}. The Hurwitz zeta-function\footnote{It is customary to assume that $0<\alpha \leq 1$ in which case the Hurwitz zeta-function has a nice functional equation, but we will adopt the convention to allow any $\alpha>0$ in the definition of the Hurwitz zeta-function.} 
  \begin{gather}
	   \newsym{The Hurwitz zeta-function}{\zeta(s,\alpha)}=\sum_{k=0}^\infty (k+\alpha)^{-s} \qquad (\Re(s)>1),
\end{gather} 
which like the Riemann zeta-function $\zeta(s)=\zeta(s,1)$ extends analytically to $\C \setminus\{1\}$  is an especially interesting case since its treatment depends intimately on the Diophantine properties of $\alpha$.  In the case of zeta-functions without Euler product where the Riemann hypothesis does not hold, such as the Hurwitz zeta-function\footnote{Universality for the Hurwitz zeta-function was proved independently by Gonek \cite{Gonek} and Bagchi \cite{Bagchi2}.}
$\zeta(s,\alpha)$ for transcendental or rational $2 \alpha \not \equiv 0 \pmod 1$ the assumption that $f$ is zero-free on $K$ may be removed. It is an open problem whether $\zeta(s,\alpha)$ is universal when  $\alpha$ is an algebraic irrational number.

Another zeta-function that has been considered is the Multiple Hurwitz zeta-function which is defined by
\begin{gather} \label{mhz}
 \newsym{The multiple Hurwitz zeta-function}{\zeta_n(\vs;\val)}=\sum_{0 \leq k_1<k_2< \cdots < k_n} (k_1+\alpha_1)^{-s_1} \cdots  (k_n+\alpha_n)^{-s_n}, \\ \intertext{where} 
\newsym{$(s_1,\ldots,s_n) \in \C^n$}{\vs} =(s_1,\ldots,s_n), \qquad \newsym{$(\alpha_1,\ldots,\alpha_n) \in (\R^+)^n$} {\val}=(\alpha_1,\ldots,\alpha_n),
\end{gather}
and  $\Re(s_j)>1$.  An interesting special case is when $\alpha_j=1$ for $j=1,\ldots,n$ in which case the Multiple Hurwitz zeta function specializes to the Euler-Zagier Multiple zeta-function $\zeta_n(\vs)$. By letting the variables $s_j$ be positive integers we obtain the so called Multiple Zeta Values \cite{Zagier} which have important connections to knot theory, quantum field theory, the Grothendieck-Teichm{\"u}ller group and the theory of mixed motives. 

We will however take a more analytic approach and consider the multiple zeta-function as a function of $n$ complex variables. Atkinson \cite{Atkinson} first proved the meromorphic continuation to $\C^2$  for $\zeta_2(s_1,s_2)$ in 1949. Around the year 2000, Akiyama-Egami-Tanigawa \cite{AKSHITAN} and Zhao \cite{Zhao} independently proved that the Euler-Zagier multiple zeta-function admits a meromorphic continuation\footnote{ A third proof for a general $n$ has been given by Matsumoto \cite{Matsumoto10}.}
 to $\vs \in \C^n$ for any $n \geq 2$.  Akiyama-Ishikawa \cite{AkIs}  showed that the Multiple-Hurwitz zeta-function for general parameters also admits a meromorphic continuation to $\C^n$.  Several authors \cite{Mat10},\cite{MuSi},\cite{KeMa},\cite{MeVi}   have later given different proofs of this result. 

Nakamura \cite{Nakamura} proved Voronin universality for the Multiple Hurwitz zeta-function with respect to the  variable $s_n$ if Re$(s_{n-1})>3/2$ and Re$(s_j)>1$ for $1 \leq j \leq n-2$ under the assumption that the  parameters $\alpha_j$ for $j=1,\ldots,n$ are algebraically independent. Nakamura-Pa{\'n}kowski  \cite[Theorem 4.1, Theorem 4.3]{NakPan} used hybrid universality to prove the same result without assuming that the parameters are algebraically independent, but still assuming $0<\alpha_n \leq 1$, $\alpha \neq \frac 1 2$ is transcendental or rational.   Furthermore Nakamura-Pa{\'n}kowski  \cite[Theorem 4.5] {NakPan} proved that  $\zeta_n(s,\ldots,s;\alpha,\ldots,\alpha)$ is  universal in one complex variable whenever $0<\alpha \leq 1$ is transcendental or rational $\alpha \neq \frac 1 2$. 

\subsection{Voronin universality in $\C^n$}
When it comes to universality in several complex variables much less has been done.  Understanding what a proper $n$-dimensional generalization of the Voronin universality theorem might not even be obvious but a natural definition and proving the existence of a universal function in several complex variables is in fact not too difficult.

This however is something far from giving an explicit example of such a function as a Dirichlet series, and no such example is hitherto known.  With regards to existence of universal functions in $\C$ it was Birkhoff \cite{Birkhoff} who found this already in 1929, predating Voronin's explicit example of a universal function, the Riemann zeta-function, by 46 years, and this is the proof that readily generalizes to several complex variables\footnote{The author is grateful to Paul M. Gauthier for discussions about this.}.

Since the multiple Hurwitz zeta-function  is a function of $n$ variables it is natural to ask   whether this function might be universal in 
$\C^n$, and indeed Matsumoto\footnote{We also thought about this question independently of Matsumoto in 2010, and only found Matsumoto's paper when doing a literature review of  the field} asked this question as one of several open problems in \cite{Matsumoto}.

\subsection{Main results}

 We will be able to answer this question whenever $\alpha_j$ are  transcendental or rational numbers. The proof of the rational case is substantially harder than the transcendental case as we will explain later.
 Our main result is the following Theorem
 \begin{thm} \label{TH1}
  Let $n \geq 2$ and $E \subseteq D^n$ be a Runge domain,  $\val=(\alpha_1,\ldots,\alpha_n)$, where $\alpha_j>0$ for $j=1,\ldots,n$ are transcendental or rational numbers and $f$ be any holomorphic function on $E$. Then for any $\varepsilon>0$, and  compact subset $K \subset E$ we have that
\begin{gather*}
 \liminf_{T \to \infty} \frac 1 {T^n} \meas  \left \{\vt \in [0,T]^n:\max_{\vs \in K} 
\abs{\zeta_n(\vs+i\vt; \val)-f(\vs)}<\varepsilon \right \}>0,
\end{gather*}
where either $\alpha_n$ or $\alpha_{n-1}$ is transcendental, the condition  
\begin{gather} \label{Kkond}
 \min_{\vs \in K} \Re \p{s_{n-1}+s_n}>\frac 3 2 
\end{gather}
holds or $\alpha_n=\frac c d$ and there exist some $q$  coprime to $d$  and non principal character $\chi$ of modulus $q$  such that the Riemann hypothesis is true for the Dirichlet $L$-functions $L(s,\chi \chi^*)$ where $\chi^*$ is any  Dirichlet character mod $d$.
 \end{thm}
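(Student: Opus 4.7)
My plan is to adapt the Bagchi--Voronin probabilistic machinery to the $n$-variable setting. Since $E$ is a Runge domain in $\C^n$, any holomorphic $f$ on $E$ can be uniformly approximated on $K$ by polynomials (more generally by rational functions with poles off $K$), so it suffices to approximate an arbitrary polynomial $P(\vs)$ by vertical shifts $\zeta_n(\vs+i\vt;\val)$. I would introduce the compact abelian group $\Omega=\prod_{j,k} S^1$ with normalized Haar measure and the random multiple Hurwitz series
$$\zeta_n(\vs;\val;\omega)=\sum_{0\leq k_1<\cdots<k_n}\prod_{j=1}^n \omega_{j,k_j}\,(k_j+\alpha_j)^{-s_j}.$$
A Weyl equidistribution argument applied to the flow $\vt\mapsto\bigl((k+\alpha_j)^{-it_j}\bigr)_{j,k}$ on $\Omega$ reduces the positive-density statement on $\vt\in[0,T]^n$ to a support statement: that the law of $\zeta_n(\,\cdot\,;\val;\omega)$ has full support in the space of holomorphic functions on $E$ (with the topology of uniform convergence on compacta).

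The core of the proof is this denseness of support. The novel feature relative to the classical one-variable case is that the strict ordering $k_1<k_2<\cdots<k_n$ prevents the clean multiplicative factorization that one has for, say, independent Dirichlet $L$-functions. I would handle this recursively, peeling off the outer variable as
$$\zeta_n(\vs;\val;\omega)=\sum_{m=0}^\infty \omega_{n,m}\,(m+\alpha_n)^{-s_n}\,\Sigma_{n-1}(m;\vs';\val';\omega'),$$
where $\Sigma_{n-1}(m;\cdots)$ is the $(n-1)$-fold multiple Hurwitz sum truncated at $k_{n-1}<m$, and then inducting on $n$. The outer series would be treated by a Pechersky--Hurwitz-type rearrangement theorem in the Hilbert space of $L^2$-holomorphic functions on $E$: the family $\{(m+\alpha_n)^{-s_n}\}_m$, once tensored with the inductively controlled inner factors, produces a conditionally convergent system sufficient to realize arbitrary polynomials in $s_n$ and, coupled with the induction hypothesis in $(s_1,\dots,s_{n-1})$, arbitrary polynomials in $\vs$. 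Ensuring the correct independence between $(\omega_{n,m})_m$ and $\omega'$, and the correct interplay between the inner induction and the outer rearrangement, is where the multivariable adaptation requires the most care.

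The main obstacle, which forces the trichotomy in the hypotheses, appears in the rational case. When $\alpha_n$ or $\alpha_{n-1}$ is transcendental, the logarithms $\log(k+\alpha_n)$ (respectively $\log(k+\alpha_{n-1})$) are $\Q$-linearly independent as $k$ ranges over $\N$, and this independence can be transported through the recursion to give Kronecker equidistribution of the required phases. In the purely rational case $\alpha_n=c/d$, the set $\{\log(k+c/d)\}$ is expressible in terms of $\{\log m\}_{m\in\N}$ together with $\log d$, so this direct route fails and one must instead use the decomposition
$$\zeta(s,c/d)=d^s\sum_{\chi\bmod d}\overline{\chi(c)}\,L(s,\chi)$$
(with the usual correction at the principal character) to rewrite the pieces of $\zeta_n$ as finite combinations of products of Dirichlet $L$-functions. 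The support claim then reduces to a joint universality statement for such a family, and the Riemann hypothesis assumption on $L(s,\chi\chi^*)$ is precisely what supplies the zero-density input needed for the rearrangement to converge uniformly on $K$ down to the critical line. The alternative condition $\Re(s_{n-1}+s_n)>3/2$ is the unconditional fallback: it pushes $K$ into the region of absolute convergence of the inner $\zeta_{n-1}$-type sum, making the factorization problem trivial and sidestepping the critical strip altogether at the price of a smaller universality domain. I expect this rational subcase to be by far the most technically demanding part of the argument, consistent with the author's remark that working around RH substantially lengthened the manuscript.
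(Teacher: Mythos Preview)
Your proposal has a genuine structural gap in the denseness step, and a misdiagnosis of where the trichotomy enters.

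\textbf{The recursion does not factor.} Writing
\[
\zeta_n(\vs;\val;\omega)=\sum_{m\ge 0}\omega_{n,m}(m+\alpha_n)^{-s_n}\,\Sigma_{n-1}(m;\vs';\val';\omega')
\]
does not put you in a position to apply Pechersky in the outer variable and induction in the inner ones, because the inner factor $\Sigma_{n-1}(m;\cdots)$ depends on $m$ through the truncation $k_{n-1}<m$. The outer ``coefficients'' are therefore not free unimodular twists of a fixed sequence of functions; they are entangled with the inner sum. The paper's substitute for this is a combinatorial device (Lemma~\ref{LE13}) that writes an arbitrary polynomial $p(\vs)$ as an \emph{ordered} sum $\sum_{m_1<\cdots<m_n}\prod_j q_{j,m_j}(s_j)$ of monomials with a prescribed pattern of vanishing, so that one can build the approximating Dirichlet polynomial in successive blocks $[\mathcal N_{m-1},\mathcal N_m]$: at each step exactly one $a_j$ is adjusted (via the one-variable Lemma~\ref{LE3}) to hit the next monomial $q_{j_m,m}$, while all the other $a_j$ are set to a function of type $(N,\chi)$ so that the corresponding partial sums are negligible. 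This block construction is what tames the $k_1<\cdots<k_n$ constraint; your inductive peeling does not, and I do not see how to make it do so without reinventing this lemma.

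\textbf{The trichotomy is not in the denseness step.} In the paper, the fundamental approximation lemma (Lemma~\ref{LE4}) is proved unconditionally; the hypotheses (transcendental $\alpha_{n-1}$ or $\alpha_n$, or \eqref{Kkond}, or the RH clause) enter only in Lemma~\ref{LE6}, specifically in bounding the ``off-$\cA$'' contribution $\Delta_N$ in Lemma~\ref{lele121}. Your reading of \eqref{Kkond} as ``pushing $K$ into absolute convergence of the inner $\zeta_{n-1}$ sum'' is incorrect: $\Re(s_{n-1}+s_n)>3/2$ is nowhere near absolute convergence, and is used instead to make the near-diagonal error in Lemma~\ref{EEV3} summable (the exponent $A=\sum_{j>v}(\Re s_j-1/2)$ must exceed $1$ after borrowing from two variables). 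Likewise, the RH clause is not ``zero-density input for the rearrangement''; it is the hypothesis of Lemma~\ref{RHV2}, which gives pointwise bounds on smooth-number partial sums needed for the same error. Finally, your random model $\Omega=\prod_{j,k}S^1$ is already wrong for rational $\alpha_j$, since the $\log(k+\alpha_j)$ are then not $\Q$-linearly independent and the $\vt$-flow does not equidistribute on the full torus; the paper handles this by restricting to completely multiplicative unimodular functions of a specific type $(N,\chi)$ rather than arbitrary $\omega$.
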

When both $\alpha_n$ and $\alpha_{n-1}$ are rational we get a smaller region where we have universality\footnote{This is a fact that we missed at previous presentations of our main result.} unless we assume the Riemann hypothesis for some Dirichlet $L$-functions. For example for $n=2$ and $\alpha_1=\alpha_2=1$ which is the Euler-Zagier case we are sure to get universality in the region where $3/4 <\Re(s_j)<1$ rather than $1/2<\Re(s_j)<1$ for $j=1,2$.  More generally, an immediate consequence (letting $\alpha_j=1$) is the following
\begin{cor}
\label{COR1}
  Let $n \geq 2$ and $E \subseteq D^n$ be a Runge domain  and $f$ be any holomorphic function on $E$. Then for any $\varepsilon>0$, and  compact subset $K \subset E$  that satisfies the condition \eqref{Kkond} we have that
\begin{gather*}
 \liminf_{T \to \infty} \frac 1 {T^n} \meas  \left \{\vt \in [0,T]^n:\max_{\vs \in K} 
\abs{\zeta_n(\vs+i\vt)-f(\vs)}<\varepsilon \right \}>0,
\end{gather*}
where $\newsym{The Euler-Zagier multiple zeta-function}{\zeta_n(\vs)}$ is the Euler-Zagier multiple zeta-function. If the  Riemann hypothesis is true for at least one Dirichlet $L$-function it is no longer necessary to assume the condition \eqref{Kkond} on the set $K$. 
\end{cor}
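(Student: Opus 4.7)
The plan is to deduce this directly from Theorem \ref{TH1} by specializing all parameters to $\val = (1, 1, \ldots, 1)$, so that the multiple Hurwitz zeta-function $\zeta_n(\vs; \val)$ reduces to the Euler-Zagier function $\zeta_n(\vs)$. Since every $\alpha_j = 1$ is rational, we land in the ``both $\alpha_n$ and $\alpha_{n-1}$ rational'' case of Theorem \ref{TH1}, which is the delicate branch. First I would verify that the two alternative hypotheses in Theorem \ref{TH1} specialize correctly.

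For the first statement of the corollary I would invoke Theorem \ref{TH1} with hypothesis \eqref{Kkond} directly: the condition $\min_{\vs \in K}\Re(s_{n-1}+s_n) > 3/2$ is assumed, and all other hypotheses are trivially satisfied, so the conclusion is immediate once one identifies $\zeta_n(\vs;(1,\ldots,1)) = \zeta_n(\vs)$ in the definition \eqref{mhz}.

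For the second statement I would check that the alternative Riemann hypothesis clause of Theorem \ref{TH1} specializes, for $\alpha_n = 1 = c/d$ with $d = 1$, to merely assuming RH for some $L(s,\chi)$ with $\chi$ a non-principal Dirichlet character. Indeed, with $d = 1$, every positive integer $q$ is coprime to $d$, and the only Dirichlet character modulo $d = 1$ is the trivial character, so the product $\chi \chi^*$ simply equals $\chi$. Hence the Theorem \ref{TH1} hypothesis reduces to the existence of a single non-principal Dirichlet character $\chi$ such that $L(s, \chi)$ satisfies the Riemann hypothesis, which is precisely what is being assumed.

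There is no real obstacle here beyond bookkeeping: the entire mathematical content is packaged in Theorem \ref{TH1}, and the corollary amounts to substitution plus unwinding the conditional clauses. The only subtlety worth flagging is the degenerate nature of ``characters modulo $1$'', which must be handled explicitly to see that the Theorem \ref{TH1} Riemann hypothesis condition collapses to the clean single-$L$-function statement in the corollary.
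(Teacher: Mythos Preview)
Your proposal is correct and matches the paper's own treatment: the paper presents Corollary~\ref{COR1} as an immediate consequence of Theorem~\ref{TH1} obtained by setting $\alpha_j=1$ for all $j$, and your unwinding of the Riemann hypothesis clause with $d=1$ (so that $\chi\chi^*=\chi$) is exactly the bookkeeping needed to see that the conditional statement collapses to RH for a single Dirichlet $L$-function.
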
 
 Since we do not assume anything about the function being zero-free when $n \geq 2$ these results implies strong results on the zero-sets of the multiple zeta-functions. In one variable it follows from applying the Voronin universality theorem with functions $f$ that has zeros in the interior of the set $K$ and Rouche's theorem that the Hurwitz zeta-function with a transcendental parameter $\alpha$ or rational $\alpha \not \in \{1/2 , 1\}$ has $\gg T$ zeros in any strip $1/2<\sigma_1 <\Re(s)<\sigma_2<1$ where $|\Im(s)|<T$.  By applying Theorem \ref{TH1} on a function $f$ with zeros in the interior of $K$ we get a corresponding result in several complex variables.
Since zeros of functions in several complex variables are not isolated we may use the area measure of the zero-set, rather than the simple counting measure.
\begin{cor}  \label{korro2}
  Let $\val=(\alpha_1,\ldots,\alpha_n)$ where $\alpha_j$ are transcendental or rational. Let $\vsigma^{\pm}=(\sigma_1^{\pm},\ldots,\sigma_n^{\pm}) \in \R^n$ where $\frac 1 2 \leq \sigma_j^-<\sigma_j^+ \leq1$ for $j=1,\ldots,n$,  and let 
\begin{gather*}
  N_\val(\vsigma^-, \vsigma^+ ;T)= \mu \{\vs \in \C^n: \zeta(\vs;\val)=0, \, 0<\Im(s_j)<T, \,  \sigma_j^-<\Re(s_j)< \sigma_j^+ \}, 
 \end{gather*}
where $\mu$ denote the area measure\footnote{Hausdorff measure of real dimension $2n-2$ when $\C^n$ is identified with $\R^{2n}$.} of the zero-set. Then
\begin{gather*} 
    N_\val(\vsigma^-, \vsigma ^+ ;T) \gg T^n, 
  \end{gather*} 
  where unless we assume the Generalized Riemann Hypothesis or that either $\alpha_n$ or $\alpha_{n-1}$ is transcendental we also need to assume that $\sigma_{n-1}^+ + \sigma_{n}^+  > \frac 3 2$.
\end{cor}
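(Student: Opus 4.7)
\textbf{Proof plan for Corollary \ref{korro2}.}

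My plan is to apply Theorem \ref{TH1} to a target function whose zero variety has a transparent area, and then transfer this into a lower bound on the area of $W=\{\vz\in\C^n:\zeta_n(\vz;\val)=0\}$ inside the target box via a slice-wise Rouche argument combined with Fubini. First I choose a closed polydisc $K=\prod_{j=1}^n \overline{D}(s_j^{(0)},r)$ with real centres $s_j^{(0)}\in(\sigma_j^-,\sigma_j^+)$ and radius $r$ small enough that $K$ lies inside the open product strip $\prod_{j=1}^n\{z\in\C:\sigma_j^-<\Re(z)<\sigma_j^+\}$. In the case where Theorem \ref{TH1} forces the extra condition \eqref{Kkond}, I additionally take $s_{n-1}^{(0)},s_n^{(0)}$ close to $\sigma_{n-1}^+,\sigma_n^+$ so that $s_{n-1}^{(0)}+s_n^{(0)}-2r>3/2$, which is possible precisely because $\sigma_{n-1}^++\sigma_n^+>3/2$ has been assumed. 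Fixing some $0<\varepsilon<r$ and $f(\vs)=s_1-s_1^{(0)}$, Theorem \ref{TH1} applied on a slightly larger open polydisc containing $K$ (which is automatically a Runge domain) gives a constant $c_1>0$ such that
\[
\meas A_T\geq c_1 T^n,\qquad A_T=\left\{\vt\in[0,T]^n : \max_{\vs\in K}|\zeta_n(\vs+i\vt;\val)-f(\vs)|<\varepsilon\right\},
\]
for all sufficiently large $T$.

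For each fixed $\vt\in A_T$ I would apply Rouche's theorem in the single variable $s_1$: on the circle $|s_1-s_1^{(0)}|=r$ one has $|\zeta_n(\vs+i\vt;\val)-(s_1-s_1^{(0)})|<\varepsilon<r=|f(\vs)|$, so for every fixed $(s_2,\ldots,s_n)\in\prod_{j=2}^n\overline{D}(s_j^{(0)},r)$ the function $s_1\mapsto\zeta_n(s_1+it_1,\ldots,s_n+it_n;\val)$ has a unique simple zero $\phi_\vt(s_2,\ldots,s_n)\in\overline{D}(s_1^{(0)},r)$; by the implicit function theorem $\phi_\vt$ is holomorphic in its arguments. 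Its translated graph
\[
\Gamma_\vt=\{(\phi_\vt(s_2,\ldots,s_n)+it_1,s_2+it_2,\ldots,s_n+it_n):s_j\in\overline{D}(s_j^{(0)},r),\ j=2,\ldots,n\}
\]
is then exactly the intersection $W\cap(K+i\vt)$. Since projection of $\Gamma_\vt$ onto the last $n-1$ complex coordinates is a $1$-Lipschitz bijection onto a polydisc of $(2n-2)$-dimensional Euclidean volume $(\pi r^2)^{n-1}$, and $1$-Lipschitz surjections cannot decrease Hausdorff measure, I obtain $\mu(\Gamma_\vt)\geq(\pi r^2)^{n-1}$.

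Finally I restrict to $A_T'=A_T\cap[r,T-r]^n$, still of Lebesgue measure at least $c_1T^n/2$ for large $T$, so that every $\Gamma_\vt$ lies inside the target region $R=\prod_{j=1}^n\{z\in\C:\sigma_j^-<\Re(z)<\sigma_j^+,\,0<\Im(z)<T\}$. Applying Fubini to the incidence set $\{(\vt,\vz):\vt\in A_T',\,\vz\in\Gamma_\vt\}$ yields
\[
\frac{c_1}{2}(\pi r^2)^{n-1}T^n\leq\int_{A_T'}\mu(\Gamma_\vt)\,d\vt=\int_{W\cap R}\meas\{\vt\in A_T':\vz\in\Gamma_\vt\}\,d\mu(\vz),
\]
and the condition $\vz\in\Gamma_\vt\subset K+i\vt$ forces each $t_j$ to lie within distance $r$ of $\Im(z_j)$, bounding the inner measure by $(2r)^n$. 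Dividing then gives $N_\val(\vsigma^-,\vsigma^+;T)=\mu(W\cap R)\gg T^n$, as desired. The only step that is not entirely routine is the slice-wise Rouche/implicit function argument producing $\Gamma_\vt$ together with the Hausdorff lower bound, which is however standard several-variable complex analysis.
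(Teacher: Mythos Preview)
Your argument is correct and follows exactly the approach the paper sketches: apply Theorem~\ref{TH1} to a target function with a zero in the interior of $K$, then transfer the zero via Rouch\'e. The paper gives no details beyond that one sentence, so your slice-wise Rouch\'e/implicit-function construction of $\Gamma_\vt$, the projection lower bound for its $(2n-2)$-Hausdorff measure, and the Fubini overlap count are precisely the missing details; they are all sound, including the measurability needed for Tonelli (since $\Gamma_\vt=W\cap(K+i\vt)$ makes the incidence set closed in $\vz-i\vt$).
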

\begin{proof}
Let $\delta:=\frac 1 2 \min_{1 \leq j \leq n}  (\sigma_{j}^+- \sigma_j^-),  \, \sigma_j:= \frac 1 2 (\sigma_j^+ + \sigma_j^-)$ and let  $K \subset  \C^n$ be the closed ball with center at $(\sigma_1,\ldots,\sigma_n)$ and radius $\delta$. Then the result follows by applying Theorem \ref{TH1} on the function  $f(\vz)=(2z_1-2\sigma_1-\delta)/\delta$  and a version of Jensen's formula \cite[Formula (12), p. 385] {Rudin2} for $\C^n$.
\end{proof} 
In particular it is clear that no analogue of the Riemann hypothesis\footnote{This also follows from the one variable universality results of Nakamura-Pa{\'n}kowski \cite{NakPan};  For the Euler-Zagier case see also  Nakamura-Pa{\'n}kowski \cite[Theorem 3.3]{NakPan2} for a clear statement.} holds. In another direction, in \cite{Andersson2} 
 we noticed that if in Voronin's theorem we assume that the set $K$ has empty interior then we do no longer need to assume that the function is zero-free on $K$. Also the condition that $f$ is analytic in the interior is  trivially true so it is sufficient to assume that $f$ is continuous on $K$. In particular it is sufficient to use Lavrentiev's theorem rather than Mergelyan's theorem in the proof of universality. While we do not have a zero-free condition in Theorem \ref{TH1} we may still use what corresponds to Lavrientev's theorem in several complex variable to simplify the statement so that we may approximate any function that is continuous on $K$. In several complex variables it is known by a result of Harvey-Wells\footnote{This generalization of the Stone Weierstrass theorem is a sharper version of a result of Hörmander-Wermer \cite{HWe}, see discussion in \cite[section 8.]{Levenberg}, or \cite{berndtsson} for a different proof.}  \cite{HW} that if $E$  is a {\em totally real\footnote{For the definition see \cite[p.115]{Levenberg}.}}  submanifold of class $C^1$ in an open set in $\C^n$ then for any compact $K \subset E$ and any function $f$ continuous on $K$ may be uniformly approximated by polynomials. An immediate consequence of Theorem \ref{TH1} and the Harvey-Wells theorem is the following.
\begin{cor} \label{kkorro}
  Let $n \geq 2$ and $E \subset D^n$ be a totally real submanifold of class $C^1$, $\val=(\alpha_1,\ldots,\alpha_n)$, where $\alpha_j>0$ for $j=1,\ldots,n$ are transcendental or rational numbers. Then for any compact subset $K \subset E$,  continuous function $f$ on $K$ and  $\varepsilon>0$ we have that
\begin{gather}
 \liminf_{T \to \infty} \frac 1 {T^n} \meas  \left \{\vt \in [0,T]^n:\max_{\vs \in K} 
\abs{\zeta_n(\vs+i\vt; \val)-f(\vs)}<\varepsilon \right \}>0,
\end{gather}
where either $\alpha_n$ or $\alpha_{n-1}$ is transcendental, the condition   \eqref{Kkond} holds or $\alpha_n=\frac c d$ and there exist some $q$  coprime to $d$  and non principal character $\chi$ of modulus $q$  such that the Riemann hypothesis is true for the Dirichlet $L$-functions $L(s,\chi \chi^*)$ where $\chi^*$ is any  Dirichlet character mod $d$.
 \end{cor}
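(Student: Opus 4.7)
The plan is to deduce Corollary \ref{kkorro} directly from Theorem \ref{TH1} by reducing the continuous target function on $K$ to a holomorphic one on a Runge neighborhood via the Harvey--Wells approximation theorem. The whole argument is a standard "Lavrentiev-type" upgrade of a universality statement; the analytic work has already been done in Theorem \ref{TH1}, so no new input about $\zeta_n$ is required.

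First, given a continuous $f$ on $K$ and $\varepsilon>0$, I would apply the Harvey--Wells theorem: since $E\subset D^n\subset\C^n$ is a totally real $C^1$ submanifold and $K\subset E$ is compact, there exists a polynomial $p(\vs)=p(s_1,\dots,s_n)$ in $n$ complex variables with
\begin{gather*}
\max_{\vs\in K}\abs{f(\vs)-p(\vs)}<\tfrac{\varepsilon}{2}.
\end{gather*}

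Next, I would choose a Runge domain $E'$ with $K\subset E'\subseteq D^n$ on which to apply Theorem \ref{TH1}. The simplest choice is $E'=D^n$ itself: the strip $D$ is a simply connected planar domain, hence Runge in $\C$, and a Cartesian product of planar Runge domains is Runge in $\C^n$, so $D^n$ is a Runge domain. The polynomial $p$ is entire and in particular holomorphic on $E'$. I would then invoke Theorem \ref{TH1} with data $(E',\val,p,\varepsilon/2,K)$ under exactly the hypothesis assumed in Corollary \ref{kkorro} (either $\alpha_n$ or $\alpha_{n-1}$ is transcendental, or \eqref{Kkond} holds, or the relevant partial GRH hypothesis on $L(s,\chi\chi^*)$). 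This gives
\begin{gather*}
 \liminf_{T\to\infty}\frac{1}{T^n}\meas\left\{\vt\in[0,T]^n:\max_{\vs\in K}\abs{\zeta_n(\vs+i\vt;\val)-p(\vs)}<\tfrac{\varepsilon}{2}\right\}>0.
\end{gather*}

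Finally, the triangle inequality closes the argument: for every $\vt$ in the set above and every $\vs\in K$,
\begin{gather*}
\abs{\zeta_n(\vs+i\vt;\val)-f(\vs)}\le \abs{\zeta_n(\vs+i\vt;\val)-p(\vs)}+\abs{p(\vs)-f(\vs)}<\tfrac{\varepsilon}{2}+\tfrac{\varepsilon}{2}=\varepsilon,
\end{gather*}
so this $\vt$ also lies in the set appearing in the conclusion of Corollary \ref{kkorro}. The liminf of measures therefore inherits positivity from the liminf produced by Theorem \ref{TH1}. The only non-trivial step is the verification that $D^n$ (or a smaller Runge neighborhood of $K$) is a Runge domain so that Theorem \ref{TH1} is applicable, and the invocation of Harvey--Wells, which is exactly where the total reality of $E$ is used; everything else is formal.
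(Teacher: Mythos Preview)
Your proposal is correct and follows exactly the route the paper indicates: the paper states that Corollary \ref{kkorro} is ``an immediate consequence of Theorem \ref{TH1} and the Harvey--Wells theorem,'' and your argument supplies precisely those details (approximate $f$ by a polynomial via Harvey--Wells, apply Theorem \ref{TH1} on the Runge domain $D^n$, conclude by the triangle inequality). There is nothing to add.
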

In particular we may choose a compact set $M \subset R^n$ and $K=(\sigma_1,\ldots,\sigma_n)+i M$ in Corollary \ref{kkorro}. If we also for simplicity consider only the Euler-Zagier case we get the following special case:
\begin{cor}
\label{kkorro2}
  Let $n \geq 2$, $\vsigma=(\sigma_1,\ldots,\sigma_n) \in (\frac 1 2,1)^n$ where $\sigma_{n-1}+\sigma_n>3/2$ and let $M \subset \R^n$ be a compact set. Then for any continuous function $f:M \to \C$  and  $\varepsilon>0$  we have that
\begin{gather}
 \liminf_{T \to \infty} \frac 1 {T^n} \meas  \left \{\vt \in [0,T]^n:\max_{\vx \in M} 
\abs{\zeta_n(\vsigma+i (\vt+\vx))-f(\vx)}<\varepsilon \right \}>0.
\end{gather}
\end{cor}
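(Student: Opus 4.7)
The plan is to deduce this directly from Corollary \ref{kkorro} applied in the Euler-Zagier case $\val = \ett$, with $E = \vsigma + i\R^n$ playing the role of the totally real submanifold and $K = \vsigma + iM$ the compact subset.

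First I would check that $E \subset D^n$: since each $\sigma_j \in (1/2,1)$, every point of $E$ has each real part equal to $\sigma_j \in (1/2,1)$, so $E \subset D^n$. Next I would verify that $E$ is a totally real submanifold of class $C^1$ (in fact $C^\infty$): $E$ is an affine real $n$-dimensional subspace of $\C^n$ with constant tangent space $i\R^n$, and multiplication by $i$ sends $i\R^n$ to $\R^n$, which meets $i\R^n$ only at $0$, so the totally real condition holds at every point.

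Then I would set $K = \vsigma + iM$, which is compact because $M$ is, and introduce the continuous function $\tilde f : K \to \C$ defined by
\begin{equation*}
\tilde f(\vsigma + i\vx) = f(\vx), \qquad \vx \in M.
\end{equation*}
The condition \eqref{Kkond} holds on $K$: for every $\vs = \vsigma + i\vx \in K$ one has $\Re(s_{n-1} + s_n) = \sigma_{n-1} + \sigma_n > 3/2$ by hypothesis. Since in the Euler-Zagier case all $\alpha_j = 1$ are rational, Corollary \ref{kkorro} applies to $\tilde f$ on $K$ and yields a set of $\vt \in [0,T]^n$ of positive lower density on which
\begin{equation*}
\max_{\vs \in K} \abs{\zeta_n(\vs + i\vt; \ett) - \tilde f(\vs)} < \varepsilon.
\end{equation*}

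Finally I would re-parametrise $\vs = \vsigma + i\vx$, so that $\vs + i\vt = \vsigma + i(\vt+\vx)$ and $\zeta_n(\vs + i\vt; \ett)$ becomes the Euler-Zagier value $\zeta_n(\vsigma + i(\vt+\vx))$. The displayed inequality then reads $\max_{\vx \in M} \abs{\zeta_n(\vsigma + i(\vt+\vx)) - f(\vx)} < \varepsilon$, which is exactly the claim. There is no substantive obstacle beyond this bookkeeping; the real work — the Harvey-Wells approximation of continuous functions on totally real manifolds combined with the universality assertion of Theorem \ref{TH1} — has already been packaged into Corollary \ref{kkorro}.
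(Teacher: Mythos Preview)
Your proof is correct and follows exactly the approach indicated in the paper: specialize Corollary \ref{kkorro} to the Euler-Zagier case with the totally real submanifold $E=\vsigma+i\R^n$ and the compact set $K=\vsigma+iM$, then reparametrise $\vs=\vsigma+i\vx$. You have in fact supplied more detail than the paper (the explicit verification that $E$ is totally real and that \eqref{Kkond} holds on $K$), which is entirely appropriate.
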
 

Finally, there are some new types of universality that occurs in several variables.
 We will prove two versions of  such a theorem, although certainly it should be possible to prove other versions of these results also. Let $\overline \Q^+$ denote the set of positive algebraic  numbers.  
 \begin{thm} \label{TH4}
  Let $n \geq 2$,  and $1\leq m < n$. Let $\vv_1,\ldots,\vv_m \in (\overline{\mathbb Q}^+)^n$  be linearly independent vectors such that for each $1 \leq j \leq m$ we have that
$$
 \vv_j=(v_{1,j},\ldots,v_{n,j})
$$  
  and that for each $j$ then $v_{1,j},\ldots,v_{n,j}$  are linearly independent over $\Q$   and $E \subseteq \{s \in \C: 1-m/(2n)<\Re(s)<1 \}^n$ be a Runge domain,  $\val=(\alpha_1,\ldots,\alpha_n)$,
 where  $\alpha_j>0$ are rational and $f$ is any holomorphic function on $E$. Then for any $\varepsilon>0$, and  compact subset $K \subset E$ that satisfies \eqref{Kkond} we have that
\begin{gather*}
 \liminf_{T \to \infty} \frac 1 {T^m} \meas  \left \{\vt \in [0,T]^m:\max_{\vs \in K} 
\abs{\zeta_n \left(\vs+i \left(\sum_{j=1}^m t_j \vv_j \right); \val \right)-f(\vs)}<\varepsilon \right \}>0.
\end{gather*}
 \end{thm}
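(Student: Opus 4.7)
My plan is to follow the Bagchi--Voronin approach used for Theorem~\ref{TH1}, keeping the Hilbert-space rearrangement and finite-truncation steps intact and replacing the Kronecker equidistribution over $[0,T]^n$ by an $m$-dimensional analogue adapted to the algebraic directions $\vv_1,\ldots,\vv_m$; the mean-square step is then redone in the reduced dimension and dictates the narrower strip.

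\textbf{Step 1 (setup).} Using that each $\alpha_l$ is rational, I first decompose $\zeta_n(\vs;\val)$ into finite combinations of multiple Dirichlet series supported on arithmetic progressions, exactly as in the proof of Theorem~\ref{TH1}. Truncating each summation at some large height $N$ and invoking the Runge-domain hypothesis on $E$ reduces the universality claim to finding a set of positive density of $\vt\in[0,T]^m$ on which the translated truncated polynomial is within $\varepsilon/2$ of a suitable rearrangement of a Dirichlet polynomial approximating $f$ on $K$. The Hilbert-space rearrangement step (Pechersky/Bagchi) is insensitive to the dimension of the $\vt$-averaging and can be imported essentially verbatim from the proof of Theorem~\ref{TH1}.

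\textbf{Step 2 (Kronecker--Baker equidistribution).} The main new ingredient is the equidistribution on the appropriate torus of the phase map $\vt\mapsto(\Phi^{(i)}(\vt))_{i}\bmod 2\pi$, where
\[
  \Phi^{(i)}(\vt) = \sum_{j=1}^{m} t_j \sum_{l=1}^{n} v_{l,j}\log\bigl(k_l^{(i)}+\alpha_l\bigr)
\]
is the phase contributed by the $i$th index tuple $\vk^{(i)}=(k_1^{(i)},\ldots,k_n^{(i)})$ of the truncated polynomial. By Weyl's criterion, it suffices to show that for any nonzero $\vec c\in\mathbb{Z}^{M}$ the vector $V\vec\mu\in\mathbb{R}^{m}$ is nonzero, where $\mu_l=\sum_{i}c_i\log(k_l^{(i)}+\alpha_l)$ and $V=(v_{l,j})$ is the $m\times n$ matrix of rank $m$ provided by the linear independence of the $\vv_j$. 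Using that each row of $V$ consists of algebraic numbers that are $\mathbb{Q}$-linearly independent, Baker's theorem on linear forms in logarithms converts any putative identity $V\vec\mu=0$ into a genuine multiplicative relation among the positive rationals $k_l^{(i)}+\alpha_l$, and the ordering constraint $k_1<\cdots<k_n$ together with distinctness of the $\vk^{(i)}$ forbids such a relation and forces $\vec c=0$. This Baker step, and in particular the combinatorial extraction of a contradiction from the multiplicative relation, is where I expect the main technical difficulty to lie and where the algebraic hypotheses on the $\vv_j$ are genuinely consumed.

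\textbf{Step 3 (mean-square bound and assembly).} Averaging over $[0,T]^m$ rather than $[0,T]^n$, the second moment of the tail beyond the truncation expands into a diagonal piece plus an off-diagonal piece that is killed by Step~2. The diagonal piece is a sum of the shape $\sum_{\vk}\prod_{l}(k_l+\alpha_l)^{-2\sigma_l}$ over ordered tuples, which is summable precisely when $\sigma_l>1-m/(2n)$ for all $l$, since the $m$-dimensional averaging yields only a gain of $T^{m}$ in place of $T^{n}$; this explains the restriction $E\subseteq\{s:1-m/(2n)<\Re(s)<1\}^{n}$ in the hypothesis. The condition \eqref{Kkond} then discharges the two rightmost rational coordinates exactly as in the proof of Theorem~\ref{TH1}, removing the need for any Riemann Hypothesis input. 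Combining the Hilbert-space approximation of Step~1, the Kronecker--Baker equidistribution of Step~2 and the mean-square bound of Step~3, the standard Voronin--Bagchi positive-density argument produces the desired set of $\vt\in[0,T]^m$ on which $\zeta_n(\vs+i\sum_{j}t_j\vv_j;\val)$ lies within $\varepsilon$ of $f(\vs)$ uniformly on $K$.
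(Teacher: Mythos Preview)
Your overall strategy matches the paper's: keep the fundamental lemma (Lemma~\ref{LE4}) unchanged, replace Lemma~\ref{LE6} by an $m$-dimensional equidistribution lemma driven by Baker's theorem, and redo the mean-square in $m$ variables. That is exactly the scaffolding of the paper's Lemmas~\ref{LE6v2} and~\ref{LE8v2}.

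Step~2, however, has a genuine gap. You index the torus by full tuples $\vk^{(i)}$ and claim that once Baker delivers $\vec\mu=0$, ``the ordering constraint $k_1<\cdots<k_n$ together with distinctness of the $\vk^{(i)}$ forbids such a relation and forces $\vec c=0$.'' This is false: the rationals $k_l^{(i)}+\alpha_l$ enjoy many multiplicative relations. With $n=2$, $\alpha_1=\alpha_2=1$ and the distinct ordered tuples $(0,1),(0,3),(1,7)$ one checks that $(c_1,c_2,c_3)=(2,-1,0)$ gives $\mu_1=\mu_2=0$. So the tuple-indexed phases are \emph{not} $\Q$-linearly independent and your Weyl step fails as written. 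The correct route---and the one the paper takes---is to exploit rationality of the $\alpha_l$ from the outset: the completely multiplicative coefficients $a_l$ produced by the fundamental lemma are determined on primes, so it suffices to equidistribute the prime-indexed vectors $\bigl((v_{l,j}\log p)_{j=1}^m\bigr)_{1\le l\le n,\,p}$. For \emph{those} your Baker argument works cleanly: $\sum_{l,p} c_{l,p} v_{l,j}\log p=0$ for all $j$ forces (Baker, each $j$) $\sum_l c_{l,p}v_{l,j}=0$ for every $p$, and then the hypothesised $\Q$-linear independence of $v_{1,j},\ldots,v_{n,j}$ gives $c_{l,p}=0$.

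A minor point on Step~3: your explanation of the threshold $1-m/(2n)$ is off. The true diagonal $\vk=\vk'$ is already summable once $\sigma_l>\tfrac12$; the restricted strip arises from the off-diagonal bookkeeping in the $m$-variable mean square (Lemma~\ref{LE8v2}), where the frequency gaps are no longer well-separated. Finally, remember to feed the weak approximate functional equation (Lemma~\ref{LE7}) into the assembly to pass from the truncated polynomial to $\zeta_n$ itself.
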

 The condition that $v_{1,j},\ldots,v_{n,j}$ are linearly independent over $\Q$ is essential. However the condition that it is sufficient that they are algebraic follows from using Baker \cite[Theorem 2.1]{baker} and is likely not essential, since for example Schanuel's conjecture is sufficient to remove this condition\footnote{removing this condition is likely  much simpler than
 proving Schanuel's conjecture}. Also it is likely possible to allow transcendental parameters. The most interesting case might be $m=1$, and we state this case for the Euler-Zagier  multiple zeta-function as a Corollary.
 \begin{cor} \label{cor4}
  Let $n \geq 2$,  and  let $\vv=(v_1,\ldots,v_n) \in  (\overline{\Q}^+)^n$ be a vector such that its components $v_1,\ldots,v_n$  are linearly independent over $\Q$ and let $E \subseteq \{s \in \C: 1-1/(2n)<\Re(s)<1 \}^n$ be a Runge domain,  and $f$ be a holomorphic function on $E$. Then for any $\varepsilon>0$ and  compact subset $K \subset E$ we have that
\begin{gather*}
 \liminf_{T \to \infty} \frac 1 {T} \meas  \left \{0 \leq t \leq T:\max_{\vs \in K} 
\abs{\zeta_n(\vs+i \vv t) -f(\vs)}<\varepsilon \right \}>0.
\end{gather*}
 \end{cor}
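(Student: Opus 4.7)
The plan is to deduce Corollary \ref{cor4} as the immediate specialization of Theorem \ref{TH4} to the case $m=1$, $\vv_1 = \vv$, and $\val = (1,\ldots,1)$ (the Euler--Zagier case). The hypotheses on $\vv$ in Corollary \ref{cor4} transfer verbatim to the hypotheses on $\vv_1$ in Theorem \ref{TH4}: we have $\vv_1 = \vv \in (\overline{\Q}^+)^n$ whose components $v_1,\ldots,v_n$ are $\Q$-linearly independent, and the required linear independence of the family $\{\vv_1\}$ is automatic since $\vv \neq 0$ (any vector whose components are $\Q$-linearly independent is in particular non-zero). The parameters $\alpha_j = 1$ are rational, and with $m=1$ the strip $\{s \in \C: 1 - m/(2n) < \Re(s) < 1\}^n$ becomes precisely $\{s \in \C: 1-1/(2n) < \Re(s) < 1\}^n$, matching the hypothesis on $E$.

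The only hypothesis of Theorem \ref{TH4} not stated explicitly in Corollary \ref{cor4} is that the compact set $K \subset E$ should satisfy \eqref{Kkond}, namely $\min_{\vs \in K} \Re(s_{n-1} + s_n) > 3/2$. I would verify this from the strip condition on $E$ and compactness of $K$: since $K$ is compact and contained in the open set $E \subseteq \{s: \Re(s) > 1 - 1/(2n)\}^n$, there exist constants $c_{n-1}, c_n > 1 - 1/(2n)$ such that $\Re(s_j) \geq c_j$ for every $\vs \in K$ and $j \in \{n-1,n\}$. Hence
$$\min_{\vs \in K} \Re(s_{n-1}+s_n) \;\geq\; c_{n-1} + c_n \;>\; 2 - \frac{1}{n} \;\geq\; \frac{3}{2},$$
where the last inequality uses $n \geq 2$. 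This is exactly \eqref{Kkond}.

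With all hypotheses checked, a direct application of Theorem \ref{TH4} yields Corollary \ref{cor4}. There is no genuine obstacle here: the entire content of the corollary lies in Theorem \ref{TH4}, and the purpose of stating it separately is to single out the most striking instance, namely $m=1$ with a single $\Q$-linearly independent algebraic direction $\vv$, in the most familiar Euler--Zagier setting. Accordingly the proof proposal is essentially a verification that the specialization is legitimate, with the only non-trivial observation being the strip condition on $E$ forces \eqref{Kkond} automatically.
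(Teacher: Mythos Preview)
Your proposal is correct and follows exactly the approach the paper intends: the corollary is stated immediately after Theorem \ref{TH4} as the specialization $m=1$, $\val=(1,\ldots,1)$. Your explicit verification that the strip condition $\Re(s_j)>1-1/(2n)$ together with $n\ge 2$ forces \eqref{Kkond} on every compact $K\subset E$ is a useful detail the paper leaves implicit.
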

  We  remark that we get a smaller region where universality holds than  the natural region which should still be $D^n$ like it should be in Theorem \ref{TH1}. To prove universality for the stronger region would not only need the Riemann hypothesis for some Dirichlet $L$-function, but also; for all choices of parameters; better mean square results for the multiple Hurwitz zeta-functions than we manage to prove in Section \ref{sec5}. 
   
   The condition that the components of the vector are linearly independent over the rationals can be replaced by some condition on the parameters $\alpha_j$ and we give an example of such a result. 
  \begin{thm} \label{TH5}
  Let $n \geq 2$,  and $1\leq m < n$. Let $\vv_1,\ldots,\vv_m \in (\Q^+)^n$  be linearly independent vectors and $E \subseteq \{s \in \C: 1-m/(2n)<\Re(s)<1 \}^n$ be a Runge domain, and let  $\val=(\alpha_1,\ldots,\alpha_n)$, where the $\alpha_j$'s are algebraically independent with the exception for at most one rational $\alpha_j$ and $f$ be any holomorphic function on $E$. Then for any $\varepsilon>0$, and  compact subset $K \subset E$ we have that
\begin{gather*}
 \liminf_{T \to \infty} \frac 1 {T^m} \meas  \left \{\vt \in [0,T]^m:\max_{\vs \in K} 
\abs{\zeta_n \left(\vs+i \left(\sum_{j=1}^m t_j \vv_j\right); \val \right)-f(\vs)}<\varepsilon \right \}>0.
\end{gather*}
 \end{thm}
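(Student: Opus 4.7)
I would adapt the proof of Theorem \ref{TH4}, preserving its overall Voronin--Bagchi structure: Runge polynomial approximation on the compact set $K$, truncation of the Dirichlet series for $\zeta_n(\vs;\val)$, a Pechersky rearrangement in an $H^2$-type Hilbert space of Dirichlet series, and the tail estimate furnished by the mean-square bound from Section \ref{sec5}, valid precisely in the strip $1-m/(2n)<\Re(s_j)<1$. The only step that requires a genuinely different input is the equidistribution argument: in Theorem \ref{TH4} the $\Q$-linear independence of the components of each $\vv_j$, combined with Baker's theorem on linear forms in logarithms, was what forced the joint phases to equidistribute, whereas here the $\vv_j$ are only rational and the required independence must be extracted instead from the arithmetic nature of $\val$.

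Concretely, after truncating the Dirichlet series at a level $N$, the universality measure is driven by the characters $\exp(-i\Phi_{i,k}(\vt))$, where
\[
\Phi_{i,k}(\vt)=\sum_{j=1}^m t_j v_{i,j}\log(k+\alpha_i),\qquad 1\le i\le n,\ 0\le k\le N.
\]
Clearing a common denominator of the rationals $v_{i,j}$, which only rescales $[0,T]^m$ and is harmless in the liminf, the phases become integer linear combinations of the $\log(k+\alpha_i)$. Since $\vv_1,\ldots,\vv_m$ are $\Q$-linearly independent in $\Q^n$, the Weyl criterion reduces the joint equidistribution of $(\Phi_{i,k}(\vt)\bmod 2\pi)_{i,k}$ to the $\Q$-linear independence of the set $\{\log(k+\alpha_i):1\le i\le n,\ 0\le k\le N\}$, modulo the trivial relations $\log(mn)=\log m+\log n$ coming from integer factorizations.

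The main obstacle is this independence statement. If every $\alpha_i$ is transcendental and $\alpha_1,\ldots,\alpha_n$ are algebraically independent, then any nontrivial $\Q$-linear relation among the $\log(k+\alpha_i)$ exponentiates to $\prod_{i,k}(k+\alpha_i)^{n_{i,k}}=1$, which as a rational-function identity in the algebraically independent variables $\alpha_1,\ldots,\alpha_n$ forces each partial product $\prod_k(k+\alpha_i)^{n_{i,k}}$ to be constant in $\alpha_i$, and hence all $n_{i,k}=0$. If one coordinate $\alpha_{i_0}=a/b$ is rational instead, one splits a hypothetical relation into its $i_0$-part, whose logarithms lie in the $\Q$-span of $\{\log p:p\text{ prime}\}$, and a complementary transcendental part; algebraic independence of $\{\alpha_i:i\ne i_0\}$ kills the transcendental part as before, and the surviving relation among $\{\log(kb+a):0\le k\le N\}$ reduces via unique factorization into primes to an impossible $\Q$-linear relation among $\{\log p\}$. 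Once this equidistribution is established, the Pechersky rearrangement and the usual positive-density argument from the proof of Theorem \ref{TH4} go through unchanged.
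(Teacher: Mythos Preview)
Your overall architecture matches the paper's: Oka--Weil on $K$, the fundamental lemma (which packages the Pechersky step), a version of Lemma~\ref{LE6} for the $m$-parameter flow, and the mean-square bound of Lemma~\ref{LE8v2} that forces the strip $1-m/(2n)<\Re(s_j)<1$. You also correctly isolate the new arithmetic input: the $\Q$-linear independence of the relevant logarithms when the $\alpha_i$ are algebraically independent, which is exactly what the paper records as item~(2) in the proof of Lemma~\ref{LE6v2}.

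There is, however, a genuine gap in your treatment of the single rational parameter. Your claim that ``the surviving relation among $\{\log(kb+a):0\le k\le N\}$ reduces via unique factorization into primes to an impossible $\Q$-linear relation among $\{\log p\}$'' is false: those logarithms are \emph{not} $\Q$-linearly independent (already $\log 6=\log 2+\log 3$ for $a=b=1$), so a nontrivial relation there projects to the \emph{trivial} relation on $\{\log p\}$ rather than to an impossible one. Hence you cannot hope to equidistribute the full family of phases $\bigl((k+\alpha_{i_0})^{-it}\bigr)_k$ on the full torus, and your reduction of the Weyl criterion to independence of $\{\log(k+\alpha_i):i,k\}$ breaks at exactly this point.

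The fix, and the paper's route, is to stop asking for equidistribution of all $(k+\alpha_{i_0})^{-it}$ and to use instead the completely multiplicative structure built into the fundamental Lemma~\ref{LE4}: when $\alpha_{i_0}$ is rational the target function $a_{i_0}$ is determined by its values on primes, so one only needs equidistribution of $p^{-it}$ for primes $p$, jointly with $(k+\alpha_i)^{-it}$ for the transcendental $\alpha_i$. The relevant set is therefore $\{\log(k+\alpha_i):\alpha_i\text{ transcendental},\,k\in\N\}\cup\{\log p:p\text{ prime}\}$, and \emph{this} set is $\Q$-linearly independent by the argument you gave for the transcendental block (your separation-of-variables step yields $\prod_k(k+\alpha_i)^{n_{i,k}}\in\Q$, hence $n_{i,k}=0$, and then $\{\log p\}$ is independent). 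A minor related point: the linear independence of $\vv_1,\ldots,\vv_m$ is not what drives this reduction---a single rational $\vv_j$ with positive entries already turns the Weyl condition into a rational combination of the logs---its real role is in the mean-square Lemma~\ref{LE8v2}, where it determines the width of the admissible strip.
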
  
   Since the conditions of the theorem implies that at least one of the last two parameters are transcendental we no longer need to assume the Riemann hypothesis for some Dirichlet $L$-function in order to obtain a better region of universality. However we still need better mean square results than we are able to prove.  
For example the mean square formula 
\begin{gather*}
  \int_0^T \abs{\zeta_n \p{\frac 1 2 + it v_1,\ldots,\frac 1 2 +it v_n;\val}}^2 dt  \ll_{\varepsilon,\val,{\bf v}} T^{1+\varepsilon}.  \qquad ({\bf v},\val \in (\R^+)^n)
\end{gather*}  
would be sufficient\footnote{A suitable version of the Lindel\"of hypothesis for multiple zeta-functions would certainly also be sufficient. However from an example  of  Kiuchy-Tanagawa-Zhai \cite{KTZ}, see \cite[Remark 1.4]{MatHir} it is not so clear what the correct version of the Lindel\"of hypothesis should be.}  
to prove Theorem \ref{TH5}; and Theorem \ref{TH4} if we also assume GRH; for the full region $D^n$. 

Another consequence of Theorem \ref{TH4} and Theorem \ref{TH5} is a one variable universality result of a more classical flavor
\begin{cor}
    Let $\vv=(v_1,\ldots, v_n) \in (\R^+)^n$, ${\bm w}=(w_1,\ldots,w_n) \in \C^n$ and $\val=(\alpha_1,\ldots,\alpha_n) \in (\R^+)^n$ be such that either $\alpha_j$ are rational,  $v_j$ are algebraic and such that $\{v_1,\ldots,v_n\}$ is linearly independent over $\Q$, or $v_j$ are rational and $\{\alpha_1,\ldots,\alpha_n\}$ are algebraically independent with the possibly exeption of one rational $\alpha_j$. Let  
 $K \subset \bigcap_{j=1}^n \{s \in \C: 1-\frac 1 {2n} <\Re(v_j s+w_j) <1 \}$ be a compact set with connected complement and suppose that $f$ is any continuous function on $K$ that is analytic  in the interior of $K$. Then 
\begin{gather}\liminf_{T \to \infty} \frac 1 T \mathop{\rm meas} \left \{t \in [0,T]:\max_{s \in K} \abs{\zeta_n( {\vv} (s+i t)+{\bm w};\val)-f(s)}<\varepsilon \right \}>0. \end{gather}
\end{cor}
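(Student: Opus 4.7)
The plan is to reduce the one-variable statement to Theorem \ref{TH4} (in the first branch of the hypothesis) or Theorem \ref{TH5} (in the second) via the affine embedding $\iota \colon s \mapsto \vv s + \bm w$ of $\C$ into $\C^n$, after a Mergelyan reduction to polynomial targets. First I would set $K' = \iota(K) \subset \C^n$ and $E = \{s \in \C : 1 - 1/(2n) < \Re(s) < 1\}^n$. The hypothesis on $K$ says exactly that $K' \subset E$; moreover $E$ is a Runge domain, being a Cartesian product of convex (hence simply connected) plane strips. For each $\vs \in K'$ and $n \geq 2$, we have $\Re(s_{n-1}) + \Re(s_n) > 2(1 - 1/(2n)) = 2 - 1/n \geq 3/2$, so $K'$ satisfies condition \eqref{Kkond}.

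Next, since $K$ has connected complement and $f$ is continuous on $K$ and holomorphic in its interior, Mergelyan's theorem supplies, for any $\eta > 0$, a polynomial $P \in \C[s]$ with $\max_{s \in K} |f(s) - P(s)| < \eta$. Because every $v_j > 0$, I lift $P$ to $\C^n$ by setting $\tilde{P}(\vs) := P((s_1 - w_1)/v_1)$, which is a polynomial and hence holomorphic on $E$, and which satisfies $\tilde{P}(\iota(s)) = P(s)$ identically. This lift is the object to be approximated by $\zeta_n(\,\cdot\,;\val)$ along the vertical direction $\vv$ in $\C^n$.

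Now I apply, with $m = 1$ and $\vv_1 = \vv$, either Theorem \ref{TH4} (in Case 1: rational $\val$ and $\vv$ algebraic with $\Q$-linearly independent entries) or Theorem \ref{TH5} (in Case 2: rational $\vv$ and $\val$ algebraically independent up to one rational exception) to the triple $(E, K', \tilde{P})$. This yields a set of $t \in [0,T]$ of positive lower density on which $\max_{\vs \in K'} |\zeta_n(\vs + it\vv; \val) - \tilde{P}(\vs)| < \eta$. Pulling back by $\iota$ (using $\iota(s) + it\vv = \vv(s + it) + \bm w$ and $\tilde{P}(\iota(s)) = P(s)$) and choosing $\eta = \varepsilon/2$, I combine with the Mergelyan approximation to obtain $\max_{s \in K} |\zeta_n(\vv(s + it) + \bm w; \val) - f(s)| < \varepsilon$ on a positive lower density set of $t$.

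There is no substantial obstacle beyond this packaging; the heavy lifting is done by Theorems \ref{TH4} and \ref{TH5}. The only points requiring attention are the Runge property of the product strip $E$, the elementary arithmetic $2 - 1/n \geq 3/2$ verifying \eqref{Kkond}, and the construction of the polynomial lift along a nondegenerate affine line. The most delicate check is simply that the two branches of the corollary's hypothesis translate, term for term, onto the hypotheses of Theorems \ref{TH4} and \ref{TH5} respectively, which they do.
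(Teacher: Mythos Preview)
Your argument is correct and is precisely the derivation the paper has in mind: the corollary is stated in the paper simply as ``another consequence of Theorem~\ref{TH4} and Theorem~\ref{TH5}'' with no proof given, and your affine pullback $\iota(s)=\vv s+\bm w$, the Mergelyan reduction to a polynomial, the polynomial lift $\tilde P(\vs)=P((s_1-w_1)/v_1)$, and the verification that $K'=\iota(K)$ lies in the product strip and satisfies \eqref{Kkond} (needed only for the Theorem~\ref{TH4} branch) are exactly the steps one must supply. One tiny remark: since your $\tilde P$ is already a polynomial, you do not actually need the Runge property of the full unbounded strip $E$; any open convex neighbourhood of $K'$ inside the strip would do, and convex domains are polynomially convex hence Runge.
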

  Nakamura-Pa{\'n}kowski \cite[Theorem 5]{NakPan} proved the corresponding result for $\vv=(1,\ldots,1)$, ${\bm w}= \boldsymbol{0}$ and $\val=(\alpha,\ldots,\alpha)$ with transcendental or rational $\alpha$ which is not covered by the theorem above\footnote{Furthermore they obtain the result in the full range $1/2<\Re(s)<1$.}.  Their proofs use hybrid one variable universality results for the Hurwitz zeta-function and combinatorial relations of type
\begin{align}
   \zeta_2(s,s;\alpha,\alpha)=&\frac 1 2 \zeta(s;\alpha)-\frac 1 2 \zeta(2s;\alpha), \\ \zeta_3(s,s,s;\alpha,\alpha,\alpha)=&\frac 1 6 \zeta(s,\alpha)^3-\frac 1 2 \zeta(s,\alpha)\zeta(2s,\alpha)+\frac 1 3 \zeta(3s,\alpha),  
    \\ &\vdots
\end{align}
An application of Rouche's theorem gives us
\begin{cor}
 Let  $\vv=(v_1,\ldots, v_n) \in (\R^+)^n$, ${\bm w} =(w_1,\ldots,w_n) \in \R^n$ and $\val=(\alpha_1,\ldots,\alpha_n) \in (\R^+)^n$ be such that either $\alpha_j$ are rational,  $v_j$ are algebraic and  such that $\{v_1,\ldots,v_n\}$ is linearly independent over $\Q$, or $v_j$ are rational and $\{\alpha_1,\ldots,\alpha_n\}$ are algebraically independent with the possibly exeption of one rational $\alpha_j$. Then there exists $\gg T$ zeros to the function $f(s)=\zeta_n( s \vv +{\bm w};\val)$ in the set $\cap_{j=1}^n \{s=\sigma+it: 1-\frac 1 {2n} <\sigma_1< v_j \sigma + w_j<\sigma_2 <1, |t_j|<T\}$ whenever the last set is non empty. 
\end{cor}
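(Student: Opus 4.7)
The idea is a standard Rouch\'e-theorem deduction from the preceding one-variable universality corollary, exactly parallel to how one deduces zeros of $\zeta(s,\alpha)$ in a strip from Voronin universality. To begin, fix the bracketing numbers $\sigma_1,\sigma_2$ describing the target strip and pick any point $s_0$ in the open horizontal strip
\[
S \;=\; \bigcap_{j=1}^n\{s=\sigma+it:\sigma_1<v_j\sigma+w_j<\sigma_2\}
\]
in the $s$-plane, which is non-empty by assumption. Choose a small closed disk $K=\overline{D(s_0,r)}\subset S$; this $K$ automatically has connected complement and every vertical translate $K+i\tau$ is still contained in $S$. Let $g(s)=s-s_0$; then $g$ is continuous on $K$, analytic in the interior, vanishes simply at $s_0$, and satisfies $|g(s)|=r$ on $\partial K$.

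Next apply the preceding corollary to the function $g$ with $\varepsilon=r/2$. Under the stated hypotheses on $\val$ and $\vv$ it applies verbatim and produces a set $T_r\subseteq[0,T]$ of measure $\meas(T_r)\gg T$ such that for every $t\in T_r$
\[
\max_{s\in K}\,\bigl|\zeta_n(\vv(s+it)+\vw;\val)-(s-s_0)\bigr|\;<\;r/2\;<\;r\;=\;\min_{s\in\partial K}|g(s)|.
\]
Rouch\'e's theorem then forces $s\mapsto\zeta_n(\vv(s+it)+\vw;\val)$ to have at least one zero $s(t)$ inside $D(s_0,r)$ for each such $t$. Translating, $\tilde s(t):=s(t)+it$ is a zero of the one-variable function $f(s)=\zeta_n(\vv s+\vw;\val)$, and $\tilde s(t)$ lies in the required region once $r$ is taken small enough that the real-part inequalities $\sigma_1<v_j\Re(\tilde s(t))+w_j<\sigma_2$ still hold; the imaginary-part condition is immediate from $|\tilde s(t)-it|\leq r$ and $t\in[0,T]$.

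The final step is to convert this $t$-indexed family of zeros into a count of \emph{distinct} zeros of $f$. Two values $t,t'\in T_r$ yield the same zero of $f$ precisely when $s(t)-s(t')=i(t'-t)$; since $s(t),s(t')\in D(s_0,r)$, the left-hand side has modulus at most $2r$, so any given zero of $f$ can arise from a set of $t$'s of measure at most $2r$. Dividing, the number of distinct zeros of $f$ in the target region is at least $\meas(T_r)/(2r)\gg T$, which is the desired bound. The only point requiring any care is this multiplicity bookkeeping; the remainder is a direct transcription of the classical Voronin--Rouch\'e argument, with all analytic substance already packaged into the preceding corollary and hence ultimately into Theorem~\ref{TH4} and Theorem~\ref{TH5}.
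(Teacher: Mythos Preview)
Your proof is correct and follows exactly the approach the paper intends: the paper states only ``An application of Rouch\'e's theorem gives us'' this corollary, and you have supplied the standard details of that application, including the bookkeeping step bounding how many $t$'s can produce the same zero of $f$. There is nothing to add.
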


\section{Some one variable preliminaries}

\subsection{Rational and transcendental parameters}
In this paper we define 
\begin{gather} \label{Kdef}
 \newsym{$\{z \in \C: 1/2+\ddd \leq \Re(z) \leq 1- \ddd, -R \leq \Im(z) \leq R \}$}{\M} =\{z \in \C: 1/2+\ddd \leq \Re(z) \leq 1- \ddd, -R \leq \Im(z) \leq R \}, 
\end{gather}
where $0<\newsym{Distance of the set $\M$ to the critical line}{\ddd}<\frac 1 4$ and $ \newsym{Bounds for the imaginary part of the set $\M$}{R}>0$.
By approximation theorems from complex analysis it is sufficient to prove universality on $\M$ in one variable (Mergelyan's theorem) and $\M^n$ in several variables (Oka-Weil's theorem). In classical proofs of Voronin Universality for the Hurwitz zeta-function of a transcendental parameter the first step is to prove that the Dirichlet polynomial
\begin{gather} \label{ep1}
	 \sum_{k=0}^N a(k+\alpha) (k+\alpha)^{-s}, \qquad |a(k)|=1 
	  \end{gather}
	can approximate any polynomial function $p(s)$ on $\M$. 
In the case of the Riemann zeta-function  we may instead  consider approximations by the finite Euler products
\begin{gather} \label{ep2} \prod_{p \leq N} (1- a(p) p^{-s})^{-1}, \qquad |a(p)|=1.
\end{gather}
 These steps use the celebrated Pechersky rearrangement theorem. The fact that the sets $\{\log(k+\alpha): k \in \N\}$ when $\alpha$ is transcendental and $\{\log p: p \text { prime}\}$ are linearly independent over $\Q$  allows us to use the Kronecker theorem to show that the coefficients $a(k+\alpha)$ and $a(p)$ respectively may be chosen as $1$ if at the same time we replace $s$ by $s+i\tau$ for some real $\tau$. More precisely we  use an equidistribution theorem of Weyl, which in this context may be viewed as an effective version of the Kronecker theorem since it also gives a measure for such $\tau$.
 
   It will turn out useful to not consider Dirichlet polynomial/Dirichlet product truncations properly, but rather the Dirichlet series whose coefficients belong to a special class  of multiplicative unimodular functions. Since completely multiplicative functions are often defined just on positive integers we will first define them in a more general manner, useful for our purposes
\begin{defn}
  Let $X \subset \R^{+}$. We say that a function $a:X \to \C$ is completely multiplicative if for all $x_1,\ldots,x_n \in X$ and $y_1, \ldots,y_m \in X$ such that
$$ \prod_{j=1}^n x_j= \prod_{j=1}^m y_j,$$
then
$$ \prod_{j=1}^n a(x_j)=\prod_{j=1}^m a(y_j).$$
We say that $a$ is unimodular if $|a(x)|=1$ for all $x \in X$.
\end{defn}
We are now ready to define the class of arithmetic function whose generating Dirichlet series will replace the Dirichlet polynomial/Dirichlet product truncations in our proof
\begin{defn}
  We say that a function $a:\N+\alpha \to \C$ is of \newsym{A class of unimodular multiplicative functions}{type $(N,\chi)$} and that $\chi$ is of modulus $q$   if $a$   is a  completely multiplicative unimodular function on $\N+\alpha$ such that
   \begin{enumerate}
    \item If $\alpha
=c/d$ for $(d,c)=1$ is rational then when the function is extended to a completely multiplicative unimodular function on $(\N+\alpha)\cup \N$ then $a(p)=\chi(p)$,
   for primes $p \geq \log N$ and where $\chi$ must be a non-principal Dirichlet character mod $q$ such that $(q,d)=1$ and  such that $L(s,\chi \chi^*)$ is zero-free on $\M$ for any character $\chi^*$ mod $d$.  
   \item If $\alpha$ is transcendental then  $a(k+\alpha)=\chi(k)$ for $k \geq  \log N$ where $\chi(k)=e^{2 \pi i k/q}$ and where $q \geq 2$ is an integer.
    \end{enumerate}
    We say that $\chi$ is permissible with respect to $\alpha$ if the conditions above on $\chi$ are satisfied.
\end{defn}
We remark that the rational case implies that the function $a$ as defined on the integers has a generating Dirichlet series 
\begin{gather} \label{ojkond}
\sum_{k=1}^\infty a(k) k^{-s}= \prod_{p<\log N_1} \p{\frac{1-\chi(p) p^{-s}}{1-a(p) p^{-s}}} L(s,\chi), 
\end{gather}
and furthermore
\begin{gather} \label{ojkond2}
  a(k+\alpha)=a \p{k+\frac c d}=\frac{a(kd+c)}{a(d)}.
\end{gather}
We will prove the following result in Section \ref{secLemma1} which will be an important ingredient in the proof our  fundamental Lemma, Lemma \ref{LE4} which will allow us to prove our main theorem.
\begin{lem} \label{LE3}
Assuming that  $\alpha>0$ is rational or transcendental there exist some $A \geq 0$  such that given any $\varepsilon>0$, $N_0 \in \N$,  and compact set 
$\M$ defined by \eqref{Kdef}, function $a:\N+\alpha \to \C$ of type $(N_0,\chi)$
and any polynomial $p$ with $\min_{s \in \M} |p(s)| \geq A$  there exist some $N_1>N_0$  such that the function $a(k+\alpha)$ can be redefined  for $k \geq N_0$  to a function of type $(N_1,\chi)$
 such that for this redefinition and some $N^* \geq N_1$ we have that
 \begin{gather}  
   \label{ao}   \sup_{N \geq N^*} \max_{s \in \M}\abs{ \sum_{k=0}^{N} \frac{a(k+\alpha)}{(k+\alpha)^{s}}-p(s)}<\varepsilon.
 \end{gather}
\end{lem}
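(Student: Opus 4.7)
I would decompose the sum into a frozen initial block $\sum_{k=0}^{N_0-1}$ (fixed by the given $a$ of type $(N_0,\chi)$), a free middle block $\sum_{k=N_0}^{\lceil\log N_1\rceil-1}$ on which $a$ can still be modified, and a tail $\sum_{k=\lceil\log N_1\rceil}^{N_1}$ that is forced by the type $(N_1,\chi)$ requirement. The tail contribution is uniformly small on $\M$ as $N_1\to\infty$: in the transcendental case $\chi(k)=e^{2\pi ik/q}$ is periodic with mean zero, so $\sum\chi(k)(k+\alpha)^{-s}$ converges uniformly on $\M$ by Abel summation; in the rational case the analogous character-decomposed tails converge uniformly from the convergence of non-principal Dirichlet $L$-series for $\Re s>0$. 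It thus suffices to realize $p(s)-(\text{frozen})-(\text{tail})$ on the free middle block with unimodular coefficients, and this is done by a Pechersky-type rearrangement theorem applied in a Hilbert space of analytic functions on $\M$.

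\textbf{Transcendental case.} Since $\alpha$ is transcendental, the values $k+\alpha$ are multiplicatively independent: any relation $\prod_i(k_i+\alpha)^{m_i}=1$ with $m_i\in\Z$, on separating positive and negative exponents, would be a non-trivial polynomial identity in $\alpha$ over $\Z$, contradicting unique factorization in $\Q[X]$. So the values $a(k+\alpha)$ for $N_0\leq k<\log N_1$ can be chosen unimodularly and independently. View $u_k(s)=(k+\alpha)^{-s}$ in the real Hilbert space $H$ of $L^2$-holomorphic functions on $\M$ (with $\C$ identified with $\R^2$). Uniformly in $s\in\M$ one has $\|u_k\|\asymp k^{-\sigma}$, whence $\sum\|u_k\|^2<\infty$ (since $\sigma\geq\halv+\eta$) while $\sum\|u_k\|=\infty$ (since $\sigma\leq 1-\eta$); the closed span of $\{u_k\}_{k\geq N_0}$ is dense in $H$ by a standard Mellin-transform/Hahn--Banach argument. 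Pechersky's rearrangement theorem in its unimodular form then furnishes unimodular $a(k+\alpha)$ making the middle sum within $\varepsilon/2$ of the target in $H$, and a Cauchy-estimate upgrade converts this to uniform approximation on $\M$. Here one may take $A=0$.

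\textbf{Rational case $\alpha=c/d$.} Writing $k+c/d=(kd+c)/d$ and using \eqref{ojkond2}, the sum becomes $(d^s/a(d))\sum_{n\equiv c\,(d),\,n\leq dN_1+c}a(n)n^{-s}$. Orthogonality of characters mod $d$ rewrites this as $\bigl(d^s/(\phi(d)a(d))\bigr)\sum_{\chi^*\bmod d}\overline{\chi^*}(c)\sum_n\chi^*(n)a(n)n^{-s}$, and complete multiplicativity expresses each inner series as an Euler product whose factors at primes $\geq\log N_1$ multiply to $L(s,\chi^*\chi)$ (with the factors at primes dividing $d$ removed), while the factors at primes $\ell<\log N_1$, $\ell\nmid d$ carry the free parameters. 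By the type $(N,\chi)$ hypothesis $L(s,\chi^*\chi)$ is zero-free on $\M$ for every $\chi^*\bmod d$, so a holomorphic logarithm is available; after taking logs and isolating the leading-order prime sums, the free parameters enter linearly as $\sum_{\ell<\log N_1,\,\ell\nmid d}\chi^*(\ell)(a(\ell)-\chi(\ell))\ell^{-s}$ in each $\chi^*$-component. Pechersky's theorem in the product space $\bigoplus_{\chi^*\bmod d}H$, with targets the character components of $\log\bigl[\phi(d)d^{-s}a(d)p(s)\bigr]$ minus the frozen-prime contributions, produces the required unimodular $a(\ell)$. The constant $A>0$ enters precisely here: the bound $|p(s)|\geq A$ is what makes the logarithm single-valued and uniformly bounded on $\M$, and hence the Pechersky target controllable.

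\textbf{Main obstacle.} The rational case is decisively the harder one: (i) one must verify that the prime-indexed vectors $(\chi^*(\ell)\ell^{-s})_{\chi^*\bmod d}$ span a dense subspace of $\bigoplus H$ adapted to the character-decomposed targets (which live in a specific subspace, not all of $\bigoplus H$); (ii) the higher-order terms $\sum_{j\geq 2}(\chi^*(\ell)a(\ell))^j/(j\ell^{js})$ in the log-expansion, together with contributions of primes dividing $d$ or primes below $\log N_0$, must be shown to be uniformly small in the free $a(\ell)$; and (iii) the zero-freeness of $L(s,\chi\chi^*)$ is indispensable. By contrast, the transcendental case is essentially a textbook Bagchi-style application of Pechersky in a single Hilbert space.
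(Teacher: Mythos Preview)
Your transcendental case is fine and matches the paper's argument: multiplicative independence of the $k+\alpha$ makes every unimodular choice completely multiplicative, and Lemma~\ref{onevar} (Pechersky) does the rest.

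In the rational case your outline has the right architecture (the character decomposition and the identity \eqref{fer}), but the Pechersky step contains a real gap. After taking logs of the Euler product ratio you say the free parameters enter as
\[
\sum_{\ell}\chi^*(\ell)\bigl(a(\ell)-\chi(\ell)\bigr)\ell^{-s},
\]
and you propose to hit the required targets in $\bigoplus_{\chi^*}H$ by Pechersky. But $a(\ell)-\chi(\ell)$ is \emph{not} unimodular; it ranges over a unit circle centred at $-\chi(\ell)$. The standard Pechersky/Bagchi machinery needs unimodular coefficients. If you try to repair this by moving the $\chi(\ell)$-part into the target, the target for $\sum_{\ell\le N_1}a(\ell)\chi^*(\ell)\ell^{-s}$ becomes
\[
\log g_{\chi^*}(s)+\sum_{\ell\le N_1}\chi(\ell)\chi^*(\ell)\ell^{-s}+O(1),
\]
and the second sum neither converges nor stays bounded as $N_1\to\infty$ unless RH holds for $L(s,\chi\chi^*)$. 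Equivalently, the random series $\sum_p(X(p)-\chi(p))\chi^*(p)p^{-s}$ has mean part $-\sum_p\chi\chi^*(p)p^{-s}$, which diverges on $\M$ without RH, so the random ratio does not even converge a.s.\ and the support argument collapses. Your ``main obstacle'' list (density in $\bigoplus H$, higher-order log terms) misses this point entirely.

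The paper closes exactly this gap with Lemma~\ref{subprime}: one constructs a subset $\Pri$ of the primes, containing almost all primes and with an equal number from each residue class mod~$q$ in each interval $[n^2,(n+1)^2]$, so that the restricted product
\[
L_{\Pri}(s,\chi\chi^*)=\prod_{p\in\Pri}\bigl(1-\chi\chi^*(p)p^{-s}\bigr)^{-1}
\]
\emph{does} converge for $\Re s>\tfrac12$ to a zero-free analytic function, unconditionally. One then runs the standard joint Pechersky argument (Lemma~\ref{rational}) on $\Pri$ with genuine unimodular coefficients $a(p)$ and a fixed target $f_j(s)L_{\Pri,N_0}(s,\chi_j)$, divides by the now-convergent $L_{\Pri,N_0}$ to recover the ratio (Lemma~\ref{rational2}), and sets $a(p)=1$ for $p\notin\Pri$ so those primes contribute trivially to the ratio. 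Without this device your argument is implicitly conditional on GRH for the $L(s,\chi\chi^*)$.
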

We remark that the conditions on $A$ is to allow  Lemma \ref{LE3} to be true for any positive $\alpha=n+\halv$ and $\alpha=n$ for integers $n$. 
For all other rational and transcendental parameters $\alpha$ we may choose $A=0$.

\section{Sketch of the proof of Theorem \ref{TH1} - The simplest case}

We are going to sketch the  proof of the simplest case of Theorem \ref{TH1}, when $n=2$ and both parameters are transcendental. Substantial new difficulties occur when the parameters are rational, but having a rough understanding of the special case first should hopefully be useful when studying the general case.  By the Oka-Weil  theorem it follows that it is sufficient to consider $f$ a polynomial, and instead of the set $K$ we may choose the set $\M^2$ where $\M$ is defined by \eqref{Kdef}.

 \subsection{A Dirichlet polynomial approximation lemma}
 The first step in proving Voronin universality is proving that the function can be approximated by some Dirichlet polynomial. In one variable the following lemma is crucial\footnote{When we also allow rational parameters we need to replace this by Lemma \ref{LE3}.}.
\begin{lem} \label{onevar}
  Assume that $\alpha,N_0>0$, $p$ is a polynomial and $\varepsilon>0$. Then there exist an $N_1 > N_0$ and unimodular numbers $|a(k+\alpha)|=1$ for $N_0 \leq k \leq N_1$ such that
\[
  \max_{s \in \M} \abs{\sum_{k=N_0+1}^{N_1} \frac{a(k+\alpha)}{ (k+\alpha)^{s}}-p(s)}<\varepsilon.
\] 
\end{lem}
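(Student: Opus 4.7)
The plan is to prove this by a Hilbert-space rearrangement argument of Pechersky type, which is the approach originally used by Voronin. Let $\M' \supset \M$ be a slightly enlarged compact set still contained in the strip $\{1/2 < \Re(s) < 1\}$ whose interior contains $\M$. Working in the Hilbert space $H = L^2(\M')$ and using the standard mean-value/Cauchy-integral estimate $\|f\|_{\infty,\M} \ll_{\M,\M'} \|f\|_{L^2(\M')}$ valid for holomorphic $f$, it suffices to approximate the polynomial $p$ in the $H$-norm to within a constant multiple of $\varepsilon$ by a finite sum $\sum_{k=N_0+1}^{N_1} a_k (k+\alpha)^{-s}$ with $|a_k|=1$.

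Set $u_k(s) = (k+\alpha)^{-s}$; each $u_k$ is entire and hence lies in $H$. Since on $\M'$ one has $1/2 + \ddd' \leq \Re(s) \leq 1 - \ddd'$ for some $\ddd' > 0$, an elementary computation yields
$$ \|u_k\|_H^2 \asymp \frac{(k+\alpha)^{-1-2\ddd'}}{\log(k+\alpha)}. $$
Consequently $\sum_k \|u_k\|_H^2 < \infty$ while $\sum_k \|u_k\|_H = \infty$. This pair of conditions is precisely the hypothesis of the Pechersky rearrangement theorem in a complex Hilbert space, which guarantees that the set $\{\sum_{k>N_0} a_k u_k : |a_k|=1\}$ of convergent unimodular sums is dense in $H$.

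Applying this density statement with target $v = p \in H$ produces an infinite unimodular series converging to $p$ in $H$-norm; truncating at a sufficiently large $N_1$ and converting $L^2$ approximation on $\M'$ back to uniform approximation on $\M$ via the first reduction yields the lemma. The proof of the Pechersky density statement itself is constructive: one chooses $a_k$ greedily at each step so as to minimize the residual norm $\|p - \sum_{k=N_0+1}^N a_k u_k\|_H$, and the non-absolute-summability $\sum \|u_k\|_H = \infty$ is exactly what prevents the residual from stalling at a positive value (it is also what fails on the line $\Re(s)=1$, so the strip $D$ is the natural domain). I expect this Pechersky step to be the main technical obstacle, but it is classical and the complex-scalar version reduces to the real $\pm 1$ version either by a phase rotation argument or by separately handling the real and imaginary parts.
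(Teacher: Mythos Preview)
Your approach via Pechersky is exactly what the paper invokes (it simply cites the Pechersky rearrangement theorem and remarks that dropping finitely many initial terms changes nothing). However, the hypotheses you state are not the right ones. The two conditions $\sum_k \|u_k\|_H^2 < \infty$ and $\sum_k \|u_k\|_H = \infty$ do \emph{not} by themselves force the set of unimodular sums to be dense in all of $H$: take $u_k = c_k e$ for a single fixed vector $e$ and scalars $c_k$ satisfying both conditions for an obvious counterexample. The correct hypothesis in the Pechersky/Voronin lemma is that for every nonzero $v \in H$ one has $\sum_k |\langle u_k, v\rangle| = \infty$ (equivalently, every tail $\{u_k : k>N\}$ has dense linear span). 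In your greedy description this is exactly what stops the residual from becoming orthogonal to all remaining $u_k$ and stalling at a positive norm.

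For $u_k(s) = (k+\alpha)^{-s}$ on $\M'$ this condition does hold, but verifying it is a separate analytic step, not a size estimate: one represents a nonzero functional on the Bergman space by a kernel and shows, via a Mellin-type argument using the structure of the exponents $\log(k+\alpha)$, that no such functional can annihilate all the $u_k$ with summable inner products. That is the genuine content of the lemma beyond quoting Pechersky; once you add this density verification, your outline is complete and matches the paper.
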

For $N_0=0$ this is exactly the Dirichlet polynomial approximation lemma we need to prove the Voronin universality theorem for the Hurwitz zeta-function with a transcendental parameter. It is a consequence of the celebrated Pechersky rearrangement theorem \cite{Pech} and the proof is the same for a general $N_0$ as for $N_0=0$ since throwing away a finite number of terms do not matter. Let us define
\begin{gather}
 \zeta_{\va}^{[N]}(\vs;\val)=\sum_{0 \leq k_1<k_2 \leq  N} \prod_{j=1}^2 \frac{a_j(k_j+\alpha_j)}{(k_j+\alpha_j)^{s_j}}
\end{gather}
 In two variables,  instead of Lemma \ref{onevar}, we need the following lemma.
\begin{lem} \label{prelem}
 Suppose that $p$ is a polynomial in two complex variables. Then given any $\varepsilon>0$ there exist some integer $N$ and coefficients $|a_j(k+\alpha_j)|=1$ for $0 \leq k \leq N$ and $j=1,2$  such that
 \[ 
  \max_{\vs \in \M^2} \abs {  \zeta_{\va}^{[N]}(\vs;\val)  -p(\vs)}<\varepsilon.
\] 
\end{lem}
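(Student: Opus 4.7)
The plan is to iterate the one-variable Dirichlet polynomial approximation Lemma~\ref{onevar} over a block decomposition of the summation range $[0,N]$. First I reduce to a single product: an arbitrary polynomial can be written as a finite sum $p(s_1,s_2)=\sum_{\ell=1}^L p_\ell^{(1)}(s_1)\,p_\ell^{(2)}(s_2)$, and running the construction below in parallel over $L$ disjoint stretches of $[0,N]$ reduces the general case to the case $p(s_1,s_2)=p_1(s_1)\,q(s_2)$, which I now treat.

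Partition $[0,N]$ into $I_1=[0,N_1]$ and $I_2=[N_1+1,N]$. Applying Lemma~\ref{onevar} with $N_0=0$, choose $a_1(k+\alpha_1)$ on $I_1$ so that $A_1(s_1):=\sum_{k\in I_1}a_1(k+\alpha_1)/(k+\alpha_1)^{s_1}$ is uniformly $\varepsilon$-close to $p_1(s_1)$ on $\M$; applying it again with $N_0=N_1$, choose $a_2(k+\alpha_2)$ on $I_2$ so that $A_2(s_2):=\sum_{k\in I_2}a_2(k+\alpha_2)/(k+\alpha_2)^{s_2}$ is uniformly $\varepsilon$-close to $q(s_2)$ on $\M$. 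The main contribution to $\zeta_{\va}^{[N]}(\vs;\val)$ comes from $(k_1,k_2)\in I_1\times I_2$, where $k_1\le N_1<k_2$ makes $k_1<k_2$ automatic; this contribution factors as $A_1(s_1)\,A_2(s_2)\approx p_1(s_1)\,q(s_2)=p(\vs)$.

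It remains to control two triangular leftovers: $T_1$, the contribution from pairs $(k_1,k_2)\in I_1\times I_1$ with $k_1<k_2$, and $T_2$, the contribution from pairs $(k_1,k_2)\in I_2\times I_2$ with $k_1<k_2$. These still depend on the yet-unchosen values of $a_2$ on $I_1$ and of $a_1$ on $I_2$. Since both $\alpha_j$ are transcendental, the constant $A$ in Lemma~\ref{LE3} may be taken to be $0$, so Lemma~\ref{onevar} applies with the zero polynomial. I subdivide $I_1$ into many short sub-blocks and apply Lemma~\ref{onevar} with $p\equiv 0$ on each to construct $a_2$ on $I_1$ so that the partial sums $U_k(s_2):=\sum_{j=1}^{k}a_2(j+\alpha_2)/(j+\alpha_2)^{s_2}$ are uniformly small on $\M$ for every $k\in I_1$. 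Writing $T_1=\sum_{k_2=1}^{N_1}(a_2(k_2+\alpha_2)/(k_2+\alpha_2)^{s_2})\,P(s_1,k_2)$ with $P(s_1,k_2)=\sum_{k_1<k_2}a_1(k_1+\alpha_1)/(k_1+\alpha_1)^{s_1}$, Abel summation in $k_2$ converts $T_1$ into a linear expression in the $U_k(s_2)$'s and the atoms $a_1(k+\alpha_1)/(k+\alpha_1)^{s_1}$, which is then uniformly small on $\M^2$. The symmetric construction for $a_1$ on $I_2$ handles $T_2$.

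The principal technical obstacle is this last step: the trivial absolute-value bound for $T_1$ grows like $N_1^{2-\Re(s_1)-\Re(s_2)}$ throughout $\M^2$, so the smallness of $T_1$ must be extracted purely from cancellation in the coefficients. One therefore has to balance the size of the filler sub-blocks and the per-block approximation errors in Lemma~\ref{onevar} so that the accumulated errors in the iterated zero-approximations still produce a uniform bound on $U_k$ small enough to beat the growth of the inner $a_1$-sum after Abel summation. The transcendence of the $\alpha_j$ enters precisely here, through the freedom to approximate $p\equiv 0$ in Lemma~\ref{onevar}; the rational case is substantially harder because this freedom is lost and one must substitute the full Lemma~\ref{LE3} together with the $L$-function zero-free input described there.
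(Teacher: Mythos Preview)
Your reduction step is where the argument breaks. You write that ``running the construction below in parallel over $L$ disjoint stretches of $[0,N]$ reduces the general case to the case $p(s_1,s_2)=p_1(s_1)\,q(s_2)$'', but this ignores the cross terms between stretches. If $J_1,\dots,J_L$ are your stretches and $A_j^{(\ell)}(s_j)=\sum_{k\in J_\ell}a_j(k+\alpha_j)(k+\alpha_j)^{-s_j}$, then the full double sum picks up $\sum_{\ell_1<\ell_2}A_1^{(\ell_1)}(s_1)\,A_2^{(\ell_2)}(s_2)$ in addition to the diagonal $\sum_\ell$ you want. In your single-product construction on $J_\ell$ you only control $a_1$ on $I_1^{(\ell)}$ (to hit $p_\ell^{(1)}$) and $a_2$ on $I_2^{(\ell)}$ (to hit $p_\ell^{(2)}$); the totals $A_1^{(\ell)},A_2^{(\ell)}$ over the whole stretch are left unspecified, so these cross terms are genuinely uncontrolled and will not cancel. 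This is exactly the obstruction the paper's combinatorial Lemma~\ref{pla} is built to remove: its decomposition forces $q_{j,m}(s_j)=0$ whenever $j\neq j_m$ and, crucially, $\sum_m q_{2,m}(s_2)=0$, so that the cross terms with the initial block vanish and the off-diagonal block products reassemble to $q(\vs)$ by design.

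A second, smaller gap: you claim you can choose $a_2$ on $I_1$ so that the partial sums $U_k(s_2)$ are uniformly small on $\M$ for \emph{every} $k\in I_1$. For small $k$ this is impossible---$U_k$ is a fixed finite sum of unimodular-weighted terms and cannot be made uniformly small on a two-dimensional set. Lemma~\ref{onevar} only controls the sum at the \emph{end} of a block, not all intermediate partial sums, so your Abel-summation bound for $T_1$ does not close. The paper sidesteps this by taking $a_j(k+\alpha_j)=(-1)^k$ on the initial block and on every passive block; this makes the relevant partial sums \emph{bounded} (not small), and the resulting $O(1)$ contribution is absorbed into the auxiliary polynomial $p_2$ which is then subtracted off before invoking Lemma~\ref{pla} on $q=p-p_2$.
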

More generally, for rational parameters we need the corresponding result when the coefficients $a_j: \N +\alpha_j \to \C$ are  completely multiplicative unimodular functions of type $(N_0,\chi)$ for some $N_0>0$, see our fundamental Lemma (Lemma \ref{LE4}, p. \pageref{LE4}) . 
The case of algebraic irrational parameters are currently far out of reach.

Before we sketch the proof of Lemma \ref{prelem} we may ask the question if we can find some corresponding result when the Dirichlet polynomial is replaced by a polynomial, and indeed we can and it is an important ingredient for the proof of the Dirichlet polynomial analogue
\begin{lem} \label{pla}
 Let $q(s_1,s_2)$ be a polynomial in two complex variables. Then there exists an integer $M \geq 0$, a sequence $\{j_m\}_{m=1}^M$ with $j_m \in \{1,2\}$ and monomials $q_{j,m}(s_j)$ for $j=1,2$ and $1 \leq m \leq M$  such that
 \begin{gather}
  \sum_{1 \leq m_1 < m_2  \leq M}  q_{1,m_1}(s_1) q_{2,m_2}(s_2)=q(s_1,s_2), \label{cd1}
 \\ \intertext{and such that $q_{j,m}(s_j)=0$ if $j \neq j_m$, and the following condition holds}
 \sum_{1 \leq m_2 \leq M} q_{2,m_2}(s_2)=0. \label{cd2}
\end{gather}
\end{lem}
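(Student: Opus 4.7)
The plan is to reduce the statement to the case of a single monomial and then combine such decompositions additively, exploiting the fact that condition \eqref{cd2} forces the cross terms of any concatenation to vanish.

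First I would construct an explicit decomposition for the basic building block, a single monomial $c\,s_1^i s_2^j$ with $c\neq 0$. I claim that $M=3$ with $(j_1,j_2,j_3)=(2,1,2)$ works, taking
\[
  q_{2,1}(s_2)=-c\,s_2^j,\qquad q_{1,2}(s_1)=s_1^i,\qquad q_{2,3}(s_2)=c\,s_2^j,
\]
with the remaining $q_{j,m}$ equal to $0$ as forced by $j\neq j_m$. Among the three pairs $(m_1,m_2)$ with $m_1<m_2$, only $(2,3)$ has $j_{m_1}=1$ and $j_{m_2}=2$ and therefore gives a nonzero contribution, which equals exactly $c\,s_1^i s_2^j$; meanwhile $q_{2,1}+q_{2,2}+q_{2,3}=0$, so \eqref{cd2} is satisfied. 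The zero polynomial is handled trivially by $M=0$.

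Next I would establish a concatenation principle: if $q^{(a)}$ and $q^{(b)}$ both admit decompositions of the required form, of sizes $M_a$ and $M_b$, then $q^{(a)}+q^{(b)}$ admits one of size $M_a+M_b$, obtained by listing the indices and monomials of the $(a)$-decomposition first and then those of the $(b)$-decomposition (with indices shifted by $M_a$). The pairs internal to each block reproduce $q^{(a)}$ and $q^{(b)}$, while the cross pairs with $m_1\leq M_a<m_2$ factor as
\[
  \Bigl(\sum_{m_1\leq M_a} q^{(a)}_{1,m_1}(s_1)\Bigr)\Bigl(\sum_{M_a<m_2\leq M_a+M_b} q^{(b)}_{2,m_2-M_a}(s_2)\Bigr),
\]
and the right-hand factor vanishes by condition \eqref{cd2} applied to the $(b)$-block. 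Condition \eqref{cd2} for the combined decomposition is then immediate, since it is just the sum of the two separate cancellations.

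Finally, writing $q(s_1,s_2)=\sum_{(i,j)}c_{ij}s_1^i s_2^j$ and applying the concatenation principle iteratively to the monomial blocks above yields the required decomposition for $q$. The only point requiring care in the whole argument is the cross-term cancellation in the concatenation step; this is not a genuine obstacle, but rather the very mechanism that \eqref{cd2} is designed to provide, and it is precisely what will let the Dirichlet-polynomial analogue Lemma~\ref{prelem} absorb the $k_1<k_2$ restriction in $\zeta_{\va}^{[N]}$ when the monomials are replaced by suitable Dirichlet-series building blocks.
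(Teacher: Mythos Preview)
Your proof is correct and follows essentially the same approach as the paper: decompose $q$ into monomials, build a small block for each monomial in which the $q_{2,m}$ entries cancel, and concatenate the blocks, using \eqref{cd2} on each later block to kill the cross terms. The paper's example (Table~\ref{exmpl}) uses four positions per monomial rather than your three, but that extra slot (the $-q_{1,\cdot}$ cancellation) is only there to secure the additional partial-sum conditions of the more general Lemma~\ref{LE13}; for Lemma~\ref{pla} as stated your $3$-block suffices and is the same mechanism.
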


While we will not sketch the proof of this result which is proved in a more general version in Lemma \ref{LE13} we  illustrate it by the  example given in Table \ref{exmpl} 
\begin{table}[h]
\begin{center}
\begin{tabular}{|c|c|c|c|c|c|c|c|c|c|c|c|}
\hline
$m$              & $1$     & 2  & 3     & 4        & 5 & 6 & 7 & 8 & 9  & $10$ & $11$     \\ \hline 
$q_{1,m}(s_1)$ & 0 & $1$ & 0 & $-1$     &  $0$  & $s_1$ &    $0$      & $-s_1$ & $0$  & $s_1^2$ & $0$ \\[2pt]  \hline
$q_{2,m}(s_2)$ & $-s_2$ & $0$ & $s_2$  & $0$ & $ - 1$ & $0$ & $ 1 $  & $0$ & $ -s_2^2$ & $0$ & $s_2^2$ \\[2pt]  \hline
\end{tabular} 
\caption{$q(s_1,s_2)=s_2+s_1+s_1^2 s_2^2$}
\label{exmpl}
\end{center}
\end{table}
and we leave it as an exercise to the reader to find a proof of the general result\footnote{or (s)he might look up Lemma \ref{LE13}.} by studying this example. We now proceed to sketch the proof of Lemma \ref{prelem}. First choose $\NC_0$ sufficiently large. For small values of $0 \leq k \leq \NC_0$ we may define $a_{j}(k+\alpha_j)=(-1)^{k}$ for $j=1,2$.  Then it follows that
\begin{gather}
     \sum_{0 \leq k_1<k_2 \leq \NC_0}  \prod_{j=1}^2 \frac{a_j(k_j+\alpha_j)}   {(k_j+\alpha_j)^{s_j}}     \approx p_2(s_1,s_2), \qquad (s_1,s_2) \in \M^2,
\end{gather}
for some polynomial $p_2$. To prove Lemma \ref{prelem} we need to find  $a_j(k+\alpha_j)$ for $\NC_0<k \leq N$ such that 
\begin{gather}
   \left \{ \sum_{k_1 \leq \NC_0}  \sum_{ \NC_0<k_2 \leq N}  + \sum_{\NC_0 < k_1<k_2 \leq N} \right \}  \prod_{j=1}^2 \frac{a_j(k_j+\alpha_j)}{ (k_j+\alpha_j)^{s_j}}   \approx q(s_1,s_2), 
\end{gather}
where $q(s_1,s_2)=p(s_1,s_2)-p_2(s_1,s_2)$. For this it is sufficient to find $a_j(k+\alpha_j)$ for $\NC_0 < k \leq N$ so that 
\begin{gather} \label{rer}
 \sum_{\NC_0 < k_1<k_2 \leq N}   \prod_{j=1}^2 \frac{a_j(k_j+\alpha_j)} {(k_j+\alpha_j)^{s_j}}   \approx q(s_1,s_2),
\end{gather}
and
\begin{gather} \label{rer2}
 \sum_{\NC_0 <k_2 \leq N} \frac {a_2(k_2+\alpha_2)} {(k_2+\alpha_2)^{s_2}} \approx 0, \qquad \abs{\sum_{0 \leq k_1\leq  \NC_0}  \frac{a_j(k_1+\alpha_j)}{(k_1+\alpha_j)^{s_1}}} \ll 1.
\end{gather}
To prove \eqref{rer} and \eqref{rer2} we first use Lemma \ref{pla} on the polynomial $q(s_1,s_2)$ and after that recursively for $m=1,\ldots,M$ use  Lemma \ref{onevar}   with $N_0=\NC_{m-1}$, $\alpha=\alpha_{j_m}$ and $p(s)=q_{j_m,m}(s)$ and define $\NC_m$ by the $N_1$ that comes from that application of Lemma \ref{onevar}  so that 
\begin{gather}  \label{at3a}
 \sum_{\NC_{m-1}<k \leq \NC_{m} } \frac{a_{j_m}(k+\alpha_{j_m})} {(k+\alpha_{j_m})^{s_j}} \approx q_{{j_m},m}(s_j).  
\end{gather}
If we furthermore define 
 \begin{gather} 
 \label{ttt}
  a_j(k+\alpha_j) =(-1)^k, \qquad (\NC_{m-1} < k \leq \NC_{m}, \, j \neq j_m),
\end{gather}
it  follows that \eqref{at3a} holds also for $j \neq j_m$. Let us finally define $N=\NC_{M}$. The approximation \eqref{rer2} follows from \eqref{cd2} together with \eqref{at3a}. We now rewrite the sum in \eqref{rer} as
  \begin{multline} \label{yu2}
  \sum_{\NC_0 < k_1<k_2 \leq N} * \,  = 
   \sum_{1 \leq m_1 <m_2 \leq M} \sum_{\NC_{m_1-1} < k_1 \leq \NC_{m_1}}  \sum_{\NC_{m_2-1} < k_2 \leq \NC_{m_2}} * \, \,\,   + \, \sum_{m=1}^{M} \sum_{\NC_{m-1} < k_1<k_2 \leq \NC_{m}} *
\end{multline} 
It follows that the first sum on the right hand side is approximately equal to $q(s_1,s_2)$ by \eqref{cd1} and \eqref{at3a}. Equation \eqref{ttt} is sufficient to show that  the second sum of the right hand side of \eqref{yu2} is approximately zero. 
If $q$ is given in Table \ref{exmpl} we can illustrate the summation by the following table:
\begin{center}
\begin{tabular}{|c|c|c|c|c|c|c|c|c|c|c|c|c|}
\hline
$m$              &  $1$  & $2$     & $3$        & $4$ & $5$  & $6$     & $7$  & $8$     & $9$        & $10$ & $11$      \\ \hline 
$ \sum\limits_{\NC_{m-1} < k \leq \NC_{m}} \frac{a_1(k+\alpha_1)}{(k+\alpha_2)^{s_1} } \approx $ &  0 & 1  & 0   &  $-1$  & 0 & $s_1$ &    0    &$-s_1$ &0  & $s_1^2$ & 0 \\ \hline
$\sum\limits_{\NC_{m-1} <k  \leq \NC_{m}} \frac{a_2(k+\alpha_2)}{(k+\alpha_2)^{s_2}} \approx $  & $-s_2$ & 0 & $s_2$ & 0 & $-1$ & 0 & $1$ & 0     & $-s_2^2$ & 0 & $s_2^2$  \\ \hline
\end{tabular} 
\end{center}

\subsection{Proceeding to Voronin universality}
We define the truncation of the double Hurwitz zeta-function series as
$$ \zeta^{[N]}(s_1,s_2;\alpha_1,\alpha_2) =\sum_{0 \leq k_1<k_2 \leq N} (k_1+\alpha_1)^{-s}(k_2+\alpha_2)^{-s}$$
A consequence of
Kronecker's approximation theorem and an equidistribution theorem of Weyl; following classical one-dimensional universality ideas; is the following result:
\begin{lem} \label{LE6simple}
 Let $\val=(\alpha_1,\alpha_2)$ where $\alpha_1,\alpha_2>0$ are transcendental and let $p$ be a polynomial in two complex variables. Then
 \begin{gather*}
     \liminf_{N \to \infty} \liminf_{T \to \infty}  \frac 1 {T^2} \operatorname{meas} \left \{\vt \in [0,T]^2: \max_{\vs \in \M^2} \abs{\zeta^{[N]}(\vs+i\vt;\val)-\zeta_\va^{[N]}(\vs;\val)} < \varepsilon  \right \} >0.
  \end{gather*}
	\end{lem}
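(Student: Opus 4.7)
The approach is a joint Weyl-equidistribution argument, adapting the classical one-variable proof by treating $t_1$ and $t_2$ as independent rotations on disjoint finite-dimensional tori.

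First I would reduce sup-norm control on $\M^2$ to pointwise control of the unimodular factors. Writing the difference as
$$\zeta^{[N]}(\vs+i\vt;\val) - \zeta_\va^{[N]}(\vs;\val) = \sum_{0 \leq k_1 < k_2 \leq N} \frac{(k_1+\alpha_1)^{-it_1}(k_2+\alpha_2)^{-it_2} - a_1(k_1+\alpha_1)a_2(k_2+\alpha_2)}{(k_1+\alpha_1)^{s_1}(k_2+\alpha_2)^{s_2}},$$
the numerator, being the difference of two products of unit-modulus quantities, has absolute value at most $2 \max_{j \in \{1,2\}} \max_{k \leq N} |(k+\alpha_j)^{-it_j} - a_j(k+\alpha_j)|$. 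Bounding each denominator on $\M^2$ by $(k_j+\alpha_j)^{-(1/2+\ddd)}$, the sup-norm of the difference is at most $2 \delta\, C_N(\ddd)$ whenever $|(k+\alpha_j)^{-it_j} - a_j(k+\alpha_j)| < \delta$ for all $k \leq N$ and $j \in \{1,2\}$, where $C_N(\ddd) := \sum_{0 \leq k_1 < k_2 \leq N}(k_1+\alpha_1)^{-(1/2+\ddd)}(k_2+\alpha_2)^{-(1/2+\ddd)} < \infty$.

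Next I would apply Weyl's equidistribution theorem. Transcendence of $\alpha_j > 0$ implies $\{\log(k+\alpha_j) : 0 \leq k \leq N\}$ is $\Q$-linearly independent, since any nontrivial rational relation would, after clearing denominators and exponentiating, furnish a nontrivial polynomial identity satisfied by $\alpha_j$. Weyl's criterion then gives equidistribution of $t \mapsto ((k+\alpha_j)^{-it})_{k=0}^N$ on the torus $\mathbb{T}^{N+1}$. Since $t_1$ and $t_2$ act only on their respective blocks, the combined map $(t_1,t_2) \mapsto \bigl(((k+\alpha_1)^{-it_1})_{k \leq N},\, ((k+\alpha_2)^{-it_2})_{k \leq N}\bigr)$ equidistributes on $\mathbb{T}^{N+1} \times \mathbb{T}^{N+1}$. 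Choosing $\delta := \varepsilon / (2 C_N(\ddd))$, the set of $\vt \in [0,T]^2$ for which the pointwise estimate holds has asymptotic density equal to the Haar volume of the target product neighborhood, which is a strictly positive constant depending only on $N$ and $\delta$.

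Combining these pieces gives $\liminf_{T \to \infty}T^{-2}\meas\{\dots\}>0$ for each fixed $N$. The main obstacle I expect is the outer $\liminf_{N \to \infty}$: the Haar volume of the target neighborhood decays as $N$ grows, because the ambient torus has dimension $2(N+1)$ while $\delta$ itself shrinks polynomially with $N$. In practice this lemma is applied for a specific $N$ provided by Lemma~\ref{prelem}, so positivity of the inner liminf for each $N$ is the essential content; the outer liminf should be read in that spirit, or alternatively, the structured choice of $\va$ furnished by Lemma~\ref{prelem}, in which $a_j(k+\alpha_j) = (-1)^k$ on large blocks, should be exploited to bound the density away from zero uniformly in $N$.
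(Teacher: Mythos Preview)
Your reduction from the sup-norm on $\M^2$ to pointwise control of the phases is clean and correct, and the Weyl equidistribution step gives positivity of the inner $\liminf_{T\to\infty}$ for every fixed $N$. The gap you flag yourself is genuine and is not a technicality: your density is of order $\delta^{2(N+1)}$ with $\delta=\varepsilon/(2C_N(\ddd))$ and $C_N(\ddd)\asymp N^{1-2\ddd}$, so the inner liminf tends to $0$ and the outer $\liminf_{N\to\infty}$ is not established. Your resolution (a) does not rescue the argument, because in the application (compare the general proof of Theorem~\ref{TH1}) one must take $N$ large to make the mean-square error from Lemma~\ref{LE8simple} small; positivity uniform in large $N$ is exactly what is used.

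The paper's route (the proof of the general Lemma~\ref{LE6}, Section~\ref{secstar}) is different in a decisive way: it does not attempt to control all $2(N+1)$ phases. One fixes integers $N_1,N_2$ independent of the running $N$, restricts to the product set $\BC_{{\bf N},\Pri,T}(\dddd)$ where only the phases $(k+\alpha_j)^{-it_j}$ with $k\le N_j$ are pinned down, and computes the $L^2$ average of $|\zeta_{\ett}^{[N]}(\vs+i\vt;\val)-\zeta_{\va}^{[N]}(\vs;\val)|^2$ over that set (Lemma~\ref{lele11}). The Weyl limit of this average equals $\int_K|\zeta_\va^{[N]}-\zeta_{\va,N}^{{\bf 1},{\bf N},\Pri}|^2\,d\vs+\Delta_N$. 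In the fully transcendental case $\cB_j(N_j)=\{1\}$, so $\Delta_N=0$, and $\zeta_{\va,N}^{{\bf 1},{\bf N},\Pri}$ is simply a fixed truncation of $\zeta_\va^{[N]}$; the remaining term is then small uniformly in $N$ precisely because $\zeta_\va^{[N]}$ converges as $N\to\infty$ (Lemma~\ref{EEEV}). That convergence is where the structure of $\va$ enters: in the general statement the $a_j$ are of type $(N_0,\chi_j)$, not arbitrary unimodular. Your suggestion (b) is therefore the right instinct, but the mechanism is an $L^2$ computation over a \emph{fixed}-dimensional torus together with convergence of the twisted series, not a sup-norm bound on the growing torus. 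With arbitrary unimodular $\va$ the outer $\liminf$ genuinely need not be positive; the sketch-section statement should be read with $\va$ satisfying the conditions of Lemma~\ref{LE4}.
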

For the full proof of the general case, Lemma \ref{LE6} see Section \ref{secstar}. A simple fact is that we can approximate the one variable Hurwitz zeta-function by some Dirichlet polynomial truncation
$$
  \zeta(\sigma+it;\alpha) \approx \sum_{n=1}^{\lfloor T \rfloor} (n+\alpha)^{-\sigma-it} \qquad (T^{1-\ddddd}<t<T, \, \, \sigma \geq 1/2)
$$
for some $\ddddd>0$.  Similarly as in the one variable case where  we will be able to prove some corresponding $n$-variable approximation which in two variables is roughly
\begin{gather}
  \zeta(s_1,s_2;\alpha_1,\alpha_2) \approx \zeta^{[T]} (s_1,s_2;\alpha_1,\alpha_2) \qquad (T^{1-\ddddd}<\Im(s_j)<T, \, \Re(s_j) \geq 1/2) \qquad \label{approxsimple} 
\end{gather}
The precise version of this result uses a smooth truncation, rather than a sharp one and can be found as Lemma \ref{LE7}.

By classical methods, squaring and treating the non-diagonal part in an elementary way utilizing the fact that the region lies strictly to the right of the half line in each variable   we can  obtain (for the general version, see Lemma \ref{LE8})
\begin{lem} \label{LE8simple}  
 Let $\ddd,R$ and $\M$ be defined by \eqref{Kdef}  and let $0<\ddddd<1$.  Then there exist some $C>0$ such that if $1 \leq N \leq T$ then
\begin{gather}
    \frac 1 {T^2}\int_{\M^2} \int_{[T^{\ddddd},T]^2} \abs{\zeta^{[N]}(\vs +i\vt;\val)-\zeta^{[T]}(\vs+i\vt;\val)}^2 d\vt d \vs \leq C  N^{-\ddd}. 
\end{gather}
\end{lem}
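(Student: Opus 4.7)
The plan is to expand $\zeta^{[T]}-\zeta^{[N]}$ as a double Dirichlet polynomial and iterate the classical mean value inequality for Dirichlet polynomials (Hilbert / Montgomery--Vaughan) separately in $t_1$ and $t_2$, using only the fact that $\sigma_1,\sigma_2\in[\tfrac12+\ddd,1-\ddd]$ lie strictly to the right of the critical line. Writing
\begin{gather*}
\zeta^{[T]}(\vs;\val)-\zeta^{[N]}(\vs;\val)=\sum_{N<k_2\leq T}\frac{f_{k_2}(s_1)}{(k_2+\alpha_2)^{s_2}},\qquad f_{k_2}(s_1):=\sum_{0\leq k_1<k_2}(k_1+\alpha_1)^{-s_1},
\end{gather*}
the difference is a Dirichlet polynomial in $t_2$ whose coefficients are themselves Dirichlet polynomials in $t_1$.

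First I would apply the mean-value theorem for Dirichlet polynomials in $t_2$; since $k_2\leq T$, the Montgomery--Vaughan error term $O(k_2)$ is absorbed into $T$, giving, for each fixed $s_1$,
\begin{gather*}
\int_0^T\abs{\zeta^{[T]}-\zeta^{[N]}}^2 dt_2 \ll T\sum_{N<k_2\leq T}\abs{f_{k_2}(s_1)}^2(k_2+\alpha_2)^{-2\sigma_2}.
\end{gather*}
Applying the same inequality to $f_{k_2}$, using $\sigma_1>1/2$ to bound $\sum_{k_1}(k_1+\alpha_1)^{-2\sigma_1}$ by an $\ddd,\alpha_1$-dependent constant and $\sigma_1<1$ to bound $\sum_{k_1<k_2}k_1(k_1+\alpha_1)^{-2\sigma_1}\ll k_2^{2-2\sigma_1}$, yields $\int_0^T\abs{f_{k_2}(s_1)}^2 dt_1\ll T+k_2^{2-2\sigma_1}$ uniformly in $\sigma_1\in[\tfrac12+\ddd,1-\ddd]$. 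Combining these and enlarging $[T^{1-\ddddd},T]^2$ to $[0,T]^2$ gives, uniformly in $\vs\in\M^2$,
\begin{gather*}
\int_{[T^{1-\ddddd},T]^2}\abs{\zeta^{[T]}-\zeta^{[N]}}^2 dt_1 dt_2 \ll T^2\sum_{N<k_2\leq T}(k_2+\alpha_2)^{-2\sigma_2}+T\sum_{N<k_2\leq T}k_2^{2-2\sigma_1-2\sigma_2}.
\end{gather*}
Integrating over the compact set $\M^2$ only costs a bounded factor, so after dividing by $T^2$ it suffices to show
\begin{gather*}
\sum_{N<k_2\leq T}(k_2+\alpha_2)^{-2\sigma_2}\ll N^{-\ddd}\quad\text{and}\quad \frac{1}{T}\sum_{N<k_2\leq T}k_2^{2-2\sigma_1-2\sigma_2}\ll N^{-\ddd}
\end{gather*}
uniformly for $\sigma_1,\sigma_2\in[\tfrac12+\ddd,1-\ddd]$.

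The first sum is $\ll N^{1-2\sigma_2}\ll N^{-2\ddd}$ uniformly, which is more than enough. For the second I would split on the sign of $2-2\sigma_1-2\sigma_2$: if $\sigma_1+\sigma_2>3/2$ the sum converges with bound $\ll N^{3-2\sigma_1-2\sigma_2}\ll N^{-2\ddd}$, which is well within the required $O(T\cdot N^{-\ddd})$; if $\sigma_1+\sigma_2\leq 3/2$ the sum is $\ll T^{3-2\sigma_1-2\sigma_2}+\log T$, so $T^{-1}$ times it is $\ll T^{2-2\sigma_1-2\sigma_2}\leq T^{-4\ddd}\leq T^{-\ddd}\leq N^{-\ddd}$, where the last inequalities use $T\geq 1$ and $N\leq T$. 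The hard part will be precisely this last regime where $\sigma_1+\sigma_2$ is close to $1$, since the iterated mean square is only borderline convergent there; the hypothesis $N\leq T$ is exactly what converts the $T$-power saving (from $\sigma_j>1/2$) into the required saving in $N$. A sharper off-diagonal analysis via $|\log((l+\alpha)/(k+\alpha))|^{-1}\ll\max(k,l)/|l-k|$ would give better exponents, but the elementary iterated mean square above already suffices for $N^{-\ddd}$.
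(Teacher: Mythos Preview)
Your approach is correct and is exactly what the paper sketches: the paper gives no detailed proof here, stating only that the lemma follows ``by classical methods, squaring and treating the non-diagonal part in an elementary way utilizing the fact that the region lies strictly to the right of the half line in each variable,'' which is precisely your iterated Montgomery--Vaughan argument. One small uniformity issue worth fixing: in your case $\sigma_1+\sigma_2>3/2$, the bound $\sum_{k>N}k^{2-2\sigma_1-2\sigma_2}\ll N^{3-2\sigma_1-2\sigma_2}$ carries an implied constant $\asymp(2\sigma_1+2\sigma_2-3)^{-1}$ that blows up at the boundary of the split; the cleanest repair is to drop the case split and use $k^{2-2\sigma_1-2\sigma_2}\le k^{-4\ddd}$ uniformly (valid since $\sigma_1+\sigma_2\ge 1+2\ddd$ on $\M^2$), which gives $T^{-1}\sum_{N<k\le T}k^{-4\ddd}\ll_\ddd T^{-4\ddd}\le N^{-\ddd}$ directly.
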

The proof of Theorem 1 in the simplest case now follows from Lemma \ref{prelem}, Lemma \ref{LE6simple}, Lemma \ref{LE8simple} and \eqref{approxsimple}. For details see the proof at the end of the next section.

\section{The proof of Theorem \ref{TH1} - The general case}

\definecolor{zzttqq}{rgb}{0.27,0.27,0.27}
\definecolor{cqcqcq}{rgb}{0.75,0.75,0.75}

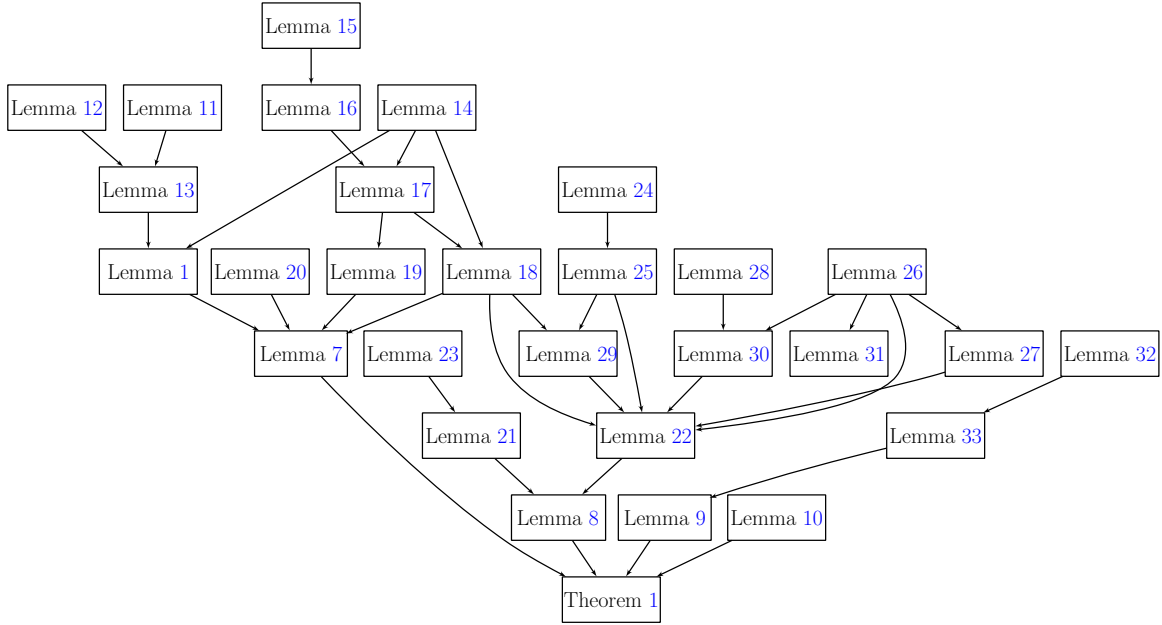
\begin{figure}[h]
\resizebox{\linewidth}{!}{\begin{tikzpicture}[>=latex',line join=bevel,]
  \pgfsetlinewidth{1bp}
\pgfsetcolor{black}
  \draw [->] (505.02bp,64.785bp) .. controls (500.86bp,58.535bp) and (496.05bp,51.332bp)  .. (485.96bp,36.194bp);
  \draw [->] (110.5bp,324.78bp) .. controls (110.5bp,319.09bp) and (110.5bp,312.61bp)  .. (110.5bp,296.19bp);
  \draw [->] (477.24bp,259.64bp) .. controls (479.96bp,251.08bp) and (483.12bp,240.56bp)  .. (485.5bp,231.0bp) .. controls (490.0bp,212.93bp) and (494.01bp,192.36bp)  .. (498.7bp,166.07bp);
  \draw [->] (736.89bp,198.34bp) .. controls (733.06bp,197.14bp) and (729.22bp,196.01bp)  .. (725.5bp,195.0bp) .. controls (665.84bp,178.8bp) and (595.88bp,165.29bp)  .. (540.29bp,155.44bp);
  \draw [->] (396.9bp,259.78bp) .. controls (403.11bp,253.26bp) and (410.32bp,245.69bp)  .. (424.12bp,231.19bp);
  \draw [->] (254.35bp,389.78bp) .. controls (260.36bp,383.26bp) and (267.32bp,375.69bp)  .. (280.67bp,361.19bp);
  \draw [->] (124.31bp,389.78bp) .. controls (122.55bp,383.95bp) and (120.54bp,377.29bp)  .. (115.69bp,361.19bp);
  \draw [->] (58.174bp,389.78bp) .. controls (65.873bp,383.05bp) and (74.836bp,375.21bp)  .. (90.849bp,361.19bp);
  \draw [->] (690.73bp,138.0bp) .. controls (657.99bp,130.09bp) and (609.74bp,117.68bp)  .. (551.67bp,98.599bp);
  \draw [->] (207.24bp,259.78bp) .. controls (210.28bp,253.81bp) and (213.75bp,246.97bp)  .. (221.77bp,231.19bp);
  \draw [->] (341.87bp,260.78bp) .. controls (321.47bp,252.21bp) and (296.33bp,241.65bp)  .. (265.66bp,228.77bp);
  \draw [->] (650.76bp,259.95bp) .. controls (636.5bp,252.53bp) and (619.61bp,243.74bp)  .. (595.27bp,231.06bp);
  \draw [->] (482.65bp,129.78bp) .. controls (475.34bp,123.12bp) and (466.86bp,115.37bp)  .. (451.33bp,101.19bp);
  \draw [->] (562.5bp,259.78bp) .. controls (562.5bp,254.09bp) and (562.5bp,247.61bp)  .. (562.5bp,231.19bp);
  \draw [->] (331.07bp,194.78bp) .. controls (335.63bp,188.53bp) and (340.89bp,181.33bp)  .. (351.95bp,166.19bp);
  \draw [->] (463.03bp,259.78bp) .. controls (460.09bp,253.81bp) and (456.72bp,246.97bp)  .. (448.96bp,231.19bp);
  \draw [->] (142.97bp,259.95bp) .. controls (157.11bp,252.53bp) and (173.86bp,243.74bp)  .. (198.0bp,231.06bp);
  \draw [->] (443.98bp,64.785bp) .. controls (448.14bp,58.535bp) and (452.95bp,51.332bp)  .. (463.04bp,36.194bp);
  \draw [->] (294.59bp,324.78bp) .. controls (293.95bp,319.09bp) and (293.23bp,312.61bp)  .. (291.41bp,296.19bp);
  \draw [->] (675.3bp,259.78bp) .. controls (672.46bp,253.81bp) and (669.2bp,246.97bp)  .. (661.69bp,231.19bp);
  \draw [->] (569.32bp,64.951bp) .. controls (553.86bp,57.458bp) and (535.52bp,48.571bp)  .. (509.71bp,36.062bp);
  \draw [->] (238.5bp,454.78bp) .. controls (238.5bp,449.09bp) and (238.5bp,442.61bp)  .. (238.5bp,426.19bp);
  \draw [->] (471.5bp,324.78bp) .. controls (471.5bp,319.09bp) and (471.5bp,312.61bp)  .. (471.5bp,296.19bp);
  \draw [->] (545.83bp,194.78bp) .. controls (539.51bp,188.26bp) and (532.18bp,180.69bp)  .. (518.15bp,166.19bp);
  \draw [->] (829.43bp,194.95bp) .. controls (812.98bp,187.39bp) and (793.44bp,178.4bp)  .. (766.6bp,166.06bp);
  \draw [->] (336.38bp,389.74bp) .. controls (344.97bp,368.17bp) and (359.77bp,331.03bp)  .. (373.57bp,296.4bp);
  \draw [->] (378.94bp,259.72bp) .. controls (378.24bp,241.68bp) and (379.94bp,213.5bp)  .. (394.5bp,195.0bp) .. controls (408.92bp,176.69bp) and (432.04bp,165.54bp)  .. (462.92bp,155.95bp);
  \draw [->] (383.08bp,129.78bp) .. controls (390.28bp,123.12bp) and (398.64bp,115.37bp)  .. (413.94bp,101.19bp);
  \draw [->] (319.45bp,324.78bp) .. controls (328.71bp,317.84bp) and (339.54bp,309.72bp)  .. (357.57bp,296.19bp);
  \draw [->] (246.57bp,194.62bp) .. controls (274.13bp,165.46bp) and (332.99bp,105.85bp)  .. (390.5bp,65.0bp) .. controls (402.83bp,56.237bp) and (417.01bp,47.892bp)  .. (439.08bp,36.053bp);
  \draw [->] (320.48bp,389.78bp) .. controls (317.35bp,383.81bp) and (313.77bp,376.97bp)  .. (305.51bp,361.19bp);
  \draw [->] (457.17bp,194.78bp) .. controls (463.49bp,188.26bp) and (470.82bp,180.69bp)  .. (484.85bp,166.19bp);
  \draw [->] (708.64bp,259.78bp) .. controls (718.88bp,252.77bp) and (730.88bp,244.56bp)  .. (750.39bp,231.19bp);
  \draw [->] (300.23bp,389.98bp) .. controls (285.33bp,381.31bp) and (266.95bp,370.6bp)  .. (250.5bp,361.0bp) .. controls (215.93bp,340.81bp) and (176.6bp,317.77bp)  .. (139.71bp,296.14bp);
  \draw [->] (273.38bp,259.78bp) .. controls (267.26bp,253.26bp) and (260.18bp,245.69bp)  .. (246.6bp,231.19bp);
  \draw [->] (693.81bp,259.71bp) .. controls (703.07bp,241.67bp) and (713.16bp,213.48bp)  .. (698.5bp,195.0bp) .. controls (680.34bp,172.11bp) and (602.55bp,159.54bp)  .. (540.25bp,152.44bp);
\begin{scope}
  \definecolor{strokecol}{rgb}{0.0,0.0,0.0};
  \pgfsetstrokecolor{strokecol}
  \draw  (601.0bp,296.0bp) -- (524.0bp,296.0bp) -- (524.0bp,260.0bp) -- (601.0bp,260.0bp) -- cycle;
  \draw  (562.5bp,278.0bp) node {\Large{Lemma \ref{RHV2}}};
\end{scope}
\begin{scope}
  \definecolor{strokecol}{rgb}{0.0,0.0,0.0};
  \pgfsetstrokecolor{strokecol}
  \draw  (479.0bp,231.0bp) -- (402.0bp,231.0bp) -- (402.0bp,195.0bp) -- (479.0bp,195.0bp) -- cycle;
  \draw (440.5bp,213.0bp) node {\Large{ Lemma \ref{TTV}}};
\end{scope}
\begin{scope}
  \definecolor{strokecol}{rgb}{0.0,0.0,0.0};
  \pgfsetstrokecolor{strokecol}
  \draw (513.0bp,36.0bp) -- (436.0bp,36.0bp) -- (436.0bp,0.0bp) -- (513.0bp,0.0bp) -- cycle;
  \draw (474.5bp,18.0bp) node {\Large{Theorem \ref{TH1}}};
\end{scope}
\begin{scope}
  \definecolor{strokecol}{rgb}{0.0,0.0,0.0};
  \pgfsetstrokecolor{strokecol}
  \draw (357.0bp,231.0bp) -- (280.0bp,231.0bp) -- (280.0bp,195.0bp) -- (357.0bp,195.0bp) -- cycle;
  \draw (318.5bp,213.0bp) node {\Large{Lemma \ref{lele11prelem}}};
\end{scope}
\begin{scope}
  \definecolor{strokecol}{rgb}{0.0,0.0,0.0};
  \pgfsetstrokecolor{strokecol}
  \draw (237.0bp,296.0bp) -- (160.0bp,296.0bp) -- (160.0bp,260.0bp) -- (237.0bp,260.0bp) -- cycle;
  \draw (198.5bp,278.0bp) node {\Large{Lemma \ref{LE13}}};
\end{scope}
\begin{scope}
  \definecolor{strokecol}{rgb}{0.0,0.0,0.0};
  \pgfsetstrokecolor{strokecol}
  \draw (403.0bp,166.0bp) -- (326.0bp,166.0bp) -- (326.0bp,130.0bp) -- (403.0bp,130.0bp) -- cycle;
  \draw (364.5bp,148.0bp) node {\Large{Lemma \ref{lele11}}};
\end{scope}
\begin{scope}
  \definecolor{strokecol}{rgb}{0.0,0.0,0.0};
  \pgfsetstrokecolor{strokecol}
  \draw (722.0bp,296.0bp) -- (645.0bp,296.0bp) -- (645.0bp,260.0bp) -- (722.0bp,260.0bp) -- cycle;
  \draw (683.5bp,278.0bp) node {\Large{Lemma \ref{gy2}}};
\end{scope}
\begin{scope}
  \definecolor{strokecol}{rgb}{0.0,0.0,0.0};
  \pgfsetstrokecolor{strokecol}
  \draw (814.0bp,231.0bp) -- (737.0bp,231.0bp) -- (737.0bp,195.0bp) -- (814.0bp,195.0bp) -- cycle;
  \draw (775.5bp,213.0bp) node {\Large{Lemma \ref{RHV}}};
\end{scope}
\begin{scope}
  \definecolor{strokecol}{rgb}{0.0,0.0,0.0};
  \pgfsetstrokecolor{strokecol}
  \draw (510.0bp,361.0bp) -- (433.0bp,361.0bp) -- (433.0bp,325.0bp) -- (510.0bp,325.0bp) -- cycle;
  \draw (471.5bp,343.0bp) node {\Large{Lemma \ref{BNDDIR}}};
\end{scope}
\begin{scope}
  \definecolor{strokecol}{rgb}{0.0,0.0,0.0};
  \pgfsetstrokecolor{strokecol}
  \draw (510.0bp,296.0bp) -- (433.0bp,296.0bp) -- (433.0bp,260.0bp) -- (510.0bp,260.0bp) -- cycle;
  \draw (471.5bp,278.0bp) node {\Large{Lemma \ref{gy}}};
\end{scope}
\begin{scope}
  \definecolor{strokecol}{rgb}{0.0,0.0,0.0};
  \pgfsetstrokecolor{strokecol}
  \draw (540.0bp,166.0bp) -- (463.0bp,166.0bp) -- (463.0bp,130.0bp) -- (540.0bp,130.0bp) -- cycle;
  \draw (501.5bp,148.0bp) node {\Large{Lemma \ref{lele121}}};
\end{scope}
\begin{scope}
  \definecolor{strokecol}{rgb}{0.0,0.0,0.0};
  \pgfsetstrokecolor{strokecol}
  \draw (692.0bp,231.0bp) -- (615.0bp,231.0bp) -- (615.0bp,195.0bp) -- (692.0bp,195.0bp) -- cycle;
  \draw (653.5bp,213.0bp) node {\Large{Lemma \ref{EEV3}}};
\end{scope}
\begin{scope}
  \definecolor{strokecol}{rgb}{0.0,0.0,0.0};
  \pgfsetstrokecolor{strokecol}
  \draw (601.0bp,231.0bp) -- (524.0bp,231.0bp) -- (524.0bp,195.0bp) -- (601.0bp,195.0bp) -- cycle;
  \draw (562.5bp,213.0bp) node {\Large{Lemma \ref{EEV2}}};
\end{scope}
\begin{scope}
  \definecolor{strokecol}{rgb}{0.0,0.0,0.0};
  \pgfsetstrokecolor{strokecol}
  \draw (768.0bp,166.0bp) -- (691.0bp,166.0bp) -- (691.0bp,130.0bp) -- (768.0bp,130.0bp) -- cycle;
  \draw (729.5bp,148.0bp) node {\Large{Lemma \ref{LE9999}}};
\end{scope}
\begin{scope}
  \definecolor{strokecol}{rgb}{0.0,0.0,0.0};
  \pgfsetstrokecolor{strokecol}
  \draw (905.0bp,231.0bp) -- (828.0bp,231.0bp) -- (828.0bp,195.0bp) -- (905.0bp,195.0bp) -- cycle;
  \draw (866.5bp,213.0bp) node {\Large{Lemma \ref{appfeq2}}};
\end{scope}
\begin{scope}
  \definecolor{strokecol}{rgb}{0.0,0.0,0.0};
  \pgfsetstrokecolor{strokecol}
  \draw (335.0bp,361.0bp) -- (258.0bp,361.0bp) -- (258.0bp,325.0bp) -- (335.0bp,325.0bp) -- cycle;
  \draw (296.5bp,343.0bp) node {\Large{Lemma \ref{LE10pre}}};
\end{scope}
\begin{scope}
  \definecolor{strokecol}{rgb}{0.0,0.0,0.0};
  \pgfsetstrokecolor{strokecol}
  \draw (277.0bp,426.0bp) -- (200.0bp,426.0bp) -- (200.0bp,390.0bp) -- (277.0bp,390.0bp) -- cycle;
  \draw (238.5bp,408.0bp) node {\Large{Lemma \ref{LE10preDim2}}};
\end{scope}
\begin{scope}
  \definecolor{strokecol}{rgb}{0.0,0.0,0.0};
  \pgfsetstrokecolor{strokecol}
  \draw (277.0bp,491.0bp) -- (200.0bp,491.0bp) -- (200.0bp,455.0bp) -- (277.0bp,455.0bp) -- cycle;
  \draw (238.5bp,473.0bp) node {\Large{Lemma \ref{additive}}};
\end{scope}
\begin{scope}
  \definecolor{strokecol}{rgb}{0.0,0.0,0.0};
  \pgfsetstrokecolor{strokecol}
  \draw (368.0bp,426.0bp) -- (291.0bp,426.0bp) -- (291.0bp,390.0bp) -- (368.0bp,390.0bp) -- cycle;
  \draw (329.5bp,408.0bp) node {\Large{Lemma \ref{trew}}};
\end{scope}
\begin{scope}
  \definecolor{strokecol}{rgb}{0.0,0.0,0.0};
  \pgfsetstrokecolor{strokecol}
  \draw (149.0bp,361.0bp) -- (72.0bp,361.0bp) -- (72.0bp,325.0bp) -- (149.0bp,325.0bp) -- cycle;
  \draw (110.5bp,343.0bp) node {\Large{Lemma \ref{rational2}}};
\end{scope}
\begin{scope}
  \definecolor{strokecol}{rgb}{0.0,0.0,0.0};
  \pgfsetstrokecolor{strokecol}
  \draw (77.0bp,426.0bp) -- (0.0bp,426.0bp) -- (0.0bp,390.0bp) -- (77.0bp,390.0bp) -- cycle;
  \draw (38.5bp,408.0bp) node {\Large{Lemma \ref{subprime}}};
\end{scope}
\begin{scope}
  \definecolor{strokecol}{rgb}{0.0,0.0,0.0};
  \pgfsetstrokecolor{strokecol}
  \draw (168.0bp,426.0bp) -- (91.0bp,426.0bp) -- (91.0bp,390.0bp) -- (168.0bp,390.0bp) -- cycle;
  \draw (129.5bp,408.0bp) node {\Large{Lemma \ref{rational}}};
\end{scope}
\begin{scope}
  \definecolor{strokecol}{rgb}{0.0,0.0,0.0};
  \pgfsetstrokecolor{strokecol}
  \draw (643.0bp,101.0bp) -- (566.0bp,101.0bp) -- (566.0bp,65.0bp) -- (643.0bp,65.0bp) -- cycle;
  \draw (604.5bp,83.0bp) node {\Large{Lemma \ref{LE8}}};
\end{scope}
\begin{scope}
  \definecolor{strokecol}{rgb}{0.0,0.0,0.0};
  \pgfsetstrokecolor{strokecol}
  \draw (328.0bp,296.0bp) -- (251.0bp,296.0bp) -- (251.0bp,260.0bp) -- (328.0bp,260.0bp) -- cycle;
  \draw (289.5bp,278.0bp) node {\Large{Lemma \ref{LE10}}};
\end{scope}
\begin{scope}
  \definecolor{strokecol}{rgb}{0.0,0.0,0.0};
  \pgfsetstrokecolor{strokecol}
  \draw (419.0bp,296.0bp) -- (342.0bp,296.0bp) -- (342.0bp,260.0bp) -- (419.0bp,260.0bp) -- cycle;
  \draw (380.5bp,278.0bp) node {\Large{Lemma  \ref{EEEV}}};
\end{scope}
\begin{scope}
  \definecolor{strokecol}{rgb}{0.0,0.0,0.0};
  \pgfsetstrokecolor{strokecol}
   \draw (149bp,296.0bp) -- (72.0bp,296.0bp) -- (72.0bp,260.0bp) -- (149.0bp,260.0bp) -- cycle;
  \draw (110.5bp,278.0bp) node {\Large{Lemma \ref{LE3}}};
\end{scope}
\begin{scope}
  \definecolor{strokecol}{rgb}{0.0,0.0,0.0};
  \pgfsetstrokecolor{strokecol}
  \draw (267.0bp,231.0bp) -- (194.0bp,231.0bp) -- (194.0bp,195.0bp) -- (267.0bp,195.0bp) -- cycle;
  \draw (230.5bp,213.0bp) node {\Large{Lemma \ref{LE4}}};
\end{scope}
\begin{scope}
  \definecolor{strokecol}{rgb}{0.0,0.0,0.0};
  \pgfsetstrokecolor{strokecol}
 \draw (553.0bp,101.0bp) -- (480.0bp,101.0bp) -- (480.0bp,65.0bp) -- (553.0bp,65.0bp) -- cycle;
  \draw (516.5bp,83.0bp) node {\Large{Lemma \ref{LE7}}};
\end{scope}
\begin{scope}
  \definecolor{strokecol}{rgb}{0.0,0.0,0.0};
  \pgfsetstrokecolor{strokecol}
 \draw (470.0bp,101.0bp) -- (396.0bp,101.0bp) -- (396.0bp,65.0bp) -- (470.0bp,65.0bp) -- cycle;
  \draw (432.5bp,83.0bp) node {\Large{Lemma \ref{LE6}}};
\end{scope}
\end{tikzpicture}}
\caption{Structure of proof of Theorem 1}
\end{figure} 
  
Let $\val=(\alpha_1,\ldots,\alpha_n)$ where $\alpha_j>0$ and let $\newsym{$(a_1,\ldots,a_j)$ where $a_j$ is typically of type $(N,\chi)$.}{\va}=(a_1,\ldots,a_j)$, where $a_j$ are unimodular completely multiplicative functions on $\N+\alpha_j$ for $j=1,\ldots,n$. Then define
\begin{gather} \label{flv}
    \newsym{A twisted truncated multiple Hurwitz zeta-function}{\zeta_{\va}^{[N]}(\vs;\val)} =\sum_{0 \leq k_1 < k_2<\cdots < k_n \leq N} \prod_{j=1}^n \frac{a_j(k_j+\alpha_j)}{ (k_j+\alpha_j)^{s_j}},
\end{gather}
 We now assume that $a_j:\N+ \alpha_j \to \C$ are of type $(N_0,\chi_j)$, where $\chi_j$ is a character of modulus $q_j$ and where $q_j$ and $q_{j+1}$ are coprime.  It is a consequence of Lemma \ref{LE10pre} (see section \ref{sc6}) that the limit
\begin{gather} \label{flvl}
     \newsym{A  twisted multiple Hurwitz zeta-function}{\zeta_{\va}(\vs;\val)} = \lim_{N \to \infty} \zeta_{\va}^{[N]}(\vs;\val).
\end{gather}
converges to an analytic function on $\{s \in \C: \Re(s)>1/2\}^n$  and that the convergence is uniform on $\M^n$.
\begin{lem} \label{LE4}
(Fundamental lemma) Let $n\geq 2$ and $\M$ be defined by \eqref{Kdef} and let $\val=(\alpha_1,\ldots,\alpha_n)$ where $\alpha_j>0$ are rational or transcendental.   Then for any $\varepsilon>0$ and polynomial $p$ in $n$ complex variables, and functions $\chi_j$ of modulus $q_j$ permissible with respect to $\alpha_j$ and where $q_j$ and $q_{j+1}$ are coprime for $1 \leq j \leq n-1$ there exists some  $N^* \in \Z^+$ and $\va=(a_1,\ldots,a_n)$ where the functions $a_j:\N+\alpha_j \to \C$ are of type $(N^*,\chi_j)$  such that 
\begin{gather*} 
\sup_{N \geq N^*} \max_{\vs \in \M^n} \abs{\zeta_{\va}^{[N]}(\vs;\val)-p(\vs)} < \varepsilon.
		\end{gather*}
\end{lem}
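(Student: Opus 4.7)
The plan is to lift the strategy sketched for Lemma \ref{prelem} from two transcendental variables to the $n$-variable mixed rational/transcendental setting, replacing the one-variable tool of Lemma \ref{onevar} with the stronger Lemma \ref{LE3} (which delivers completely multiplicative unimodular functions of type $(N,\chi)$), and replacing the decomposition of Lemma \ref{pla} with its $n$-variable counterpart Lemma \ref{LE13}. First, I would apply Lemma \ref{LE13} to the given polynomial to obtain a representation
\[
p(\vs) = \sum_{1 \leq m_1 < \cdots < m_n \leq M} \prod_{j=1}^{n} q_{j, m_j}(s_j),
\]
where each $q_{j,m}$ is a monomial in a single variable, an assignment $m \mapsto j_m$ selects, for each $m$, the unique index $j$ with $q_{j,m} \neq 0$, and suitable partial sums of the $q_{j,m}$'s along the ``lower'' indices $j<n$ vanish identically.

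Next I would construct $\va = (a_1,\ldots,a_n)$ inductively in $M$ stages. Choose $\NC_0$ large and initialise each $a_j$ as an arbitrary function of type $(\NC_0,\chi_j)$. At stage $m$, I would invoke Lemma \ref{LE3} on $a_{j_m}$ with $N_0 = \NC_{m-1}$, and with target polynomial equal to the current partial sum through $\NC_{m-1}$ plus $q_{j_m,m}(s) + C$ for a constant $C$ large enough to meet the $|p(s)| \geq A$ hypothesis of Lemma \ref{LE3}. This produces a redefinition to type $(\NC_m,\chi_{j_m})$ making the block contribution $\sum_{\NC_{m-1} < k \leq \NC_m} a_{j_m}(k+\alpha_{j_m})(k+\alpha_{j_m})^{-s}$ approximately $q_{j_m,m}(s)$. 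Simultaneously, for each $j \neq j_m$, I would apply Lemma \ref{LE3} to extend $a_j$ from type $(\NC_{m-1},\chi_j)$ to $(\NC_m,\chi_j)$ with a target designed to make its block contribution on $(\NC_{m-1},\NC_m]$ as small as the $A$-threshold of Lemma \ref{LE3} permits. After $M$ stages, set $N^* = \NC_M$.

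To verify the approximation, I would partition the ordered simplex $\{0\leq k_1<\cdots<k_n \leq N^*\}$ according to which sub-interval $(\NC_{m-1},\NC_m]$ contains each $k_j$, yielding multi-indices $m_1 \leq m_2 \leq \cdots \leq m_n$. The strict stratum $m_1 < \cdots < m_n$ factorises into a product of single-variable block sums, which by construction approximates $\prod_j q_{j, m_j}(s_j)$; summing over such strata reproduces $p(\vs)$ by the Lemma \ref{LE13} identity. The ``cross'' strata mixing the pre-$\NC_0$ initial range with post-$\NC_0$ ranges contribute only negligibly thanks to the vanishing conditions on partial sums of $q_{j,m}$. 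The remaining ``degenerate'' strata, in which two or more $k_j$ fall in a common block $(\NC_{m-1},\NC_m]$, are controlled by observing that in each such block at most one variable has a nontrivial block sum while the others are small; Abel summation in the inner variables combined with $\Re(s_i)+\Re(s_j) > 1$ on $\M^n$ then gives a bound that decays as the initial $\NC_0$ is chosen large.

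The hard part will be twofold. First, the $A$-threshold in Lemma \ref{LE3} forces every intermediate application to be carried out with a constant offset, and these offsets must be tracked carefully throughout the iteration so that they cumulatively contribute only a negligible, explicitly subtractable, residual; this is the main technical bookkeeping, and it is this obstruction which is responsible for the appearance of $A$ in the statement. Second, preserving the multiplicative type $(N,\chi_j)$ structure simultaneously across all $j$ while ``neutralising'' the inactive variables requires the coprimality hypothesis on the moduli $q_j,q_{j+1}$, which decouples the constraints that each character places on the primes $p \in [\log \NC_{m-1},\log \NC_m]$ used to realise $a_j$. Finally, the uniformity $\sup_{N \geq N^*}$ follows from Lemma \ref{LE10pre}: since $\zeta_{\va}^{[N]}(\vs;\val)$ converges uniformly on $\M^n$ to the analytic limit $\zeta_{\va}(\vs;\val)$, enlarging $\NC_0$ (and hence $N^*$) at the outset ensures that the tail $\zeta_{\va}^{[N]} - \zeta_{\va}^{[N^*]}$ stays below $\varepsilon$ for every $N \geq N^*$.
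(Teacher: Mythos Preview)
Your high-level plan matches the paper's: decompose via Lemma \ref{LE13}, iterate Lemma \ref{LE3} over blocks $(\NC_{m-1},\NC_m]$, and control the degenerate strata. However, your handling of the $|p(s)|\geq A$ hypothesis of Lemma \ref{LE3} does not work as stated, and this is the crux of the argument, not mere bookkeeping. Adding a constant $C$ to each target forces the block contribution to be $q_{j_m,m}+C$ rather than $q_{j_m,m}$; these offsets then propagate multiplicatively through the product representation \eqref{firstcond} and cannot be ``explicitly subtracted'' at the end. The paper's solution uses the part of Lemma \ref{LE13} you omit: conditions \eqref{finalcond1}--\eqref{finalcond2}, achieved by rescaling the monomials, guarantee that every running partial sum $\sum_{l\leq m} q_{j,l}$ is a single monomial that is either large on $\M$ or identically zero (for $j=1$), or has modulus at most $1$ (for $j\geq 2$). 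Combined with an initialization in which each one-variable series is first driven, again via Lemma \ref{LE3}, to a value close to $A_0+3$ (not chosen ``arbitrarily''), this forces the cumulative targets $Q_{j,m}(s)=\sum_{0\leq k\leq \NC_0} a_j(k+\alpha_j)(k+\alpha_j)^{-s}+\sum_{l\leq m} q_{j,l}(s)$ to satisfy $|Q_{j,m}|\geq A_0$ on $\M$ throughout, so Lemma \ref{LE3} applies at every stage with no offset. Because that initialization contributes a nonzero analytic function $g$ to the pre-$\NC_0$ multi-sum, Lemma \ref{LE13} is applied to $q\approx p-g$ (after an Oka--Weil step), not to $p$ as you propose.

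Two further points. First, re-applying Lemma \ref{LE3} to $a_j$ for $j\neq j_m$ at stage $m$ is unnecessary: a function of type $(\NC_{m-1},\chi_j)$ is automatically of type $(\NC_m,\chi_j)$, and its block contribution on $(\NC_{m-1},\NC_m]$ is already small by the tail bound coming from Lemma \ref{trew}. Second, your Abel-summation sketch for the degenerate strata ($m_j=m_{j+1}$) is insufficient once rational parameters are present; the paper invokes Lemma \ref{LE10}, and it is \emph{there}---through the near-diagonal cancellation of Lemma \ref{additive} feeding into Lemma \ref{LE10preDim2} and Lemma \ref{LE10pre}---that the coprimality of $q_j$ and $q_{j+1}$ is actually used, not in ``decoupling constraints on primes'' as you suggest.
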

We will prove the fundamental lemma in section \ref{secstar}. While we can prove this result unconditionally, assuming the Riemann hypothesis for the  Dirichlet $L$-functions simplifies the proof, as we may use $\Pri$ as the set of all primes in Lemma \ref{subprime} (see p. \pageref{subprime}).  In the next lemma however, the Riemann hypothesis would not only simplify the proof, for example Lemma \ref{BNDDIR} would not be needed, but is also necessary if we wish  to obtain the strongest possible region of universality when the last two parameters $\alpha_{n-1}$ and $\alpha_n$ are rational. We would also like to remark that the case where $\alpha_n$ is transcendental is especially simple\footnote{The proof of Lemma \ref{lele121} in section \ref{lele121ref} would be trivial.} and assuming this would allow us to prove the next lemma in 5 pages rather than 22 pages.
\begin{lem} \label{LE6}
 Let  $n\geq 2$ and the conditions on $\val$ and $K$ of Theorem \ref{TH1} be satisfied.  Then for any $\varepsilon>0$ and $\va=(a_1,\ldots,a_n)$ satisfying the conditions of Lemma \ref{LE4}, where   furthermore the order of  $\chi_n$ is divisible by 4,  we have that 
\begin{gather*}
     \liminf_{N \to \infty} \liminf_{T \to \infty}  \frac 1 {T^n} \operatorname{meas} \left \{ \vt \in [0,T]^n: \max_{\vs \in K} \abs{\zeta_{\ett}^{[N]}(\vs+i\vt;\val)-\zeta_{\va}^{[N]}(\vs;\val)} < \varepsilon  \right \} >0.
  \end{gather*}   
\end{lem}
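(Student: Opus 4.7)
The plan is to exploit the independence of the $n$ shift coordinates. Since $t_j$ enters the product defining $\zeta_{\ett}^{[N]}$ only through the single factor $(k_j+\alpha_j)^{-it_j}$, we can rewrite
\begin{gather*}
\zeta_{\ett}^{[N]}(\vs+i\vt;\val)-\zeta_{\va}^{[N]}(\vs;\val)=\sum_{0\leq k_1<\cdots<k_n\leq N}\prod_{j=1}^n\frac{1}{(k_j+\alpha_j)^{s_j}}\Bigl[\prod_{j=1}^n(k_j+\alpha_j)^{-it_j}-\prod_{j=1}^n a_j(k_j+\alpha_j)\Bigr],
\end{gather*}
and a telescoping of the difference of products of unimodular numbers bounds its $\M^n$-sup by $N^n$ times $\max_{j,k}|(k+\alpha_j)^{-it_j}-a_j(k+\alpha_j)|$. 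Because $t_j$ affects only the $j$-th slot, Fubini reduces the claim to the one-variable density statement
\begin{gather*}
\liminf_{T\to\infty}\frac{1}{T}\operatorname{meas}\Bigl\{t\in[0,T]:\max_{0\leq k\leq N}|(k+\alpha_j)^{-it}-a_j(k+\alpha_j)|<\delta\Bigr\}>0
\end{gather*}
for each fixed $j$.

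For transcendental $\alpha_j$ the numbers $\{\log(k+\alpha_j)\}_{k=0}^N$ are $\Q$-linearly independent, so Kronecker's approximation theorem in Weyl's quantitative form produces the positive density directly from equidistribution of $(t\log(k+\alpha_j)/(2\pi))_{k=0}^N$ on the torus $\mathbb{T}^{N+1}$.

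The substantial case is rational $\alpha_j=c_j/d_j$. Writing $k+\alpha_j=(kd_j+c_j)/d_j$ and invoking the complete multiplicativity of $a_j$ on $\N$ via \eqref{ojkond2}, the problem reduces to finding $t$ with $p^{-it}\approx a_j(p)$ on every prime $p$ appearing in the factorization of some $kd_j+c_j$ with $k\leq N$. For the small-prime range $p<\log N^*$, the values $a_j(p)$ are freely chosen unimodulars (by the flexibility granted by Lemma \ref{LE4}), and Kronecker--Weyl on the $\Q$-independent set $\{\log p:p<\log N^*\}$ suffices. For the large-prime range $p\geq\log N^*$, however, the constraint $a_j(p)=\chi_j(p)$ is rigidly prescribed, and this is the main obstacle. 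Using the Euler-product decomposition \eqref{ojkond}, the twisted partial sum is re-expressed as a product of $L(s,\chi_j)$ (evaluated at a shift $s+it$) with a correction supported on small primes; the desired matching then reduces to a Voronin-type shift statement for $L(s,\chi_j)$. The hypotheses of the theorem provide the two routes: under \eqref{Kkond} the large-prime tail is absorbed by the mean-square estimate of Lemma \ref{BNDDIR}, pushing the argument strictly to the right of the critical line $\Re(s_{n-1}+s_n)=3/2$; under the assumed Riemann hypothesis for $L(s,\chi_j\chi^*)$ the zero-free region is large enough to run the shift-universality argument directly. Combining the small-prime Kronecker step with the large-prime shift step on a common positive-density set yields the one-variable density statement.

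Fubini then recovers the $n$-dimensional lower bound. The condition that $\operatorname{ord}(\chi_n)$ be divisible by $4$ is used only at the last coordinate: it guarantees a permissible character of the required non-real, non-quadratic type, which provides the cancellation needed to decouple the ordering constraint $k_{n-1}<k_n$ from the large-prime comparison at $j=n$; the earlier coordinates $j<n$ do not need this parity since the ordering asymmetrically concentrates the obstruction at the top index.
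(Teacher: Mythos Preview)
Your argument has a genuine gap that breaks the conclusion. The telescoping bound
\[
\max_{\vs\in\M^n}\bigl|\zeta_{\ett}^{[N]}(\vs+i\vt;\val)-\zeta_{\va}^{[N]}(\vs;\val)\bigr|
\;\le\; C_N\cdot \max_{j,k\le N}\bigl|(k+\alpha_j)^{-it_j}-a_j(k+\alpha_j)\bigr|
\]
forces the pointwise tolerance $\delta$ to be of order $\varepsilon/C_N$ with $C_N\to\infty$. Kronecker--Weyl then gives, for each fixed $j$, a density $\asymp\delta^{M_j(N)}$ where $M_j(N)$ is the number of generators you must control (either $N{+}1$ in the transcendental case or $\pi(Nd_j)$ primes in the rational case). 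In every case $M_j(N)\to\infty$, so the resulting density $\asymp (\varepsilon/C_N)^{\sum_j M_j(N)}$ tends to $0$ as $N\to\infty$. Hence $\liminf_{N\to\infty}$ of the inner limit is $0$, not positive. The statement explicitly requires the double $\liminf$ to be strictly positive, so pointwise matching of \emph{all} frequencies up to $N$ cannot work.

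The paper avoids this by decoupling the Kronecker set from the truncation level: one fixes once and for all a finite index set ${\bf N}=(N_1,\dots,N_n)$ and a finite prime set $\Pri$ (Lemma~\ref{lele121}), defines the Kronecker box $\BC_{{\bf N},\Pri,T}(\dddd)$ only through those finitely many generators, and then computes an $L^2$ average of $|\zeta_{\ett}^{[N]}-\zeta_{\va}^{[N]}|^2$ over $\BC_{{\bf N},\Pri,T}(\dddd)$ via Lemma~\ref{lele11}. The point is that the $L^2$ identity splits into a controllable piece plus the error $\Delta_N(K,\va,{\bf N},\Pri)$ carrying the contribution of the ``large'' primes $b_j\in\cB_j(N_j)$; Lemma~\ref{lele121} shows this error is small \emph{uniformly in the truncation $N$}. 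The density one extracts is then $\gg\dddd^{A}$ with $A$ depending only on the fixed ${\bf N},\Pri$, hence independent of $N$.

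Two secondary remarks: your appeal to Lemma~\ref{BNDDIR} as a ``mean-square estimate'' is a misreading --- that lemma is a joint approximation result for finite Euler products, and it enters only to construct the prime set $\Pri$ in Lemma~\ref{gy}; and the divisibility of $\operatorname{ord}(\chi_n)$ by $4$ is needed there (through the Mishou--Nagoshi real-coefficient rearrangement), not for any ``decoupling of the ordering constraint $k_{n-1}<k_n$''. The handling of rational $\alpha_n$ in the paper proceeds through Lemmas~\ref{gy}, \ref{TTV}, \ref{EEV2}, \ref{EEV3}, not through a shift-universality statement for a single $L(s,\chi_j)$.
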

We will prove this result in section \ref{secstar}. Next we are going to state the fact that the multiple Hurwitz zeta-function can be approximated by a sufficiently smoothed truncated Dirichlet polynomial. Assume that
\begin{gather} \label{phidef} \phi \in C_0^\infty(\R),  \qquad \text{and} \qquad \phi(x)=1, \qquad \text{ if } \qquad 0 \leq x \leq 2,
\end{gather}
 and define
\begin{gather}\label{zetaphidef}
 \newsym{A smoothly truncated multiple Hurwitz zeta-function}{\zeta_n^{[\phi,T]}(\vs;\val)}=\sum_{0=k_0 \leq k_1<k_2<\cdots<k_n} 
\prod_{j=1}^n (k_j+\alpha_j)^{-s_j}  \phi\p{\frac{k_{j}-k_{j-1}} {T}}.
\end{gather}
Then we can prove the following weak approximate functional equation (for proof, see section \ref{sec5}, Lemma \ref{LE9999} for a slightly different version that by modifying $T,\ddddd$ slightly depending on $R$ gives the version below)
\begin{lem} \label{LE7}
  For any $A,\ddddd,R>0$ and $\phi$ that fulfills \eqref{phidef} we have that there exists some $B$ so that for $T \geq R+1$, $T^\ddddd-R \leq \Im(s_i) \leq T+R$, $-A \leq \Re(s_i) \leq A$ for all $i=1,\ldots, n$ the following inequality holds 
	   $$\abs{\zeta_n(\vs;\val)-\zeta_n^{[\phi,T]}(\vs;\val)} <B T^{-A}.$$
\end{lem}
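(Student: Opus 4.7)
The plan is to prove the bound by inserting the Mellin representation of $\phi$ into each smoothing factor of $\zeta_n^{[\phi,T]}$ and then shifting contours leftward, exploiting the super-polynomial decay of the Mellin transform $\tilde\phi(w)=\int_0^\infty\phi(x)x^{w-1}\,dx$ to produce arbitrarily large negative powers of $T$. Because $\phi\in C_0^\infty(\R)$ with $\phi\equiv 1$ on $[0,2]$, integration by parts in the identity $\tilde\phi(w)=-\frac{1}{w}\int_0^\infty\phi'(x)x^{w}\,dx$ (valid since $\phi'\in C_0^\infty((0,\infty))$) shows that $\tilde\phi$ extends meromorphically to $\C$ with a single simple pole at $w=0$ of residue $\phi(0)=1$, and $|\tilde\phi(\sigma+i\tau)|\ll_{N,\sigma_0,\sigma_1}(1+|\tau|)^{-N}$ for every $N$, uniformly on each vertical strip $\sigma_0\le\sigma\le\sigma_1$.

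Applying the Mellin inversion $\phi((k_j-k_{j-1})/T)=\frac{1}{2\pi i}\int_{(c_j)}\tilde\phi(w_j)T^{w_j}(k_j-k_{j-1})^{-w_j}\,dw_j$ for each $j$ (with $c_j>0$ chosen large enough for absolute convergence, and after separately handling the boundary contribution from $k_1=0$, where $k_1-k_0=0$, by induction on $n$) writes $\zeta_n^{[\phi,T]}(\vs;\val)$ as an $n$-fold vertical contour integral of $\prod_j\tilde\phi(w_j)\,T^{\sum_j w_j}$ against the auxiliary multiple Dirichlet series
\[
Z({\bm w},\vs;\val)=\sum_{0\le k_1<\cdots<k_n}\prod_{j=1}^n(k_j-k_{j-1})^{-w_j}(k_j+\alpha_j)^{-s_j},
\]
which, by iterated Mellin--Barnes representations together with the continuation arguments of Akiyama--Ishikawa and Matsumoto, extends meromorphically in $({\bm w},\vs)\in\C^{2n}$ with a known finite family of polar hyperplanes.

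Shifting each contour $\Re(w_j)=c_j$ leftward to $\Re(w_j)=-c_j'<0$, the combined residue taken at the common polar locus $w_1=\cdots=w_n=0$ equals $T^0\cdot Z({\bm 0},\vs;\val)=\zeta_n(\vs;\val)$, which exactly cancels the $\zeta_n$ term in the difference $\zeta_n(\vs;\val)-\zeta_n^{[\phi,T]}(\vs;\val)$. All remaining residues arise from the polar hyperplanes of $Z$; each such residue carries a factor of $\tilde\phi$ evaluated at a nontrivial linear form in the $s_i$, whose imaginary part is of size at least $T^\ddddd-R\gg T^\ddddd$ by the hypothesis $\Im(s_i)\ge T^\ddddd-R$, and the super-polynomial decay of $\tilde\phi$ then makes each such contribution $O_N(T^{-N})$ for every $N$.

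What survives is the shifted $n$-fold integral, whose integrand carries $|T^{\sum_j w_j}|=T^{-\sum_j c_j'}$; combining the rapid decay of the $\tilde\phi(w_j)$ in the imaginary directions with a uniform polynomial growth bound for $Z$ on the shifted contours bounds this integral by $T^{-\sum_j c_j'+C}$ for some constant $C=C(n,A)$, so choosing $\sum_j c_j'$ larger than $A+C$ yields the desired $B\,T^{-A}$ estimate. The main technical obstacle will be establishing the uniform polynomial growth bound for $Z$ on vertical lines deep in the left half-plane, together with clean bookkeeping of the residues swept up during the contour shift; both are carried out in detail in Section~\ref{sec5} as Lemma~\ref{LE9999}, from which the present Lemma~\ref{LE7} follows by slightly shrinking $\ddddd$ and enlarging $T$ by an amount depending only on $R$, so as to absorb the height parameter $R$.
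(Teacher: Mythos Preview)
Your route is genuinely different from the paper's. You propose to unfold all $n$ smoothing factors simultaneously by Mellin inversion, obtaining an $n$-fold contour integral of $\prod_j\tilde\phi(w_j)T^{\sum w_j}$ against a new $2n$-variable series $Z({\bm w},\vs;\val)$, and then to shift every $w_j$-contour leftward. The paper instead proceeds by induction on $n$: it invokes Matsumoto's one-step Mellin--Barnes recursion to write both $\zeta_n$ and $\zeta_n^{[\phi,T]}$ as a \emph{single} contour integral of a Beta kernel $B(z,s_n)$ against a product of a one-variable Hurwitz factor and an $(n-1)$-variable factor, subtracts, deforms one contour $\Gamma=I_1\cup\cdots\cup I_5$, and estimates each piece using Stirling for $B(z,s_n)$, the base case $n=1$ (Lemma~\ref{appfeq2}), and the induction hypothesis. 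What the paper's route buys is that at every step only the well-understood $\zeta_1(z;\alpha)$ and the already-controlled $\zeta_{n-1}$ appear; there is no auxiliary $2n$-variable object to analyse.

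There is a real gap in your version. The series $Z({\bm w},\vs;\val)$, with its extra factors $(k_j-k_{j-1})^{-w_j}$, is not the multiple Hurwitz zeta-function treated by Akiyama--Ishikawa or Matsumoto; those references give you neither its meromorphic continuation in all $2n$ variables, nor its full polar hyperplane arrangement, nor the uniform polynomial growth on far-left vertical lines that your shifted integral needs. You would have to establish all three from scratch, and the natural way to do so is an iterated Mellin--Barnes argument of precisely the kind the paper already runs for $\zeta_n$ itself---so your route does not bypass the paper's machinery, it transfers it to a harder auxiliary object. Your closing pointer to Lemma~\ref{LE9999} for ``both technical obstacles'' is therefore a mismatch: that lemma is proved by the inductive single-integral method and never introduces $Z$, so it does not supply the missing continuation or growth bounds. (Your final clause, that Lemma~\ref{LE7} follows from Lemma~\ref{LE9999} by adjusting $\ddddd$ and $T$ to absorb $R$, is correct and is exactly how the paper states the reduction.)
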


By classical methods, squaring and treating the non-diagonal part in an elementary way utilizing the fact that the region lies strictly to the right of the half line in each variable   we obtain
\begin{lem} \label{LE8}
 Let $\ddd,R>0$, $\M$ be defined by \eqref{Kdef} and let $\phi$ fulfill \eqref{phidef} and let $0<\ddddd<1$.  Then there exist some $C>0$ such that if $1 \leq N \leq T$ then
\begin{gather}
    \frac 1 {T^n}\int_{\M^n} \int_{[T^{\ddddd},T]^n} \abs{\zeta_{\ett}^{[N]}(\vs +i\vt;\val)-\zeta_n^{[\phi,T]}(\vs+i\vt;\val)}^2 d\vt d \vs \leq C  N^{-\ddd}. 
\end{gather}
\end{lem}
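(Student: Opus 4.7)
The plan is to expand the difference $D(\vs+i\vt) := \zeta_{\ett}^{[N]}(\vs+i\vt;\val) - \zeta_n^{[\phi,T]}(\vs+i\vt;\val)$ as a single multiple Dirichlet polynomial in $\vt$, with coefficients
\[
c_{\vk} = \mathbf{1}[k_n \leq N] - \prod_{j=1}^n \phi\!\left(\frac{k_j - k_{j-1}}{T}\right),
\]
and then bound its mean square by an iterated application of the Montgomery--Vaughan mean value estimate for Dirichlet polynomials.

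A crucial first observation is a cancellation in the coefficients. Because $\phi \equiv 1$ on $[0,2]$ and we assume $N \leq T$, for any $\vk$ with $k_n \leq N$ every difference $k_j - k_{j-1} \leq k_n \leq T$, so both the sharp indicator and the smooth product equal $1$ and $c_\vk = 0$. Conversely, since $\phi$ has compact support of diameter, say, $M$, nonvanishing of the smooth product forces $k_j - k_{j-1} \leq MT$ for every $j$, hence $k_n \leq nMT$. Thus $c_\vk$ is uniformly bounded and supported on tuples with $k_n > N$ and every coordinate $\ll T$.

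Next I would iterate the standard one-variable mean value estimate
\[
\int_0^T \Big| \sum_{k \leq K} a_k (k+\alpha)^{-\sigma-it} \Big|^2 dt \ll (T+K) \sum_k |a_k|^2 (k+\alpha)^{-2\sigma}
\]
in the variables $t_n, t_{n-1}, \ldots, t_1$ successively, at each stage peeling off the innermost sum in $k_j$ as a scalar coefficient depending on the remaining variables. Since $[T^{\ddddd},T]^n \subseteq [0,T]^n$ and every relevant $k_j$ is $\ll T$, each factor $T + O(k_j+\alpha_j)$ stays of order $T$, producing
\[
\int_{[T^{\ddddd},T]^n} |D(\vs+i\vt)|^2 d\vt \ll T^n \sum_\vk |c_\vk|^2 \prod_{j=1}^n (k_j+\alpha_j)^{-2\sigma_j}.
\]

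Finally I would bound the diagonal sum. For $\vs \in \M^n$ one has $2\sigma_j \geq 1 + 2\ddd$ for every $j$, so the partial sums $\sum_{k<K}(k+\alpha_j)^{-2\sigma_j}$ are uniformly bounded in $K$. Holding $k_n = K$ fixed and summing over the other coordinates yields a bound $\ll K^{-1-2\ddd}$, and summing the remaining tail $K > N$ contributes $\ll N^{-2\ddd}$. Dividing by $T^n$ and integrating over the bounded set $\M^n$ gives the desired $CN^{-\ddd}$ (indeed, the exponent $2\ddd$, which is stronger than needed). The only real difficulty is handling the strict-inequality constraint $k_1 < \cdots < k_n$ when iterating Montgomery--Vaughan; this is dealt with by peeling off $t_n$ first, so that at each step the next index to be summed ranges freely subject only to the lower bound inherited from the previous step, leaving the one-dimensional estimate applicable without modification.
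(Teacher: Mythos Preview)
Your argument is correct and matches the paper's approach, which is only given as a one-line sketch (``squaring and treating the non-diagonal part in an elementary way utilizing the fact that the region lies strictly to the right of the half line in each variable''); your iterated Montgomery--Vaughan is precisely a clean packaging of that squaring, and your key observation that $c_{\vk}=0$ for $k_n\le N$ while $c_{\vk}$ is supported on $k_n\ll T$ is exactly what drives the diagonal bound $\ll N^{-2\ddd}$. One cosmetic slip: when you peel off $t_n$ first, the next index $k_{n-1}$ inherits an \emph{upper} bound $k_{n-1}<k_n$, not a lower one, but this does not affect the argument.
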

We are now ready by means of Lemma \ref{LE4}, Lemma \ref{LE6}, Lemma \ref{LE7} and Lemma \ref{LE8} to prove our main result, Theorem \ref{TH1}.

\noindent {\em Proof of Theorem \ref{TH1}.} \label{here}
We first choose $R$ and $\ddd$ in \eqref{Kdef} so that $K \subset \M^n$ and the distance between $K$ and $(\M^n)^\complement$ is strictly positive.
It follows by the conditions in the theorem  and the Oka-Weil theorem (Oka \cite{Oka}, Weil \cite{Weil}, see also \cite{Passare}, \cite{Levenberg}), 
that the function $f$ can be uniformly approximated by a polynomial $p$ on the set  $K$, such that 
\begin{gather} \label{ett}
 \max_{\vs \in K} |p(\vs)-f(\vs)|<\frac {\varepsilon} 4.
\end{gather} 
By standard zero-density estimates \cite[(1.17)]{bombieri} for Dirichlet $L$-functions we may find  $q_j \in \Z^+$ for $j=1,\ldots,n$, and characters $\chi_j$ mod $q_j$ such that $q_{j}$ and $q_{j+1}$ are coprime, the order of $\chi_n$ is divisible by 4, and such that if $\alpha_j=\frac {c_j}{d_j}$ is rational with coprime $c_j,d_j$, then $L(s,\chi_j \chi^*)$ is zero-free on $\M$ for each character $\chi^*$ mod $d_j$. 
By Lemma \ref{LE4} there exists some $N_0$ and functions $a_j:\N+\alpha_j \to \C$ of type  $(N_0,\chi)$ such that 
\begin{gather} \label{ettb1}
   \sup_{N \geq N_0} \max_{\vs \in K} \abs{\zeta_{\va}^{[N]}(\vs;\val)-p(s) }< \frac {\varepsilon} 4.
\end{gather} 
By Lemma \ref{LE6} we have that there exists some $\ddddd>0$ and $N_1 \geq N_0$ such that for any $N \geq N_1$ we have that
\begin{gather} \label{ettb}
 \liminf_{T \to \infty} \frac 1 T \meas \left \{ \max_{\vs \in K} \abs{\zeta_{\ett}^{[N]}(\vs+i\vt;\val)- \zeta_{\va}(\vs;\val) }< \frac {\varepsilon} 4 \right \} \geq  \ddddd>0.
\end{gather}
 By the triangle inequality, Lemma 
\ref{LE7} and Lemma \ref{LE8} it follows
 that 
$\zeta_{\ett}^{[N]}(\vs+i\vt;\val)$ approximates $\zeta(\vs+i\vt;\val)$ with distance less than $\varepsilon_1^2$ on $L^2(\M^n)$ for $0 \leq t \leq T$  on a set with measure greater then $T(1-CN^{-\ddd} \varepsilon_1^{-2})$ for some $C>0$. By choosing $N \geq N_1$ sufficiently large we can make this measure greater than $T(1-\ddddd/2)$.  Since we have some room to spare, standard methods from complex analysis implies that arbitrarily close estimation in $L^2$-norm on the slightly larger set (distance between $K$ and $(\M^n)^\complement$ is positive) 
gives us arbitrarily close estimation in sup-norm on $K$ so that by choosing $\varepsilon_1$ sufficiently small we get
\begin{gather} \label{ettc}
 \max_{\vs \in K} \abs{\zeta_{\ett}^{[N]}(\vs+i\vt;\val)-\zeta_n(\vs+i\vt;\val) } <\frac {\varepsilon} 4,
\end{gather}
for $t$  in a subset of $[0,T]$  of positive measure greater than $T(1-\ddddd/2)$. By  the triangle inequality and the inequalities \eqref{ett}, \eqref{ettb1}, \eqref{ettb}, \eqref{ettc}  we conclude that 
\begin{gather} 
 \liminf_{T \to \infty} \frac 1 {T^n}  \operatorname{meas} \left \{t \in [0,T]^n: \max_{\vs \in K} \abs{\zeta_n(\vs+i\vt;\val)-f(\vs)} < \varepsilon \right \} \geq \frac {\ddddd} 2>0. 
\end{gather}  \qed

\section{Proof of Lemma \ref{LE3}} \label{secLemma1}
Lemma \ref{LE3} is an easy consequence of Lemma \ref{onevar} when $\alpha$ is transcendental. When $\alpha$ is rational we  use a variant of the following joint universality result for finite Euler-products
\begin{lem} \label{rational}
  Let $\chi_1$,\ldots,$\chi_m$ be distinct non-principal Dirichlet characters mod $q$. Then given any  subset $\newsym{A set of primes}{\mathcal P}$ of the primes that has positive density with respect to each residue class mod $q$
\begin{gather} \label{rrr}
  \lim_{N \to \infty} \frac {1} {\pi(N)} \{p \in \mathcal P: p \equiv a \pmod q, \, p \leq N \}=\frac{c_{a}}{\phi(q)}>0, \qquad  \GCD(a,q)=1, \qquad
\end{gather}
and any continuous $f_1,\ldots,f_m$ that are either identically zero\footnote{Allowing identically zero-functions in Lemma \ref{rational} easily follows from the Lemma without explicitly allowing such function since  as functions we may choose the functions $f_j(s)=\varepsilon$ for arbitrarily small $\varepsilon>0$. However since we will apply the Lemma in exactly this situation we choose to make this explicit in the formulation of the Lemma.} or are zero-free functions on $\M$ that are analytic in the interior of $\M$ and $\varepsilon,N_0>0$, then there exist some $N_1>N_0$ and coefficients $|a(p)|=1$ such that
\begin{gather*}
 \max_{1 \leq j \leq m} \max_{s \in \M}\abs{\prod_{\substack{N_0<p<N_1 \\ p \in \mathcal P}} (1-a(p) \chi_j(p)p^{-s})^{-1}-f_j(s)   }<\varepsilon.
\end{gather*}
\end{lem}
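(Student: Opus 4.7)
The plan is to reduce the problem to a standard Pechersky-type rearrangement argument by taking logarithms, decomposing the resulting prime sum according to residue classes modulo $q$, and then exploiting the orthogonality of Dirichlet characters. As indicated in the footnote, we may assume each $f_j$ is zero-free and analytic on $\M$ (any identically zero $f_j$ being replaced by a sufficiently small nonzero constant), so that a branch $g_j = \log f_j$ is well defined on $\M$.

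First, taking logarithms of the finite Euler product gives
$$\log \prod_{\substack{N_0 < p < N_1 \\ p \in \mathcal P}} \bigl(1-a(p)\chi_j(p) p^{-s}\bigr)^{-1} = \sum_{\substack{N_0 < p < N_1 \\ p \in \mathcal P}} \frac{a(p) \chi_j(p)}{p^{s}} + E_j(s),$$
where $E_j(s)$ collects the higher-order terms in $p^{-ks}$ for $k \geq 2$. Since $|p^{-ks}| \leq p^{-k(1/2+\ddd)}$ uniformly on $\M$, the error is $O(N_0^{-2\ddd})$ and can be made negligible by taking $N_0$ large. Grouping the remaining sum by residue class and setting
$$S_b(s) := \sum_{\substack{N_0 < p < N_1,\ p \in \mathcal P \\ p \equiv b \,(\mathrm{mod}\, q)}} \frac{a(p)}{p^{s}}, \qquad (b,q) = 1,$$
one has $\sum_{p} a(p)\chi_j(p) p^{-s} = \sum_{(b,q)=1} \chi_j(b) S_b(s)$.

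Second, extend $\chi_1,\ldots,\chi_m$ to a complete system $\chi_1,\ldots,\chi_{\phi(q)}$ of characters mod $q$ and define targets
$$G_b(s) := \frac{1}{\phi(q)} \sum_{j=1}^{m} \overline{\chi_j(b)}\, g_j(s), \qquad (b,q)=1.$$
Orthogonality of characters then yields $\sum_b \chi_j(b) G_b(s) = g_j(s)$ for $1 \leq j \leq m$, so it suffices to find unimodular $a(p)$'s with $\max_{s \in \M} |S_b(s) - G_b(s)| < \delta$ for each residue class $b$, where $\delta$ is small relative to $\varepsilon$, $m$, and $\phi(q)$. This is handled in each residue class separately. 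By the density hypothesis \eqref{rrr} combined with the prime number theorem in arithmetic progressions, the primes $p \in \mathcal P$ with $p \equiv b \pmod q$ and $p > N_0$ satisfy $\sum p^{-\sigma} = \infty$ for $\sigma < 1$ while $\sum p^{-2\sigma} < \infty$ for $\sigma > 1/2$. A standard Pechersky-type rearrangement argument, as in Voronin's original proof of universality for the Hurwitz zeta-function (see \cite{Steuding} or Bagchi \cite{Bagchi2}), then shows that the closure of the set of finite sums $\sum a(p)/p^s$ with unimodular $a(p)$ and primes drawn from this subset is dense in a suitable Hilbert space of functions analytic on the interior of $\M$; Cauchy estimates on a slightly enlarged strip upgrade $L^2$ approximation to the required uniform approximation on $\M$. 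Letting $N_1$ exceed the largest prime used across all residue classes completes the construction.

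The main obstacle is the verification of the density step of the rearrangement argument for the specific sparse set of primes lying simultaneously in a single residue class mod $q$ and in $\mathcal P$; classical formulations use the set of all primes. The hypothesis \eqref{rrr} is precisely what is needed here, since it ensures both the divergence of the $\ell^1$-norm and the convergence of the $\ell^2$-norm of $\{p^{-s}\}_p$ in the relevant function space, which are the standing hypotheses of the Pechersky rearrangement theorem. Consistency across the different indices $j$ is automatic: for fixed $b$, the coefficients $a(p)$ with $p \equiv b \pmod q$ enter every $T_j$ only through the common function $S_b$, and distinct residue classes correspond to disjoint sets of primes and can therefore be treated independently.
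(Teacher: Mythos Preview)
Your argument is correct and follows exactly the standard Bagchi-style reduction that the paper itself invokes: the paper's own proof is essentially a pointer to \cite[Lemma~4.9]{Bagchi} together with the remark that positive density in each residue class is all the Pechersky rearrangement step actually uses. One small quibble: $N_0$ is given in the hypothesis, not a parameter you may enlarge, so the sentence ``can be made negligible by taking $N_0$ large'' needs a one-line reduction---fix any $N_0' > N_0$ large enough that $\sum_{p>N_0'} p^{-2(1/2+\ddd)}$ is below the tolerance, absorb the finite Euler factor over $N_0 < p \le N_0'$ (which is zero-free) into the targets $f_j$, and run your argument from $N_0'$.
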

\begin{proof}
  When $\Pri$ is the set of all primes this is a standard result in the field which is usually used to prove joint universality for Dirichlet $L$-functions, see e.g \cite[Lemma 4.9]{Bagchi}. Its proof depends on the Pechersky rearrangement theorem. The version above follows by the same proof method since positive density of the primes in each residue class is sufficient for using the Pechersky rearrangement theorem in a similar manner. 
\end{proof}
We also need that there exist some subset $\Pri$ of the primes with the desired properties such that certain Euler-products over the primes are  convergent.
\begin{lem} \label{subprime}
 Let $q \geq 3$. Then there exists a subset $\mathcal P$ of the primes containing asymptotically all primes
\begin{gather} \label{priineq}
   \sharp \{p \in \Pri: p \leq N \} = \pi(N) \p{1+  O \p{\frac 1 {(\log N)^{A}}}}  \qquad (A>0),
\end{gather}
such that the product
  \begin{gather}
    \lim_{N \to \infty} \prod_{\substack{p \in \Pri \\ p \leq N }} (1-\chi(p)p^{-s})^{-1}=    \newsym{A restricted Dirichlet $L$-function with Euler-product over primes in $\Pri$}{L_{\Pri}(s,\chi)},
   \end{gather}
   is  convergent to a zero-free analytic function $L_{\Pri}(s,\chi)$  on the half plane $\Re(s)>1/2$ for each non-principal character $\chi \mod q$. 
\end{lem}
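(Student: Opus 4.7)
The plan is to construct $\Pri$ as the complement of a sparse, carefully chosen set $\mathcal B$ of primes. The removed primes are used to balance the residue classes modulo $q$ within dyadic intervals so that the partial character sums $T_\chi(y) = \sum_{p \in \Pri,\, p \leq y} \chi(p)$ stay small, which together with the absolute convergence of the higher-order logarithm terms will yield convergence and zero-freeness of the Euler product on $\Re(s) > 1/2$.

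By the Siegel--Walfisz theorem, for each non-principal $\chi$ mod $q$ and any $B>0$ one has $S_\chi(x) := \sum_{p \leq x} \chi(p) = O_B(x/(\log x)^B)$, and the primes equidistribute among the $\phi(q)$ residue classes coprime to $q$ with the same error. I would partition $[2,\infty)$ into dyadic intervals $I_k = (2^{k-1}, 2^k]$. For each $k$, let $\pi(I_k;a,q)$ denote the number of primes in $I_k$ congruent to $a$ mod $q$, set $M_k = \min_a \pi(I_k;a,q)$, and build $\mathcal B_k$ by selecting exactly $\pi(I_k;a,q) - M_k$ primes from each class $a$. Then each residue class in $I_k \setminus \mathcal B_k$ contains precisely $M_k$ primes, so by character orthogonality $\sum_{p \in \Pri \cap I_k} \chi(p) = M_k \sum_a \chi(a) = 0$ for every non-principal $\chi$, while Siegel--Walfisz gives $|\mathcal B_k| \ll_B 2^k/k^B$ for any $B$. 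Taking $\Pri$ to be the complement of $\bigcup_k \mathcal B_k$ and choosing $B>A+1$ yields $|\Pri \cap [1,N]| = \pi(N)(1 + O(1/(\log N)^A))$.

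Taking logarithms, one writes
\[
\log \prod_{\substack{p \in \Pri \\ p \leq N}}(1 - \chi(p) p^{-s})^{-1} \;=\; \sum_{\substack{p \in \Pri \\ p \leq N}} \frac{\chi(p)}{p^s} \;+\; R_N(s),
\]
where $R_N$ collects the $k \geq 2$ terms of $-\log(1-z)$ and converges absolutely and uniformly on compacta of $\Re(s)>1/2$. Convergence of the remaining Dirichlet sum is then obtained by Abel summation from the bounds on $T_\chi(y)$, which by construction vanish at the dyadic endpoints $y = 2^k$. Convergence of $\log L_\Pri(s,\chi)$ to an analytic function on $\Re(s)>1/2$ then forces $L_\Pri(s,\chi) = \exp(\log L_\Pri(s,\chi))$ to be analytic and nowhere zero there.

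The main obstacle is controlling the mid-interval partial sums $T_\chi(y)$ for $y$ strictly inside some $I_k$ at the scale $O(y^{1/2+\varepsilon})$ required for convergence throughout $\Re(s)>1/2$: while the construction forces the sum to vanish at each endpoint $y = 2^k$, a careless choice of which primes to remove within each residue class could let the partial sum grow as large as $|\Pri \cap I_k| \asymp 2^k/k$ before returning to zero. I would handle this by spreading the removed primes uniformly through each residue class within $I_k$ rather than clustering them at one end, so that the mid-interval character oscillations in $\Pri \cap I_k$ inherit the Siegel--Walfisz savings on sub-intervals of $I_k$; if these unconditional savings on sub-intervals prove insufficient, one may refine the decomposition (replacing the dyadic intervals by shorter ones of length $2^{k\theta}$ for suitable $\theta < 1$) and invoke Bombieri--Vinogradov or Huxley-type equidistribution of primes in short intervals to obtain the required control.
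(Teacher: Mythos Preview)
Your proposal correctly identifies the strategy---remove a sparse set of primes to force equal counts in each residue class---and you correctly isolate the genuine difficulty: controlling $T_\chi(y)$ to order $O(y^{1/2+\varepsilon})$. However, the dyadic decomposition with Siegel--Walfisz cannot deliver this. Siegel--Walfisz gives only $S_\chi(y)=O(y/(\log y)^B)$, and your removed set $\mathcal B_k$ has the same size, so inside a dyadic block the fluctuation of $T_\chi$ is still of order $2^k/k^B$, not $2^{k/2}$. Your first proposed fix, spreading the deletions uniformly, does not help: the obstruction is the oscillation of $\sum_{p\le y}\chi(p)$ over \emph{all} primes in the block, and that term is untouched by how you distribute the deletions. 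No amount of cleverness in placing $\mathcal B_k$ will turn an $O(y/(\log y)^B)$ bound into an $O(y^{1/2+\varepsilon})$ bound.

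Your second fix---refining to much shorter intervals and invoking short-interval equidistribution---is the right move, and this is precisely what the paper does, but with a twist that bypasses Abel summation altogether. The paper takes intervals $I_n=[n^2,(n+1)^2]$ of length $\asymp\sqrt{x}$, and uses a Selberg-type mean-value result (coming from zero-density estimates) to show that most such intervals contain close to the expected number of primes in every residue class mod $q$. From each good interval one keeps exactly the same number of primes in every class, so $\sum_{p\in\Pri\cap I_n}\chi(p)=0$ for every non-principal $\chi$. The key observation is then that for $p\in I_n$ one has $p^{-s}=n^{-2s}+O(|s|\,n^{-2\Re(s)-1})$; since the $\chi(p)$-weighted main terms cancel exactly over each $I_n$, and each $I_n$ contains at most $n$ primes, the error terms contribute an absolutely convergent series for $\Re(s)>1/2$. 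Thus the convergence of $\sum_{p\in\Pri}\chi(p)p^{-s}$ is obtained directly, without needing a pointwise bound on the tail partial sums $T_\chi(y)$. The length scale $\sqrt{x}$ is exactly what makes this Taylor-expansion trick work: it is short enough that $p^{-s}$ is essentially constant on $I_n$ up to an absolutely summable correction, yet long enough that short-interval prime distribution results apply.
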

\begin{proof}
It is sufficient if we when  $N \to \infty$  can choose  asymptotically all  of the primes $p \equiv a \pmod q$ in each residue class $(a,q)=1$ from an interval $[N,N+\delta_N N]$, where $\delta_N=\frac 1 {(\log N)^A}$ to be in the set $\Pri$.
 The following is  a variant of a result of Selberg \cite{Selberg} in arithmetic progressions which is a consequence of zero-density estimates for the Dirichlet $L$-functions,
\begin{gather} \label{tyt}
   \max_{(a,q)=1} \int_x^{2x}
  \abs{\sum_{\substack{t <p<t+h \\ p \equiv a \pmod q}} \log p-\frac h {\phi(q)}} dt \ll_{q,B,\varepsilon} \frac {hx} {(\log x)^B},  \qquad (h \geq x^{1/6+\varepsilon}), 
 \end{gather}
see e.g. \cite[Theorem 1.1]{Dim}. We are going to consider primes in the intervals $I_n=[n^2,(n+1)^2].$ The expected number of primes in the interval $I_n$ congruent to $a \pmod q$ will be
\begin{gather}
   E_n=\frac 1 {\phi(q)} \left(\operatorname{Li}((n+1)^2)-\operatorname{Li}(n^2)\right).  
\end{gather}
From \eqref{tyt} it follows that there exist some $N_0>0$ such that for $N \geq N_0$ we have that
\begin{gather}
   \# \left \{n^2 \in \left[N,N+\delta_N N \right] : \min_{(a,q)=1} \# \{ p \in I_n , \, p \equiv a \pmod  q \} < (1-\delta_N) E_{n} \right \}   \leq  \sqrt{N}\delta_N^2,   
\end{gather}
so that for a proportion of at least $(1-\delta_N)$ of the intervals $I_n \subset [N,N+\delta_N N]$  there are at least $(1-\delta_N)$ of the expected number of primes in each residue class. 
For each such value of $n$ we choose $\lfloor (1-\delta_N)E_n \rfloor$ primes from each residue class $a \pmod q$ with $(a,q)=1$ for the set $\Pri$.  By its construction it is clear that the set $\Pri$ has   more than $(1-\delta_N)^2$  of the primes in each such residue class which gives us \eqref{priineq}. 
By taking the logarithm of the product in Lemma \ref{subprime} and by noticing that the prime power part is absolutely convergent for $\Re(s)>1/2$ it follows that
it is sufficient to show that the series
\begin{gather} \label{jj}
 \sum_{p \in \Pri} \chi(p)p^{-s}
\end{gather}
is  convergent for $\Re(s)>1/2$. If $p \in I_n$ we have that $p=n^2+x$ for $0<x \leq 2n$ and 
\begin{gather*} p^{-s}=(n^2+x)^{-s}=n^{-2s}\p{1+\frac x {n^2}}^{-s}=n^{-2x}+O \p{sn^{-2s-1}}, \end{gather*}
from which it follows that the contribution to the sum \eqref{jj} from each prime in the interval $I_n$ is
\begin{gather} \label{jj2}
  \chi(p) n^{-2s}+O(s n^{-2s-1}).
\end{gather}
Since we have an equal number of primes in each residue class $p \equiv a \pmod q$ with $(a,q)=1$ in the interval $I_n$ and the character is non-principal the first part will vanish when summing over all the primes in the interval. Since if $n \geq 2$ we have at most $n$ primes in the interval $I_n$ the second part is absolutely convergent when we sum over $n$ when $\Re(s)>1/2$.
\end{proof}
It is clear that if we assume the generalized Riemann hypothesis, then $L_\Pri(s,\chi)=L(s,\chi)$ when we choose $\Pri$ to be the set of all primes in Lemma \ref{subprime} since the convergence of the product over all the primes for  $\Re(s)>1/2$ is  equivalent to the Riemann hypothesis for the Dirichlet $L$-function $L(s,\chi)$. Thus lemma \ref{subprime} may be viewed in the following manner: There exist a subset of the primes containing almost all primes such that the Riemann hypothesis is true for all\footnote{By considering the moduli $q_n=n!$ and a limit argument it is not neccesary to restrict to a single modulus in Lemma \ref{subprime}. However a single modulus  is sufficient for our applications.}  restricted Dirichlet $L$-functions, where ``restricted Dirichlet $L$-functions'' means that rather than taking the Euler product over all primes we take the Euler product restricted over the set $\Pri$.

\begin{lem} \label{rational2}
  Let $\chi_1$,\ldots,$\chi_m$ be distinct non-principal Dirichlet characters mod $q$. Then given any continuous   $f_1,\ldots,f_m$ that are either identically zero or are zero-free functions on $\M$ that are analytic in the interior of $\M$ and $\varepsilon,N_0>0$ then there exist some $N_1>N_0$ and $|a(p)|=1$ for $N_0<p<N_1$ such that
\begin{gather*}
 \max_{1 \leq j \leq m} \max_{s \in \M}\abs{\prod_{N_0<p<N_1} \p{ \frac{1- \chi_j(p)p^{-s}} {1-a(p) \chi_j(p)p^{-s}}}-f_j(s)   }<\varepsilon.
\end{gather*}
\end{lem}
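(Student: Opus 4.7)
The plan is to reduce Lemma \ref{rational2} to Lemma \ref{rational} applied to the restricted set of primes $\Pri$ from Lemma \ref{subprime}. The key observation is that the ratio form $\frac{1-\chi_j(p)p^{-s}}{1-a(p)\chi_j(p)p^{-s}}$ permits a clean splitting: for primes $p \notin \Pri$ we may simply set $a(p)=1$ so that the corresponding factor equals $1$ and contributes nothing. Only the primes $p\in \Pri$ then matter, and on that restricted set we have control both over approximation (by Lemma \ref{rational}) and over an auxiliary convergent Euler product (by Lemma \ref{subprime}).

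More precisely, first invoke Lemma \ref{subprime} to obtain a subset $\Pri$ of the primes such that, for each of the characters $\chi_1,\ldots,\chi_m$, the restricted $L$-function $L_{\Pri}(s,\chi_j)$ exists as a zero-free analytic function on $\Re(s)>\frac12$, given by the convergent Euler product over $\Pri$. Because $\Pri$ contains asymptotically all primes, it has density $1/\phi(q)$ in each reduced residue class mod $q$, so condition \eqref{rrr} of Lemma \ref{rational} is satisfied. Next, using the convergence of the Euler product for $L_{\Pri}(s,\chi_j)$ uniformly on $\M$, enlarge $N_0$ if necessary so that
\[
\max_{1\leq j\leq m}\max_{s\in \M}\abs{\prod_{\substack{p\in\Pri\\ p>N_0}}(1-\chi_j(p)p^{-s}) - 1} < \varepsilon',
\]
for a sufficiently small $\varepsilon'>0$ to be fixed; this bound then holds with the product truncated at any $N_1>N_0$ as well, since the same tail estimate controls finite sub-tails.

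Now apply Lemma \ref{rational} to the set $\Pri$, the characters $\chi_1,\ldots,\chi_m$, the functions $f_1,\ldots,f_m$, and a small parameter $\varepsilon''>0$. This yields $N_1>N_0$ and unimodular coefficients $\{a(p)\}_{p\in\Pri,\, N_0<p<N_1}$ with
\[
\max_{1\leq j\leq m}\max_{s\in\M}\abs{\prod_{\substack{N_0<p<N_1\\ p\in\Pri}}(1-a(p)\chi_j(p)p^{-s})^{-1} - f_j(s)} < \varepsilon''.
\]
Complete the definition of $a(p)$ by setting $a(p)=1$ for all primes $p\in(N_0,N_1)$ with $p\notin\Pri$. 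Then the full product in the statement factors as
\[
\prod_{N_0<p<N_1}\frac{1-\chi_j(p)p^{-s}}{1-a(p)\chi_j(p)p^{-s}} = \left[\prod_{\substack{N_0<p<N_1\\ p\in\Pri}}(1-\chi_j(p)p^{-s})\right]\cdot\left[\prod_{\substack{N_0<p<N_1\\ p\in\Pri}}(1-a(p)\chi_j(p)p^{-s})^{-1}\right].
\]
The first bracket is within $\varepsilon'$ of $1$ uniformly on $\M$, and the second bracket is within $\varepsilon''$ of $f_j(s)$. A trivial triangle-inequality estimate, using that $|f_j|$ is bounded on $\M$, then yields total error at most $(\|f_j\|_{\infty}+\varepsilon'')\varepsilon'+\varepsilon''$, which is $<\varepsilon$ once $\varepsilon',\varepsilon''$ are chosen small enough in terms of $\varepsilon$ and $\max_j\|f_j\|_\infty$.

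The only subtle step is verifying that the contribution of the exceptional primes $p\in(N_0,N_1)\setminus\Pri$ really is neutralized. The device $a(p)=1$ off $\Pri$ handles this exactly, regardless of how sparse or irregular the complementary set is, which is the reason for working with the ratio form rather than with the single Euler product $\prod(1-a(p)\chi_j(p)p^{-s})^{-1}$ appearing in Lemma \ref{rational}. Everything else is a direct quotation of Lemma \ref{rational} and Lemma \ref{subprime}, so no genuinely new analytic input is needed; the main "obstacle" is conceptual, namely recognizing that Lemma \ref{rational} on $\Pri$ is the right tool and combining its output with the trivial-factor choice off $\Pri$ and the tail estimate from $L_{\Pri}$.
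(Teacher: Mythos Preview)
Your approach is correct and shares the paper's core idea: reduce to Lemma \ref{rational} on the subset $\Pri$ from Lemma \ref{subprime}, and neutralize the primes outside $\Pri$ by setting $a(p)=1$ there so the corresponding ratio factors become $1$. The technical execution differs slightly. The paper keeps the given $N_0$ fixed and applies Lemma \ref{rational} with target functions $f_j(s)\,L_{\Pri,N_0}(s,\chi_j)$ (still zero-free or identically zero), then divides by the partial product $\prod_{\substack{N_0<p<N_1\\ p\in\Pri}}(1-\chi_j(p)p^{-s})^{-1}$, which by uniform convergence is close to $L_{\Pri,N_0}(s,\chi_j)$ and is bounded below in modulus by $\lambda/2$. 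You instead enlarge $N_0$ so that, by the Cauchy criterion for the convergent product $L_{\Pri}$, every finite sub-product $\prod_{\substack{N_0<p<N_1\\ p\in\Pri}}(1-\chi_j(p)p^{-s})$ is uniformly close to $1$, and then target $f_j$ directly in Lemma \ref{rational}. Both routes work; yours is arguably cleaner in that it avoids tracking the lower bound $\lambda$, at the cost of silently replacing the given $N_0$ by a larger one. That replacement is harmless---one simply sets $a(p)=1$ on the intervening primes as well, so those factors are trivial---but you should say so explicitly, since $N_0$ is part of the data in the statement and may not itself be large.
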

\begin{proof} 
 By Lemma \ref{subprime} it follows that there exists some subset $\Pri$ of the primes such that each residue class mod $q$ contains almost all primes, i.e. \eqref{rrr} holds with $c_a=1$ if $(a,q)=1$ and such that
\begin{gather} \label{iiiii}
  \newsym{A restricted Dirichlet $L$-function with Euler-product over primes in $\Pri$ greater than $N_0$}{L_{\Pri,N_0}(s,\chi)}=\lim_{N_1 \to \infty} \prod_{\substack{N_0<p<N_1 \\ p \in \Pri}} (1-  \chi(p)p^{-s})^{-1}, 
\end{gather}
for each non-principal $\chi$ mod $q$ is convergent to a zero-free analytic function for $\Re(s)>1/2$. Define
\begin{align} \label{irt}
 \lambda&= \min_{1 \leq j \leq m} \min_{s \in \M} \abs{L_{\Pri,N_0}(s,\chi_j)}, \\ \intertext{and}
 \Lambda&=  \max_{1\leq j \leq m} \max_{s \in \M} \abs{f_j(s)}.   \label{irt2}  
\end{align}
 Let $0<\varepsilon_1<\lambda/2$. By equation \eqref{iiiii} we get  some $M>N_0$ such that for any $N_1>M$ then
\begin{gather} \label{uy1} 
 \max_{s \in \M} \abs{L_{\Pri,N_0}(s,\chi)- \prod_{\substack{N_0<p<N_1 \\ p \in \Pri}} (1-  \chi(p)p^{-s})^{-1}}<\varepsilon_1<\frac \lambda 2. 
\end{gather}
By Lemma \ref{rational} we find an $N_1>M$ and $|a(p)|=1$ for $p \in \Pri$ and $N_0<p<N_1$ so that 
\begin{gather} \label{uy2}
 \max_{1 \leq j \leq m} \max_{s \in \M}\abs{\prod_{\substack{N_0<p<N_1 \\ p \in \Pri}}(1-a(p) \chi_j(p)p^{-s})^{-1}-f_j(s) L_{\Pri,N_0}(s,\chi_j)   } <\varepsilon_2.
\end{gather}
 By the triangle inequality and equations \eqref{irt2}, \eqref{uy1} and \eqref{uy2} it follows that
\begin{gather} \label{uy3}
\abs{\prod_{\substack{N_0<p<N_1\\ p \in \Pri}}(1-a(p) \chi_j(p)p^{-s})^{-1}-f_j(s) \prod_{\substack{N_0<p<N_1 \\ p \in \Pri}}(1- \chi_j(p)p^{-s})^{-1}   } <\varepsilon_2+\Lambda \varepsilon_1. \qquad
\end{gather}
for each $1 \leq j \leq m$ and  $s \in \M$,
and furthermore by the triangle inequality and equations \eqref{irt} and \eqref{uy1} that
\begin{gather} \label{uy4} \min_{s \in \M} \abs{\prod_{\substack{N_0<p<N_1 \\ p \in \Pri}}(1- \chi_j(p)p^{-s})^{-1}} \geq \frac \lambda 2.\end{gather}
By the inequalities \eqref{uy3} and \eqref{uy4} it  follows that
\begin{gather} \label{iii4}
\max_{1 \leq j \leq m} \max_{s \in \M}\abs{\prod_{\substack{N_0<p<N_1 \\ p \in \Pri}} \p{ \frac {1- \chi_j(p)p^{-s}} { 1-a(p) \chi_j(p)p^{-s}}}-f_j(s)   }<\frac 2 {\lambda} (\varepsilon_2+\Lambda \varepsilon_1)
\end{gather}
We notice that if we define $a(p)=1$ if $p \not \in \Pri$ for $N_0<p<N_1$ then the product in \eqref{iii4} can be written over all the primes in the interval $N_0<p<N_1$ and Lemma \ref{rational2}  follows by choosing $\varepsilon_1$ and $\varepsilon_2$ so that
$$
 \frac 2 \lambda (\varepsilon_2+\Lambda \varepsilon_1) <\varepsilon. 
$$
\end{proof}

\begin{lem} \label{trew}
 Let $\alpha>0$ be transcendental or rational, $\varepsilon>0$ and $q$ be given. Then there exists some  $C=C(\varepsilon,\alpha,q)$ such that if $a:\N+\alpha \to \C$ is of type $(N_0,\chi)$ and $\chi$ is of modulus $q$ then for any $ N_0 \leq N_1 \leq N_2$  and $s \in \C$ with $\Re(s) \geq \varepsilon$ we have that 
 \begin{gather}
    \abs{\sum_{k=N_1}^{N_2} \frac{ a(k+\alpha)}{ (k+\alpha)^{s}}} \leq C (1+|s|) N_1^{\varepsilon-\Re(s)}.
  \end{gather}
\end{lem}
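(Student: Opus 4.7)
The plan is to reduce the estimate, via Abel summation, to a bound on the unweighted partial sum $A(M) := \sum_{k=N_1}^M a(k+\alpha)$. Partial summation yields
\[
\sum_{k=N_1}^{N_2} \frac{a(k+\alpha)}{(k+\alpha)^s} = \frac{A(N_2)}{(N_2+\alpha)^s} + s\int_{N_1}^{N_2} A(\lfloor t\rfloor)\, (t+\alpha)^{-s-1}\, dt,
\]
so if I can establish the uniform bound $|A(M)| \le C_1(\alpha,q,\varepsilon)\, M^{\varepsilon/2}$ for all $M \ge N_1 \ge N_0$ (uniformly in $N_0$), then for $\Re(s) \ge \varepsilon$ the boundary term is bounded by $C_1 N_2^{\varepsilon/2-\Re(s)} \le C_1 N_1^{\varepsilon/2-\Re(s)}$ and the integral by $2|s|C_1 N_1^{\varepsilon/2-\Re(s)}/\varepsilon$, which together give the stated inequality after absorbing $N_1^{\varepsilon/2-\Re(s)} \le N_1^{\varepsilon-\Re(s)}$.

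For $\alpha$ transcendental the claim is almost immediate: by the definition of type $(N_0,\chi)$, $a(k+\alpha) = e^{2\pi i k/q}$ for every $k \ge \log N_0$, and since $N_1 \ge N_0 \ge \log N_0$ (the finitely many small $N_0$ being absorbed into $C$), the summand in $A(M)$ is a periodic character of period $q$ with vanishing mean, so $|A(M)| \le 2/|1-e^{2\pi i/q}|$, a bound much stronger than $M^{\varepsilon/2}$.

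For $\alpha = c/d$ rational with $(c,d)=1$, I use the identity $a(k+\alpha) = a(kd+c)/a(d)$ from \eqref{ojkond2}, where $a$ now denotes the completely multiplicative unimodular extension to $\mathbb{N}$. Orthogonality of characters modulo $d$ writes $A(M)$ as a linear combination over $\chi^* \pmod d$ of sums $\sum_{n \le X}(a\chi^*)(n)$ with $X = Md+c$. For each such $\chi^*$ put $b := a\chi^*$; this is completely multiplicative with $b(p) = \psi(p)$ for $p \ge \log N_0$, where $\psi := \chi\chi^*$ is non-principal modulo $qd$ (since $\chi$ is non-principal mod $q$ and $(q,d)=1$). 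Factoring the Dirichlet series as
\[
\sum_n b(n) n^{-s} = G(s)\, L(s,\psi), \qquad G(s) = \prod_{p<\log N_0} \frac{1-\psi(p)p^{-s}}{1-b(p)p^{-s}},
\]
yields the convolution $b = g * \psi$ with $g$ supported on $\log N_0$-smooth integers and $|g(m)| \le 2^{\omega(m)}$. P\'olya--Vinogradov bounds $|\sum_{n \le y}\psi(n)| \le \sqrt{qd}\log(qd)$ uniformly in $y$, hence
\[
\Big| \sum_{n\le X} b(n) \Big| \le \sqrt{qd}\log(qd) \cdot \sum_{\substack{m \le X\\ P(m) < \log N_0}} 2^{\omega(m)}.
\]
Standard smooth-number estimates give $\Psi(X,\log N_0) \le \Psi(X,\log X) = X^{o(1)}$ (using $N_0 \le X$) and $2^{\pi(\log N_0)} = N_0^{o(1)} \le X^{o(1)}$, so the weighted sum is $X^{o(1)} \ll_{\varepsilon,\alpha,q} X^{\varepsilon/2}$, delivering the desired bound on $|A(M)|$.

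The main obstacle is obtaining the bound on $|A(M)|$ with a constant independent of $N_0$ in the rational case; this requires combining character cancellation (P\'olya--Vinogradov for $\psi$ modulo $qd$) with a sharp smooth-number bound controlling the ``small-prime correction'' function $g$, the latter succeeding crucially because $N_1 \ge N_0$ forces $\log N_0 \le \log M$.
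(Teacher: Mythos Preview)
Your argument is correct and follows essentially the same route as the paper: reduce via Abel summation to a bound $|A(M)|\ll M^{\varepsilon/2}$ uniform in $N_0$, treat the transcendental case by periodicity, and in the rational case exploit the convolution factorisation $a=g*\chi$ (with $g$ supported on $\log N_0$-smooth integers) together with the smooth-number estimate $\Psi(X,\log N_0)\ll_\varepsilon X^{\varepsilon/2}$ for $X\ge N_0$. The only cosmetic differences are that the paper works directly with the progression $n\equiv c\pmod d$ and bounds the resulting incomplete $\chi$-sum trivially by $q$, whereas you pass through orthogonality mod $d$ and invoke P\'olya--Vinogradov for $\chi\chi^*$; both yield an $O_{q,d}(1)$ constant and the proofs are otherwise identical.
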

\begin{proof}
 By partial summation it is sufficient to prove that  
  \begin{gather} \label{Bdeff}
   \abs{\sum_{n=0}^{N} a(n+\alpha)}   \leq \tfrac 1 2 C \varepsilon N^{\varepsilon/2}, \qquad (N \geq N_0).
   \end{gather}
for any $a:\N+\alpha \to \C$ of type $(N_0,\chi)$ where $\chi$ is of modulus $q$.   The inequality is trivial if $\alpha$ is transcendental so we assume that
 $\alpha=r+\frac c d$ is rational where $r \in \N$,  $c,d \in \Z^+$ and $0 \leq c <d$.   Since adding an integer $r$ to $\alpha$ just corresponds to shifting the summation indices by $r$ we may without loss of generality assume that $r=0$. As customary in the theory of numbers without large prime factors \cite{HilTen}  we let  \newsym{The number of positive integers less than $x$ with prime factors less than $y$.}{$\Psi(x,y)$} denote the number of integers less than $x$ with prime factors less than $y$. The inequality \cite[Eq 1.1.14]{HilTen} 
$$
 \Psi (x,(\log x)^\alpha)=x^{1-1/\alpha+o(1)},  \qquad (\alpha>1),
$$
with $\alpha=(1-\varepsilon/4)^{-1}$ gives us that for some $D =D(\varepsilon)>0$ and any $N_0 \in \Z^+$ then
\begin{gather}  \label{nwlpf}
  \Psi(N,\log N_0) \leq D N^{\varepsilon/2}, \qquad (N \geq N_0).
\end{gather}
Let  \newsym{The largest prime factor of $k$}{$P^+(k)$} denote the largest prime factor of $n$. 
By the construction of $a(n)$ we note that
\begin{gather} \label{rraj}
   \sum_{k=1}^\infty a(k) k^{-s}= B(s) L(s,\chi), \\ \intertext{where}
   B(s)=\sum_{P^+(k) \leq \log N_0} b(k) k^{-s}= \prod_{p \leq \log N_0} (1-a(p) p^{-s})^{-1}  (1-\chi(p) p^{-s}), \label{rraj2}
 \end{gather}
and $|b(k)| \leq 2$. From this we obtain 
\begin{gather} \label{rraj3}  
   \sum_{n=0}^{N_1} a(n+\alpha)   = \overline{a(d)}  \sum_{n=0}^{N_1} a(nd+c)  =
	 \overline{a(d)} \sum_{P^+(k) \leq \log N_0} b(k) \sum_{\substack{k l \equiv c \pmod d \\ 0  \leq l <N_1d/k}} \chi(l).
\end{gather}
 The inner sum is bounded by $q$ and we get that 
\begin{gather*}
 \abs{\sum_{n=0}^{N_1} a(n+\alpha)} \leq
2 q  \Psi(N_1, \log N_0)   
\end{gather*}
which  by \eqref{nwlpf} implies the inequality \eqref{Bdeff} with $C=4Dq/\varepsilon$. 
\end{proof}

\noindent {\em Proof of Lemma \ref{LE3}.}
 We first prove the simpler case when $\alpha$ is transcendental. Since $\alpha$ is transcendental we have that the set $\{\log(k+\alpha): k \in \N \}$ is linearly independent over $\Q$ and any unimodular function $a$ on $\N+ \alpha$  is a completely multiplicative function. 
By Lemma \ref{onevar} we have that there there exists unimodular $a(k+\alpha)$ for $N_0<k \leq N_1$ such that
\begin{gather*}
 \max_{s \in \M} \abs{ \sum_{k=N_0+1}^{N_1} a(k+\alpha) (k+\alpha)^{-s} - \left(p(s)-\sum_{k=0}^{N_0} a(k+\alpha) (k+\alpha)^{-s} \right)} <  \varepsilon.
\end{gather*}
The case of $\alpha$ rational needs some more care. We want to use similar reasoning as when we prove that the Hurwitz zeta-function for a rational parameter is universal. First we assume that 
\begin{gather} 
  \alpha= r+\frac c d, \qquad (c,d \in \Z^+, \,  c \leq d, \,  r  \in \N, \, (c,d)=1), \label{rdef} 
\end{gather}
since we see no reason to require that $\alpha_j<1$ in the definition of the multiple Hurwitz zeta-function. 
In a similar manner to how the Hurwitz zeta-function with a rational parameter $0<\frac c d<1$ can be written as a linear combination of Dirichlet $L$-functions it follows by \eqref{ojkond} and \eqref{ojkond2} that  
\begin{align} \notag 
    \sum_{k=0}^\infty a(k+&\alpha) (k  +\alpha)^{-s} + \sum_{k=0}^{r-1} a\p{k+\frac c d} \p{k+\frac c d}^{-s}, \\ 
&= \frac {d^s \overline{a(d)}} {\phi(d)} \sum_{\chi^* \pmod d}  \overline{\chi^*(c)} \prod_{p } (1-\chi^*(p) a(p) p^{-s})^{-1}, \notag  \\
 &= \frac {d^s \overline{a(d)}} {\phi(d)} \sum_{\chi^* \pmod d}  \overline{\chi^*(c)} L(s,\chi\chi^*) \prod_{p< N_1 } \p{\frac{1- \chi^*(p) \chi(p) p^{-s}   }{1-\chi^*(p) a(p) p^{-s}} },  \label{fer}
\end{align}
for any $N_1>\log N_0$. Let us define \begin{gather} \label{fdef}
  f(s)=p(s) + \sum_{k=0}^{r-1} a\p{k+\frac c d} \p{k+\frac c d}^{-s}, 
\end{gather}
and note that by choosing $A\geq r+\alpha^{-1}$ it follows that $\min_{s \in \M}  |f(s)| \geq 1$ and in particular $f$ is zero-free on $\M$. Now let $\chi_0$ be the principal character mod $d$.  Since $\chi$ was assumed to be permissible with respect to $\alpha$ then $\chi$ is a non-principal Dirichlet character mod $q$ where $q$ is coprime to $d$ and also $L(s,\chi \chi_0)$  is zero-free on $\M$. This allow us to apply Lemma \ref{rational2} for the $\phi(d)$ distinct non-principal Dirichlet characters $\chi \chi^*$ mod $dq$ and we can find $N_1>N_0$, and unimodular $a(p)$ for $N_0<p \leq N_1$ such that
\begin{gather} \label{trt1}
  \abs{\prod_{N_0<p \leq N_1}  \p{\frac{ 1-\chi_0(p)\chi(p) p^{-s}} {1-\chi_0(p) a(p) p^{-s}}} - \frac {f(s)   \phi(d) a(d) }{  L(s,\chi \chi_0)d^s}    \prod_{p \leq	N_0}  \p{\frac{ 1-\chi_0(p) a(p) p^{-s}}{1-\chi_0(p)\chi(p) p^{-s}}}} < \varepsilon_1, \hskip 20pt 
\\ \intertext{for each $s \in \M$  and} \label{trt2}
  \max_{s \in \M} \abs{\prod_{N_0<p \leq N_1} \p{\frac{ 1-\chi^*(p) \chi(p) p^{-s}}{1-\chi^*(p) a(p) p^{-s}}}} < \varepsilon_1,
\end{gather}
for all other characters $\chi^* \neq \chi_0$ mod $d$, where  \begin{gather} \label{trt3} \varepsilon_1 = \frac {\varepsilon} 2 \, \min_{\chi^* \mod d} \min_{s \in \M}  \abs{\frac {  d^{-s} }{ L(s,\chi \chi^*)}    \prod_{p \leq	N_0}  \p{\frac{ 1-\chi^*(p) a(p) p^{-s}}{1-\chi^*(p) \chi(p)p^{-s}}}}. \end{gather}  It follows from equation \eqref{trt1}, \eqref{trt2} and \eqref{trt3} that
\begin{gather*} 
  \max_{s \in \M} \abs{\frac {d^s L(s,\chi \chi^*)}{\phi(d)  a(d) \chi^*(c)}  \prod_{p \leq N_1}   \p{\frac{1-\chi^*(p)\chi(p)p^{-s}} {1-\chi^*(p) a(p) p^{-s}}} - f(s) \begin{cases} 1 &  \chi^*  = \chi_0  \\ 0  & \text{otherwise} \end{cases} }< \frac{\varepsilon}{2 \phi(d)},
\end{gather*}
which in view of equations \eqref{fer}, \eqref{fdef} and the triangle inequality gives us the inequality 
\begin{gather} \label{ewwe} \max_{s \in \M} \abs{ \sum_{k=0}^\infty a(k+\alpha) (k+\alpha)^{-s}  - p(s)}<\frac{\varepsilon} 2.
\end{gather}
From Lemma \ref{trew} it follows that
\begin{gather}  \label{ewwe2}
 \max_{s \in \M}  \abs{\sum_{k=N_1+1}^\infty  a(k+\alpha) (k+\alpha)^{-s}}< \frac{\varepsilon} 2,
\end{gather}
provided $N_1$ has been chosen sufficiently large. Our final result follows from the inequalities \eqref{ewwe} and \eqref{ewwe2} and the triangle inequality.
\qed

\section{Other lemmas} \label{sc6}

We would like to obtain  an $n$-dimensional version of Lemma \ref{trew}, but before proving this we prove some preliminary variants. First an averaged form of an  ``additive version''. 
\begin{lem} \label{additive}
 Suppose that $a_j: \N+\alpha_j \to \C$ is a function of type $(N,\chi_j)$ where $\chi_j$ is a character of modulus $q_j$ for $j=1,2$. Then for any $N \leq N_1 \leq N_2$ and $0 \leq r_0$ we have that
\begin{gather}
    \max_{(s_1,s_2) \in \M^2} \abs{\sum_{r=r_0}^{r_0+ q_1-1}  \sum_{n=N_1}^{N_2} \frac{a_1(n+\alpha_1) a_2(n+r+\alpha_2)}{(n+\alpha_1)^{s_1} (n+r+\alpha_2)^{s_2}}} \ll_{\varepsilon} N_1^{\varepsilon-2 \ddd-1}.
\end{gather}
\end{lem}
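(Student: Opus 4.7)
The plan is to produce an extra factor of $N_1^{-1}$ beyond the trivial estimate $\ll N_1^{-2\ddd}$ by combining a Taylor expansion in the inner sum over $r$ with partial summation in the outer sum over $n$, the latter relying on a uniform bound
$A(N) := \sum_{n \leq N} a_1(n+\alpha_1)\,c(n) \ll_\varepsilon N^{\varepsilon/2}$, where $c(n):= \sum_{r=r_0}^{r_0+q_1-1} a_2(n+r+\alpha_2)$.

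The first step is, for fixed $n \geq N_1$, to expand
\[
 (n+r+\alpha_2)^{-s_2} = (n+\alpha_2)^{-s_2} + O\!\big((1+|s_2|)\,r\,n^{-\Re(s_2)-1}\big)
\]
uniformly for $r \in [r_0, r_0+q_1-1]$. Since $r \leq q_1$ and $|s_2|$ is bounded on $\M$, the contribution of the remainder to the full double sum is $\ll q_1^2 \sum_{n \geq N_1} n^{-\Re(s_1)-\Re(s_2)-1} \ll N_1^{-1-2\ddd}$, already within the target bound. This reduces the problem to estimating
\[
  T_0 := \sum_{n=N_1}^{N_2} a_1(n+\alpha_1)\,c(n)\,(n+\alpha_1)^{-s_1}(n+\alpha_2)^{-s_2}.
\]

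For the crucial bound $A(N) \ll N^{\varepsilon/2}$, the cancellation comes from the non-principality of $\chi_1$ together with the coprimality $(q_1,q_2)=1$ guaranteed in the context of Lemma~\ref{LE4}. In the transcendental case $a_j(k+\alpha_j) = e^{2\pi i k/q_j}$ for $k$ past the threshold, so $a_1(n+\alpha_1)\,c(n) = C\,e^{2\pi i n(1/q_1+1/q_2)}$ for a bounded constant $C$, and the geometric sum in $n$ is bounded since $(q_1+q_2)/(q_1 q_2) \notin \Z$; in fact $A(N) \ll 1$. In the rational case I would use the convolution decomposition $a_j(k) = \sum_{e \mid k,\, P^+(e)\leq\log N} b_j(e)\,\chi_j(k/e)$ of \eqref{rraj}--\eqref{rraj2} (with $|b_j(e)|\leq 2$) together with \eqref{ojkond2} for the rational shift; this unfolds $A(N)$ into a weighted double sum over smooth $e_1,e_2$ of character sums over arithmetic progressions, each of which vanishes over a full period modulo $q_1 q_2$ by the CRT split
\[
  \sum_{n\bmod q_1 q_2} \chi_1(n)\,\chi_2(n+r) = \Bigl(\sum_{n_1\bmod q_1}\chi_1(n_1)\Bigr)\Bigl(\sum_{n_2 \bmod q_2}\chi_2(n_2+r)\Bigr) = 0.
\]
The factor $N^{\varepsilon/2}$ comes from the smooth-numbers bound $\Psi(N,\log N)\ll N^{\varepsilon/2}$ already used in the proof of Lemma~\ref{trew}. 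With this in hand, partial summation in $n$ with $f(n) = (n+\alpha_1)^{-s_1}(n+\alpha_2)^{-s_2}$, $|f(n+1)-f(n)| \ll (|s_1|+|s_2|)\,n^{-\Re(s_1)-\Re(s_2)-1}$, yields
\[
 |T_0| \ll N_1^{\varepsilon/2-\Re(s_1)-\Re(s_2)} + \int_{N_1}^{N_2} x^{\varepsilon/2-\Re(s_1)-\Re(s_2)-1}\,dx \ll N_1^{\varepsilon-1-2\ddd},
\]
as required.

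The hard part will be establishing the rational-case bound $A(N) \ll N^{\varepsilon/2}$: in contrast to the single-function estimate of Lemma~\ref{trew}, this is essentially a shifted convolution bound for two type-$(N,\chi_j)$ functions, and the argument must keep careful track of how the smooth--rough decompositions of $a_1,a_2$ interact with the congruence conditions induced by the rational shifts $c_j/d_j$. The coprimality hypothesis $(q_1,q_2)=1$ enters precisely at the CRT step above, ensuring that the product character modulo $q_1 q_2$ is non-principal so that the full-period character sums vanish and only the smooth-numbers contribution $\Psi(N,\log N)^2$ remains.
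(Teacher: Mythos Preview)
Your overall architecture—separate the $r$-dependence from the weight, reduce to bounding the unweighted partial sums $A(N)$, then partial summation—is the same as the paper's. But there are two problems.

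First, a minor slip: you write ``since $r \leq q_1$'', but the lemma allows arbitrary $r_0 \geq 0$ (and in the application to Lemma~\ref{LE10preDim2} one has $r_0$ up to $\sqrt{N_1}$). Expanding $(n+r+\alpha_2)^{-s_2}$ about $n+r_0$ rather than $n$ fixes this and gives the $O(q_1^2 N_1^{-1-2\ddd})$ remainder you want.

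Second, and this is the real gap: your CRT mechanism for $A(N)\ll N^{\varepsilon/2}$ does not work. After the smooth decompositions with moduli $e_1,e_2$, the variable $n$ lies in an arithmetic progression modulo $\Lambda=\operatorname{lcm}(e_1,e_2)$, and as the progression index $m$ varies the character arguments move by $\Lambda d_1/e_1 \pmod{q_1}$ and $\Lambda d_2/e_2 \pmod{q_2}$. Your CRT split needs these steps to be units, but $\Lambda$ can absorb the prime factors of both $q_1$ and $q_2$. Concretely, take $q_1=3$, $q_2=5$, $e_1=5$, $e_2=3$: then $\Lambda=15$, both steps are $\equiv 0$, both character values are \emph{constant} in $m$, and the inner $n$-sum equals $\chi_1(C_1)\chi_2(C_2)$ times the length—no cancellation at all. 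So the fixed-$r$ shifted convolution $\sum_n a_1(n+\alpha_1)a_2(n+r+\alpha_2)$ does not in general admit the bound you claim.

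The paper obtains the cancellation from the $r$-sum instead: after the same smooth decomposition one has $\psi_{e_1,e_2}(r;X)$ with $\chi_1$-argument $-rd_1+\text{const}\pmod{q_1}$, and summing over the full window $r_0\leq r<r_0+q_1$ gives $\sum_r\chi_1(-rd_1+\text{const})=0$ directly from non-principality of $\chi_1$ and $(d_1,q_1)=1$; this yields $|\Delta_{r_0}(X)|\ll\Psi(X,\log N_0)^2\ll X^{\varepsilon}$. Since your $A(N)$ is exactly the paper's $\Delta_{r_0}(N)$ (up to the variable shift), replacing your CRT-on-$n$ step by this $r$-average makes the rest of your outline go through. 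Note that the $q_1$-length of the $r$-window in the lemma statement is precisely what makes this work.
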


\begin{proof}
 We  use a similar method as we used to prove Lemma \ref{trew} to treat the right hand side of Eq. \eqref{ytreraj}. By the change of summation $k=n+r$ the sum in our Lemma may be written as 
\begin{align} \label{ppp1}
   (*)&= \sum_{r=r_0}^{r_0+ q_1-1}  \sum_{k=N_1+r}^{N_2+r} \frac{a_1(k-r+\alpha_1) a_2(k+\alpha_2)}{(k-r+\alpha_1)^{s_1} (k+\alpha_2)^{s_2}}. \\
\intertext{Changing the interval of integration in $k$ from $N_1+r \leq k \leq N_2+r$ to $N_1+r_0 \leq k \leq N_2 + r_0$ gives us an error term} 
    (*)&=\sum_{r=r_0}^{r_0+ q_1-1}  \sum_{k=N_1+r_0}^{N_2+r_0} \frac{a_1(k-r+\alpha_1) a_2(k+\alpha_2)}{(k-r+\alpha_1)^{s_1} (k+\alpha_2)^{s_2}} + O(q_1^2 N_1^{-2 \ddd-1}). \label{ppp2}
\end{align}
 Since assuming otherwise just corresponds to a shift of summation intervals we may without loss of generality assume that $0<\alpha_j<1$. Let  \begin{gather}\alpha_j=\frac {c_j} {d_j}, \qquad \operatorname{GCD}(c_j,d_j)=1,
\end{gather}
 where  $d_j \in \Z^+$ and $1 \leq c_j \leq d_j$ if $\alpha_j$ if rational and $d_j=1$,  $c_j=\alpha_j$ if $\alpha_j$ is transcendental.
We consider
\begin{gather} \label{str0}
 \Delta_{r_0}(X)=\sum_{r=0}^{q_2} I_{r}(X), \\ \intertext{where}
   I_r(X)= \sum_{n=N_1+r_0}^{X+r_0} a_1(n-r+\alpha_1) a_2(n+\alpha_2).
\end{gather}
By  Eq \eqref{rraj} and \eqref{rraj2} we may rewrite the sum 
 \begin{align} \label{str}
I_r(X) &= 
\sum_{k_j \in \mathcal A_j}    b_1(k_1) b_2(k_2) a_1(k_1) a_2(k_2) \overline{\chi_1(d_1k_1) \chi_2(d_2 k_2)}  \psi_{k_1,k_2}(r;X), \\ \intertext{where} \psi_{k_1,k_2}(r;X)&=  \sum_{\substack{N_1+r_0 \leq k \leq X+r_0 \\  (n-r)d_1+c_1 \equiv 0 \pmod{k_1} \\ nd_2+c_2 \equiv 0 \pmod{k_2}}  } \chi_1\p{(n-r)d_1+c_1} \chi_2 \p{nd_2+c_2}, \label{psidef3}
  \end{align}
and where $\mathcal A_j=\{k \in \Z: \GCD(k,q_j)=1: P^+(k) \leq \log N_0\}$, where $P^+(k)$ denote the largest prime of $k$ if $\alpha_j$ is rational, and $\mathcal A_j=\{1\}$ if  $\alpha_j$ is transcendental. Let 
 \begin{gather} 
 \Lambda= {\operatorname{LCM}(k_1,k_2)}.
\end{gather}
Since both $c_j$ and $d_j$ as well as $d_j$ and $q_j$ are coprime for $j=1$ and $j=2$ it follows that either there are unique solutions to the congruences
\begin{gather}
\label{congruences} (n-r)d_1+c_1 \equiv 0 \pmod{k_1} \qquad  n d_2+c_2 \equiv 0 \pmod{k_2}
\end{gather}
 which gives us a unique solution \begin{gather} 
   n \equiv b_r \pmod  \Lambda  
\end{gather}
 or there is no solution at all (which  happen if $d_j$ and $k_j$ is not coprime for $j=1$ or $j=2$). In case there is no solution the sum is empty so that 
\begin{gather} \label{prisest}
  \Psi_{k_1,k_2}(r;X)=0
\end{gather}
In case there is a solution we may furthermore find a solution to the congruences
\begin{gather}
  (n-r)d_1+c_1 \equiv m_1 k_1 \pmod {k_1 q_2}  \qquad \text{and} \qquad 
 nd_2+c_2 \equiv m_2 k_2 \pmod {k_2 q_1}
\\ \intertext{with} 0 \leq B_r < \Lambda q_1q_2 \\ \intertext{where}
b_r \equiv B_r \pmod \Lambda, \\ \intertext{and where} \label{trtt2}
 m_j k_j \equiv c_j \pmod {d_j}, \qquad (j=1,2)
\end{gather}
Because $k_j$ and $d_j$ is now assumed to be coprime there is a solution $m_j$ for the congruence \eqref{trtt2}. It follows that
\begin{gather} 
  B_r \equiv \frac{m_jk_j-c_j} {d_j} \pmod {q_j}
\end{gather}
for $j=1,2$ so that 
\begin{gather}
  B_r \equiv B \pmod {q_1q_2} 
\end{gather}  
is a constant with respect to $r$.
The sum may now be written as\begin{gather} 
\psi_{k_1,k_2}(r;X)=   \sum_{k = \lceil (N_1+r_0-B_r)/\Lambda \rceil}^{\lfloor (X+r_0-B_r)/\Lambda \rfloor  } 
\chi_1((\Lambda k+B-r)d_1+c_1) \chi_2 ((\Lambda k+B)d_2+c_2), 
\end{gather}   
When we  replace the summation interval from $(N_1+r_0-B_r)/\Lambda<k<(X+r_0-B_r)/\Lambda$ to $(N_1+r_0)/\Lambda<  k <(X+r_0)/\Lambda$,  the resulting sum will differ by at most $2 q_1q_2$ terms so that if
\begin{gather} 
\psi_{k_1,k_2}^*(r;X)=   \sum_{k=\lceil (N+r_0)/\Lambda \rceil}^{\lfloor (X+r_0)/\Lambda \rfloor  } 
\chi_1((\Lambda k+B)d_1+c_1) \chi_2 ((\Lambda k+B+r)d_2+c_2), 
\end{gather}   
then by the triangle inequality it follows that
\begin{gather} \label{est11}
 \abs{\psi_{k_1,k_2}(r;X)- \psi_{k_1,k_2}^*(r;X)} \leq 2q_1q_2.
\end{gather}
Let us now take the average with respect to  $r$.  We get that
\begin{gather} 
\sum_{r=0}^{q_1-1} \psi_{k_1,k_2}^*(r;X)=   \sum_{r=0}^{q_1-1} \sum_{k=\lceil (N+r_0)/\Lambda \rceil}^{ \lfloor (X+r_0)/\Lambda \rfloor  } 
\chi_1((\Lambda k+B-r)d_1+c_1)) \chi_2 ((\Lambda k+B)d_2+c_2), \\ \intertext{which by changing the summation order equals} 
  \label{rtrt}  \sum_{k=\lceil (N+r_0)/\Lambda \rceil}^{ \lfloor (X+r_0)/\Lambda \rfloor  }    
\chi_2 ((\Lambda k+B)d_2+c_2) 
\sum_{r=r_0}^{q_1-1}
\chi_1(-rd_1 +(\Lambda k+B)d_1+c_1)=0,  \qquad 
\end{gather}
where we have used that $\chi_1$ is a nonprincipal character mod $q_1$, and that $d_1$ and $q_1$ are coprime, so that the inner summation is over all residues mod $q_1$. From \eqref{rtrt}, \eqref{est11} and by using \eqref{prisest} if $d_j$ and $k_j$ is not coprime for $j=1$ or $j=2$ it follows that
\begin{gather} \label{est111}
  \abs{\sum_{r=0}^{q_1-1} \psi_{k_1,k_2}(r;X)} \leq 2 q_1^2 q_2. 
\end{gather}
It follows by \eqref{str0}, \eqref{str}, \eqref{est111}, the triangle inequality, the fact that the multiplicative function $|b_j(k)| \leq 2$  and the fact that the number of $k_j \in \mathcal A_j$ less than $X$ can be bounded by $\abs{\mathcal A_j \cap [0,X]} \leq \Psi(X,\log N_0)$ that
\begin{gather} \label{popp}
   \abs{\Delta_{r_0}(X)}   \leq 8 q_1^2q_2(\Psi(X,\log N_0))^2 \leq 8q_1^2 q_2 X^{\varepsilon}, 
\end{gather}  
where the last inequality follows by  \eqref{nwlpf}, the estimate of number without large prime factors. From \eqref{popp} it follows by partial summation in a similar manner
as in the proof of Lemma \ref{trew} that
\begin{multline}
 \max_{(s_1,s_2) \in \M^2}  \abs{\sum_{r=r_0}^{r_0+q_1-1} \sum_{k=N_1+r_0}^{N_2+r_0} \frac{a_1(k+\alpha_1-r) a_2(k+\alpha_2)}{(k+\alpha_1-r)^{s_1}(k+\alpha_2)^{s_2}}}  \\ \leq C(\varepsilon) (1+|s_1|)(1+|s_2|) (N_1-r_0)^{\varepsilon-\Re(s_1+s_2)},
\end{multline}
which concludes the proof of Lemma \ref{additive}.
\end{proof}

\begin{lem}
   \label{LE10preDim2}
  Let $\ddd$ and $\M$ be defined by \eqref{Kdef}. Suppose that $\alpha_1,\alpha_2$ are transcendental and rational numbers. Suppose that  $\chi_j$ is permissible with respect to $\alpha_j$ and of modulus $q_j$ for $1\leq j \leq m$ and such that $q_1$ and $q_2$ are coprime. Then, for any $\varepsilon >0$ there exist an $N_0$ such that if    $N_0 \leq N_1 \leq N_2$ and $a_j$ is of type $(N_1,\chi_j)$  then
	\begin{gather*} 
  \max_{s \in \M^2} \abs{\sum_{N_1 \leq k_1< k_2  \leq  N_2 }  \prod_{j=1}^2 \frac{a_j(k_j+\alpha_j)}{(k_j+\alpha_j)^{s_{j}}}}  \leq N_1^{-1/2-\ddd+\varepsilon}.
\end{gather*}
\end{lem}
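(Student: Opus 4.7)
The plan is to parametrise the sum by the difference $r=k_2-k_1$, writing
$$\Sigma \;=\; \sum_{r=1}^{N_2-N_1} T_r,\qquad T_r\;=\;\sum_{k_1=N_1}^{N_2-r}\frac{a_1(k_1+\alpha_1)\,a_2(k_1+r+\alpha_2)}{(k_1+\alpha_1)^{s_1}(k_1+r+\alpha_2)^{s_2}},$$
and to split the $r$-sum at the threshold $R^{*}=q_1\lceil N_1^{1/2+\ddd}\rceil$. The small-shift contribution $r\le R^{*}$ will be handled by Lemma~\ref{additive}, while the large-shift contribution will be handled by Lemma~\ref{trew} applied to each factor separately. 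The exponent $-1/2-\ddd$ emerges from the balance of these two estimates at $R^{*}$.

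For $r\le R^{*}$, I partition the range into blocks $[jq_1+1,(j+1)q_1]$ of $q_1$ consecutive shifts and apply Lemma~\ref{additive} to each block, which yields a bound of $O(N_1^{\varepsilon-1-2\ddd})$ per block, uniformly in $j$. The mismatch between the inner $k_1$-range $[N_1,N_2-r]$ in $T_r$ and the constant range $[N_1,N_2]$ used in Lemma~\ref{additive} produces only $O(q_1^{2})$ boundary terms per block, each of size $\ll N_1^{-1-2\ddd}$, which is absorbed. Summing over the $\sim N_1^{1/2+\ddd}/q_1$ blocks gives exactly the desired $N_1^{-1/2-\ddd+\varepsilon}$.

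For $r>R^{*}$, I reorder the summation as $\Sigma_{>}=\sum_{k_2>N_1+R^{*}}a_2(k_2+\alpha_2)/(k_2+\alpha_2)^{s_2}\cdot S'(k_2)$ with $S'(k_2)=\sum_{k_1=N_1}^{k_2-R^{*}-1}a_1(k_1+\alpha_1)/(k_1+\alpha_1)^{s_1}$, and decompose $S'(k_2)=\widetilde A-E'(k_2)$, where $\widetilde A=\sum_{k_1\ge N_1}a_1(k_1+\alpha_1)/(k_1+\alpha_1)^{s_1}$ is the full tail and $E'(k_2)=\sum_{k_1\ge k_2-R^{*}}a_1(k_1+\alpha_1)/(k_1+\alpha_1)^{s_1}$. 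The bulk contribution $\widetilde A\cdot\sum_{k_2>N_1+R^{*}}a_2/(k_2+\alpha_2)^{s_2}$ factors as a product of two Lemma~\ref{trew} bounds (one at scale $N_1$ and one at scale $R^{*}$), giving $O\bigl(N_1^{\varepsilon-1/2-\ddd}(R^{*})^{\varepsilon-1/2-\ddd}\bigr)$, which is comfortably smaller than the target. The remainder $-\sum_{k_2}a_2(k_2+\alpha_2)/(k_2+\alpha_2)^{s_2}E'(k_2)$ is handled by partial summation in $k_2$ using the cancellation of $a_2$, combined with the change of variables $l=k_2-R^{*}$ that reduces it to a translated instance of the double sum to which Lemma~\ref{additive} again applies.

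For the rational-parameter case the $a_j$'s are not pure characters; the factorisation~\eqref{ojkond} allows each $a_j(k+\alpha_j)$ to be written as a finite sum (indexed by smooth numbers) of shifted character values, and the analyses above go through unchanged after inserting this decomposition, exactly as in the proof of Lemma~\ref{additive}. The main technical obstacle is reconciling the two types of cancellation used in the two regimes: the $q_1$-block averaging of Lemma~\ref{additive} loses efficiency once $r$ is large because the relevant weight factor no longer contributes to the gain, while the direct factorisation estimate used for $r>R^{*}$ is weaker than Lemma~\ref{additive}'s bound for small $r$; the particular threshold $R^{*}=q_1 N_1^{1/2+\ddd}$ is chosen so that both contributions match the claimed $N_1^{-1/2-\ddd+\varepsilon}$.
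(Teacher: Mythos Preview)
Your small-shift estimate is fine: blocking $r\le R^*$ into runs of length $q_1$ and invoking Lemma~\ref{additive} indeed gives $O\bigl(N_1^{-1/2-\ddd+\varepsilon}\bigr)$. The gap is in your treatment of the remainder $\sum_{k_2}a_2(k_2+\alpha_2)(k_2+\alpha_2)^{-s_2}E'(k_2)$ from the large-shift part. Partial summation in $k_2$ replaces $E'(k_2)$ by its increments $E'(k_2)-E'(k_2-1)$, which are single terms of size $\asymp k_2^{-1/2-\ddd}$. Since the partial $k_2$-sums are only $O\bigl(N_1^{\varepsilon-1/2-\ddd}\bigr)$ (not decaying in $k_2$), this yields a bound $\ll N_1^{\varepsilon-1/2-\ddd}\sum_{k_2>N_1}k_2^{-1/2-\ddd}\ll N_1^{\varepsilon-1/2-\ddd}N_2^{1/2-\ddd}$, which is unbounded as $N_2\to\infty$. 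You never reduce to $N_2\le 2N_1$, and even after such a reduction this argument only gives $O(N_1^{\varepsilon-2\ddd})$, which for $\ddd<\tfrac14$ is weaker than the required $N_1^{-1/2-\ddd+\varepsilon}$.

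Your alternative claim---that the change of variable $l=k_2-R^*$ ``reduces [the remainder] to a translated instance of the double sum to which Lemma~\ref{additive} again applies''---does not hold up either. Writing the remainder out, it is a sum over pairs $(k_1,k_2)$ with $k_2>N_1+R^*$ and $k_1\ge k_2-R^*$. The strip $k_2-R^*\le k_1<k_2$ can indeed be handled by another pass of Lemma~\ref{additive}, but the region $k_1\ge k_2$ (present whether you take $\tilde A$ infinite or finite up to $N_2$) is just the original double sum with the roles of $k_1,k_2$ swapped and the lower cutoff moved from $N_1$ to $N_1+R^*$. Iterating your argument then requires $\asymp (N_2-N_1)/R^*\asymp N_1^{1/2-\ddd}$ steps, again producing only $O(N_1^{-2\ddd+\varepsilon})$.

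The paper's proof avoids this by first performing a dyadic reduction to $N_2\le 2N_1$, and then---crucially---\emph{averaging} the identity over a translate $X\in[N_1-H,N_1]$. This averaging converts the near-diagonal contribution into a weighted correlation sum $\sum_{r\le H}(1-r/H)\,\ddddd_r(\vs)$; the weight $(1-r/H)$ is then split into a step function constant on $q_1$-blocks (handled by Lemma~\ref{additive}) and a sawtooth of height $O(q_1/H)$ (handled by Lemma~\ref{trew} within residue classes). It is this second piece, of size $O(N_1^{\varepsilon-2\ddd}/H)$, that replaces your problematic remainder, and balancing it against the off-diagonal piece $O(2^M N_1^{\varepsilon-1-2\ddd})$ at $H\asymp N_1^{1/2}$ gives the exponent $-\tfrac12-\ddd$. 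Your decomposition lacks any mechanism that produces the extra factor $1/H$.
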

We note that the condition  that $q_1$ and $q_2$ are coprime is essential since the Lemma does not hold in case $a_1(n)=\chi(n)$ and $a_2(n)=\overline{\chi(n)}$ for any character $\chi$ mod $q$. 

\begin{proof}
 We first use a dyadic decomposition with $I_n=[2^n,2^{n+1}] \cap \Z \cap [N_1,N_2]$. It is clear that for $N_1 \geq 1$ we get that
\begin{gather} 
\begin{split}
\sum_{N_1 \leq k_1< k_2  \leq  N_2 }  \prod_{j=1}^2 &\frac{a_j(k_j+\alpha_j)}{(k_j+\alpha_j)^{s_{j}}} = \sum_{0 \leq n_1 < n_2}  \,  \sum_{k_1 \in I_{n_1}}  \,  \sum_{k_2 \in I_{n_2}}  \,  \prod_{j=1}^2 \frac{a_j(k_j+\alpha_j)}{(k_j+\alpha_j)^{s_{j}}} \\
 &+ \sum_{n=0}^\infty \, \, \sum_{\max(2^{n},N_1) \leq k_1< k_2  \leq  \min(2^{n+1},N_2) } \,   \prod_{j=1}^2 \frac{a_j(k_j+\alpha_j)}{(k_j+\alpha_j)^{s_{j}}}. \end{split} \label{orejg}
\end{gather} 
The absolute value of the first term on the right hand side of \eqref{orejg} can be  bounded by $C_\varepsilon N_1^{2 \varepsilon-1-2\ddd}$ by Lemma \ref{trew} for any $(s_1,s_2) \in \M^2$. It is sufficient to prove that the second sum in \eqref{orejg} is bounded by  $N_1^{-1/2-\ddd+\varepsilon}$  for $(s_1,s_2) \in \M^2$ if $N_1$ is sufficiently large. But these sums are of the same type as in the current lemma, with the additional assumption that the upper bound is less than twice the lower bound in the summation. Hence without loss of generality we may assume $N_2 \leq 2 N_1$ since the general case of Lemma \ref{LE10preDim2} will follow from this special case. From now on we will thus assume that  $N_2 \leq 2 N_1$. Let us define 
\begin{gather} \label{lambdajdef}
\Lambda_j(x;s_j)=\sum_{N_1  \leq k<x} \frac{a_j(k+\alpha_j)}{ (k+\alpha_j)^{s_j}},  \\  \intertext{for $j=1,2$ and}   \label{lambdadef} \Lambda(X;\vs)=  \sum_{N_1 \leq k_1<k_2 \leq \min(X,N_2) }  \prod_{j=1}^2 \frac{a_j(k_j+\alpha_j)}{(k_j+\alpha_j)^{s_{j}}}.
\end{gather} With this terminology the statement of the Lemma can be written as 
\begin{gather} \label{prff}
\max_{s \in \M^2} \abs{\Lambda(N_2;\vs)}  \leq N_1^{-1/2-\ddd+\varepsilon},
\end{gather}
provided $N_0=N_0(\varepsilon)$ has been chosen sufficiently large. We can write the following formula
\begin{gather} 
 \Lambda(X+\Delta;\vs)-\Lambda(X;\vs) =  \Lambda_1(X;\vs) (\Lambda_2(X+\Delta;s_1)-\Lambda_2(X;s_2)) + \sum_{d=1}^\infty  \sum_{j=1}^{2^{d-1}} T_{d,j}(X;\vs) \qquad  \label{irw} 
\end{gather} where
\begin{multline} T_{d,j}(X;\vs)=   \p{\Lambda_{1} \p{X+ \frac{(2j-1) \Delta} {2^d};s_1 } - \Lambda_{1} \p{X+ \frac{ (2j-2) \Delta}{2^d}};s_1} \times   \\ \times
\p{\Lambda_2\p{X+ \frac{2j \Delta}{2^d};s_2 } - \Lambda_2\p{X + \frac{(2j-1) \Delta}{2^d}};s_2}, 
\end{multline} 
 and where the first term in \eqref{irw} comes from the case where $N_1 \leq k_1<X$ and where the second term from the dyadic division of $X \leq k_1<k_2 \leq X+\Delta$ illustrated by the figure  
\definecolor{zzttqq}{rgb}{0.5,0.5,0.5}
\definecolor{xdxdff}{rgb}{0.49,0.49,1}
\begin{center}
\begin{tikzpicture}[line cap=round,line join=round,>=triangle 45,x=1.0cm,y=1.0cm]
\draw[->,color=black] (-1,0) -- (9,0);
\draw[->,color=black] (0,-0.7) -- (0,7);
\clip(-1.3,-0.7) rectangle (9,7);
\fill[color=zzttqq,fill=zzttqq,fill opacity=0.1] (2,6) -- (2,4) -- (4,4) -- (4,6) -- cycle;
\fill[color=zzttqq,fill=zzttqq,fill opacity=0.1] (4,6) -- (5,6) -- (5,5) -- (4,5) -- cycle;
\fill[color=zzttqq,fill=zzttqq,fill opacity=0.1] (2,4) -- (3,4) -- (3,3) -- (2,3) -- cycle;
\fill[color=zzttqq,fill=zzttqq,fill opacity=0.1] (2,3) -- (2.5,3) -- (2.5,2.5) -- (2,2.5) -- cycle;
\fill[color=zzttqq,fill=zzttqq,fill opacity=0.1] (5,6) -- (5.5,6) -- (5.5,5.5) -- (5,5.5) -- cycle;
\fill[color=zzttqq,fill=zzttqq,fill opacity=0.1] (3,4) -- (3.5,4) -- (3.5,3.5) -- (3,3.5) -- cycle;
\fill[color=zzttqq,fill=zzttqq,fill opacity=0.1] (4,5) -- (4.5,5) -- (4.5,4.5) -- (4,4.5) -- cycle;
\fill[color=zzttqq,fill=zzttqq,fill opacity=0.1] (2,2.5) -- (2,2.25) -- (2.25,2.25) -- (2.25,2.5) -- cycle;
\fill[color=zzttqq,fill=zzttqq,fill opacity=0.1] (2.5,3.0) -- (2.5,2.75) -- (2.75,2.75) -- (2.75,3.0) -- cycle;
\fill[color=zzttqq,fill=zzttqq,fill opacity=0.1] (3,3.5) -- (3,3.25) -- (3.25,3.25) -- (3.25,3.5) -- cycle;
\fill[color=zzttqq,fill=zzttqq,fill opacity=0.1] (3.5,4.0) -- (3.5,3.75) -- (3.75,3.75) -- (3.75,4.0) -- cycle;
\fill[color=zzttqq,fill=zzttqq,fill opacity=0.1] (4,4.5) -- (4,4.25) -- (4.25,4.25) -- (4.25,4.5) -- cycle;
\fill[color=zzttqq,fill=zzttqq,fill opacity=0.1] (4.5,5.0) -- (4.5,4.75) -- (4.75,4.75) -- (4.75,5.0) -- cycle;
\fill[color=zzttqq,fill=zzttqq,fill opacity=0.1] (5,5.5) -- (5,5.25) -- (5.25,5.25) -- (5.25,5.5) -- cycle;
\fill[color=zzttqq,fill=zzttqq,fill opacity=0.1] (5.5,6.0) -- (5.5,5.75) -- (5.75,5.75) -- (5.75,6.0) -- cycle;
\fill[color=zzttqq,fill=zzttqq,fill opacity=0.1] (2,2.25) -- (2,2.125) -- (2.125,2.125) -- (2.125,2.25) -- cycle;
\fill[color=zzttqq,fill=zzttqq,fill opacity=0.1] (3,3.25) -- (3,2.125) -- (3.125,3.125) -- (3.125,3.25) -- cycle;
\fill[color=zzttqq,fill=zzttqq,fill opacity=0.1] (4,4.25) -- (4,4.125) -- (4.125,4.125) -- (4.125,4.25) -- cycle;
\fill[color=zzttqq,fill=zzttqq,fill opacity=0.1] (5,5.25) -- (5,5.125) -- (5.125,5.125) -- (5.125,5.25) -- cycle;
\fill[color=zzttqq,fill=zzttqq,fill opacity=0.1] (2.5,2.75) -- (2.5,2.625) -- (2.625,2.625) -- (2.625,2.75) -- cycle;
\fill[color=zzttqq,fill=zzttqq,fill opacity=0.1] (3.5,3.75) -- (3.5,3.625) -- (3.625,3.625) -- (3.625,3.75) -- cycle;
\fill[color=zzttqq,fill=zzttqq,fill opacity=0.1] (4.5,4.75) -- (4.5,4.625) -- (4.625,4.625) -- (4.625,4.75) -- cycle;
\fill[color=zzttqq,fill=zzttqq,fill opacity=0.1] (5.5,5.75) -- (5.5,5.625) -- (5.625,5.625) -- (5.625,5.75) -- cycle;
\fill[color=zzttqq,fill=zzttqq,fill opacity=0.1] (2.25,2.5) -- (2.25,2.375) -- (2.375,2.375) -- (2.375,2.5) -- cycle;
\fill[color=zzttqq,fill=zzttqq,fill opacity=0.1] (3.25,3.5) -- (3.25,3.375) -- (3.375,3.375) -- (3.375,3.5) -- cycle;
\fill[color=zzttqq,fill=zzttqq,fill opacity=0.1] (4.25,4.5) -- (4.25,4.375) -- (4.375,4.375) -- (4.375,4.5) -- cycle;
\fill[color=zzttqq,fill=zzttqq,fill opacity=0.1] (5.25,5.5) -- (5.25,5.375) -- (5.375,5.375) -- (5.375,5.5) -- cycle;
\fill[color=zzttqq,fill=zzttqq,fill opacity=0.1] (2.75,3.0) -- (2.75,2.875) -- (2.875,2.875) -- (2.875,3.0) -- cycle;
\fill[color=zzttqq,fill=zzttqq,fill opacity=0.1] (3.75,4.0) -- (3.75,3.875) -- (3.875,3.875) -- (3.875,4.0) -- cycle;
\fill[color=zzttqq,fill=zzttqq,fill opacity=0.1] (4.75,5.0) -- (4.75,4.875) -- (4.875,4.875) -- (4.875,5.0) -- cycle;
\fill[color=zzttqq,fill=zzttqq,fill opacity=0.1] (5.75,6.0) -- (5.75,5.875) -- (5.875,5.875) -- (5.875,6.0) -- cycle;
\draw [domain=-0.7:9.0] plot(\x,{\x});
\draw [color=zzttqq] (2,2.125)-- (2.125,2.125);
\draw [color=zzttqq] (2.125,2.125)-- (2.125,2.25);
\draw [color=zzttqq] (3,3.125)-- (3.125,3.125);
\draw [color=zzttqq] (3.125,3.125)-- (3.125,3.25);
\draw [color=zzttqq] (4,4.125)-- (4.125,4.125);
\draw [color=zzttqq] (4.125,4.125)-- (4.125,4.25);
\draw [color=zzttqq] (5,5.125)-- (5.125,5.125);
\draw [color=zzttqq] (5.125,5.125)-- (5.125,5.25);
\draw [color=zzttqq] (2.5,  2.625)-- (2.625,2.625);
\draw [color=zzttqq] (2.625,2.625)-- (2.625,2.75);
\draw [color=zzttqq] (3.5,  3.625)-- (3.625,3.625);
\draw [color=zzttqq] (3.625,3.625)-- (3.625,3.75);
\draw [color=zzttqq] (4.5,  4.625)-- (4.625,4.625);
\draw [color=zzttqq] (4.625,4.625)-- (4.625,4.75);
\draw [color=zzttqq] (5.5,  5.625)-- (5.625,5.625);
\draw [color=zzttqq] (5.625,5.625)-- (5.625,5.75);
\draw [color=zzttqq] (2.75,  2.875)--  (2.875, 2.875);
\draw [color=zzttqq] (2.875, 2.875)--  (2.875, 3.0);
\draw [color=zzttqq] (3.75,  3.875)--  (3.875, 3.875);
\draw [color=zzttqq] (3.875, 3.875)--  (3.875, 4.0);
\draw [color=zzttqq] (4.75,  4.875)--  (4.875, 4.875);
\draw [color=zzttqq] (4.875, 4.875)--  (4.875, 5.0);
\draw [color=zzttqq] (5.75,  5.875)--  (5.875, 5.875);
\draw [color=zzttqq] (5.875, 5.875)--  (5.875, 6.0);
\draw [color=zzttqq] (2.25, 2.375)-- (2.375,2.375);
\draw [color=zzttqq] (2.375,2.375)-- (2.375,2.5);
\draw [color=zzttqq] (3.25, 3.375)-- (3.375,3.375);
\draw [color=zzttqq] (3.375,3.375)-- (3.375,3.5);
\draw [color=zzttqq] (4.25, 4.375)-- (4.375,4.375);
\draw [color=zzttqq] (4.375,4.375)-- (4.375,4.5);
\draw [color=zzttqq] (5.25, 5.375)-- (5.375,5.375);
\draw [color=zzttqq] (5.375,5.375)-- (5.375,5.5);
\draw [color=zzttqq] (2.25,2.25)-- (2.25,2.5);
\draw [color=zzttqq] (2,2.25)-- (2.25,2.25);
\draw [color=zzttqq] (2.75,2.75)-- (2.75,3.0);
\draw [color=zzttqq] (2.5,2.75)-- (2.75,2.75);
\draw [color=zzttqq] (3.25,3.25)-- (3.25,3.5);
\draw [color=zzttqq] (3,3.25)-- (3.25,3.25);
\draw [color=zzttqq] (3.75,3.75)-- (3.75,4.0);
\draw [color=zzttqq] (3.5,3.75)-- (3.75,3.75);
\draw [color=zzttqq] (4.25,4.25)-- (4.25,4.5);
\draw [color=zzttqq] (4,4.25)-- (4.25,4.25);
\draw [color=zzttqq] (4.75,4.75)-- (4.75,5.0);
\draw [color=zzttqq] (4.5,4.75)-- (4.75,4.75);
\draw [color=zzttqq] (5.25,5.25)-- (5.25,5.5);
\draw [color=zzttqq] (5,5.25)-- (5.25,5.25);
\draw [color=zzttqq] (5.75,5.75)-- (5.75,6.0);
\draw [color=zzttqq] (5.5,5.75)-- (5.75,5.75);
\draw [color=zzttqq] (2,6)-- (2,4);
\draw [color=zzttqq] (2,4)-- (4,4);
\draw [color=zzttqq] (4,4)-- (4,6);
\draw [color=zzttqq] (4,6)-- (2,6);
\draw [color=zzttqq] (4,6)-- (5,6);
\draw [color=zzttqq] (5,6)-- (5,5);
\draw [color=zzttqq] (5,5)-- (4,5);
\draw [color=zzttqq] (4,5)-- (4,6);
\draw [color=zzttqq] (2,4)-- (3,4);
\draw [color=zzttqq] (3,4)-- (3,3);
\draw [color=zzttqq] (3,3)-- (2,3);
\draw [color=zzttqq] (2,3)-- (2,4);
\draw [color=zzttqq] (2,3)-- (2.5,3);
\draw [color=zzttqq] (2.5,3)-- (2.5,2.5);
\draw [color=zzttqq] (2.5,2.5)-- (2,2.5);
\draw [color=zzttqq] (2,2.5)-- (2,3);
\draw [color=zzttqq] (5,6)-- (5.5,6);
\draw [color=zzttqq] (5.5,6)-- (5.5,5.5);
\draw [color=zzttqq] (5.5,5.5)-- (5,5.5);
\draw [color=zzttqq] (5,5.5)-- (5,6);
\draw [color=zzttqq] (3,4)-- (3.5,4);
\draw [color=zzttqq] (3.5,4)-- (3.5,3.5);
\draw [color=zzttqq] (3.5,3.5)-- (3,3.5);
\draw [color=zzttqq] (3,3.5)-- (3,4);
\draw [color=zzttqq] (4,5)-- (4.5,5);
\draw [color=zzttqq] (4.5,5)-- (4.5,4.5);
\draw [color=zzttqq] (4,4.5)-- (4,5);
\draw [color=zzttqq] (0,2) -- (9,2);
\draw [color=zzttqq] (4.5,4.5)-- (4,4.5);
\draw  [color=zzttqq] (0,6) -- (9,6);
\draw [color=zzttqq] (2,0) -- (2,7);
\draw  [color=zzttqq] (6,0) -- (6,7);
\draw[color=black] (5.91,-0.24) node {$X+\Delta$};
\draw[color=black] (-0.65,6) node {$X+\Delta$};
\draw[color=black] (1.95,-0.24) node {$X$};
\draw[color=black] (-0.3,2) node {$X$};
\draw[color=black] (-0.3,2) node {$X$};
\draw[color=black] (8.75,-0.25) node {$k_1$};
\draw[color=black] (-0.35,6.75) node {$k_2$};
\draw[color=black] (3,5) node {$\displaystyle \begin{matrix} d=1 \\ j=1 \end{matrix}$};  
\draw[color=black] (4.5,5.5) node {$\displaystyle \substack{ d=2 \\ j=2}$};
\draw[color=black] (2.5,3.5) node {$\displaystyle \substack{ d=2 \\ j=1}$}; 
\draw[color=black] (2.25,2.75) node {\scalebox{0.75}{$\displaystyle \substack{ d=3 \\ j=1}$}}; 
\draw[color=black] (3.25,3.75) node  {\scalebox{0.75}{$\displaystyle\substack{ d=3 \\ j=2}$}}; 
\draw[color=black] (4.25,4.75) node {\scalebox{0.75}{$\displaystyle\substack{ d=3 \\ j=3}$}}; 
\draw[color=black] (5.25,5.75) node {\scalebox{0.75}{$\displaystyle\substack{ d=3 \\ j=4}$}}; 
\end{tikzpicture}
\end{center}
By Lemma \ref{trew} it follows that
 \begin{gather} \label{Lambdaestimate} \max_{s_j \in \M} \abs{\Lambda_j(X+\Delta;\vs)-\Lambda_j(X;\vs)} \ll_\varepsilon X^{\varepsilon-\ddd-1/2}, \qquad (j=1,2). \end{gather}
It also follows that \begin{gather} \max_{\vs \in \M^2} \abs{T_{d,j}(X;\vs)} \ll_{\varepsilon}  X^{2\varepsilon-2\ddd-1}. \end{gather} 
This  allow us to bound the sum for small $d$  (this corresponds to pairs $(k_1,k_2)$ in the original sum that are far from  the diagonal)   in \eqref{irw}  and by the triangle inequality we get that
\begin{gather}
 \max_{\vs \in \M^2} \abs{\Lambda(X+\Delta;\vs)-\Lambda(X;\vs)} \ll_{\varepsilon} 2^M N_1^{\varepsilon-2\ddd-1}  +\abs{ \sum_{d=M}^\infty  \sum_{j=1}^{2^{d-1}} T_{d,j}(X;\vs)},  \qquad  \p{\frac{N_1} 2 \leq X}  \qquad   \label{irw2} \end{gather}
We will treat the case of large $d$ (this corresponds to pairs $(k_1,k_2)$ in the original sum that are close to the diagonal) with Lemma \ref{additive}. Before proceeding to treat this case we notice that by the definitions \eqref{lambdajdef},\eqref{lambdadef} we have that
\begin{gather}
   \Lambda(N_2;\vs)  =\sum_{N_1 \leq k_1< k_2  \leq  N_2 }\prod_{j=1}^2 \frac{a_j(k_j+\alpha_j)}{(k_j+\alpha_j)^{s_{j}}} =\Lambda(M_2;\vs)-\Lambda(M_1;\vs)  
\end{gather}
whenever $M_1\leq N_1$ and $N_2\leq M_2$. Thus
\begin{gather} \label{stardef}
  \Lambda(N_2;\vs) = \frac 1 H \int_{N_1-H}^{N_1} \left(\Lambda(X+\Delta;\vs)-\Lambda(X;\vs) \right) dX , \\ \intertext{where } H=\frac{N_2-N_1}{2^M-1}, \qquad \Delta=2^MH. \label{Hdef}
\end{gather}
since $X+\Delta \geq N_2$ and $X \leq N_1$ whenever $X$ is in the interval of integration.  It follows that 
\begin{align} \label{ytreraj}
 J(\vs) &= \frac 1 H  \int_{N_1-H}^{N_1}  \p{ \sum_{d=M}^\infty  \sum_{j=1}^{2^{d-1}} T_{d,j}(X;\vs)} dX = \sum_{r=1}^{\lfloor H \rfloor } \left(1-\frac r H \right) \ddddd_r(\vs), \\ \intertext{where} \ddddd_r(\vs) &=  \sum_{n=N_1}^{N_2-H} \frac{a_1(n+\alpha_1)a_2(n+\alpha_2+r)}{ (n+\alpha_1)^{s_1}(n+\alpha_2+r)^{s_2}}. 
\end{align}
We now write the sum as follows 
\begin{gather} \label{trt7}
 J(\vs) =  \sum_{r=1}^{\lfloor H \rfloor } \p{1-   \frac {q_1} H \left \lfloor \frac r {q_1} \right \rfloor } \ddddd_r(\vs) +   
 \frac 1 H \sum_{r=1}^{\lfloor H \rfloor} \p{q_1 \left \lfloor \frac r {q_1}  \right \rfloor -r }\ddddd_r(\vs).  \end{gather}
By Lemma \ref{additive}  the first sum in \eqref{trt7} can be estimated by
\begin{gather}
 \label{frstsum}
  \max_{\vs \in \M^2} \abs{\sum_{r=1}^{\lfloor H \rfloor } \p{1-   \frac {q_1} H \left \lfloor \frac r {q_1} \right \rfloor } \ddddd_r(\vs)} \ll_{\varepsilon} H N_1^{\varepsilon-2\ddd-1}.
\end{gather}
 By dividing the second sum in \eqref{trt7} into residue classes mod $q_1$ and using
\begin{gather}
 \max_{s \in \M}  \sum_{j=1}^{q_1} \abs{\sum_{\substack{r \leq H \\ r \equiv j \pmod {q_1}}} \frac{a_2(n+\alpha_2+r)}{(n+\alpha_2+r)^{s_2}}} \ll_{\varepsilon}  n^{\varepsilon -1/2-\ddd}, 
\end{gather}
which since $q_1$ and $q_2$ is coprime is a consequence of applying Lemma \ref{trew} with $\alpha=(j+\alpha_2)/q_1$,  we obtain after summing over $n$ that
\begin{gather}
 \label{scndsum}
  \max_{\vs \in \M^2} \abs{ \frac 1 H \sum_{r=1}^{\lfloor H \rfloor} \p{q_2 \left \lfloor \frac r {q_2}  \right \rfloor -r }\ddddd_r(\vs)}  \ll_{\varepsilon}  \frac{N_1^{\varepsilon-2 \ddd}} H. 
\end{gather}  
The equation \eqref{trt7}, the inequalities \eqref{frstsum},  \eqref{scndsum} and the triangle inequality implies that
\begin{gather} \label{estoo}
   \max_{\vs \in \M^2} \abs{J(\vs)} \ll_{\varepsilon} H N_1^{\varepsilon-2\ddd-1}+ \frac{N_1^{\varepsilon-2 \ddd}} H.
\end{gather}
Now \eqref{Hdef} and the initial assumption that $N_2 \leq 2 N_1$ implies that $H \leq N_1 2^{1-M}$. By combining \eqref{stardef} and the inequalities \eqref{irw2} and \eqref{estoo} it follows that 
\begin{gather} 
\max_{\vs \in \M^2} \abs{ \Lambda(N_2;\vs)} \ll_\varepsilon 2^M N_1^{\varepsilon-2\ddd-1}+ N_1^{\varepsilon-2 \ddd} 2^{-M}.
\end{gather}
This inequality together with the choice of
$$
 M=\frac 1 2 \lceil \log _2 N_1 \rceil,
$$
 where $\log_2 x$ denote the logarithm with base $2$ implies that 
\begin{gather}
 \max_{\vs \in \M^2} \abs{\Lambda(N_2;\vs)} \leq C_\varepsilon N_1^{\varepsilon-2\ddd-1/2},
\end{gather}
for some $C_\varepsilon>0$. The choice $N_0 = \lceil C_{\varepsilon}^{-1/\ddd} \rceil$ implies the inequality \eqref{prff}   which concludes the proof  of Lemma \ref{LE10preDim2}.
\end{proof}

\begin{lem}
   \label{LE10pre}
  Let $\ddd$ and $\M$ be defined by \eqref{Kdef}. Suppose that $\{\alpha_j\}_{j=1}^m$ are transcendental and rational numbers. Suppose that  $\chi_j$ is permissible with respect to $\alpha_j$ and of modulus $q_j$ for $1\leq j \leq m$ and such that $q_j$ and $q_{j+1}$ are coprime for $1 \leq j \leq m-1$. Then, for any $\varepsilon >0$ there exist an $N_0$ such that if    $N_0 \leq N_1 \leq N_2$ and $a_j$ is of type $(N_1,\chi_j)$  then
	\begin{gather*} 
  \max_{s \in \M^m} \abs{\sum_{N_1 \leq k_1<\cdots < k_m < N_2 }  \prod_{j=1}^m \frac{a_j(k_j+\alpha_j)}{(k_j+\alpha_j)^{s_{j}}}}  \leq N_1^{-1/2-\ddd+\varepsilon},
\end{gather*}
\end{lem}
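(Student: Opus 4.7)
The plan is to prove Lemma \ref{LE10pre} by induction on $m$. The base case $m=1$ is immediate from Lemma \ref{trew} and the base case $m=2$ is Lemma \ref{LE10preDim2}. For the inductive step $m \geq 3$, I will reduce the $m$-variable sum to the $(m-2)$-variable inductive hypothesis via Abel summation applied to the outermost variable $k_1$.

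Writing $f_j(k) = a_j(k+\alpha_j)(k+\alpha_j)^{-s_j}$ and $S = \sum_{k_1} f_1(k_1) B(k_1)$ with $B(k_1) = \sum_{k_1 < k_2 < \cdots < k_m \leq N_2} \prod_{j=2}^m f_j(k_j)$, and noting that $B(N_2) = 0$ for trivial reasons, Abel summation produces
\[
S = \sum_{k_1=N_1}^{N_2-1} A(k_1)\, \bigl(B(k_1) - B(k_1+1)\bigr) = \sum_{k_1} A(k_1)\, f_2(k_1+1)\, C(k_1+1),
\]
where $A(k_1) = \sum_{k=N_1}^{k_1} f_1(k)$ and $C(k) = \sum_{k < k_3 < \cdots < k_m \leq N_2} \prod_{j=3}^m f_j(k_j)$ is an $(m-2)$-variable sum involving parameters $\alpha_3, \ldots, \alpha_m$ and characters $\chi_3, \ldots, \chi_m$, for which the required coprimality of consecutive moduli is inherited from the original hypothesis.

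Each of the three factors admits a clean bound. Lemma \ref{trew} gives $|A(k_1)| \ll N_1^{\varepsilon/2 - 1/2 - \ddd}$ uniformly in $k_1 \geq N_1$. Trivially $|f_2(k_1+1)| \leq (k_1+1)^{-1/2-\ddd}$. Since the type-$(N,\chi)$ condition weakens monotonically in $N$, each $a_j$ of type $(N_1,\chi_j)$ is automatically of type $(k_1+1,\chi_j)$, so the inductive hypothesis applies and supplies $|C(k_1+1)| \leq (k_1+1)^{-1/2-\ddd+\varepsilon/2}$ once $N_1$ exceeds the $N_0$ associated with the $(m-2)$-case. Using $\sum_{k \geq N_1} k^{-1-2\ddd+\varepsilon/2} \ll N_1^{-2\ddd+\varepsilon/2}$ (valid for $\varepsilon$ sufficiently small), I conclude
\[
\max_{\vs \in \M^m} |S| \;\ll\; N_1^{\varepsilon/2 - 1/2 - \ddd} \cdot N_1^{-2\ddd+\varepsilon/2} \;=\; N_1^{-1/2 - 3\ddd + \varepsilon},
\]
which is strictly stronger than the required bound $N_1^{-1/2-\ddd+\varepsilon}$.

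The substantive difficulty of Lemma \ref{LE10pre} is entirely absorbed into the $m=2$ base case (Lemma \ref{LE10preDim2}), whose proof relies on the near-diagonal averaging of Lemma \ref{additive} together with the coprimality of $q_1$ and $q_2$. In the inductive step for $m \geq 3$, no further joint cancellation between neighboring variables is required: the one-variable cancellation from Lemma \ref{trew} on $A(k_1)$ together with the absolute convergence of the series in $k_1$ provides an extra saving of $N_1^{-2\ddd}$ essentially for free. Consequently the coprimality hypothesis on consecutive $q_j, q_{j+1}$ enters the argument only through the base case invoked at the terminal step of the induction.
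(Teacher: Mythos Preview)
Your proof is correct and the Abel--summation identity $B(k_1)-B(k_1+1)=f_2(k_1+1)\,C(k_1+1)$ is the key observation that makes it work. The monotonicity of the type-$(N,\chi)$ condition in $N$ is exactly what is needed to feed $C(k_1+1)$ back into the inductive hypothesis at level $m-2$, and the boundary term in the summation by parts does vanish since $B$ is eventually zero.

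The paper's argument is organised differently. Rather than summing by parts, it fixes a \emph{middle} index $1<v<m$, pulls out the variable $k_v=k$, and bounds the resulting product termwise by the triangle inequality:
\[
|S| \le \sum_{k\ge N_1} (k+\alpha_v)^{-1/2-\eta}\cdot\Bigl|\text{inner $(v{-}1)$-sum over }[N_1,k)\Bigr|\cdot\Bigl|\text{outer $(m{-}v)$-sum over }(k,N_2)\Bigr|,
\]
then applies the induction hypothesis to each of the two flanking sums and uses absolute convergence of $\sum_k k^{-1-2\eta+\varepsilon}$ to close. So both proofs share the same base cases ($m=1$ via Lemma~\ref{trew}, $m=2$ via Lemma~\ref{LE10preDim2}) and both ultimately reduce to an absolutely convergent one-variable sum; the difference is that you collapse two variables at once (via $k_2=k_1+1$) and invoke the hypothesis once at level $m-2$, while the paper splits symmetrically and invokes it twice at levels $v-1$ and $m-v$. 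Your route has the minor advantage of squeezing out a nominally sharper exponent $N_1^{-1/2-3\eta+\varepsilon}$ at each step, and it makes transparent that for odd $m$ the induction never actually touches the delicate $m=2$ case---an observation that is equally true of the paper's proof (for $m=3$ one must take $v=2$, giving two one-variable sums) but is less visible there.
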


 \begin{proof}
  We give a proof by strong induction where we need two base cases, $m=1$ and $m=2$
\begin{description}[leftmargin=18pt]
\item[Base case $m=1$:] This is a consequence of Lemma \ref{trew} and the definition of the set $\M$ ( Eq. \eqref{Kdef}). 
\item[Base case $m=2$:] This is Lemma \ref{LE10preDim2}. 
\item[Induction step:] As an induction hypothesis we assume that the Lemma holds for each positive integer strictly less than $m=p$.  Assume that $m=p+1 \geq 3$ and choose some $1<v<p+1$. Without loss of generality we may assume that $0<\varepsilon<2 \ddd$ and choose $N_0$ sufficiently large so that
\begin{gather} \label{pr}
 \sum_{k=N_0}^{\infty}(k+\alpha_v)^{-1/2- \ddd}   k^{-1/2-\ddd+\varepsilon} \leq \sum_{k=N_0}^{\infty}k^{-1-2 \ddd+\varepsilon}  \leq 1.
\end{gather}
Let us rewrite the summation (with $k=k_v$) as
\begin{multline}
 (*)= \abs{\sum_{N_1 \leq k_1<\cdots < k_{p+1} < N_2 }  \prod_{j=1}^{p+1} \frac{a_j(k_j+\alpha_j)} {(k_j+\alpha_j)^{s_{j}}}} = \\  
\abs{\sum_{k=N_1}^{N_2}   \frac{a_v(k+\alpha_v)}{(k+\alpha_v)^{s_v}} \p{\sum_{N_1 < k_1< \cdots k_{v-1}<k}   \frac{a_j(k_j+\alpha_j)}{(k_j+\alpha_j)^{s_{j}}}}  \p{\sum_{k < k_{v+1}< \cdots k_{p+1}<N_2}  \frac{a_j(k_j+\alpha_j)}{(k_j+\alpha_j)^{s_{j}}}}}.  
\end{multline} 
By the triangle inequality and by using the facts that  $s_v \in \M$ and $a_v$ is a unimodular function on the first factor in each term, and the induction hypothesis on the second two factors in each term  this can be bounded for each choice of $(s_1,\ldots,s_m) \in \M^m$ in absolute values by
\begin{gather} \begin{split}
  (*)&\leq   \sum_{k=N_1}^{N_2-1}   \frac{1}{(k+\alpha_v)^{1/2+\ddd}} \times N_1^{-1/2-\ddd+\varepsilon}   \times k^{-1/2-\ddd+\varepsilon} \\ &= N_1^{-1/2-\ddd+\varepsilon} \sum_{k=N_1}^{N_2} (k+\alpha_v)^{-1/2- \ddd}   k^{-1/2-\ddd+\varepsilon}  \leq N_1^{-1/2-\ddd+\varepsilon}, \end{split}
\end{gather}
where the last inequality follows from \eqref{pr} since $N_1 \geq N_0$.
\end{description}
\end{proof}
A consequence of Lemma \ref{LE10pre} which is useful in the proof of the fundamental Lemma and will also  find its use in the proof of Lemma \ref{LE6} is the following result.
\begin{lem} \label{EEEV} Let $\alpha>0$ be a transcendental or rational number and let $a:\N +\alpha \to \C$ be of type $(N_0,\chi)$. Then for any $\varepsilon>0$ there exist a $C(\varepsilon)>0$ such that
\begin{gather}
\sum_{0 \leq k_1 < \cdots <k_{n} \leq N} \prod_{j=1}^n \frac{a_j(k_j+\alpha_j)}{ (k_j+\alpha_j)^{s_j}} =\zeta_\va(\vs;\val)+ \newsym{An error term of order $O_\varepsilon(N^{\varepsilon-1/2-\ddd})$}{E_{\va,N}(\vs;\val)},
\end{gather} where   $\zeta_\va(\vs;\val)$ is an analytic function on $\M^n$ and
\begin{gather} 
\max_{\vs \in \M^n} \abs{E_{\va,N}(\vs;\val)} \leq C(\varepsilon) N^{\varepsilon -1/2-\ddd},  \qquad (N \in \Z^+). 
 \end{gather}
\end{lem}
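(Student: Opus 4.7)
The plan is to deduce the lemma directly from Lemma \ref{LE10pre}, by establishing that the partial sums $\zeta_\va^{[N]}(\vs;\val)$ form a Cauchy sequence on $\M^n$ with rate $O_\varepsilon(N^{\varepsilon - 1/2 - \ddd})$. Once this uniform Cauchy estimate is in place, the limit $\zeta_\va(\vs;\val)$ is analytic on $\M^n$ as a uniform limit of analytic functions, and the bound on $E_{\va,N}$ follows by letting the upper truncation tend to infinity.

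For $N_0 \leq N_1 \leq N_2$, I would analyze the difference $\zeta_\va^{[N_2]} - \zeta_\va^{[N_1]}$ by splitting the range of summation according to the smallest index $j^* \in \{1,\ldots,n\}$ with $k_{j^*} > N_1$. Since the constraint $k_{j^*-1} \leq N_1 < k_{j^*}$ automatically supplies the inequality $k_{j^*-1} < k_{j^*}$, the sum factors as
\begin{equation*}
\zeta_\va^{[N_2]}(\vs;\val) - \zeta_\va^{[N_1]}(\vs;\val) = \sum_{j^*=1}^{n} A_{j^*}(\vs) \, B_{j^*}(\vs),
\end{equation*}
where
\begin{align*}
A_{j^*}(\vs) &= \sum_{0 \leq k_1 < \cdots < k_{j^*-1} \leq N_1} \prod_{j=1}^{j^*-1} \frac{a_j(k_j+\alpha_j)}{(k_j+\alpha_j)^{s_j}}, \\
B_{j^*}(\vs) &= \sum_{N_1 < k_{j^*} < \cdots < k_n \leq N_2} \prod_{j=j^*}^{n} \frac{a_j(k_j+\alpha_j)}{(k_j+\alpha_j)^{s_j}},
\end{align*}
with the convention that $A_1 = 1$. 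Lemma \ref{LE10pre}, applied with $N_1$ as the lower bound (and $N_1 \geq N_0$ chosen large enough), immediately gives $|B_{j^*}(\vs)| \leq N_1^{\varepsilon - 1/2 - \ddd}$ uniformly on $\M^n$ and in $N_2$.

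The remaining task is a uniform-in-$N_1$ bound on $A_{j^*}(\vs)$, which I would handle by induction on $n$. The base case $n=1$ (so only the $j^*=1$ term appears and $A_1 = 1$) is in fact immediate, with the $B_1$ estimate supplied by Lemma \ref{trew}. For the inductive step, the present lemma applied in dimension $j^*-1 < n$ tells us that $A_{j^*}$ converges uniformly on $\M^{j^*-1}$ to an analytic function, which is bounded there by compactness, uniformly in $N_1$. Combining the two estimates and summing over $j^*$ yields the Cauchy bound
\begin{equation*}
\max_{\vs \in \M^n} \abs{\zeta_\va^{[N_2]}(\vs;\val) - \zeta_\va^{[N_1]}(\vs;\val)} \ll_\varepsilon N_1^{\varepsilon - 1/2 - \ddd},
\end{equation*}
from which both existence of the analytic limit $\zeta_\va(\vs;\val)$ and the desired bound on $E_{\va,N}$ follow at once. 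I do not anticipate a serious technical obstacle: Lemma \ref{LE10pre} already carries the essential off-diagonal cancellation, and the remainder is simply inductive bookkeeping over the position of the first ``large'' index $k_{j^*}$.
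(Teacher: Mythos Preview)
Your proposal is correct and follows essentially the same route as the paper: both argue by induction on $n$, decompose $\zeta_\va^{[N_2]}-\zeta_\va^{[N_1]}$ according to the position of the first index exceeding $N_1$ (equivalently, the number of indices $\leq N_1$), bound the ``high'' block via Lemma~\ref{LE10pre}, and control the ``low'' block by the induction hypothesis, with the base case $n=1$ supplied by Lemma~\ref{trew}. Your write-up is in fact slightly cleaner in that you include the $j^*=1$ term ($A_1=1$) explicitly, whereas the paper's displayed sum over $v=1,\ldots,p$ appears to omit the corresponding $v=0$ case.
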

\begin{proof} 
It is sufficient to prove that
\begin{gather} \label{ui} 
\max_{\vs \in \M^n} \abs{\zeta_{\va}^{[N_1]}(\vs;\val) - \zeta_{\va}^{[N]}(\vs;\val)}\leq C(\varepsilon)  N^{\varepsilon-1/2-\ddd}, \qquad (1 \leq N < N_1)  \\ \intertext{where}
\zeta_{\va}^{[N]}(\vs;\val)=
  \sum_{0 \leq k_1 < \cdots <k_{n} \leq N} \prod_{j=1}^n \frac{a_j(k_j+\alpha_j)}{ (k_j+\alpha_j)^{s_j}},
\end{gather}
since the condition \eqref{ui} gives us a Cauchy sequence and thus the sequence \begin{gather} \lim_{N \to \infty} \zeta_{\va}^{[N]}(\vs;\val)=\zeta_\va(\vs;\val) \end{gather} is uniformly convergent to an analytic function on $\M^n$ and the Lemma follows from \eqref{ui} when taking the limit $N_1 \to \infty$. We now use a proof by induction to prove the estimate \eqref{ui}. The base case $n=1$ is an immediate consequence of Lemma \ref{trew}. As the induction assumption we assume that \eqref{ui} is true for $n=p$. By dividing the sum occurring in \eqref{ui} for $n=p+1$ according to how many $k_j$ are greater than $N$ we get 
\begin{multline}
\zeta_{\va}^{[N_1]}(\vs;\val) - \zeta_{\va}^{[N]}(\vs;\val)  \\ = \sum_{v=1}^p \p{
\sum_{1 \leq k_1 < \cdots <k_{v} \leq N} \prod_{j=1}^{v} \frac{a_j(k_j+\alpha_j)}{ (k_j+\alpha_j)^{s_j}}} \p{ \sum_{N < k_{v+1} < \cdots <k_{p+1} \leq N_1} \prod_{j=v+1}^{p+1} \frac{a_j(k_j+\alpha_j)}{ (k_j+\alpha_j)^{s_j}} }.
\end{multline}
By the induction assumption the first factor in the sum is bounded  on $\M^v$ and by Lemma \ref{LE10pre} the second factor is bounded by $BN_1^{\varepsilon -\ddd-1/2}$ whenever $(s_{v+1},\ldots,s_{p+1}) \in \M^{p+1-v}$ for some constant $B>0$. 
 This concludes our proof.
\end{proof}

\begin{lem}   \label{LE10}
  Suppose that $\{\alpha_j\}_{j=1}^n$ are transcendental and rational numbers such that  $\chi_j$ is permissible with respect to $\alpha_j$ and of modulus $q_j$ such that $q_j$ and $q_{j+1}$ are coprime for $1 \leq j \leq n-1$ and let $1\leq j_0\leq n$.  Then, for any $\varepsilon^* >0$ there exist an $N_0$ such that if  $N_0<N_1 \leq N_2$ and   $a_j$ is of type $(N_2,\chi_j)$ if $j=j_0$ and of type $(N_1,\chi_j)$ otherwise, then
	\begin{gather*} 
  \max_{1 \leq v <w \leq n} \max_{s \in \M^m} \abs{\sum_{N_1 \leq k_v<\cdots < k_w < N_2}  \prod_{j=v}^w \frac{a_j(k_j+\alpha_j)}{(k_j+\alpha_j)^{s_{j}}}}  < (1+\Delta)  \varepsilon^*, \\ \intertext{where}
   \Delta= \max_{s \in \M^n} \abs{\sum_{k=N_1}^{N_2} \frac{a_{j_0}(k+\alpha_{j_0})}{(k+\alpha_{j_0})^{s}}}. \end{gather*}
\end{lem}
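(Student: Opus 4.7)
The plan is to split into cases according to whether the distinguished index $j_0$ lies in $[v,w]$. If $j_0 \notin [v,w]$, then every function $a_j$ appearing in the sum is of the stronger type $(N_1,\chi_j)$, so Lemma~\ref{LE10pre} applies directly and gives a bound $N_1^{\varepsilon-1/2-\ddd}$, uniform in $\vs \in \M^n$. Taking $N_0$ large enough makes this less than $\varepsilon^*$, and only the ``$1$'' in the factor $(1+\Delta)$ is needed.

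When $v\leq j_0 \leq w$, I isolate the $j_0$-th summation variable by fixing $k_{j_0}=K$ and writing the sum as a product of a ``left'' inner sum over $k_v<\cdots<k_{j_0-1}<K$, the factor $a_{j_0}(K+\alpha_{j_0})(K+\alpha_{j_0})^{-s_{j_0}}$, and a ``right'' inner sum over $K<k_{j_0+1}<\cdots<k_w<N_2$, each interpreted as $1$ when the corresponding index set is empty. Since each $a_j$ with $j\neq j_0$ is of type $(N_1,\chi_j)$ and hence of type $(K,\chi_j)$ for every $K \geq N_1$, Lemma~\ref{LE10pre} yields that the left sum is $\leq N_1^{\varepsilon-1/2-\ddd}$ when $j_0>v$ and the right sum is $\leq K^{\varepsilon-1/2-\ddd}$ when $j_0<w$. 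In the subcase $j_0<w$ the trivial estimate $|a_{j_0}(K+\alpha_{j_0})(K+\alpha_{j_0})^{-s_{j_0}}| \leq K^{-1/2-\ddd}$ multiplied with the bound on the right sum gives a summand $\ll K^{\varepsilon-1-2\ddd}$, whose sum over $K \geq N_1$ converges to $O(N_1^{\varepsilon-2\ddd})$; combining with the left sum bound (or with $1$ when $j_0=v$) produces a total that is a negative power of $N_1$ and therefore smaller than $\varepsilon^*$ once $N_0$ is large.

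The remaining subcase $j_0=w$ is the main obstacle: the right sum is empty, and the trivial bound $K^{-1/2-\ddd}$ no longer gives a convergent series. I handle it by writing
\[
\sum_{k=K+1}^{N_2-1} \frac{a_w(k+\alpha_w)}{(k+\alpha_w)^{s_w}} = \Sigma_{N_2-1}-\Sigma_{K}, \qquad \Sigma_L:=\sum_{k=N_1}^{L} \frac{a_w(k+\alpha_w)}{(k+\alpha_w)^{s_w}},
\]
so that by the definition of $\Delta$ we have $|\Sigma_{N_2-1}| \leq \Delta + (N_2+\alpha_w)^{-1/2-\ddd} \leq \Delta+1$. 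Substituting back and interchanging the order of summation splits the sum into a main term $\Sigma_{N_2-1}$ times a nested $(w-v)$-fold partial sum over $v,\dots,w-1$, to which Lemma~\ref{LE10pre} applies and returns at most $(\Delta+1) N_1^{\varepsilon-1/2-\ddd}$, plus a secondary term $\sum_{N_1 \leq k_v<\cdots<k_{w-1}<N_2-1}\Sigma_{k_{w-1}} \prod_{j<w}\frac{a_j(k_j+\alpha_j)}{(k_j+\alpha_j)^{s_j}}$. I then attack the secondary term by a further Abel summation in the variable $k_{w-1}$, using that the increment of the nested left-sum in $k_{w-1}$ is itself bounded via Lemma~\ref{LE10pre}, so that the resulting telescoping recombines into an expression again controlled by $(\Delta+1) N_1^{\varepsilon-1/2-\ddd}$. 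Selecting $N_0$ large enough that the implicit constant times $N_1^{\varepsilon-1/2-\ddd}$ falls below $\varepsilon^*$ then delivers the required $(1+\Delta)\varepsilon^*$ estimate. The delicate point is precisely this final rearrangement in the case $j_0=w$, where care is needed to ensure no uncontrolled partial sum of $a_w$ escapes from the telescoping.
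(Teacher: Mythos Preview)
Your treatment of the cases $j_0\notin[v,w]$, $j_0=v$, and $v<j_0<w$ is fine and essentially matches the paper. The gap is in the subcase $j_0=w$, precisely at the point you flag as ``delicate.'' After pulling off the main term $\Sigma_{N_2-1}\cdot(\text{nested sum})$, Abel summation in $k_{w-1}$ on the secondary term produces
\[
\text{(boundary)}\;-\;\sum_{K}(\Sigma_{K+1}-\Sigma_K)\,B_K,
\]
where $B_K=\sum_{N_1\le k_v<\cdots<k_{w-1}\le K}\prod_{j=v}^{w-1}\frac{a_j(k_j+\alpha_j)}{(k_j+\alpha_j)^{s_j}}$. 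Lemma~\ref{LE10pre} gives only $|B_K|\le N_1^{\varepsilon-1/2-\ddd}$, \emph{uniform} in $K$, while $|\Sigma_{K+1}-\Sigma_K|\le (K+1)^{-1/2-\ddd}$. The resulting sum $\sum_{K=N_1}^{N_2-1}K^{-1/2-\ddd}$ is of order $N_2^{1/2-\ddd}$ and is unbounded as $N_2\to\infty$, so the telescoping does \emph{not} recombine into something controlled by $(1+\Delta)N_1^{\varepsilon-1/2-\ddd}$. Bounding the increment $b_K$ of $B_K$ via Lemma~\ref{LE10pre}, as you suggest, does not help here: in the Abel identity it is $B_K$, not $b_K$, that multiplies $\Sigma_{K+1}-\Sigma_K$. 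You would need a $K$-decaying tail estimate $|B_K-B_\infty|\ll K^{\varepsilon-1/2-\ddd}$ (which is provable, via the argument behind Lemma~\ref{EEEV}), but you have not invoked or proved it.

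The paper avoids this obstruction by a different device: in the secondary sum $\sum_{N_1\le k_v<\cdots<k_{w-1},\,N_1\le k_w\le k_{w-1}}$ it splits the range of $k_w$ into $k_{w-2}<k_w<k_{w-1}$ and $k_w\le k_{w-2}$. The first piece is a nested sum with the indices reordered so that $k_w$ is no longer last, and therefore falls under the already-established interior case. The second piece $(\star)$ is handled by isolating $k_{w-2}$ and bounding the three resulting factors termwise. The key idea you are missing is this index-reordering, which converts the hard boundary case $j_0=w$ back into an interior case.
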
 
\begin{proof}
First choose $N_0$ sufficiently large such that
 \begin{gather} \label{ajj}
  \sum_{k=N_0}^\infty (k+\alpha_{j_0})^{-1/2-\ddd} k^{-1/2} < \varepsilon^*
 \end{gather}
and such that Lemma \ref{LE10pre} holds for that choice of $N_0$ with $\varepsilon=\ddd$.
 Assume that $1 \leq v <w \leq n$ is given and consider the sum
\begin{gather*}
(*)=\sum_{N_1 \leq   k_v<\cdots < k_w < N_2 }  \prod_{v=1}^w \frac{a_j(k_j+\alpha_j)}{(k_j+\alpha_j)^{s_{j}}}
\end{gather*}
 We have the following possibilities
\begin{enumerate}[leftmargin=18pt]
\item  $j_0 \not \in [v,w]$:
 In this case the inequality follows trivially from  Lemma \ref{LE10pre}, in fact we get the stronger result $\abs{(*)} \leq N_0^{-1/2-\ddd+\varepsilon}<\varepsilon^*$, provided that $N_0$ has been chosen sufficiently large.
\item $j_0=v$. This case follows by rewriting the sum and applying the triangle inequality 
\begin{align} 
 \abs{(*)}&=\abs{\sum_{k=N_1}^{N_2}   \frac{a_{v}(k+\alpha_{v})}{(k+\alpha_{v})^{s_{v}}}    \p{\sum_{k < k_{v+1}< \cdots k_{w}<N_2}  \frac{a_j(k_j+\alpha_j)}{(k_j+\alpha_j)^{s_{j}}}}} \\
          &\leq \sum_{k=N_1}^{N_2} {\abs{k+\alpha_{v}}^{-1/2-\ddd}} k^{\varepsilon-\ddd-1/2} \leq \varepsilon^*,
\end{align}
where the inequality for the first factor follow from the fact that $a_j$ is unimodular and  $s_j \in \M$,   where the inequality for the second factor follows by Lemma \ref{LE10pre}, and the final inequality follows from \eqref{ajj}.
\item $v < j_0<w$: In the same way as in the induction step in the proof of Lemma \ref{LE10pre} the sum can be rewritten as $(*)=$
\begin{align} 
 \sum_{k=N_1}^{N_2}   \frac{a_{j_0}(k+\alpha_{j_0})}{(k+\alpha_{j_0})^{s_{j_0}}} \p{\sum_{N_1 < k_v< \cdots k_{j_0-1}<k}   \frac{a_j(k_j+\alpha_j)}{(k_j+\alpha_j)^{s_{j}}}}  \p{\sum_{k < k_{j_0+1}< \cdots k_{w}<N_2}  \frac{a_j(k_j+\alpha_j)}{(k_j+\alpha_j)^{s_{j}}}}.
\end{align}
We now apply the triangle inequality.  Since $a_j$ is unimodular and $s_j \in \M$ the first factor in each term can be bounded by $(k+\alpha_{j_0})^{-1/2-\ddd}$. The second factor in each term is bounded by $N_1^{-1/2-\ddd+\varepsilon} \leq 1$ if $v<j_0$ by Lemma \ref{LE10pre}.  The third factor in each term is bounded  by $k^{-1/2-\ddd+\varepsilon}$ by Lemma \ref{LE10pre}. Since $N_1 \geq N_0$ it follows by \eqref{ajj} that
$$
 \abs{(*)} \leq \sum_{k=N_1}^{N_2} (k+\alpha_{j_0})^{-1/2-\ddd}\times 1 \times k^{-1/2} \leq \varepsilon^*.
$$
\item $j_0=w$: This is the most difficult case to treat. We rewrite the sum in the last variable as
\begin{gather}
  \sum_{k_{w-1}<k_w} \frac{a_w(k_w+\alpha_w)}{(k_w+\alpha_w)^{s_w}}=\Delta(s_w)-\sum_{N_1 \leq k_w \leq  k_{w-1} }(k_w+\alpha_w)^{-s_w}, \\ \intertext{where}
\Delta(s)=\sum_{k=N_1}^{N_2} \frac{a_w(k+\alpha_w)}{(k+\alpha_w)^{s}}. 
\end{gather}
and we can now write the sum $(*)$ as
\begin{multline} \label{ytre1}
 (*)= \Delta(s) \sum_{N_1 \leq k_v< \cdots < k_{w-1} \leq N_2} \prod_{j=v}^{w-1} \frac{a_j(k_j+\alpha_j)}{(k_j+\alpha_j)^{s_j}}  \\ -\sum_{\substack{N_1 \leq k_v <  \cdots  < k_{w-1} \leq N_2 \\ N_1 \leq k_w \leq k_{w-1}}}  \prod_{j=1}^{w} \frac{a_j(k_j+\alpha_j)}{(k_j+\alpha_j)^{s_j}} (k_w+\alpha_w)^{-s_w}, 
\end{multline}
The first term on the right hand side of \eqref{ytre1} can be estimated  as
\begin{gather} \label{ytreojoj}
 \abs{ \Delta(s) \sum_{N_1 \leq k_v< \cdots < k_{w-1} \leq N_2} \prod_{j=v}^{w-1} \frac{a_j(k_j+\alpha_j)}{(k_j+\alpha_j)^{s_j}} } < \Delta \varepsilon^*
 \end{gather}
 by Lemma \ref{LE10pre} provided $N_0$ has been chosen sufficiently large. The second term in the right hand side of \eqref{ytre1} can be rewritten as
\begin{multline} \label{ytre2} 
 \sum_{\substack{N_1 \leq k_v< \cdots  < k_{w-1} \leq N_2 \\ N_1 \leq k_w \leq k_{w-1}}}\prod_{j=v}^{w-1} \frac{a_j(k_j+\alpha_j)}{(k_j+\alpha_j)^{s_j}}    \\ =   \sum_{N_1 \leq  k_v < \cdots < k_{w-2} <k_{w} < k_{w-1} \leq N_2} \prod_{j=v}^{w-1} \frac{a_j(k_j+\alpha_j)}{(k_j+\alpha_j)^{s_j}}  + (\star), 
\end{multline}
where
\begin{gather} (\star)= \sum_{\substack{N_1 \leq k_v< \ldots < k_{w-2} <k_{w-1}\\ N_1 \leq k_w \leq k_{w-2}}}\prod_{j=v}^{w-1} \frac{a_j(k_j+\alpha_j)}{(k_j+\alpha_j)^{s_j}}.
\end{gather}
The absolute value of the first term is less than $\varepsilon^*/2$  by step 2  ($j_0<w$) in the proof of this result since we have  rearranged the $k_j$ and $\varepsilon^*$ can be chosen arbitrarily small. In case $w-v=3$ then $|(\star)|$ can be estimated in a similar manner by  $\varepsilon^*/2$. In case $w-v \geq 4$ we rewrite the final term (with $k=k_{w-2}$) as
\begin{multline*}
(\star)=\sum_{\substack{N_1 \leq k_v< \ldots < k_{w-2} <k_{w-1}\\ N_1 \leq k_w \leq k_{w-2}}}\prod_{j=v}^{w-1} \frac{a_j(k_j+\alpha_j)}{(k_j+\alpha_j)^{s_j}}   =
\sum_{k = N_1}^{N_2} \frac{a_{w-2}(k+\alpha_{w-1})}{(k+\alpha_{w-2})^{s_{w-2}}} \times  \\ \times 
\left( \sum_{N_1 \leq k_v< \ldots < k_{w-3}<k} \prod_{j=v}^{w-3} \frac{a_j(k_j+\alpha_j)}{(k_j+\alpha_j)^{s_j}}  \right) \times \\  \times   \left( \sum_{k<k_{w-1} \leq N_1}  \frac{a_{w-1}(k_{w-1}+\alpha_{w-1})}{(k_{w-1}+\alpha_{w-1})^{s_{w-1}}} \right) \left( \sum_{N_1 \leq k_w \leq  k}  \frac{a_w(k_w+\alpha_w)}{(k_w+\alpha_w)^{s_w}} \right)
\end{multline*}
The absolute value of the first factor in the sum can be estimated by $k^{-1/2-\ddd}$, the second and the third term can be estimated by Lemma \ref{LE10pre} by $k^{-1/2-\ddd+\varepsilon}$, and the fourth term can be estimated by $2 k^{1/2-\ddd}$. It follows that
\begin{gather} \label{starr}
 |(\star)| \leq \sum_{k = N_1}^{N_2} k^{-1/2-\ddd} \times k^{-1/2-\ddd+\varepsilon} \times k^{-1/2-\ddd+\varepsilon} \times 2 k^{1/2-\ddd} = 2 \sum_{k = N_1}^{N_2} k^{-1-2\ddd+2 \varepsilon} \leq \frac{\varepsilon^*} 2  \qquad
\end{gather}
provided that $0<\varepsilon<\ddd$ and $N_0$ is sufficiently large.
The final result follows from  estimates \eqref{ytre1}, \eqref{ytreojoj}, \eqref{ytre2}, \eqref{starr} and the triangle inequality.
\end{enumerate}
\end{proof}

The following  combinatorial lemma will be crucial for our argument. 
\begin{lem} \label{LE13}
   Let $K  \subset \C \setminus\{0 \}$ be a compact set, $C>0$,  $n \geq 2$ and let $p(\vs)$  be a  polynomial in $n$ variables. Then there exists some monomials $q_{j,m_j}(s_j)$, integer $M$ and a sequence $\{j_m\}_{m=1}^{M}$ with $j_m \in \{1,\ldots,n\}$  so that
  \begin{gather} \label{firstcond}
     p(\vs)= \sum_{1 \leq  m_1 < \cdots < m_n \leq M} \prod_{j=1}^n  q_{j,m_j} (s_j), 
		\\ \intertext{and} \label{nexttofinalcond}
 \sum_{1 \leq m_v <\cdots < m_n \leq M} \prod_{j=v}^n q_{j,m_j} (s_j), \qquad (2 \leq v \leq n),
  \\ \intertext{and such that $q_{j,m}(s)=0$ if $j \neq j_m$. Furthermore we can choose the monomials such that the sums}  \sum_{m=1}^N q_{j,m}(s)
			   \\  \intertext{for $1 \leq N \leq M$ and $1 \leq j \leq n$ are also monomials and either}
			  \label{finalcond1}
			      \min_{s \in K} \abs{\sum_{m=1}^N q_{1,m}(s)}>C \qquad \text{or} \qquad \sum_{m=1}^N q_{1,m}(s) =0, \\   \intertext{for $1 \leq N \leq M$ and} \label{finalcond2} 
                      \max_{s \in K} \abs{\sum_{m=1}^N q_{j,m}(s)} \leq 1, 
         \end{gather}
          for  $1 \leq N \leq M$ and $2 \leq j \leq n$.
	\end{lem}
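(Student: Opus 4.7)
I would reduce to the case of a single monomial of $p$ and build $\{q_{j,m}\}_{m=1}^M$ as a concatenation of ``main'' blocks $B_1,\ldots,B_T$, one per monomial of $p$, separated by one-column ``cancellation'' blocks $B_t'$ that reset the slot-$1$ partial sum between main blocks.  Writing $p(\vs)=\sum_{t=1}^T c_t s_1^{a_{1,t}}\cdots s_n^{a_{n,t}}$ with $c_t\ne 0$, the main block $B_t$ consists of $2n-1$ consecutive columns whose entries, read left to right, are
\[
-E_{n,t}s_n^{a_{n,t}},\ -E_{n-1,t}s_{n-1}^{a_{n-1,t}},\ \ldots,\ -E_{2,t}s_2^{a_{2,t}},\ D_t s_1^{a_{1,t}},\ E_{2,t}s_2^{a_{2,t}},\ \ldots,\ E_{n,t}s_n^{a_{n,t}},
\]
each placed in the slot $j_m$ equal to the variable it involves; the block $B_t'$ carries the single entry $-D_t s_1^{a_{1,t}}$ in slot $1$.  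This is the direct $n$-variable analogue of Table \ref{exmpl} from the proof sketch of Lemma \ref{pla}.  The free scalars $D_t,E_{j,t}$ will be chosen at the very end.

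\textbf{Vanishing and realisation of $p$.}  Abbreviate $\rho_{v,u}(B):=\sum_{m_v<\cdots<m_u\in B}\prod_{j=v}^u q_{j,m_j}(s_j)$.  Within $B_t$, for $2\le v\le u\le n$ the slot-$j$ entries sit only at the two columns symmetric about the centre, and the ordering constraint forces $m_{v+1},\ldots,m_u$ onto the right half while leaving $m_v$ free between its two symmetric positions; summing the two signs yields $\rho_{v,u}(B_t)=(-E_{v,t}+E_{v,t})s_v^{a_{v,t}}\prod_{j>v}E_{j,t}s_j^{a_{j,t}}=0$, and trivially $\rho_{v,u}(B_t')=0$ whenever $v\ge 2$.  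The one surviving block-sum is $\rho_{1,n}(B_t)=D_t\prod_{j\ge 2}E_{j,t}\cdot s_1^{a_{1,t}}\cdots s_n^{a_{n,t}}$, obtained when $m_1$ sits at the centre and the remaining $m_j$'s on the positive-side columns.  Decomposing any global sum over $m_v<\cdots<m_n$ according to which block contains each index, every multi-block contribution picks up at least one factor $\rho_{v',u'}(B)$ with $v'\ge 2$ and hence vanishes; this yields \eqref{nexttofinalcond}, and for $v=1$ only the single-block terms $\rho_{1,n}(B_t)$ remain, giving \eqref{firstcond} once I impose $D_t\prod_{j\ge 2}E_{j,t}=c_t$.

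\textbf{Partial sums and choice of scalars.}  Because $\sum_{m\in B_t}q_{j,m}=0$ for each $j\ge 2$, the slot-$j$ cumulative $\sum_{m=1}^N q_{j,m}(s)$ only ever takes the two values $0$ and $-E_{j,t}s^{a_{j,t}}$, so it is automatically a monomial and \eqref{finalcond2} becomes the single inequality $|E_{j,t}|\max_{s\in K}|s^{a_{j,t}}|\le 1$.  Because $B_t'$ cancels slot $1$ of $B_t$, the slot-$1$ cumulative is always $0$ or $D_t s^{a_{1,t}}$, a monomial, and \eqref{finalcond1} becomes $|D_t|\min_{s\in K}|s^{a_{1,t}}|>C$.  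The main obstacle is balancing these two inequalities against the product constraint $D_t\prod_{j\ge 2}E_{j,t}=c_t$ when $|c_t|$ is small; it is resolved by choosing every $|E_{j,t}|$ much smaller than $1/\max_{s\in K}|s^{a_{j,t}}|$ and letting $|D_t|=|c_t|/\prod_{j\ge 2}|E_{j,t}|$ absorb the remaining scale, using that $K\subset\C\setminus\{0\}$ compact guarantees $\min_{s\in K}|s^{a_{1,t}}|>0$ so that the required $|D_t|$ is finite.
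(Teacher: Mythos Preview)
Your construction is correct and is essentially the same as the paper's: both build, for each monomial of $p$, a symmetric $(2n-1)$-column block with slot~$1$ at the centre and slots $2,\ldots,n$ in $\pm$ pairs around it, separated by a single slot-$1$ cancellation column, and then rescale to force the size conditions \eqref{finalcond1}--\eqref{finalcond2}. The only difference is cosmetic: the paper rescales with a single global constant $B$ at the end, whereas you build the free scalars $D_t,E_{j,t}$ into the construction from the start; your variant is slightly cleaner (it sidesteps a typo in the paper's description of the scaling for $n\ge 3$) and your verification of the cross-block vanishing via the factorisation into $\rho_{v',u'}(B)$'s is more explicit than the paper's, which simply states the construction and leaves the check to the reader.
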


\begin{proof}
 First we will prove the result without the final  condition. Assume that 
\begin{gather} p(s)=\sum_{l=1}^L a_l \prod_{j=1}^n s_j^{c_{j,l}}. \end{gather}
The result follows by choosing $N=(2n-1)L-1$ and
\begin{align*}
 q_{j,(2n-1)l+j-1}(s_j)&=s_j^{c_{j,l}}, &  q_{j,(2n-1)l-j+1}(s_j)&=-s_j^{c_{j,l}},  & (2 \leq j \leq n, 1 \leq l \leq L),& \\
  q_{1,(2n-1)l}(s_1)&=a_l s_1^{c_{1,l}},  &  q_{1,(2n+1)l}(s_1)&=-a_l s_1^{c_{1,l}}, & (1\leq l \leq L).&
\end{align*}
 To prove the result with those conditions we notice that it is sufficient to multiply the monomials $q_{j,m_j}(s_j)$ for $1 \leq j \leq n-1$ and $1 \leq m_j \leq M$  by some sufficiently large constant $B$ and divide the monomials $q_{n,m_n}(s_n)$  for $1 \leq m_n \leq M$  by  $B^n$. This assures that \eqref{finalcond1} and \eqref{finalcond2} are true while not changing the sum given in \eqref{firstcond}.
\end{proof}
By the construction in the proof of Lemma \ref{LE13} 
for the polynomial $q(s_1,s_2)=s_2+s_1+s_1^2s_2^2$  we arrive at $N_1=11$ and  Table \ref{exmpl}. The final step in the lemma to assure that the final conditions are satisfied gives us
\begin{center}
\begin{tabular}{|c|c|c|c|c|c|c|c|c|c|c|c|}
\hline
$m$              & $1$     & 2  & 3     & 4        & 5 & 6 & 7 & 8 & 9  & $10$ & $11$     \\ \hline 
$q_{1,m}(s_1)$ & 0 & $B$ & 0 & $-B$     &  $0$  & $Bs_1$ &    $0$      & $-Bs_1$ & $0$  & $Bs_1^2$ & $0$ \\[1pt]  \hline
$q_{2,m}(s_2)$ & $\displaystyle -\frac{s_2} B$ & $0$ & $\displaystyle \frac{s_2} B$  & $0$ & $\displaystyle - \frac 1 B$ & $0$ & $\displaystyle \frac 1 B$  & $0$ & $\displaystyle -\frac{s_2^2}  B$ & $0$ & $\displaystyle\frac{s_2^2}B$ \\[6pt]  \hline
\end{tabular} 
\end{center}
for a sufficiently large $B$. It is clear that multiplying the first row by $B$ and the second by $1/B$  does not change the sum \eqref{firstcond}. Similarly for the example $p(s_1,s_2,s_3)=1+s_1s_2s_3$ the first construction (when $B=1$) in the proof of the lemma gives us
\begin{center}
\begin{tabular}{|c|c|c|c|c|c|c|c|c|c|c|c|}
\hline
$m$              & 1   & $2$ & $3$     & $4$        & $5$ & $6$ & $7$ & $8$ & $9$  & $10$ & $11$ 
 \\ \hline 
$q_{1,m}(s_1)$ & 0 & 0 & 1 & 0     &  $0$  & $-1$ &    $0$      & $0$ & $s_1$  & $0$ & $0$ 
 \\  \hline
$q_{2,m}(s_2)$ & $0$ & $-1$ & $0$  & $1$ & $0$ & $0$ & $0$  & $-s_2$ & $0$ & $s_2$ & $0$ 
 \\  \hline
$q_{3,m}(s_3)$ & $-1$ & $0$ & $0$  & $0$ & $1$ & $0$ & $-s_3$  & $0$ & $0$ & $0$ & $s_3$ 
\\ \hline
\end{tabular} 
\end{center}

\section{A proof of the fundamental lemma, Lemma \ref{LE4}} \label{fundlem}

\begin{description}[leftmargin=18pt]
\item[Initial step:] To prove the fundamental lemma we need to show that there exist some functions $a_j:\N+\alpha_j \to \C$ of type $(\chi_j,N^*)$  such that for any $N\geq N^*$ then
\begin{gather} \label{fundlemsuff}
  \max_{\vs \in \M^n}   \abs{\zeta_{\va}^{[N]}(\vs;\val)-p(\vs)}<\varepsilon. 
\end{gather}
We first use Lemma \ref{LE3} to find functions   $a_j$ on $\N+\alpha_j$ of type $(L_0,\chi_j)$  for $j=1,\ldots,n$ and  numbers $A_0$, where $A_0$ can be chosen as the maximum of the $n$ different $A$'s that appears in Lemma \ref{LE3}, and $L_0$ can be chosen as the maximum of the $n$ different $N_1$'s that comes from the use of Lemma \ref{LE3} with $N_0=0$ and $\varepsilon=1$ such that
\begin{gather} \label{estaa}
  \max_{1 \leq j \leq n} \max_{s \in \M} \abs{\sum_{k=0}^{\infty} \frac{a_j(k+\alpha_j)}{(k+\alpha_j)^{s}}-A_0-3}<1,  
\end{gather}
By Lemma \ref{EEEV} it follows that
\begin{gather} \label{ggdef} \lim_{N \to \infty} \,
 \sum_{ 0 \leq k_1 <\cdots <k_n < N} \prod_{j=1}^n \frac{a_j(k_j+\alpha_j)}{ (k_j+\alpha_j)^{s_j}} = g(\vs), \qquad \p{\Re(s_j)>\frac{1+\ddd}2 },
\end{gather}
where $g$ is an analytic function on  $\{s \in \C: \Re(s)>\frac 1 2 \}^n$ where the convergence  is uniform  on $\M^n$ and  that for some constant $B>0$ we have that
\begin{gather} 
 \label{tt1}        
        \max_{j=1,\ldots,n} \sup_{m \geq 0} \max_{s \in \M}  \abs{\sum_{k=1}^m      \frac{a_j(k+\alpha_j)}{   (k+\alpha_j)^{s}}} \leq B, \\ 			\intertext{and}                                                            \label{tt2}    \max_{1 \leq l \leq n} \sup_{m \geq 0} \max_{\vs \in \M^l}  \abs{\sum_{ 0 \leq k_1 < \cdots < k_{l}<m} \prod_{j=1}^l \frac{a_j(k_j+\alpha_j)}{ (k_j+\alpha_j)^{s_j}}} \leq B.
\end{gather}
By the Oka-Weil theorem we can approximate the function $p-g$ by a polynomial $q$ so that
\begin{gather}\label{est2} 
  \max_{\vs \in \M^n} \abs{q(\vs)-(p(\vs)-g(\vs))} < \frac{\varepsilon} 3.
\end{gather}
We now apply Lemma \ref{LE13} on the polynomial $q$ with  $K=\M^n$ and \begin{gather} \label{Ckond} C \geq B+2A_0+1, \end{gather}  ensuring that the functions we need to estimate in a later step have sufficiently large minimum value to allow us to use Lemma \ref{LE3}.  This means that we have monomials   $q_{k_j,j}(s_j)$ and an integer $M$ so that
  \begin{gather} \label{psum}
     q(\vs)= \sum_{1 \leq  m_1 < \cdots < m_n \leq M} \prod_{j=1}^n q_{j,m_j} (s_j), \\ \intertext{and}
		 \label{psum2}  \sum_{1 \leq  m_v  < \cdots < m_n \leq M} \prod_{j=v}^n q_{j,m_j} (s_j)=0, \qquad (2 \leq v \leq n).
 \end{gather}
Define 
\begin{gather} 
  \label{Qdef} 
     \Delta=\max_{1 \leq m \leq M} \max_{1 \leq j \leq n}\max_{s \in \M}\abs{q_{j,m}(s)}.
  \end{gather}
where $N_0$ is the constant that comes from the application of Lemma \ref{LE10} with 
\begin{gather} \label{epstar}
 \varepsilon^*=\min \p{1,\frac{\varepsilon}{3n(1+Bn)(2(M+1)(\Delta+1))^n}}.
\end{gather}
By \eqref{ggdef} and Lemma \ref{EEEV}  there exist some  sufficiently large $\NC_0>\max(L_0,N_0)$ such that
\begin{gather} \label{est1l} 
   \max_{\vs \in \M^n} \abs{\sum_{ 
 0 \leq k_1 <\cdots < k_n < \NC_0} \prod_{j=1}^n \frac{a_j(k_j+\alpha_j)}{ (k_j+\alpha_j)^{s_j}}-g(\vs)} < \frac {\varepsilon} 3, \\ \intertext{and} 
 \label{est2l}
    \max_{1 \leq j \leq n} \max_{s \in \M} \abs{\sum_{k=\NC_0}^\infty \frac{a_j(k_j+\alpha_j)}{ (k_j+\alpha_j)^{s_j}}}<1. 
\end{gather}
 By the definition of  $\zeta_{\va}^{[N]}(\vs;\val)$  as a multiple sum; equation \eqref{flv}; it can be rewritten as 
	\begin{gather} \label{flv3c}
    \zeta_{\va}^{[N]}(\vs;\val) =  \sum_{v=1}^n A_v(\vs) B_v(\vs), \\ \intertext{where}  A_1(\vs)=1, \qquad  A_v(\vs)=   \sum_{0 \leq k_1 < \cdots < k_{v-1} <\NC_0} \prod_{j=1}^{v-1} \frac{a_j(k_j+\alpha_j)}{ (k_j+\alpha_j)^{s_j}},   \qquad  (2 \leq v \leq n), \qquad \label{Avdef} \\ \intertext{and}
      B_v(\vs)= \sum_{\NC_0 \leq k_v < \cdots < k_n < N } \prod_{j=v}^{n} \frac{a_j(k_j+\alpha_j)}{ (k_j+\alpha_j)^{s_j}}. \label{Bvdef}
\end{gather}
Let us furthermore define
\begin{gather}
  \label{QQdef} Q_{j,m}(s)= \sum_{0 \leq k \leq \NC_0} a_j(k+\alpha_j)(k+\alpha_j)^{-s}+\sum_{l=1}^m q_{j,l}(s)
\end{gather}
By the triangle inequality, the inequalities \eqref{tt2}, \eqref{estaa}, \eqref{est2l} and \eqref{finalcond1} it follows that
\begin{gather} \label{ytre}
 \min_{s \in \M} \abs{Q_{j,m}(s)} \geq  A_0. 
\end{gather}
The same inequality also follows from \eqref{Ckond}, \eqref{tt2}, \eqref{estaa}, \eqref{est2l} and \eqref{finalcond2}. Since either \eqref{finalcond1} or \eqref{finalcond2} holds we know that \eqref{ytre} holds for each $1 \leq j \leq n$ and $1 \leq m \leq M$.
By \eqref{QQdef} it is also clear that \begin{gather} \label{ajabaja}
   q_{j,m}(s)=Q_{j,m}(s) - Q_{j,m-1}(s).
\end{gather}
\item[Recursion step:]
We now define $\NC_m>\NC_{m-1}$ for $1 \leq m \leq M$ recursively; at each step redefining the  $a_{j_m}(k+\alpha_{j_m})$ for $k \geq \NC_{m-1}$    by use of Lemma \ref{LE3} on the functions $Q_{j_m,m}(s)$ which by \eqref{ytre} fulfills the necessary conditions and letting $\NC_m$ be the $N_1$ that appears in the application of   Lemma \ref{LE3} so that we have
 \begin{gather} \label{aer}
     \max_{s  \in \M} \abs{\sum_{0 \leq k<\NC_{m}} \frac{a_j(k+\alpha_{j_m})}
{(k+\alpha_{j_m})^{s}}-Q_{j_m,m}(s)} <\varepsilon^*
\\  \intertext{and}
		 \sup_{\NC_{m} \leq N_2<N_3}  \max_{s  \in \M} \abs{\sum_{n=N_2}^{N_3} \frac{a_{j_m}(n+\alpha_{j_m})}{ (n+\alpha_{j_m})^{s}}} <\varepsilon^*, 
\label{aer2}
  \end{gather}
	where $\varepsilon^*$ is given by \eqref{epstar}. After the final redefinition of the $a_j$ to $\N+\alpha_j$ the inequalities \eqref{aer} and \eqref{aer2} imply together with the triangle inequality and \eqref{ajabaja} that 
	\begin{gather} \label{iii}
        \max_{s  \in \M} \abs{\sum_{\NC_{m-1} \leq  k<\NC_{m}} \frac{a_j(k+\alpha_j)}{(k+\alpha_j)^{s}}- q_{j,m}(s)} <2\varepsilon^*,
		\end{gather}
	for  $1 \leq m \leq M$ and $1 \leq j \leq n$. 
\item[Conclusion step:]
We now choose $N^*=\NC_M$. It is clear that each function $a_j:\N+\alpha_j \to \C$ after its final redefinition can be viewed as a function of type $(N^*,\chi_j)$.  For notational convenience we let $\NC_{M+1}=N>\NC_M$. The terms $B_v(\vs)$ defined by \eqref{Bvdef} can be written as
	\begin{gather} \label{flv3}
    B_v(\vs)= \sum_{1 \leq m_v \leq \cdots \leq m_n \leq M+1} \, \sum_{\substack{ k_v < \cdots < k_n \\ \NC_{m_j-1} \leq k_j < \NC_{m_{j}}}} \prod_{j=v}^n \frac{a_j(k_j+\alpha_j)}{ (k_j+\alpha_j)^{s_j}}
\end{gather}
It follows by \eqref{iii}  and \eqref{Qdef} that for $1 \leq v \leq n$ that 
\begin{gather} \label{ppp} \abs{\sum_{\substack{0 \leq k_v < \cdots < k_n \\ \NC_{m_j-1} \leq k_j < \NC_{m_{j}}}} \prod_{j=v}^n \frac{a_j(k_j+\alpha_j)}{ (k_j+\alpha_j)^{s_j}} - \prod_{j=v}^n q_{j,m}(s_j)}< (n-v+1) \varepsilon^* (2(\Delta+\varepsilon^*))^{n-v}, \qquad \end{gather}
in case we are summing from different intervals, i.e. $m_v< \cdots <m_n$. In case we allow $m_j=m_{j+1}$ for some $v \leq j \leq n-1$ then the same result follows from Lemma \ref{LE10} and the fact that at least one of $q_{j,m_j}(s_1)$ and $q_{j+1,m_j}(s_2)$ is identically zero so the product is also zero. From \eqref{flv3} and \eqref{ppp} and using the fact that we have chosen $\varepsilon^* \leq 1$ in \eqref{epstar} we see that
\begin{gather} \label{trw}
  \abs{B_v(\vs)-  \sum_{1 \leq m_v <\cdots <m_n \leq M} \prod_{j=v}^n q_{j,m}(s_j)}< n \p{2(M+1)(\Delta+1)}^{n}  \varepsilon^*.
\end{gather}  
By \eqref{psum} and \eqref{trw} we get the inequality
\begin{gather}
  \max_{\vs \in \M^n} \abs{B_1(\vs)- q(\vs)} <  n \p{2(M+1)(\Delta+1)}^{n} \,\varepsilon^*, 
\\ \intertext{and by  \eqref{psum2} and \eqref{trw}  we have}
 \max_{\vs \in \M^n} \abs{B_v(\vs)} < n \p{2(M+1)(\Delta+1)}^{n} \,\varepsilon^*, \qquad (2 \leq v \leq n).
\end{gather}
By  \eqref{Avdef} and \eqref{tt1} we find that $|A_v(\vs)| \leq B$ when $\vs \in \M^n$.  It follows that  
\begin{gather} \label{est222}
  \max_{\vs \in \M^n} \abs{ \zeta_{\va}^{[N]}(\vs;\val) - p(\vs)} < 
\frac {\varepsilon} 3
\end{gather}
since by \eqref{flv3c} and \eqref{Avdef} and the triangle inequality the left hand side in of \eqref{est222} can be bounded by
\begin{gather}
  \p{n\p{2(M+1)(\Delta+1)}^{n} +  \sum_{v=2}^n B n \p{2(M+1)(\Delta+1)}^{n}} \varepsilon^* 
\end{gather}
which by the definition of $\varepsilon^*$ in \eqref{epstar} is bounded by $\varepsilon/3$.
Finally we notice that the estimates \eqref{est2}, \eqref{est1l}, 
\eqref{est222} together with the triangle inequality implies \eqref{fundlemsuff} which concludes the proof of Lemma \ref{LE4}. 
\end{description}
\qed

\section{Proof of Lemma \ref{LE6} \label{secstar}}

Let us assume that $\va=(a_1,\ldots,a_n)$ and that $a_j$ is a completely multiplicative unimodular function on $\N+\alpha_j$. If $\alpha_j$  is rational then $a_j$ is determined on its values on the primes. Assume that
$$
  a_j(p)=e^{2 \pi i \theta_{j,p}}
$$
 if $\alpha_j$ is rational and  that
$$
 a_j(k+\alpha_j)=e^{2 \pi i \theta_{j,k}}
$$
if $\alpha_j$ is transcendental where $\theta_{j,p}, \theta_{j,k} \in [0,1)$.
Let us assume that $0 <\dddd \leq 1$ and let $\norm{x}$ denote the distance between $x$ and its nearest integer and define
\begin{gather} \label{BCdef}
   \newsym{A certain subset of $[0,T]$}{\BC_{j,N,T}( \dddd)}= \bigcap_{k=0}^{\lfloor N-\alpha_j \rfloor} \left \{0 \leq t \leq T: 
\norm { \frac{t} { 2 \pi \log(k+\alpha_j)} -\theta_{j,k} }<\frac { \dddd} 2  \right \}, 
\end{gather}
for $1 \leq j \leq n$ if $\alpha_j$ is transcendental and
\begin{gather} \label{BCdef2}
  \BC_{j,N,T}( \dddd)= \bigcap_{p \in {\mathcal P}_j(N)} \left \{0 \leq t_j \leq T: 
\norm { \frac{t} { 2 \pi \log(p)} -\theta_{j,p} }<\frac { \dddd} 2  \right \}, 
\end{gather}
if $\alpha_j$ is rational,
where we choose \begin{gather} \newsym{a finite set of primes containing all primes less than $N$ }{{\mathcal P}_j(N)} = \{p \leq N :  p \text{ prime} \}, \\ \intertext{if $1 \leq j \leq n-1$, and}  {\mathcal P}_n(N) = \mathcal P, \\ \intertext{where $\mathcal P$ is a finite set of primes such that}   \label{Pdef}  \{p \leq N : p \text{ prime} \} \subseteq {\mathcal P}. \end{gather}
 Furthermore, if $\alpha_j$ is rational we  let
\begin{gather}
  \mathcal P_j'(N) = \{p :  p \text{ prime}\} \setminus  P_j(N),%
\end{gather} 
and let \newsym{If $\alpha_j$ is rational, numbers with only prime factors in $\Pri_j(N)$. If $\alpha_j$ is transcendental $\{n+ \alpha_j :  0 \leq n \leq N \}$}{$\cA_j(N)$} and \newsym{If $\alpha_j$ is rational, numbers without prime factors in $\Pri_j(N)$. If $\alpha_j$ is transcendental $\{1 \}$} {$\cB_j(N)$} denote the multiplicative semigroups generated by $\Pri_j(N)$ and $\Pri_j'(N)$ respectively, so that $\cA_j(N)$ consists of positive integers with all prime factors in the set $\Pri_j(N)$ and $\cB_j(N)$ consists of positive integers with all prime factors in the set $\Pri_j'(N)$.
If $\alpha_j$ is transcendental we will let
\begin{gather}
  \cA_j(N)= \{n+ \alpha_j :  0 \leq n \leq N \}, \qquad \text{and} \qquad   \cB_j(N)=\{1\}.
\end{gather}
Furthermore we let $\cC_j(N)$ denote the multiplicative group generated by $\cA_j(N)$, so that $b \in \cC_j(N)$ if and only if
\begin{gather}
  b=\prod_{k=1}^m {a_k}^{b_k}
\end{gather}
for some $a_k \in \cA_j(N)$ and integers $b_k$.  It is a  consequence of a theorem of Weyl \cite{Weyl} (for some related results, see  for example the discussion in \cite[pp 55-58]{Steuding3}) that
\begin{gather} \label{u77}
\lim_{T \to \infty} \frac{ \meas(\BC_{j,N,T}( \dddd)
)} {T} = \begin{cases}  \dddd^{\lfloor N -\alpha_j \rfloor +1}, & \alpha_j \text{ transcendental}, \\  \dddd^{\pi(N)}, & \alpha_j \text{ rational, and } 1 \leq j \leq n-1, \\  \dddd^{|\Pri|}, & \alpha_j \text{ rational, and } j=n, \\ \end{cases}  \qquad
\end{gather}
when  $0 < \dddd \leq 1$  and   that 
\begin{gather} \label{eq55}  
  \lim_{\dddd \to 0} \lim_{T \to \infty} \frac 1 { \meas \BC_{j,N,T}( \dddd)}  \int_{ \BC_{j,N,T}( \dddd)} b^{it} dt= \begin{cases} 1, &  b  \in \cC_j(N), \\ 0, & \text{otherwise}. \end{cases}
\end{gather}
Let ${\bf N}=(N_1,\ldots,N_n)$ and define
\begin{gather} \label{BCD}
 \newsym{A certain subset of $[0,T]^n$}{\BC_{{\bf N}, \Pri, T}( \dddd)}=\prod_{j=1}^n  \BC_{j,N_j,T}( \dddd). 
 \end{gather}
It follows from \eqref{u77} and \eqref{BCD} that 
 \begin{gather} 
  \label{eq56}
   \lim_{T \to \infty} \frac{\meas \BC_{{\bf N},\Pri,T}( \dddd)} {T^n}  =   \dddd^{A}, \qquad (0 < \dddd \leq 1), \\ \intertext{where} \label{eq56b}
                     A=\sum_{\alpha_j \text{ rational}} |\mathcal P_j(N_j)|+\sum_{\alpha_j \text{ transcendental}}  (\lfloor N_j -\alpha_j \rfloor +1).
\end{gather}
Furthermore we define $d_j$ as follows.
\begin{gather}
   d_j= \begin{cases} \min \{d \in \Z^+ : d \alpha_j \in \Z^+\}, & \alpha_j \text{ rational}, \\ 1,  & \alpha_j \text{ transcendental,} \end{cases}
\end{gather}
so that if $\alpha_j>0$ is rational then $\alpha_j=\frac{c_j}{d_j}$ for $c_j$ and $d_j$ coprime. With these notations we will prove (for its proof see subsection \ref{lele11ref}) that
\begin{lem} \label{lele11}  Let $\BC_{{\bf N},\Pri,T}( \dddd)$ be defined by \eqref{BCD} and let $K \subset D^n$ be a compact set. Let $\val=(\alpha_1,\ldots,\alpha_n)$ where $\alpha_j>0$ be rational or transcendental numbers and let $N_j \geq 1$ be integers for $1 \leq j \leq n$. Then
\begin{gather}
\begin{split}
  \lim_{ \dddd \to 0} \lim_{T \to \infty} \frac 1 {\meas  \BC_{{\bf N},\Pri,T}( \dddd) }\int_{\BC_{{\bf N},\Pri,T}( \dddd)} &  \int_{K} \abs{\zeta_{\va}^{[N]}(\vs; \val)- \zeta_{{\bf 1}}^{[N]}(\vs+i \vt;\val)}^2 d\vs d \vt  \\  =&\int_K \abs{\zeta_\va^{[N]}(\vs;\val)-\zeta_{\va,N}^{\bf{1},\bf{N},\Pri}(\vs;\val)}^2 d\vs +\Delta, \end{split}
 \\ \intertext{where} \label{ret} 
  \Delta= \newsym{A quantity appearing in  Lemma \ref{lele11} and Lemma \ref{lele121}}{\Delta_N(K,\va, {\bf N},\Pri)}=  \sum_{\substack{b_1 \cdots b_n \geq 2 \\ b_j \in \cB_j(N_j)}} \, \int_{K} \abs{\zeta_{\va,N}^{\vb,\bf{N},\Pri} (\vs;\val)}^2 d \vs,  \\ \intertext{and}
  \label{ret2} \newsym{A quantity appearing in  Lemma \ref{lele11} and Lemma \ref{lele121}}{\zeta_{\va,N}^{\vb,\bf{N},\Pri} (\vs;\val)}=  \sum_{\substack{0\leq k_1 < \cdots <k_n \leq N \\  d_j(k_j+\alpha_j) b_j^{-1} \in \cA_j(N_j) }}  \prod_{j=1}^n \frac{a_j(k_j+\alpha_j)}{ (k_j+\alpha_j)^{s_j} }
  \end{gather}
  \end{lem}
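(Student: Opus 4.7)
The plan is a standard Bohr-type second-moment computation. I would denote $A(\vs):=\zeta_\va^{[N]}(\vs;\val)$ and $B(\vs,\vt):=\zeta_{\bf 1}^{[N]}(\vs+i\vt;\val)$, and expand
\[|A-B|^2 = |A|^2 - A\overline{B} - \overline{A}\,B + |B|^2.\]
Since $|A|^2$ does not depend on $\vt$, it passes unaltered through the $\vt$-average; for the other three terms I would swap the order of integration via Fubini (legitimate since the sums defining $A$ and $B$ are finite) and compute the $\vt$-averages coordinate-wise. By \eqref{eq55} combined with the product structure \eqref{BCD}, the $\vt$-average of $\prod_j \xi_j^{it_j}$ over $\BC_{{\bf N},\Pri,T}(\dddd)$ tends, as $T\to\infty$ and then $\dddd\to 0$, to $\prod_j a_j(\xi_j)$ when $\xi_j\in\cC_j(N_j)$ for every $j$, and to $0$ otherwise; each $a_j$ is extended from $\cA_j(N_j)$ to the group $\cC_j(N_j)$ by complete multiplicativity.

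Applied to $\overline B=\sum_\vm\prod_j(m_j+\alpha_j)^{-\bar s_j+it_j}$, only indices $\vm$ with $m_j+\alpha_j\in\cC_j(N_j)$ for every $j$ survive. For rational $\alpha_j=c_j/d_j$ this is the condition $d_j(m_j+\alpha_j)\in\cA_j(N_j)$ (assuming $N_j$ is large enough that $d_j\in\cA_j(N_j)$, which is harmless by monotonicity), matching the index set of $C(\vs):=\zeta_{\va,N}^{{\bf 1},{\bf N},\Pri}(\vs;\val)$; for transcendental $\alpha_j$ it reduces to $0\le m_j\le\lfloor N_j-\alpha_j\rfloor$. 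After integrating on $K$ and using a change of variable $\vs\mapsto\overline{\vs}$ (available since the applications reduce by monotonicity to the symmetric set $K=\M^n$), the cross-term contribution becomes $-2\Re\int_K A\overline{C}\,d\vs$.

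For $\widetilde{|B|^2}$ the $\vt$-dependent factor is $\prod_j ((m_j+\alpha_j)/(k_j+\alpha_j))^{it_j}$. Writing $\beta_{j,k}\in\cB_j(N_j)$ for the $\cB_j(N_j)$-component in the unique factorisation of $d_j(k+\alpha_j)$ as a product of an element of $\cA_j(N_j)$ with an element of $\cB_j(N_j)$, the condition $(m_j+\alpha_j)/(k_j+\alpha_j)\in\cC_j(N_j)$ is equivalent to $\beta_{j,k_j}=\beta_{j,m_j}$ for every $j$. This partitions the surviving pairs $(\vk,\vm)$ into layers $V_\vb:=\{\vk:\beta_{j,k_j}=b_j\ \forall j\}$ indexed by $\vb\in\prod_j\cB_j(N_j)$, and the complete multiplicativity of each $a_j$ reorganises the sum inside each layer into $|\zeta_{\va,N}^{\vb,{\bf N},\Pri}(\vs)|^2$ (again after the $\vs\mapsto\overline{\vs}$ symmetrisation). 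Separating the $\vb={\bf 1}$ layer from the rest gives $\int_K\widetilde{|B|^2}\,d\vs=\int_K|C|^2\,d\vs+\Delta$, and re-assembling the three pieces then yields
\[
\int_K|A|^2\,d\vs - 2\Re\int_K A\overline{C}\,d\vs + \int_K|C|^2\,d\vs + \Delta = \int_K|A-C|^2\,d\vs+\Delta,
\]
which is the claimed identity.

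The hard part will be the arithmetic bookkeeping in the rational case: verifying that the $\cA_j\cdot\cB_j$ factorisation of $d_j(k+\alpha_j)$, the multiplicative extension of $a_j$ from $\cA_j(N_j)$ to $\cC_j(N_j)$, and the Weyl-type averaging in \eqref{eq55} all fit together so that the layered sums recombine cleanly into the $\vb$-indexed family $\zeta_{\va,N}^{\vb,{\bf N},\Pri}$. The interchange of the iterated $(T,\dddd)$-limit with the finite summations over $(\vk,\vm)$ and the integration over the compact set $K$ is routine and follows from dominated convergence.
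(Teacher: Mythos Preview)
Your approach is the same as the paper's: expand $|A-B|^2$, push the $\vt$-average through each piece using the Weyl-type limit \eqref{eq55}, and recombine. The paper packages the two nontrivial averages (of $B$ and of $|B|^2$) as Lemma~\ref{lele11prelem}, then the identity is pointwise in $\vs$ and one integrates over $K$ at the end.

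There is, however, a sign error in your averaging statement that forces you into the ad hoc symmetrisation. On $\BC_{j,N_j,T}(\dddd)$ one has $t_j\log\xi/(2\pi)\approx\theta$ with $a_j(\xi)=e^{2\pi i\theta}$, so it is $\xi^{-it_j}$ whose average tends to $a_j(\xi)$; equivalently the average of $\xi^{it_j}$ is $\overline{a_j(\xi)}$, not $a_j(\xi)$. With the correct sign, the averaged $B$ is $C$ on the nose, the averaged $\overline{B}$ is $\overline{C}$ (just conjugate the previous identity), and the averaged $|B|^2$ is $\sum_{\vb}|\zeta_{\va,N}^{\vb,{\bf N},\Pri}(\vs;\val)|^2$ pointwise in $\vs$. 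The cross terms then give $-A\overline{C}-\overline{A}C=-2\Re(A\overline{C})$ directly, for every $\vs\in K$ and every compact $K$; no change of variable $\vs\mapsto\overline{\vs}$ is needed. Your symmetrisation step is therefore unnecessary, and as you note it would only be available for conjugation-symmetric $K$, which the lemma does not assume. Drop it, fix the sign, and your argument is exactly the paper's.
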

We also need the following Lemma which we will prove in subsection \ref{lele121ref}
\begin{lem}
  \label{lele121} Let the conditions on $\val$, and $K$ of Theorem  \ref{TH1} be satisfied and let 
$a_j:\N+\alpha_j \to \C$  be  functions of type $(N_0,\chi_j)$, where the order of $\chi_n$ is divisible by 4 if $\alpha_n$ is rational. Then for any $\varepsilon>0$ there exists ${\bf N} \in \N^n$ with $N_j \geq N_0$ for $j=1,\ldots,n$ and a finite set of primes $\Pri$
 containing all primes less than $N_n$ such that  
\begin{gather} 
\limsup_{N \to \infty} |\Delta_N(K,\va, {\bf N},\Pri)|<\varepsilon, 
\end{gather} where 
$\Delta_N(K,\va, {\bf N},\Pri)$ is defined in Lemma \ref{lele11}, and
\begin{gather}
  \max_{\vs \in K}  \abs{\zeta_\va^{[N]}(\vs;\val)-\zeta_{\va,N}^{\bf{1},\bf{N},\Pri}(\vs;\val)} <\varepsilon.
\end{gather}

\end{lem}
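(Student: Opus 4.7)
The starting point is the observation that each positive integer of the form $d_j(k_j+\alpha_j)$ admits a canonical decomposition $d_j(k_j+\alpha_j) = a_j b_j$ into a ``smooth'' factor $a_j \in \cA_j(N_j)$ (prime factors in $\Pri_j(N_j)$) and a ``rough'' factor $b_j \in \cB_j(N_j)$ (prime factors in $\Pri_j'(N_j)$). Applied coordinatewise, this yields the identity
\begin{equation*}
\zeta_{\va}^{[N]}(\vs;\val) \;=\; \sum_{\vb \,:\, b_j \in \cB_j(N_j)} \zeta_{\va,N}^{\vb,\mathbf{N},\Pri}(\vs;\val),
\end{equation*}
with the $\vb=\mathbf{1}$ summand serving as the candidate ``main term.'' Both claims of the lemma therefore reduce to controlling the contribution from $\vb \neq \mathbf{1}$: the first as the sum of $L^2(K)$-norms $\Delta_N$, the second as a sup-norm on $K$.

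For transcendental $\alpha_j$ one has $\cB_j(N_j) = \{1\}$, so only rational indices can yield $b_j \neq 1$. The plan is to handle the rational indices $1 \le j \le n-1$ first: by choosing $N_j$ sufficiently large, combine a Rankin-type bound on $\sum_{b_j \in \cB_j(N_j),\, b_j > 1} b_j^{-\ddd}$ (small because $b_j$ has no prime factor below $N_j$) with the mean-square estimate of Lemma \ref{LE10pre} applied to the other $n-1$ factors. This makes the contribution of all $\vb \neq \mathbf{1}$ having some rough $b_j$ at an index $j < n$ smaller than any prescribed $\varepsilon/2$, uniformly in the outer truncation~$N$.

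The delicate part is the index $j=n$ when $\alpha_n$ is rational. Here merely enlarging $N_n$ is insufficient, since in $\zeta_{\va}^{[N]}$ the index $k_n$ ranges up to $N \to \infty$; one must instead choose the auxiliary finite set of primes $\Pri \supseteq \{p\ \text{prime}: p \le N_n\}$ to exploit the cancellation produced by the character $\chi_n$ that governs $a_n$ at large arguments. The assumption that the order of $\chi_n$ is divisible by $4$ provides a square-distinguishing character that separates the ``diagonal'' from the ``off-diagonal'' contribution in the mean-square, via an argument analogous to the one used in Lemma \ref{additive}. In the easy subcase $\Re(s_{n-1})+\Re(s_n) > 3/2$ of \eqref{Kkond}, the extra real part makes the resulting mean-square absolutely convergent, while in the other subcase the assumed Riemann hypothesis for $L(s,\chi_n\chi^*)$ gives the analytic continuation needed to push the estimate below the natural abscissa of convergence and obtain a bound $\int_K |\zeta_{\va,N}^{\vb,\mathbf{N},\Pri}|^2 d\vs \ll b_n^{-\ddd}$ that is summable over $b_n \in \cB_n(N_n)$.

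Once the $L^2$-bound giving the first claim is in hand, the uniform approximation in the second claim follows by a standard $L^2 \to L^\infty$ transfer: enlarge $K$ slightly to a compact $K' \subset D^n$ still satisfying the same hypotheses, run the same argument on $K'$, and then convert the $L^2(K')$-control into $L^\infty(K)$-control via Cauchy's integral formula applied in each variable. The principal obstacle throughout is the $j=n$ rational case without GRH, where one must carefully balance the cancellation available from $\chi_n$ against the loss from operating close to the critical line; this is precisely where the divisibility of the order of $\chi_n$ by $4$ and the hypothesis \eqref{Kkond} (or partial GRH) enter the argument in an essential way.
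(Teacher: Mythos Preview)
Your proposal has the right starting point---the smooth/rough decomposition identity and the identification of the rational $j=n$ coordinate as the delicate one---but two of the central mechanisms are misidentified, and the hardest case is essentially missing.

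First, the role of the condition ``order of $\chi_n$ divisible by $4$'' is not what you describe. It is not used to separate diagonal from off-diagonal contributions in any mean square, nor is it analogous to Lemma~\ref{additive}. It enters only through Lemma~\ref{BNDDIR}: having four cosets $\{p:\chi_n(p)=i^l\}$, $l\in\{0,1,2,3\}$, is exactly what permits (via the Mishou--Nagoshi real rearrangement argument) the construction of a finite prime set $\Pri$ for which the partial Euler products $L_\Pri(s,\chi_n\chi^*)$ hit prescribed zero-free targets. That in turn yields Lemma~\ref{gy}, giving the pointwise bounds \eqref{OPP9}--\eqref{OPP10} on the $j=n$ partial sums. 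Your ``cancellation produced by the character $\chi_n$'' is therefore a careful \emph{choice of $\Pri$}, not a mean-square device.

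Second, and more seriously, the sketch ``Rankin-type bound on $\sum_{b_j>1} b_j^{-\ddd}$ plus Lemma~\ref{LE10pre} on the other $n-1$ factors'' does not produce what is actually needed, namely the pointwise estimate
\[
\bigl|\zeta_{\va,N}^{\vb,\mathbf N,\Pri}(\vs;\val)\bigr|\;\ll\;\prod_{j=1}^n b_j^{-1/2-\xi}
\]
uniformly on $K$ (this is \eqref{tr33} in the paper). Lemma~\ref{LE10pre} bounds sums over a full interval $N_1\le k_1<\cdots<k_m<N_2$; it says nothing about sums constrained by $d_j(k_j+\alpha_j)b_j^{-1}\in\cA_j(N_j)$, where each $k_j$ ranges over a thin set of translated $N_j$-smooth multiples of $b_j$. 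The paper's proof of this bound, in the case where both $\alpha_{n-1}$ and $\alpha_n$ are rational, requires a \emph{doubly exponential tower} $N_{j}=\exp(\exp(N_{j+1}))$ together with an inclusion--exclusion decomposition over staircase intervals $I_v$ (the picture before \eqref{uy6666}), so that in each piece at most one variable is ``active'' and the remaining factors can be controlled by Lemmas~\ref{EEV2} (under the RH condition) or~\ref{EEV3} (under \eqref{Kkond}), Lemma~\ref{gy2} on smooth sums, and Lemmas~\ref{RHV}--\ref{RHV2}. The tower is what absorbs the $\exp(\sqrt{N_v})$ losses coming from Lemma~\ref{RHV}; see \eqref{rer314}. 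None of this structure appears in your plan, and without it there is no route to the summable bound in $\vb$.

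Finally, the second assertion of the lemma is not obtained in the paper by an $L^2\to L^\infty$ transfer from the first; it is proved directly by the same decomposition (see \eqref{hytr}--\eqref{hytr9}), because the constant in the pointwise bound \eqref{tr33} depends on the choices $\mathbf N,\Pri$ and one must verify separately that the $\vb=\mathbf 1$ term already approximates $\zeta_\va^{[N]}$. Your transfer would need control of $\int_{K'}\bigl|\sum_{\vb\neq\mathbf 1}\zeta_{\va,N}^{\vb}\bigr|^2$, which is not $\Delta_N$ and does not follow from it without the same pointwise information.
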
  
We are now ready to prove Lemma \ref{LE6}.
 
\noindent {\em Proof of Lemma \ref{LE6}.}
 Let \begin{gather} \label{Kxidef}
        K^\ddddd= \{s \in \C: d(s,K) \leq \ddddd \}
      \end{gather} 
      be a neighborhood of $K$ where $\ddddd>0$ is chosen sufficiently small such that $K^\ddddd \subset \M^n$ and that if $K$ satisfies \eqref{Kkond} then also  $K^\ddddd$ satisfies \eqref{Kkond}.   In a similar way as in the proof of Theorem \ref{TH1} it is sufficient to prove that
\begin{gather} \label{yrt4}
 \int_{K^\ddddd} \abs{\zeta_{\va}^{[N]}(\vs; \val)- \zeta_{\bm{1}}^{[N]}(\vs+i \vt;\val)}^2 d\vs < \varepsilon_2 
\end{gather}
  for any $\varepsilon_2>0$ in order for 
\begin{gather} \label{yrt3}
  \max_{s \in K} \abs{\zeta_{\va}^{[N]}(\vs;\val)-\zeta_{\bm{1}}^{[N]}(\vs+i\vt;\val)}< \varepsilon
  \end{gather}
to be true for any $\varepsilon>0$ since by complex analysis an estimate  in $L^2$-norm on a slightly larger set gives approximation in sup-norm on the smaller set.  It now follows from applying Lemma \ref{lele11} and Lemma \ref{lele121} on the set $K^\ddddd$ that for some ${\bf N} \in (\Z^+)^n$ then \begin{gather}
 \lim_{ \dddd \to 0} \lim_{\min_j T_j \to \infty} \frac 1 {\meas  \BC_{{\bf N},\Pri,{\bf T}}( \dddd) }\int_{\BC_{{\bf N},{\bf T}}( \dddd)}  \int_{K^\ddddd} \abs{\zeta_{\va}^{[N]}(\vs; \val)- \zeta_{\bf{1}}^{[N]}(\vs+i \vt;\val)}^2 d\vs d \vt <   \frac{\varepsilon_2} 2. \label{yrt5} 
\end{gather} 
Together with equation \eqref{eq56} and with  equation \eqref{yrt3} this implies \eqref{yrt4} with positive lower measure greater than $\frac 1 2  \dddd^{A},$ where $A$ is given in \eqref{eq56b}, and thus for  suitable small $\varepsilon_2$ depending on $\varepsilon, K , \ddddd$ also \eqref{yrt3} follows with a positive lower measure. \qed

\subsection{Proof of lemma \ref{lele11}} \label{lele11ref}

Before we prove Lemma \ref{lele11} we prove a useful Lemma.
\begin{lem}
  \label{lele11prelem}  We have that
\begin{gather} 
  \lim_{ \dddd \to 0} \lim_{T \to \infty} \frac 1 {\meas  \BC_{{\bf N},\Pri,T}( \dddd) } \int_{\BC_{{\bf N},\Pri, T}( \dddd)}  \abs{\zeta_{\bf{1}}^{[N]}(\vs+i \vt;\val)}^2 d\vt = \sum_{\substack {b_1 \cdots b_n \geq 2 \\ b_j \in \cB(N_j)}}  \abs{\zeta_{\va,N}^{\vb,\bf{N},\Pri} (\vs;\val)}^2, \qquad \label{aaaa} \\  
\intertext{and}
  \lim_{ \dddd \to 0} \lim_{T \to \infty} \frac 1 {\meas  \BC_{{\bf N},\Pri,T}( \dddd) } \int_{\BC_{{\bf N}, \Pri,T}( \dddd)} \zeta_{\bf{1}}^{[N]}(\vs+i \vt;\val) d\vt= \zeta_{\va,N}^{\bf{1},{\bf N},\Pri} (\vs;\val). 
\end{gather} 
where  ${\bf 1}=(1,\ldots,1) \in \N^n$.
\end{lem}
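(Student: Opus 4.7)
The plan is to reduce both identities to an $n$-fold application of the one-variable Weyl equidistribution statement \eqref{eq55}. The first step is to expand $\zeta_{\ett}^{[N]}(\vs+i\vt;\val)$ as a finite Dirichlet polynomial in $\vt$; since the outer sum is finite I may interchange it with the integration over $\BC_{{\bf N},\Pri,T}(\dddd)$, and then use the product structure $\BC_{{\bf N},\Pri,T}(\dddd)=\prod_{j=1}^n \BC_{j,N_j,T}(\dddd)$ via Fubini to factor each summand's $\vt$-integral into a product of $n$ one-dimensional integrals over the $\BC_{j,N_j,T}(\dddd)$.

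For the second (linear) identity, the $j$-th factor in each summand is $(k_j+\alpha_j)^{-it_j}$; by \eqref{eq55} the normalised $\BC_{j,N_j,T}(\dddd)$-average of this factor tends in the iterated limit to a non-zero value only when $k_j+\alpha_j\in\cC_j(N_j)$, and on that locus to the value $a_j(k_j+\alpha_j)$ (the completely multiplicative extension of $a_j$ to $\cC_j(N_j)$, as forced by the definition of the phases $\theta_{j,k}$ in \eqref{BCdef}--\eqref{BCdef2}). A short unpacking of the definitions shows $k_j+\alpha_j\in\cC_j(N_j)$ is equivalent to $d_j(k_j+\alpha_j)\in\cA_j(N_j)$, which is precisely the $\vb=\ett$ case of the defining condition in \eqref{ret2}, so the surviving terms assemble exactly into $\zeta_{\va,N}^{\ett,{\bf N},\Pri}(\vs;\val)$.

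For the first (quadratic) identity, expanding $|\zeta_{\ett}^{[N]}(\vs+i\vt;\val)|^2$ gives a double sum over ordered tuples $\vec k,\vec l$ whose $\vt$-factor is $\prod_j \bigl((l_j+\alpha_j)/(k_j+\alpha_j)\bigr)^{it_j}$; the same factorisation together with \eqref{eq55} shows that after averaging each $j$-th factor contributes $a_j(k_j+\alpha_j)\overline{a_j(l_j+\alpha_j)}$ if $(l_j+\alpha_j)/(k_j+\alpha_j)\in\cC_j(N_j)$ and $0$ otherwise. I expect the main obstacle to be the arithmetic step of verifying that this membership holds if and only if $d_j k_j+c_j$ and $d_j l_j+c_j$ have the same $\cB_j(N_j)$-part $b_j$: for rational $\alpha_j$ this follows from unique factorization in $\N$ combined with the disjointness of the prime sets supporting $\cA_j(N_j)$ and $\cB_j(N_j)$, while for transcendental $\alpha_j$ it follows from the $\Q$-linear independence of $\{\log(k+\alpha_j)\}_k$, which forces $b_j=1$ and $k_j,l_j\leq N_j$. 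Once this is established, grouping $(\vec k,\vec l)$ pairs by their shared rough part $\vb$ decouples the $\vec k$- and $\vec l$-sums over the common index set $\{\vec k:0\leq k_1<\cdots<k_n\leq N,\ d_j(k_j+\alpha_j)b_j^{-1}\in\cA_j(N_j)\ \forall j\}$, and the double sum collapses to $\sum_{\vb}|\zeta_{\va,N}^{\vb,{\bf N},\Pri}(\vs;\val)|^2$, matching the stated identity (with the $\vb=\ett$ contribution to be accounted for consistently with the companion identity of Lemma \ref{lele11}).
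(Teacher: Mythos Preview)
Your approach is correct and is essentially identical to the paper's own proof: expand the finite Dirichlet polynomial, use the product structure $\BC_{{\bf N},\Pri,T}(\dddd)=\prod_j \BC_{j,N_j,T}(\dddd)$ to factor each term's $\vt$-integral, apply the one-variable equidistribution \eqref{eq55} together with the phase definitions \eqref{BCdef}--\eqref{BCdef2} to evaluate each factor, and then identify the surviving index tuples via the common-$\cB_j(N_j)$-part condition. Your parenthetical remark about the $\vb=\ett$ term is also apt: the paper uses the full sum over all $\vb$ (including $\vb=\ett$) in the proof of Lemma~\ref{lele11}, so the constraint $b_1\cdots b_n\geq 2$ in the displayed identity is a slip you have correctly flagged.
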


\begin{proof} 
We divide the proof into two cases. We first prove the first part. Consider
\begin{gather}
(*)=   \lim_{ \dddd \to 0} \lim_{T \to \infty} \frac 1 {\meas  \BC_{{\bf N},\Pri,T}( \dddd) }      \int_{\BC_{{\bf N},\Pri, T}( \dddd)}  \abs{\zeta_{\bf{1}}^{[N]}(\vs+i \vt;\val)}^2 dt.
\end{gather}
By writing the truncated zeta-functions as finite sums and integrating explicitly, we get
\begin{multline}
 (*)  =  \lim_{ \dddd \to 0} \lim_{T \to \infty}  \frac 1 {\meas  \BC_{{\bf N},\Pri,T}( \dddd) }   
\sum_{\substack{0\leq m_1 < \cdots <m_n \leq N  \\0\leq k_1 < \cdots <k_n \leq N }}  \int_{\BC_{{\bf N},\Pri, T}( \dddd)} \times \\ \times  \prod_{j=1}^n \frac{a_j(k_j+\alpha_j)\overline{a_j(m_j+\alpha_j)} }{ (k_j+\alpha_j)^{s_j} (m_j+\alpha_j)^{\overline{s_j}}}  \p{\frac{m_j+\alpha_j}{k_j+\alpha_j}}^{it_j}  d \vt,
\end{multline}
which by the definition \eqref{BCD} of $\BC_{{\bf N}, \Pri,T}( \dddd)$ as a product set can be written as
\begin{multline}
 (*)  =  
\sum_{\substack{0\leq m_1 < \cdots <m_n \leq N  \\0\leq k_1 < \cdots <k_n \leq N }}   \prod_{j=1}^n \left(\frac{a_j(k_j+\alpha_j)\overline{a_j(m_j+\alpha_j)} }{ (k_j+\alpha_j)^{s_j} (m_j+\alpha_j)^{\overline{s_j}}}  \times \right. \\ \left. \times   \lim_{ \dddd \to 0} \lim_{T \to \infty}    \frac 1 {\meas  \BC_{j,N_j,T}( \dddd) } \int_{\BC_{j,N_j,T}( \dddd)}   \p{\frac{m_j+\alpha_j}{k_j+\alpha_j}}^{it_j} dt_j \right).
\end{multline}
By \eqref{BCdef}, \eqref{BCdef2} and  \eqref{eq55}   it follows that 
\begin{multline}
 \lim_{ \dddd \to 0}  \lim_{T \to \infty} \frac 1 {\meas  \BC_{j,N_j,T}( \dddd) }  \int_{\BC_{j,N,T}( \dddd)}   \p{\frac{m_j+\alpha_j}{k_j+\alpha_j}}^{it_j} dt_j  \\ = \begin{cases} a_j(k_j+\alpha_j) \overline{a_j(m_j+\alpha_j)}, & \frac{m_j+\alpha_j}{k_j+\alpha_j}=\frac a b, \text{ for some } a,b \in \cA(N_j), \\ 0, & \text{otherwise,} \end{cases}
\end{multline}
or in other words, we get the expected contribution if and only if  $d_j (m_j+\alpha_j) b_j^{-1} =a$ and $d_j (k_j+\alpha_j) b_j^{-1} =b$ for some $a,b \in \cA_j(N)$.  Thus the sum $(*)$ can be written as required. We now proceed to prove the second part. Similarly as above we get by expanding the sum that
\begin{multline}
  \frac 1 {\meas  \BC_{{\bf N},\Pri, T}( \dddd) } \int_{\BC_{{\bf N},\Pri, T}( \dddd)} \zeta_{\bf{1}}^{[N]}(\vs+i \vt;\val) d\vt   \\ = 
\sum_{0\leq k_1 < \cdots <k_n \leq N }    \prod_{j=1}^n  \p{ (k_j+\alpha_j)^{-s_j}   \lim_{ \dddd \to 0} \lim_{T \to \infty}   \frac 1 {\meas  \BC_{j,N_j,T}( \dddd)} \int_{\BC_{j,N_j,T}( \dddd)} (k_j+\alpha_j)^{-it_j} dt_j}.
\end{multline}
When $d_j(k_j+\alpha_j)b_j^{-1} \in \cA(N_j)$ it follows by \eqref{BCdef}, \eqref{BCdef2} and  \eqref{eq55}  that  the last  limit in the equation above  will equal $a_j(k_j+\alpha_j)$,  whereas otherwise it will be zero.   This concludes the proof of the second part. 
\end{proof}

\noindent {\em Proof of Lemma \ref{lele11}.}
We have that \begin{gather*}      
  I(\vs) =  \lim_{ \dddd \to 0}  \lim_{T \to \infty} \frac 1 {\meas  \BC_{{\bf N},\Pri,T}( \dddd) } 
  \int_{\BC_{{\bf N}, \Pri, T}( \dddd)}    \abs{\zeta_{\va}^{[N]}(\vs; \val)- \zeta_{\bf{1}}^{[N]}(\vs+i \vt;\val)}^2 d\vt  
\end{gather*}
 After  rewriting the integrand in the standard way as $ {\left| * \right|}^2 = * \overline{*}$ and expanding the integral
we obtain
\begin{align}
  I(\vs) &=  \lim_{ \dddd \to 0} 
 \lim_{T \to \infty} \frac 1 {\meas  \BC_{{\bf N},\Pri,T}( \dddd) } \times \\ &\times \left( \int_{\BC_{{\bf N}, \Pri,T}( \dddd)}    \abs{ \zeta_{\bf{1}}^{[N]}(\vs+i \vt;\val)}^2 d\vt +\meas  \BC_{{\bf N},\Pri,T}( \dddd) \abs{\zeta_{\va}^{[N]}(\vs; \val)}^2 \right. \\  &\left.  - \overline{\zeta_{\va}^{[N]}(\vs ; \val)}  \int_{\BC_{{\bf N},\Pri, T}( \dddd)} \zeta_{\bf{1}}^{[N]}(\vs+i \vt;\val) d\vt - \zeta_{\va}^{[N]}(\vs ; \val)  \int_{\BC_{{\bf N},\Pri, T}( \dddd)} \overline{\zeta_{\bf{1}}^{[N]}(\vs+i \vt;\val)} d\vt \right).
 \end{align} 
By moving the limits inside the parenthesis and applying Lemma \ref{lele11prelem} this can be simplified to
\begin{multline}
 I(\vs)= \p{\sum_{b_j \in \cB_j(N_j)}  \abs{\zeta_{\va,N}^{\vb,\bf{N},\Pri} (\vs;\val)}^2} + \abs{ \zeta_{\va}^{[N]}(\vs;\val)}^2 \\
- \overline{\zeta_{\va}^{[N]}(\vs ; \val)} \zeta_{\va,N}^{{\bf 1},\bf{N},\Pri}(\vs;\val) - \zeta_{\va}^{[N]}(\vs ; \val) \overline{\zeta_{\va,N}^{{\bf 1},\bf{N},\Pri}(\vs;\val)}. 
\end{multline}
This equality may be rewritten as
\begin{gather} \label{or77}
 I(\vs)= \p{\sum_{\substack{ b_1 \cdots b_n \geq 2 \\  b_j \in \cB_j(N_j)}}  \abs{\zeta_{\va,N}^{\vb,\bf{N},\Pri} (\vs;\val)}^2}  + \abs{\zeta_{\va}^{[N]}(\vs; \val)- \zeta_{\va,N}^{{\bf 1},\bf{N},\Pri}(\vs;\val)}^2.
\end{gather}
The lemma follows by taking the integral over the set $K$.
\qed

\subsection{Proof of Lemma \ref{lele121}} \label{lele121ref}

Before proving Lemma \ref{lele121}  we need some useful lemmas. The following lemma will be needed when we do not assume any Riemann hypothesis.
Since its proof is the same we choose to write the next joint approximation lemma in a slightly more general form than we need since it might have some independent interest.
\begin{lem} \label{BNDDIR}
  Let $\chi$ mod $q$ be a Dirichlet character of  order divisible by $4$ and let  $q$ and $d$ be coprime.  Then given $M \geq 0$, $\varepsilon>0$ and analytic zero-free functions $f_{\chi^*}$ on $\M$ where $\chi^*$ are the Dirichlet characters mod $d$ there exist some finite set of primes $\mathcal P$ such that $\mathcal P$ contains all primes less than $M$ and the finite Euler products
\begin{gather}
   L_{\mathcal P}(s,\chi \chi^*)= \prod_{p \in {\mathcal P}} (1-\chi(p) \chi^*(p) p^{-s})^{-1},
\end{gather}
 fulfill the inequality
\begin{gather}
  \max_{\chi^* \mod d} \max_{s \in \M}  
\abs{L_{\mathcal P}(s,\chi \chi^*)-f_{\chi^*}(s)} <\varepsilon.
\end{gather}
\end{lem}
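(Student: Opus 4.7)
The plan is to adapt the Pechersky rearrangement argument used for joint universality of Dirichlet $L$-functions (compare Lemma \ref{rational}), but working in the sup norm on $\M$ with subset-selection of primes in place of free unimodular weights. Since each $f_{\chi^*}$ is zero-free on the simply-connected set $\M$, write $f_{\chi^*}=\exp F_{\chi^*}$ for some analytic $F_{\chi^*}$; the standard expansion
\[
\log L_{\mathcal P}(s,\chi\chi^*) = \sum_{p \in \mathcal P} \chi(p)\chi^*(p)p^{-s} + E_{\mathcal P}(s,\chi\chi^*),
\]
where $E_{\mathcal P}$ is the absolutely convergent $k\ge 2$ contribution, together with the observation that primes $p\le M$ produce a fixed analytic function, reduces the task to choosing a set $\mathcal P^\sharp$ of primes $>M$ so that $\sum_{p\in\mathcal P^\sharp}\chi(p)\chi^*(p)p^{-s}\approx \tilde F_{\chi^*}(s)$ uniformly on $\M$, simultaneously for every Dirichlet character $\chi^*\bmod d$, where $\tilde F_{\chi^*}$ absorbs all the fixed pieces.

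Character orthogonality mod $d$ now decouples the simultaneous approximation. By Fourier inversion on the characters mod $d$, it suffices to approximate, for each residue $a$ coprime to $d$, the analytic function
\[
g_a(s) := \phi(d)^{-1}\sum_{\chi^* \bmod d} \overline{\chi^*(a)}\,\tilde F_{\chi^*}(s)
\]
by $h_a(s):=\sum_{p\in\mathcal P^\sharp,\,p\equiv a\,(\bmod\, d)} \chi(p)\,p^{-s}$. Further decompose by the value $\omega=\chi(p)$: write $h_a(s)=\sum_{\omega\in\Omega}\omega\, H_{a,\omega}(s)$, where $\Omega$ is the image of $\chi$ and $H_{a,\omega}$ is the Dirichlet series over the chosen subset of primes in $T_{a,\omega}:=\{p:p\equiv a\pmod d,\;\chi(p)=\omega\}$. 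Because $\gcd(q,d)=1$, Chebotarev's density theorem applied in $\Q(\mu_{qd})$ gives each class $T_{a,\omega}$ (with $\omega\in\Omega$) positive natural density.

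Because $4\mid\operatorname{ord}\chi$, the group $\Omega$ contains $\{1,-1,i,-i\}$, so any analytic target $g_a$ can be realised by the combination $\sum_\omega\omega H_{a,\omega}$: set $H_{a,\omega}\equiv 0$ for $\omega\notin\{1,-1,i,-i\}$, and prescribe $H_{a,1}-H_{a,-1}=\Re g_a$, $H_{a,i}-H_{a,-i}=\Im g_a$. The remaining task, for each $(a,\omega)$, is then to approximate a prescribed analytic function on $\M$ by a subset-sum $\sum_{p\in S}p^{-s}$ with $S\subset T_{a,\omega}$. This is the classical Pechersky/Bagchi subset-selection problem: since $\sum_{p\in T_{a,\omega}}\|p^{-s}\|_{L^2(\M)}^2<\infty$ (from $\sum p^{-1-2\ddd}<\infty$) but $\sum_{p\in T_{a,\omega}}p^{-s}$ diverges at $\Re(s)=1$, Pechersky's rearrangement theorem yields that the subset-sums are dense in the Bergman-type Hilbert space of analytic functions on $\M$; a Cauchy-estimate on a slightly larger rectangle upgrades $L^2$-approximation to sup-norm approximation on $\M$.

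The main obstacle is the non-triviality hypothesis of Pechersky: no nonzero bounded linear functional on the function space may be absolutely summable on the sequence $\{p^{-s}\}_{p\in T_{a,\omega}}$. This follows from the standard Bagchi-style density argument: any such functional corresponds to a finite Borel measure $\mu$ on $\M$ whose transform $\tilde\mu(z):=\int e^{-zs}\,d\mu(s)$ is entire, and absolute summability $\sum_{p\in T_{a,\omega}}|\tilde\mu(\log p)|<\infty$ combined with positive density of $T_{a,\omega}$ and analyticity of $\tilde\mu$ forces $\tilde\mu\equiv 0$, hence $\mu=0$. A finite truncation of the Pechersky-selected primes then delivers a finite $\mathcal P=\{p\le M\}\cup\bigcup_{a,\omega}\mathcal P_{a,\omega}$ with the desired uniform approximation; the four-divisibility of $\operatorname{ord}\chi$ enters precisely in ensuring that the finite-dimensional linear system $\sum_\omega\omega H_{a,\omega}=g_a$ admits an arbitrary analytic solution, since the three values $\{1,-1,i\}$ alone cannot produce a complex target with non-negative coefficient sums.
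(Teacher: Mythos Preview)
Your reduction via logarithms and Fourier inversion mod $d$ is fine and matches the paper's first step. The gap is at the subset-selection stage. You assert that for each fixed pair $(a,\omega)$ the finite sums $\sum_{p\in S}p^{-s}$ with $S\subset T_{a,\omega}$ are dense in the Bergman space on $\M$, invoking Pechersky. This is false: at any real point $s_0\in\M\cap\R=(1/2+\ddd,1-\ddd)$ every such sum is a non-negative real number, so no function with $f(s_0)\notin[0,\infty)$ can be approximated. Pechersky's theorem (and Bagchi's density criterion) yield density for sums with \emph{unimodular} or $\pm 1$ coefficients, not for $\{0,1\}$ coefficients; the ``no absolutely summable functional'' hypothesis you verify is the hypothesis of the wrong theorem. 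Your final remark about ``non-negative coefficient sums'' hints that you sense this constraint, but splitting $g_a$ into $\Re g_a$ and $\Im g_a$ does not resolve it: these are harmonic, not analytic, and even after passing to real-coefficient polynomial approximants one still needs each individual $H_{a,\omega}$ to lie in the (non-dense) closure of the positive subset sums.

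The paper handles exactly this obstruction by a pairing trick that you are missing. After Mergelyan and the real/imaginary-coefficient split $Q_j=P_{j,0}+iP_{j,1}$, it invokes the construction of Lemma \ref{subprime} to produce a set $\mathcal P^*$ of almost all primes in which, for each residue $j\bmod d$ and each $l\in\{0,1,2,3\}$, the $n$-th prime $p_n^{j,l}$ with $\chi(p)=i^l$ lies within $2\sqrt{p_n}$ of a common reference $p_n$. Then for each $n$ one \emph{chooses} either the prime with $\chi=i^l$ or the one with $\chi=i^{l+2}=-i^l$; since the two are nearly equal, this choice contributes approximately $\pm i^l p_n^{-s}$. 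The signs $c_{j,l}(n)\in\{-1,1\}$ are now selected by the real-Hilbert-space Pechersky theorem of Mishou--Nagoshi to hit the real-coefficient targets $P_{j,l}$. The four-divisibility of $\operatorname{ord}\chi$ is used here to guarantee that all four values $\pm1,\pm i$ actually occur, so that both the real part ($l=0$, pairing $\chi=\pm1$) and the imaginary part ($l=1$, pairing $\chi=\pm i$) can be handled. Without this pairing-to-$\pm1$ reduction your argument does not go through.
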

\begin{proof} 
  Since
  \begin{gather}
     \log L_{\mathcal P}(s,\chi \chi^*) =   - \sum_{\substack{(j,d)=1 \\ 0< j <d}} \chi^*(j)  \sum_{\substack{p \equiv j \pmod d \\ p \in \Pri}}   \log(1-\chi(p) p^{-s}), \\ \intertext{and}
     \log  f_{\chi^*}(s) = \sum_{\substack{(j,d)=1 \\ 0< j <d}}  \chi^*(j) F_j(s), 
 \intertext{where} 
    F_j(s)=\frac 1 {\phi(d)} \sum_{\chi^* \mod d} \overline{\chi^* (j)} \log f_{\chi^*}(s),
 \end{gather}
  it is sufficient to prove that for any $\varepsilon_1>0$ 
we can find some finite set of primes $\Pri$ containing all primes less than $M$ such that
  \begin{gather} \label{M10} 
    \max_{s \in \M} \max_{(j,d)=1} \abs{\sum_{\substack{p \equiv j \pmod d \\ p \in \Pri}} \log(1-\chi(p) p^{-s}) + F_j(s)}<\varepsilon_1. 
   \end{gather}
Let us now assume that $M_1 \geq M$ is sufficiently large so that
\begin{gather}
   \label{M11}\max_{s \in \M} \sum_{p>M_1} \abs{\log(1-\chi(p) p^{-s}) + \chi(p) p^{-s} }<\frac{\varepsilon_1} {6}, \\ \intertext{and that}
    \label{M12} \max_{s \in \M} \sum_{p>M_1}   \abs{ (p-2\sqrt {p})^{-s} -  p^{-s} }<\frac{\varepsilon_1} {6}.  
\end{gather}
By Mergelyan's theorem we can find polynomials $Q_j(s)$ such that
\begin{gather} \label{RT11}
  \max_{s \in \M } \abs{Q_j(s) -F_j(s)+\sum_{\substack{p \equiv j \pmod d \\ p \leq M_1}} \log(1-\chi(p) p^{-s})}<\frac{\varepsilon_1}{6}.
\end{gather}
We let
\begin{gather} \label{Q12}
  Q_j(s)=P_{j,0}(s)+i P_{j,1}(s)
\end{gather}
where the polynomials $P_{j,l}(s)$ have real coefficients.  By the construction of Lemma \ref{subprime} we find that there exist a subset  $\mathcal P^*$ of the primes
 such that if $p_1^{j,l},p_2^{j,l},p_3^{j,l}, \ldots$ is an enumeration in increasing order of the primes greater than $M_1$ that are congruent to $j$ mod $d$ in $\Pri^*$  and such that $\chi(p_n^{j,l})=i^l$ for $l \in \{0,1,2,3\}$ where for convenience we denote $p_n=p_n^{1,0}$ such that each sequence contains approximately the same numbers of primes
\begin{gather} \label{uir}
  \abs{p_n^{j,l} -p_n} \leq 2 \sqrt{p_n}, \qquad (j,d)=1,
\end{gather} 
and furthermore $\Pri^*$  has full density amongst the primes, which in particular implies that
 \begin{gather}  \label{denseprime}
  p_n \sim  \phi(qd) n \log n. 
 \end{gather} 
Since the polynomials $P_{j,l}(s)$ defined in \eqref{Q12} have real coefficients it follows from the Pechersky rearrangement theorem for real Hilbert spaces due  to
 Mishou-Nagoshi\footnote{ Similarly as in Lemma \ref{rational} it is sufficient to have a subset of the primes fulfilling \eqref{denseprime}  
in order to prove the result of Mishou-Nagoshi.   It should be noted that Mishou-Nagoshi's result has typically; as in their original papers \cite{MisNag}, \cite{MiNag2}; found applications
 for proving universality for families of zeta- and  $L$-functions which have real coefficients, in the family aspect, see for example \cite{AndSod} and  \cite{ChoKim}. 
 } \cite[Proposition 2.5]{MisNag}    that whenever $\varepsilon_1>0$
then there exists some real numbers $c_{j,l}(n) \in \{-1,1\}$ and $N_1>0$  such that
\begin{gather} \label{ytree}
  \max_{\substack{(j,d)=1  \\ 0 < j < d \\  0 \leq l \leq 1}}   
 \max_{s \in \M} \abs{ \sum_{n=1}^{N_1}  c_{j,l}(n) p_n^{-s}-P_{j,l}(s)} <\frac{\varepsilon_1} {6}. 
\end{gather}
By the triangle inequality and  the inequalities \eqref{M12} and \eqref{uir} we have that
\begin{gather} \label{ytreee} 
 \max_{\substack{(j,d)=1  \\ 0 < j < d \\  0 \leq l \leq 1}}  \max_{s \in \M} \abs{ \sum_{n=1}^{N_1} c_{j,l}(n) \left(p_{n}^{j,l+1-c_{j,l}(n)}\right)^{-s}  - \sum_{n=1}^{N_1}  c_{j,l}(n) p_n^{-s}} <\frac{\varepsilon_1} {6}.
\end{gather}
 It follows by  \eqref{Q12}, \eqref{ytree}, \eqref{ytreee}   and the triangle inequality that
\begin{gather} \label{ytreeee}
  \max_{\substack{(j,d)=1 \\ 0 < j < d}}  \max_{s \in \M} \abs{ \sum_{n=1}^{N_1} \sum_{l=0}^1 i^l c_{j,l}(n) \left(p_{n}^{j,l+1-c_{j,l}(n)}\right)^{-s}  -Q_{j}(s)} <\frac{2\varepsilon_1} {3}.
\end{gather}
 We now choose the set $\Pri$ as follows
\begin{gather}
  \Pri=\{p \leq M_1: p \text{ prime}\} 
 \bigcup \bigcup_{(j,d)=1} \bigcup_{l=0}^1 \bigcup_{n=1}^{N_1} \left \{p_n^{j,l+1-c_{j,l}(n)} \right \}.
\end{gather}
With this choice of $\Pri$ it follows from  \eqref{M11},\eqref{ytreeee} and the triangle inequality that
\begin{gather} \label{ytr1e}
  \max_{\substack{(j,d)=1 \\ 0< j < d}}  \max_{s \in \M} \abs{ \sum_{\substack{p \equiv j \pmod d \\  p \in \Pri \\ p > M_1}} \log(1-\chi(p) p^{-s})^{-1} - Q_j(s)}<\frac {5\varepsilon_1} 6. 
\end{gather}
The inequality \eqref{M10} now follows from  \eqref{RT11}, \eqref{ytr1e} and the triangle inequality.
\end{proof}

\begin{lem} \label{gy} 
  Let $a:\N +\alpha \to \C $ be of type $(N_0,\chi)$, where the order of $\chi$  is  divisible by 4, and assume that $\alpha=\frac c d$ with $c,d \in \Z^+$  is a rational number. Assume that $\M$ is given by \eqref{Kdef}. Then there exists  some $C>0$  such that for any $M>0$ there exist a finite  set of primes $\Pri$ such that
if $\mathcal A$ denote the set of positive integers with all prime factors in $\Pri$ and $\cB$ denote the set of positive integers without prime factors in $\Pri$, then
\begin{gather} \label{OPP9}
      \abs{\sum_{\substack{d(k+\alpha)b^{-1}\in {\mathcal A} \\ k \geq 0}} \frac{a(k+\alpha)}{ (k+\alpha)^{s}}   } <
 C b^{-\Re(s)}, \qquad (b \in \cB, s \in \M) \\ \intertext{and} \label{OPP10}
   \abs{\sum_{\substack{d(k+\alpha)\in {\mathcal A} \\ k \geq M}} \frac{a(k+\alpha)}{ (k+\alpha)^{s}}   } < C M^{-1/2}. 
\end{gather}
\end{lem}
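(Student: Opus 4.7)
The strategy is to reduce both \eqref{OPP9} and \eqref{OPP10} to estimates for restricted Dirichlet series that differ from Dirichlet $L$-functions only by a bounded finite Euler factor, and then to invoke Lemma \ref{BNDDIR} to control these series on $\M$. Applying \eqref{ojkond2}, namely $a(k+\alpha) = a(kd+c)/a(d)$ and $(k+\alpha)^{-s} = d^{s}(kd+c)^{-s}$, the substitution $m = (kd+c)/b$ converts \eqref{OPP9} into a sum over $m \in \mathcal A$ subject to $bm \equiv c \pmod d$ and $bm \ge c$; the analogous substitution $m = kd+c$ converts \eqref{OPP10} into a sum over $m \in \mathcal A$ subject to $m \equiv c \pmod d$ and $m \ge dM+c$. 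Arranging $\Pri$ to consist only of primes coprime to $qd$ forces $(m,d) = 1$ for every $m \in \mathcal A$; thus \eqref{OPP9} is vacuously zero when $\gcd(b,d) > 1$, while otherwise character orthogonality mod $d$ rewrites the congruence indicator to give
\[
    \text{LHS of \eqref{OPP9}} = \frac{d^{s} a(b)}{a(d)\phi(d) b^{s}} \sum_{\chi^{*} \bmod d} \overline{\chi^{*}(c)}\chi^{*}(b) \bigl[ G_{\chi^{*}}(s) - R_{b,\chi^{*}}(s) \bigr],
\]
where $G_{\chi^{*}}(s) := \sum_{m \in \mathcal A} \chi^{*}(m) a(m) m^{-s} = \prod_{p \in \Pri}\bigl(1 - \chi^{*}(p) a(p) p^{-s}\bigr)^{-1}$ and $R_{b,\chi^{*}}$ collects the at most $d$ boundary terms with $bm < c$. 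The analogous identity for \eqref{OPP10} produces instead the tail $G_{\chi^{*}}^{(X)}(s) := \sum_{m \in \mathcal A,\, m \ge X} \chi^{*}(m) a(m) m^{-s}$ with $X = dM + c$.

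\medskip

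Since $q$ and $d$ are coprime and $\chi$ has order divisible by $4$, each $\chi \chi^{*}$ is a non-principal character of modulus $qd$ and, by permissibility of $\chi$, the $L$-function $L(s, \chi\chi^{*})$ is analytic and zero-free on $\M$. Applying Lemma \ref{BNDDIR} with $f_{\chi^{*}}(s) := L(s,\chi\chi^{*})$ and tolerance $\tfrac{1}{2}\min_{\chi^{*}} \min_{s \in \M} |L(s,\chi\chi^{*})|$ (and discarding the finitely many output primes dividing $qd$, which changes each $G_{\chi^{*}}$ only by a bounded multiplicative factor) produces a finite $\Pri$ containing every prime $\le M$ for which $L_{\Pri}(s, \chi \chi^{*})$ stays within $\varepsilon$ of $L(s,\chi\chi^{*})$ on $\M$. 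Now $G_{\chi^{*}}$ differs from $L_{\Pri}(s, \chi \chi^{*})$ only through a ratio of local Euler factors at primes $p < \log N_{0}$, and this ratio is bounded above and below on $\M$ by constants depending only on $N_{0}, q, d$; hence $\sup_{s \in \M}|G_{\chi^{*}}(s)| \le C_{0}$ with $C_{0}$ independent of $M$. Inserting this and the trivial bound $|R_{b,\chi^{*}}| \le d$ into the displayed identity yields \eqref{OPP9} at once.

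\medskip

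The main obstacle is \eqref{OPP10}, because the $M^{-1/2}$ decay of the tail $G_{\chi^{*}}^{(X)}(s)$ must come from analytic cancellation: the trivial bound $\sum_{m \ge X} m^{-\Re(s)}$ diverges on $\M$, so one cannot proceed coefficient-by-coefficient. My plan is a smoothed Perron argument: choose a smooth cutoff $\phi$ with $\phi(x) = 0$ for $x \le \tfrac{1}{2}$ and $\phi(x) = 1$ for $x \ge 1$, whose Mellin transform $\check\phi(w)$ decays rapidly on vertical lines. Then
\[
    \sum_{m \in \mathcal A} \chi^{*}(m) a(m) m^{-s} \phi(m/X) = \frac{1}{2\pi i} \int_{(c_{0})} G_{\chi^{*}}(s+w)\, \check\phi(w)\, X^{w}\,dw,
\]
and shifting the contour from $\Re(w) = c_{0} > 0$ past the simple pole at $w = 0$ (whose residue equals the full series $G_{\chi^{*}}(s)$) to the line $\Re(w) = -\tfrac{1}{2}$ yields an integral of size $O(X^{-1/2}) = O(M^{-1/2})$, provided $G_{\chi^{*}}(s+w)$ is uniformly bounded along that line against the rapidly decaying $\check\phi(w)$. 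Establishing this uniform bound is the crux: for $|\Im(w)|$ bounded it follows from Lemma \ref{BNDDIR} together with analytic continuation of $L(s,\chi\chi^{*})$ into $\{\Re(s) \ge \ddd\}$, while for large $|\Im(w)|$ classical convexity bounds for $L(s,\chi\chi^{*})$ inside the critical strip give at most polynomial growth which is absorbed by $\check\phi$. The discrepancy between the smooth cutoff $\phi(m/X)$ and the sharp cutoff $\mathbf 1_{m \ge X}$ is supported on the dyadic range $[X/2, X]$ and is itself bounded by $O(M^{-1/2})$ by the same Mellin device, completing \eqref{OPP10} with a universal $C$ depending only on $N_{0}, q, d, \chi$ and the geometry of $\M$.
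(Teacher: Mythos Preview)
Your treatment of \eqref{OPP9} is essentially correct and matches the paper's: the character decomposition together with Lemma~\ref{BNDDIR} bounds each $G_{\chi^{*}}$ on $\M$ by a constant depending only on $N_0,q,d$ and the geometry of $\M$, and the factor $b^{-s}$ then gives the stated decay.

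The gap is in \eqref{OPP10}. Your Mellin--Perron argument requires a bound for $G_{\chi^{*}}(s+w)$ along $\Re(w)=-\tfrac12$, i.e.\ for $\Re(s+w)$ as small as $\ddd$, that is uniform in $M$. No such bound is available: $G_{\chi^{*}}$ is a \emph{finite} Euler product over $\Pri$, and on the line $\Re(s')=\ddd<\tfrac12$ one has only the trivial estimate $|G_{\chi^{*}}(s')|\le \prod_{p\in\Pri}(1-p^{-\ddd})^{-1}$, which is of size $\exp\bigl(c\,M^{1-\ddd}/\log M\bigr)$ once $\Pri$ contains all primes up to $M$. This swamps the saving $X^{-1/2}\asymp M^{-1/2}$ from the contour shift. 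The appeal to ``classical convexity bounds for $L(s,\chi\chi^{*})$'' does not help, because $G_{\chi^{*}}$ is not $L(s,\chi\chi^{*})$: Lemma~\ref{BNDDIR} only forces $L_{\Pri}$ close to $L$ on the compact set $\M$, and to the left of $\M$ a truncated Euler product behaves nothing like the full $L$-function (indeed, controlling the difference there is tantamount to GRH). So the implied constant in your $O(M^{-1/2})$ depends on $M$, and the argument collapses.

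The paper repairs this not by analytic continuation but by exploiting the full \emph{freedom} in Lemma~\ref{BNDDIR}: instead of the neutral target $f_{\chi^{*}}=L(\cdot,\chi\chi^{*})$, it prescribes $L_{\Pri}(s,\chi\chi^{*})\approx 0$ for all non-principal $\chi^{*}\bmod d$ and chooses the principal target so that, via the identity \eqref{ferro299}, the $\mathcal A$-restricted full series $\sum_{d(k+\alpha)\in\mathcal A}a(k+\alpha)(k+\alpha)^{-s}$ equals the \emph{unrestricted} series $f(s)$ up to the tolerance $\varepsilon_1\asymp M^{-1/2}$ of \eqref{fer12}. The tail bound \eqref{OPP10} then follows immediately from the already established convergence rate \eqref{sssa} of the unrestricted series (Lemma~\ref{trew}), with no contour shifting and no need to leave $\M$. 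In short: the $M^{-1/2}$ in \eqref{OPP10} is \emph{built into the choice of $\Pri$}, not extracted analytically after the fact.
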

\begin{proof}
As in the proof of Lemma \ref{LE3} we will assume that
\begin{gather}
  \alpha=r+\{\alpha\}=r+ \frac c d,   \qquad (\GCD(c,d)=1, \, r \in {\mathbb N}, \,  0 \leq c <d).
\end{gather}
By Lemma \ref{trew} it follows  that the sum
\begin{gather}
  \zeta_a^{[N]}(s;\{\alpha\})= \sum_{k=-r }^N \frac{a(k+\alpha)}{(k+\alpha)^s}
\end{gather}
is uniformly convergent to an analytic function $f(s)$ on $\M$ as $N \to \infty$ and that
\begin{gather} \label{sssa}
  \max_{s \in \M}  \abs{    \sum_{k=-r}^N \frac{a(k+\alpha)}{(k+\alpha)^s} -f(s) }  \ll_{\varepsilon}  N^{-\varepsilon-\ddd-1/2}.
\end{gather}
By the same basic proof method as in the proof of Lemma \ref{LE3} where we prove the identity \eqref{fer} we have the corresponding identity 
\begin{align} \label{ferro299}
 \sum_{\substack{k \geq  - r\\  d(k+\alpha)  \in \cA}}  a(k+& \alpha) (k  +\alpha)^{-s} \\ 
 &= \frac {d^s \overline{a(d)}} {\phi(d)} \sum_{\chi^* \pmod d}  \overline{\chi^*(c)} L_{\Pri}(s,\chi\chi^*) \prod_{p< N_0 } \p{\frac{1- \chi^*(p) \chi(p) p^{-s}   }{1-\chi^*(p) a(p) p^{-s}} }.  
\end{align}
We may now let $\chi_0$ denote the principal character mod $d$ and apply Lemma \ref{BNDDIR} to find $\Pri$ so that
 \begin{gather} \label{fer11}
  \max_{s \in M}  \abs{L_{\Pri}(s,\chi\chi_0) -   \frac {\phi(d)}  {d^s \overline{a(d)}} \prod_{p< N_0 } \p{\frac{1- a(p) \chi(p) p^{-s}   }{1-\chi_0(p) \chi(p) p^{-s}} }  \p{f(s)- \sum_{k=-r}^{-1} \frac{a(k+\alpha)}{(k+\alpha)^s} } } < \varepsilon, \qquad  \\ \intertext{and}
    \max_{\substack{\chi^* \mod d  \\ \chi^* \neq \chi_0}} \max_{s \in M} \abs{L_{\Pri}(s,\chi \chi^*)}<\varepsilon, \qquad \label{fer11a}
\\ \intertext{where}
 \label{fer12}
  \varepsilon= M^{-1/2} \min_{s \in \M} \abs{d^{-s} \prod_{p< N_0 }    \p{\frac {1-\chi^*(p) a(p) p^{-s}} {1- \chi^*(p) \chi(p) p^{-s}  }}}. 
\end{gather} 
Equation \eqref{OPP10} now follows from the triangle inequality and \eqref{sssa}, \eqref{ferro299}, \eqref{fer11}, \eqref{fer11a} and \eqref{fer12}. We now proceed to prove that this choice of $\Pri$ also gives \eqref{OPP9}. We have
\begin{gather}  \label{ferro9bb}
 \sum_{\substack{d(n+\alpha)b^{-1} \in {\mathcal A} \\ n \geq 0}}  \frac{a(n+\alpha)}{(n+\alpha)^s} = b^{-s} \sum_{\substack{d(k+\beta) \in {\mathcal A} \\ k \geq 0}}  \frac{a(k+\beta)} {(k+\beta)^s},
\end{gather}
where $\beta=\frac c d$ for some $(c,d)=1$.  
In the same way as we obtained \eqref{ferro299} we have the corresponding identity 
\begin{align} \label{ferrobb}
  \sum_{\substack{d(k+\beta) \in {\mathcal A} \\ k \geq -r_2}}  \frac{a(k+\beta)}{(k  +\beta)^{s}} 
 = \frac {d^s \overline{a(d)}} {\phi(d)} \sum_{\chi^* \pmod d}  \overline{\chi^*(c)} L_{\Pri}(s,\chi\chi^*) \prod_{p< N_0 } \p{\frac{1- \chi^*(p) \chi(p) p^{-s}   }{1-\chi^*(p) a(p) p^{-s}} }  \qquad
\end{align}
for some $0 \leq r_2 \leq r$. 
Equation \eqref{OPP9} now follows by \eqref{ferro9bb}, \eqref{ferrobb}, \eqref{fer11a}, \eqref{fer11}, \eqref{fer12} and the triangle inequality.
\end{proof}

The following Lemma follows from sparsity of smooth numbers
\begin{lem} \label{gy2}   Assume that  $\alpha$                        is a rational number with denominator $d$.
 Suppose $\Pri$ is a finite set of primes with largest prime $P$ and that $\mathcal A$ denote the set of integers with all prime factors in $\Pri$ and $\cB$ denote the set of integers without prime factors in $\Pri$. Then 
\begin{gather} 
    \sum_{\substack{d(k+\alpha)b^{-1}\in {\mathcal A} \\ k \geq M}} \frac{1}{ \abs{k+\alpha}^{\Re(s)}}\ll_{\varepsilon} b^{\varepsilon- \Re(s)}, \qquad (M \geq \exp(P), b \in \cB, s \in \M)
\end{gather}
\end{lem}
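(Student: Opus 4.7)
\textbf{Proof plan for Lemma \ref{gy2}.} Write $\alpha = c/d$ with $c,d \in \Z^+$ coprime and substitute $n = d(k+\alpha) = dk+c$. The summation condition $d(k+\alpha) b^{-1} \in \mathcal{A}$ then reads $n = bm$ with $m \in \mathcal{A}$, and as $k$ ranges over integers $\geq M$, $n$ ranges over $\{n \equiv c \pmod d:\, n \geq d(M+\alpha)\}$. Since $k+\alpha = bm/d > 0$, the sum in question becomes
\[
S := \sum_{\substack{d(k+\alpha)b^{-1}\in {\mathcal A} \\ k \geq M}} \frac{1}{(k+\alpha)^{\sigma}}
\;\leq\; d^{\sigma}\,b^{-\sigma} \sum_{\substack{m \in \mathcal{A} \\ m \geq Y}} m^{-\sigma},
\qquad Y := d(M+\alpha)/b,
\]
where $\sigma = \Re(s) \geq 1/2 + \ddd$. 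The task reduces to showing $\sum_{m \in \mathcal{A},\, m \geq Y} m^{-\sigma} \ll_{\varepsilon,\alpha} b^\varepsilon$.

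The plan is to distinguish two cases according to the size of $Y$. When $Y \leq 1$ (i.e.\ $b \geq d(M+\alpha)$), the tail sum is bounded trivially by the full Euler product:
\[
\sum_{m \in \mathcal{A}} m^{-\sigma} = \prod_{p \in \Pri}(1-p^{-\sigma})^{-1} \leq \exp\!\Bigl(C \sum_{p \leq P} p^{-\sigma}\Bigr) \leq \exp(C_\ddd\, P^{1-\sigma}/\log P) \leq \exp(C_\ddd'\,P^{1/2}),
\]
by Chebyshev/PNT estimates using $\sigma \geq 1/2+\ddd$. When $Y > 1$, I apply Rankin's trick with exponent $\tau = \sigma/2$:
\[
\sum_{m \in \mathcal{A},\, m \geq Y} m^{-\sigma} \;\leq\; Y^{-\sigma/2}\prod_{p \in \Pri}(1-p^{-\sigma/2})^{-1} \;\leq\; Y^{-\sigma/2}\exp(C_\ddd''\, P^{3/4}).
\]

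Finally, the hypothesis $M \geq \exp(P)$ is exactly what makes the small factor $M^{-\varepsilon}$ dominate the Euler-product blow-up. In Case $Y\leq 1$, since $b \geq d(M+\alpha) \geq d\,e^P$, we have $b^\varepsilon \geq d^\varepsilon e^{\varepsilon P}$, which dominates $\exp(C_\ddd' P^{1/2})$ for $P \geq P_0(\varepsilon,\ddd)$, the remaining range $P \leq P_0$ being absorbed into an implicit constant depending on $\varepsilon, \ddd$. In Case $Y > 1$, the bound on $S$ reads
\[
S \;\leq\; d^\sigma\, b^{-\sigma/2}\,(d(M+\alpha))^{-\sigma/2} \exp(C_\ddd''\,P^{3/4}),
\]
and using $b < d(M+\alpha)$ together with $M \geq e^P$ gives $b^{\sigma-\varepsilon} S \leq d^\sigma (d(M+\alpha))^{-\varepsilon}\exp(C_\ddd''\,P^{3/4})$; the right-hand side is $O_{\varepsilon,\alpha}(1)$ because $(d(M+\alpha))^{-\varepsilon} \leq d^{-\varepsilon} e^{-\varepsilon P}$ and $\varepsilon P - C_\ddd'' P^{3/4} \to \infty$.

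The main obstacle is making the argument uniform in $\Pri$ (in particular in the size $P$ of the largest prime) while keeping the implicit constant independent of $P$ and $b$. Rankin's trick plus the crucial inequality $M \geq e^P$ turn the ``bad'' factor $\exp(cP^{\beta})$, $\beta < 1$, into something subpolynomial in $M$, which is in turn absorbed by $(dM)^\varepsilon \leq (b \cdot (dM)/b)^\varepsilon$ on the side where it matters. No new ideas beyond PNT-level estimates for $\sum_{p \leq P} p^{-\tau}$ and the elementary Rankin inequality are needed.
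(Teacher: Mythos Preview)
Your argument is correct and follows essentially the same route as the paper's own proof: reduce to $d^\sigma b^{-\sigma}\sum_{m\in\mathcal A,\,m\ge Y} m^{-\sigma}$, split according to the size of $b$ relative to $M$, and in each case balance the Euler-product factor $\prod_{p\le P}(1-p^{-\tau})^{-1}\ll \exp(cP^{1-\tau})$ against the hypothesis $M\ge e^{P}$. The only differences are cosmetic: the paper splits at $b=\sqrt{M}$ (rather than your $b=d(M+\alpha)$) and, in the small-$b$ case, invokes the smooth-number count $\Psi(x,P)$ from \eqref{nwlpf} directly instead of Rankin's trick; your Rankin step with $\tau=\sigma/2$ is an equivalent and arguably cleaner way to extract the same saving.
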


\begin{proof}
  We have that 
\begin{align} 
(*)&=\sum_{\substack{d(k+\alpha)b^{-1}\in {\mathcal A} \\ k \geq M}} \frac{1}{ \abs{k+\alpha}^{s}}\\  &= d^{\Re(s)} b^{-\Re(s)} \sum_{\substack{b^{-1}(nd+c) \in \mathcal{A} \\ n \geq (M-c)/d}}  \abs{(nd+c) b^{-1}}^{-\Re(s)} \\ \intertext{which with $kb=nd+c$ gives us}  (*)&\leq d  b^{-\Re(s)} \sum_{\substack{k \in \cA \\ k > (M-c)/(bd)}} k^{-1/2-\ddd}, \qquad (s \in \M).  \label{rtyt}
\end{align}

 By the estimate \eqref{nwlpf} of the numbers of integers without small prime factors we find that if $b<\sqrt M$ then the sum in \eqref{rtyt} is bounded and thus $(*)  \ll  b^{-\Re(s)}$. If $b>\sqrt M$ we may estimate the  sum in \eqref{rtyt} by a sum over all integers in $\mathcal A$ 
\begin{align} 
(*)&\leq d  b^{-\Re(s)} \sum_{k \in \mathcal A} k^{-1/2-\ddd} 
=  d  b^{-\Re(s)} \prod_{p \in \Pri} (1-p^{-\Re(s)})^{-1}, \\
 &\leq d  b^{-\Re(s)} \prod_{p \leq P} (1-p^{-\Re(s)})^{-1} \ll d b^{-\Re(s)} e^{\sqrt P} \ll   b^{\varepsilon-\Re(s)},   \qquad \qquad (s \in \M), 
\end{align}
since $b^\varepsilon \ll e^{\sqrt P}$ if $b>e^{P/2}$.
\end{proof}

\begin{lem} \label{RHV} Let  $a: \N +\alpha \to \C$ be of type $(N_0,\chi)$ where $\alpha=\frac {c} {d}$ and suppose that $\varepsilon>0$. Then there exists some $C>0$ such that for any 
positive integer $b \in \cB$ and finite set of primes $\Pri$ containing all primes less than $M$ with largest prime $P$ then
\begin{gather}
  \abs{\sum_{  \substack{d(n+\alpha)b^{-1} \in {\mathcal A} \\ N_1 \leq n \leq N_2}}  \frac{a(n+\alpha)} {(n+\alpha)^s}} \leq 
Cb^{-\Re(s)} \exp(\sqrt P). 
\end{gather}
holds for all $s \in \M$.
\end{lem}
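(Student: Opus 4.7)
\medskip

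\noindent \emph{Proof proposal for Lemma \ref{RHV}.} The plan is to discard the oscillation in $a$ by the triangle inequality and then bound a smooth-number sum by its associated Euler product, controlled by Mertens' theorem. Note that for this lemma we do not need any cancellation at all: the factor $\exp(\sqrt P)$ in the bound is extremely generous, so the trivial estimate is already strong enough. This is what distinguishes Lemma \ref{RHV} from Lemmas \ref{gy} and \ref{gy2}, where genuine cancellation (via Lemma \ref{BNDDIR}) or genuine sparsity of smooth numbers had to be invoked to beat the Euler-product bound.

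First I would apply the triangle inequality together with the unimodularity $|a(n+\alpha)|=1$ to obtain
\begin{gather}
  \abs{\sum_{  \substack{d(n+\alpha)b^{-1} \in {\mathcal A} \\ N_1 \leq n \leq N_2}}  \frac{a(n+\alpha)} {(n+\alpha)^s}} \leq \sum_{  \substack{d(n+\alpha)b^{-1} \in {\mathcal A} \\ N_1 \leq n \leq N_2}}  \frac 1 {(n+\alpha)^{\Re(s)}}.
\end{gather}
Writing $\alpha=c/d$ with $(c,d)=1$, the condition $d(n+\alpha)b^{-1}\in \cA$ means that $nd+c=kb$ for some $k \in \cA$ satisfying $kb\equiv c\pmod d$, and $(n+\alpha)=kb/d$. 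Parametrising the sum by $k$ instead of $n$, extending the range to all of $\cA$, and using that $\Re(s) \geq 1/2+\ddd$ for $s \in \M$, we obtain
\begin{gather} \label{triv11}
  \sum_{  \substack{d(n+\alpha)b^{-1} \in {\mathcal A} \\ N_1 \leq n \leq N_2}}  \frac 1 {(n+\alpha)^{\Re(s)}} \leq d^{\Re(s)} b^{-\Re(s)} \sum_{k \in \cA} k^{-\Re(s)} \leq d\, b^{-\Re(s)} \prod_{p \in \Pri} \p{1-p^{-1/2-\ddd}}^{-1}.
\end{gather}

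The remaining task is to bound the Euler product in \eqref{triv11} by $C\exp(\sqrt P)$. Using $-\log(1-x) \leq x + x^2/(1-x)$ valid for $0 \leq x <1$, and the bound $p^{-1/2-\ddd} \leq p^{-1/2} \leq 1/\sqrt 2$, one obtains $-\log(1-p^{-1/2-\ddd}) \leq (1+c_0)\, p^{-1/2}$ for some absolute $c_0>0$. By the prime number theorem
\begin{gather}
  \sum_{p \leq P} p^{-1/2} = \frac{2\sqrt P}{\log P}(1+o(1)),
\end{gather}
so that $\log \prod_{p \in \Pri}(1-p^{-1/2-\ddd})^{-1} \leq (1+c_0) \sum_{p \leq P} p^{-1/2} \leq \sqrt P$ for all $P$ greater than some absolute constant $P_0$; the finitely many cases $P \leq P_0$ can be absorbed into the constant $C$. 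Combining this with \eqref{triv11} yields the required bound with a constant $C=C(\alpha,\ddd)$.

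As for the main obstacle: there is essentially none. The only mild subtlety is to verify that the change-of-variable argument is legitimate uniformly in $b \in \cB$; since $(b,p)\neq 1$ for no $p \in \Pri$ by the definition of $\cB$, the congruence $kb\equiv c\pmod d$ either has a unique solution in each residue class $\bmod\, d$ or none at all, and in either case the bound \eqref{triv11} is valid after enlarging the sum to all $k \in \cA$. The proof therefore reduces to one application of the triangle inequality and one application of Mertens' estimate.
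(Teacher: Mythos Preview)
Your proof is correct and follows essentially the same route as the paper: triangle inequality, the change of variable $nd+c=kb$ to pull out the factor $b^{-\Re(s)}$, enlargement of the remaining sum to $\sum_{k\in\cA}k^{-\Re(s)}$, and the Euler-product bound $\prod_{p\le P}(1-p^{-\Re(s)})^{-1}\ll e^{\sqrt P}$. Your write-up is in fact more detailed than the paper's, which dispatches the last step without justification; your use of the prime number theorem to bound $\sum_{p\le P}p^{-1/2}$ is exactly what is needed there.
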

\begin{proof}
The second case is Lemma \ref{gy2}. We now prove the final case which in fact holds for any finite set of primes $\Pri$ and for any $0\leq N_1 \leq N_2$. In this case we can estimate the sum from above by the triangle inequality as in the proof of Lemma \ref{gy2}  by
\begin{align}
  d^{\Re(s)} b^{-\Re(s)} \sum_{dn+l \in \mathcal A} (dn+l)^{-\Re(s)} &\leq d^{\Re(s)} b^{-\Re(s)} \prod_{p \leq P} (1-p^{-s})^{-1}, \\ &\ll b^{-\Re(s)} e^{\sqrt P}, \qquad (s \in \M). 
\end{align}
\end{proof}

\begin{lem} \label{RHV2} Suppose that $a: \N +\alpha \to \C$ is of type $(N_0,\chi)$,  where $\alpha=\frac {c} {d}$.  Furthermore assume that the Riemann hypothesis holds for $L(s,\chi \chi^*)$ for all Dirichlet characters $\chi^*$ mod $d$. Then given $\varepsilon>0$ there exists some $C>0$ such that given any $M>0$ and positive integer $b$ there exists some $M_1 \geq M$ such that if $\mathcal A$ is the set of positive integers with all prime factors less than $M_1$, then 
\begin{gather} \max_{s \in \M} \abs{\sum_{\substack{d(n+\alpha)b^{-1} \in {\mathcal A} \\ N_1 \leq n \leq N_2}}  \frac{a(n+\alpha)} {(n+\alpha)^s}} \leq C  ((b+N_1)^{\varepsilon-\ddd-1/2}+b^{\varepsilon-\ddd-1/2} M_1^{\varepsilon-\ddd}), \qquad (b \in \cB). 
\end{gather} 
\end{lem}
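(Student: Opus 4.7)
The plan is to reduce the sum to a partial sum of a Dirichlet series attached to products of $L(s,\chi\chi^*)$, and then exploit the hypothesized Riemann hypothesis via contour integration. First I choose $M_1 \geq M$ larger than the largest prime divisor of $d$; since $b \in \cB$ has all its prime factors $\geq M_1$, this forces $\gcd(b,d)=1$. Substituting $m = d(n+\alpha)/b = (nd+c)/b$, the smoothness restriction $d(n+\alpha)b^{-1}\in\cA$ becomes $m \in \cA$ with $bm \equiv c \pmod d$, and $N_1\leq n\leq N_2$ becomes $X_1\leq m\leq X_2$ with $X_j=(dN_j+c)/b$. Using the complete multiplicative extension of $a$ to $\N$ given by \eqref{ojkond2}, so that $a(n+\alpha)=a(m)a(b)/a(d)$, and then detecting the congruence $bm\equiv c\pmod d$ by orthogonality of Dirichlet characters mod $d$ (valid because $\gcd(c,d)=\gcd(b,d)=1$), I get
\[
S(s)=\frac{d^s\,a(b)}{\phi(d)\,b^s\,a(d)}\sum_{\chi^*\bmod d}\overline{\chi^*(c)}\chi^*(b)\,T_{\chi^*}(s;X_1,X_2),\qquad T_{\chi^*}(s;X_1,X_2)=\sum_{\substack{m\in\cA\\X_1\leq m\leq X_2}}\frac{a(m)\chi^*(m)}{m^s},
\]
so it suffices to bound $|T_{\chi^*}(s;X_1,X_2)|$ uniformly in $s\in\M$ and in $\chi^*$ mod $d$.

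Next I analyse the generating series $F_{\chi^*}(s)=\prod_{p<M_1}(1-a(p)\chi^*(p)p^{-s})^{-1}$. Using $a(p)=\chi(p)$ for $p\geq\log N_0$ I factor
\[
F_{\chi^*}(s)=L(s,\chi\chi^*)\,H(s)\,G_{M_1}(s),
\]
where $H(s)=\prod_{p<\log N_0}(1-\chi\chi^*(p)p^{-s})/(1-a(p)\chi^*(p)p^{-s})$ is a fixed finite Euler product bounded on $\M$, and $G_{M_1}(s)=\prod_{p\geq M_1}(1-\chi\chi^*(p)p^{-s})$. Under the hypothesized RH for $L(s,\chi\chi^*)$, the standard estimate gives $|L(s,\chi\chi^*)|\ll_\varepsilon(1+|t|)^\varepsilon$ on $\Re s\geq\tfrac12+\ddd/2$, and expanding $\log G_{M_1}(s)=-\sum_{p\geq M_1}\chi\chi^*(p)p^{-s}+O(M_1^{1-2\sigma})$ together with partial summation from the RH-consequence $\sum_{p\leq x}\chi\chi^*(p)=O(x^{1/2+\varepsilon})$ yields $|G_{M_1}(s)-1|\ll M_1^{1/2+\varepsilon-\sigma}\leq M_1^{\varepsilon-\ddd}$ uniformly on $\M$.

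The final step is truncated Perron--Mellin inversion,
\[
T_{\chi^*}(s;X_1,X_2)=\frac{1}{2\pi i}\int_{c_0-iT}^{c_0+iT}F_{\chi^*}(s+w)\,\frac{X_2^w-X_1^w}{w}\,dw+\text{(truncation error)},
\]
with $c_0>1-\sigma$ so the series converges absolutely, followed by the splitting
\[
F_{\chi^*}(s+w)=L(s+w,\chi\chi^*)H(s+w)+L(s+w,\chi\chi^*)H(s+w)\bigl(G_{M_1}(s+w)-1\bigr)
\]
and a shift of the contour to $\Re(w)=\tfrac12+\ddd/2-\sigma<0$. No residue is picked up, because $L(s+w,\chi\chi^*)$ is entire (as $\chi\chi^*$ is non-principal) and $X_2^w-X_1^w$ vanishes at $w=0$. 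The main ($LH$) piece, estimated by the RH bound together with $|X_2^w-X_1^w|\leq 2X_1^{\Re w}$ on the shifted contour and the prefactor $b^{-\sigma}$ from the first step, produces the bound $(b+N_1)^{\varepsilon-\ddd-1/2}$ after using $bX_1\asymp dN_1+c$ and tracking the exponents carefully; the correction piece carries an extra factor $|G_{M_1}-1|\ll M_1^{\varepsilon-\ddd}$, giving the second contribution $b^{\varepsilon-\ddd-1/2}M_1^{\varepsilon-\ddd}$.

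The principal obstacle is making the Perron analysis uniform in all four parameters $b$, $N_1$, $N_2$, $M_1$ simultaneously: the truncation parameter $T$ must be chosen (say polynomial in $X_1$) so that the omitted vertical tail of the Perron integral and, more seriously, the horizontal connecting segments — where only the convexity estimate $|L(s+w,\chi\chi^*)|\ll(1+|w|)^{O(1)}$ is available inside the critical strip — are dominated by the target right-hand side. A secondary technical point is ensuring that the implied constants are uniform over the $\phi(d)$ characters $\chi^*$, which is fine since $d$ is fixed in the hypothesis and the RH-based bound on each $L(s,\chi\chi^*)$ absorbs $d$ into the constant $C$.
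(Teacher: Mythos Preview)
Your reduction to the sums $T_{\chi^*}(s;X_1,X_2)$ via orthogonality of characters mod $d$ is exactly what the paper does (its equation \eqref{uyr3} and the identity \eqref{ire}), so the first stage matches. The divergence comes in how you bound $T_{\chi^*}$.

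The paper does not use Perron at all. Instead it exploits the convolution identity $a=b*\chi$ from \eqref{rraj}--\eqref{rraj2} (the same device used in Lemma~\ref{trew}), which expresses $a(m)$ as a Dirichlet convolution of $\chi$ against a function $b(\cdot)$ supported on $\log N_0$-smooth integers. This unwinds $T_{\chi^*}$ directly into a short outer sum over smooth $k$ and an inner sum of the form $\sum_{Y_1<m<Y_2,\,m\in\cA}\chi\chi^*(m)m^{-s}$, and the RH bound \eqref{ire2} for that pure-character sum is then asserted as a standard consequence of partial summation against $\sum_{m\le x}\chi\chi^*(m)\ll x^{1/2+\varepsilon}$. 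The sparsity of $\log N_0$-smooth numbers controls the outer sum. No contour shift, no truncation parameter, no horizontal segments.

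Your route via the factorisation $F_{\chi^*}=L\cdot H\cdot G_{M_1}$ and a contour shift is also workable, but it is genuinely heavier, and the obstacle you flag is real. One point you do not flag: the bound $|G_{M_1}(z)-1|\ll M_1^{\varepsilon-\ddd}$ that you quote holds on $\M$, but on the shifted line $\Re(s+w)=\tfrac12+\ddd/2$ partial summation under RH gives $|\log G_{M_1}(z)|\ll |z|\,M_1^{\varepsilon-\ddd/2}$, so $|G_{M_1}|$ grows with $|\Im w|$ and the correction piece is not integrable against $|w|^{-1}$ without further truncation gymnastics. This can be managed by choosing $T$ small relative to a suitable power of $M_1$, but it adds another layer of balancing. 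The paper's convolution approach sidesteps all of this and is what you should use if you want a clean write-up.
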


\begin{proof}
Without loss of generality we may assume that $0<\alpha<1$ since the general result easily follows from this special case.  We may write 
\begin{gather} \label{uyr3} 
 \sum_{\substack{d(n+\alpha)b^{-1} \in {\mathcal A} \\ N_1 \leq n \leq N_2}}  \frac{a(n+\alpha)} {(n+\alpha)^s} = b^{-s} \sum_{\substack{d(n+\beta) \in {\mathcal A} \\ N_1/b \leq n \leq N_2/b}}  \frac{a(n+\beta)} {(n+\beta)^s},
\end{gather}
where $\beta= \frac l d$ for some $1 \leq  l \leq d$. It follows that we have the generating Dirichlet series (this is similar to \eqref{rraj3})
\begin{gather}  
  \sum_{\substack{d(n+\beta) \in {\mathcal A} \\  n \geq 0}}  \frac{a(n+\beta)} {(n+\beta)^s} = \frac {d^{s}}{\phi(d)}\sum_{\chi^* \mod d} \,  \sum_{P^+(k) \leq \log N_0} b(k)  \overline{\chi^*(lk)} k^{-s} L_\Pri(s,\chi \chi^*),
\end{gather}
where the coefficients $b(k)$ are defined by \eqref{rraj2}, and that
\begin{multline}
  \sum_{\substack{d(n+\beta) \in {\mathcal A} \\  \substack N_1/b < n <N_2/b}}  \frac{a(n+\beta)} {(n+\beta)^s} \\ = \frac {d^{s}}{\phi(d)}\sum_{\chi^* \mod d}  \, \sum_{P^+(k) \leq \log N_0} b(k) \overline{\chi^*(lk)} k^{-s} \sum_{ \substack{ N_1/(bk) <  m < N_2/(bk) \\ m \in \mathcal A}} \frac{\chi(m) \chi^*(m)} {m^{s}}. \label{ire}
\end{multline}
It is a consequence of the Riemann hypothesis for $L(s,\chi \chi^*)$ that the innermost sum can be estimated by
\begin{gather} \label{ire2}
 \max_{s \in \M} \abs{\sum_{\substack{  N_1/(bk) <  m < N_2/(bk) \\ m \in \mathcal A}}  \frac{\chi(m) \chi^*(m)} {m^{s}} } \ll_{\varepsilon} \p{\frac {N_1}{bk} + 1}^{\varepsilon-\ddd-1/2}+M_1^{\varepsilon-\ddd}.
\end{gather}
From \eqref{uyr3}, \eqref{ire} and \eqref{ire2} and the triangle inequality it follows that
\begin{align}
\max_{s \in \M} \abs{\sum_{\substack{d(n+\alpha)b^{-1} \in {\mathcal A} \\ N_1 \leq n \leq N_2}}  \frac{a(n+\alpha)} {(n+\alpha)^s}
}  &\ll d b^{-\ddd-1/2}  \sum_{P^+(k) \leq \log N_0 } k^{-\ddd-1/2}\p{\p{\frac {N_1}{bk}+1 }^{\varepsilon-\ddd-1/2}+ M_1^{\varepsilon-\ddd}} \\ &\ll   ((b+N_1)^{\varepsilon-\ddd-1/2} + \, M_1^{\varepsilon-\ddd}),
\end{align}
where the final inequality follows from the sparsity of smooth numbers, or in other words the inequality \eqref{nwlpf} for the number of integers without large prime factors.
\end{proof}
  
\begin{lem} \label{TTV} Suppose that $\alpha_j$ are rational or transcendental numbers and $a_j: \N + \alpha_j \to \C$ are of type $(N_0,\chi_j)$. Then for $M \geq N_0$, $\varepsilon>0$ and any finite set of primes $\Pri$ containing all primes less than $M$ that satisfies \eqref{OPP9} we have that 
\begin{gather} 
\limsup_{N \to \infty} \max_{\vs \in \M^n} \abs{\sum_{0 \leq k_1 < \cdots <k_{n-1} \leq N} \sum_{\substack{\substack{d_n(k_n+\alpha_n)b_n^{-1} \in {\mathcal A}_n(N_n)}\\ k_{n-1} < k_n \leq N}} \prod_{j=1}^n \frac{a_j(k_j+\alpha_j)}{ (k_j+\alpha_j)^{s_j}}} \ll_{\varepsilon} b_n^{\varepsilon-1/2-\ddd}. 
\end{gather}
\end{lem}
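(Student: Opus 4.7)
\noindent\textbf{Proof plan for Lemma \ref{TTV}.} The approach is to interchange the order of summation so that the sum over $k_n$ becomes outermost, and then to peel off the leading contribution of the resulting inner $(n-1)$-fold sum using the approximation from Lemma \ref{EEEV}. For fixed $N$, commuting the sums yields
\begin{gather*}
 S_N(\vs) = \sum_{\substack{k_n \leq N \\ d_n(k_n+\alpha_n)b_n^{-1} \in \cA_n(N_n)}} \frac{a_n(k_n+\alpha_n)}{(k_n+\alpha_n)^{s_n}}\, G_{k_n}(\vs'),
\end{gather*}
where $\vs'=(s_1,\ldots,s_{n-1})$ and $G_M(\vs')= \sum_{0\le k_1<\cdots<k_{n-1}<M}\prod_{j=1}^{n-1} a_j(k_j+\alpha_j)/(k_j+\alpha_j)^{s_j}$. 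By Lemma \ref{EEEV} applied in the first $n-1$ variables, $G_M(\vs')=G(\vs')+E_M(\vs')$ with $G$ uniformly bounded on $\M^{n-1}$ and $|E_M(\vs')|\ll_\varepsilon M^{\varepsilon-1/2-\ddd}$, so $S_N(\vs)=G(\vs')\,V_N(s_n)+W_N(\vs)$, where $V_N(s_n)$ is the coefficient sum in $k_n$ alone and $W_N(\vs)$ is the residual error sum paired against $E_{k_n}(\vs')$.

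Each piece is then estimated separately. For the main term, hypothesis \eqref{OPP9} gives $\limsup_N|V_N|\le |V_\infty|\le Cb_n^{-\Re(s_n)}$, which together with the uniform bound on $G$ yields a contribution of order $b_n^{-\Re(s_n)}\le b_n^{-1/2-\ddd}$ on $\M^n$. For the error term, inserting $|E_{k_n}|\ll_\varepsilon k_n^{\varepsilon-1/2-\ddd}$ and taking absolute values gives
\begin{gather*}
|W_N(\vs)|\ll_\varepsilon\sum_{\substack{k_n\ge 0\\ d_n(k_n+\alpha_n)b_n^{-1}\in\cA_n(N_n)}}(k_n+\alpha_n)^{-\Re(s_n)-1/2-\ddd+\varepsilon}.
\end{gather*}
Since $\vs\in\M^n$ forces the total exponent to exceed $1+2\ddd-\varepsilon>1$, this series is absolutely convergent. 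The substitution $m=d_n(k_n+\alpha_n)/b_n$, where $m$ ranges over elements of $\cA_n(N_n)$ satisfying a congruence mod $d_n$, pulls out the factor $b_n^{-(\Re(s_n)+1/2+\ddd-\varepsilon)}$ and bounds the remaining sum by the Euler product $\prod_{p\in\Pri}(1-p^{-\sigma})^{-1}\le\zeta(1+2\ddd-\varepsilon)$. Hence $|W_N|\ll_\varepsilon b_n^{\varepsilon-1-2\ddd}$, which is stronger than required, and combining the two estimates yields the claimed bound $b_n^{\varepsilon-1/2-\ddd}$.

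The main obstacle is not in the estimation itself but in justifying the passage $N\to\infty$ cleanly in both pieces: for $V_N$ this follows since \eqref{OPP9} ensures convergence of the full series, so the tail $V_\infty-V_N$ vanishes in the limit, while for $W_N$ the absolute convergence of the bounding Dirichlet series permits dominated convergence. The transcendental case is essentially trivial, since $\cB_n=\{1\}$ forces $b_n=1$ and the bound reduces to a uniform estimate obtained directly from Lemma \ref{EEEV} and \eqref{OPP9}; all the substance lies in the rational case, and the hypothesis that $\Pri$ contains all primes below $M\ge N_0$ is precisely what allows the Euler-product substitution to absorb the smooth factor arising from the type-$(N_0,\chi)$ structure of $a_n$ without any dependence on $P=\max\Pri$.
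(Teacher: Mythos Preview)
Your proposal is correct and follows essentially the same route as the paper: interchange the order of summation so that $k_n$ is outermost, invoke Lemma~\ref{EEEV} on the inner $(n-1)$-fold sum to split off a bounded limit $A(\vs')$ plus an error $B_k(\vs')\ll k^{-1/2}$, then bound the main piece via \eqref{OPP9} and the error piece by absolute values and the substitution $m=d_n(k_n+\alpha_n)/b_n$. The paper uses the coarser bound $|B_k|\le C k^{-1/2}$ from Lemma~\ref{EEEV} rather than your sharper $k^{\varepsilon-1/2-\ddd}$, giving $b_n^{-1-\ddd}$ instead of your $b_n^{\varepsilon-1-2\ddd}$ for the error term, but the argument is otherwise identical; your remark about the role of the hypothesis $\Pri\supseteq\{p\le M\}$ is slightly off (that hypothesis is consumed in producing \eqref{OPP9} via Lemma~\ref{gy}, not in the Euler-product bound here, which is uniform in $\Pri$ simply because $\sigma>1$), but this does not affect the proof.
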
 

\begin{proof}
  By Lemma \ref{EEEV} there exists some $C>0$ and functions $A,B_k:\M^{n} \to \C$ such that 
 \begin{gather} 
\label{Mtineq}
   \sum_{1 \leq k_1 < \cdots <k_{n-1}<k}\prod_{j=1}^n \frac{a_j(k_j+\alpha_j)}{ (k_j+\alpha_j)^{s_j}} =A(\vs)+ B_k(\vs), \\ \intertext{where}
\max_{\vs \in \M^{n}} \abs{A(\vs)} \leq C,   \label{Errorineq0} \\ \intertext{and} \label{Errorineq}
\sup_{k \geq 0} \max_{\vs \in \M^{n}}   \abs{B_k(\vs)} \sqrt k \leq C.
\end{gather} 
We may write 
\begin{multline} 
 I_N(b_n,\vs)=\sum_{1 \leq k_1 < \cdots <k_{n-1} \leq N}  \sum_{\substack{\substack{d_n(k_n+\alpha_n)b_n^{-1} \in {\mathcal A}_n}\\ k_{n-1} < k_n \leq N}} \prod_{j=1}^n \frac{a_j(k_j+\alpha_j)}{ (k_j+\alpha_j)^{s_j} } \\ =  \label{urt5}
A(\vs)\sum_{\substack{d_n(k+\alpha_n)b_n^{-1}\in {\mathcal A}_n \\ 0 \leq k \leq N}} \frac{a_n(k+\alpha_n)}{ (k+\alpha_n)^{s_n}} \, \,  + 
\sum_{\substack{d_n(k+\alpha_n)b_n^{-1}\in {\mathcal A}_n  \\ 0 \leq k \leq N}}B_k(\vs)  \frac{a_n(k+\alpha_n)}{ (k+\alpha_n)^{s_n}}.
 \end{multline}
By Lemma \ref{gy} there exist some $N^*$ such that 
\begin{gather} \label{OPP2}
 \max_{\vs \in \M} \abs{\sum_{\substack{d_n(k+\alpha_n)b_n^{-1}\in {\mathcal A}_n \\ 0 \leq k \leq N}} \frac{a_n(k+\alpha_n)}{ (k+\alpha_n)^{s_n}}} \leq 2Cb_n^{-1/2-\ddd},  \qquad (N \geq N^*).
\end{gather}
By using the triangle inequality on the second term in \eqref{urt5} and \eqref{Errorineq} we get that 
 \begin{gather} \label{yr123}
    \abs{\sum_{\substack{ d_n(k+\alpha_n)b_n^{-1}\in {\mathcal A}_n   \\ 0 \leq k \leq N}} B_{k}(\vs) \frac{a_n(k+\alpha_n)}{ (k+\alpha_n)^{s_n}}} \leq  C \sum_{\substack{d_n(k+\alpha_n)b_n^{-1}\in {\mathcal A}_n  \\ 0 \leq k }} k^{-1-\ddd} \ll b_n^{-1-\ddd}. 
\end{gather}  
The lemma follows by using the triangle inequality  and the inequalities \eqref{Mtineq}, \eqref{OPP2}, \eqref{yr123} on \eqref{urt5}.
\end{proof}

\begin{lem} \label{EEV2}
  Assume that $a_j:\N+\alpha_j \to \C$ is of type $(N_0,\chi_j)$ where $\alpha_j$ are rational and transcendental, $\alpha_n=\frac c d$ with $(c,d)=1$ is rational.  Furthermore assume that $A_j(N_j)$ denote the set of integers with all prime factors less than $N_j$ for $1 \leq j \leq n$  so in particular we assume that $\Pri$ is the set of prime less than $N_n$.
Assume  that $k \geq \exp(N_j)$ for $j=v+1,\ldots,n-1$,
  and that the Riemann hypothesis is true for $L(s,\chi \chi^*)$  if $\chi^*$ is a character mod $d$. Then for $0 \leq v \leq n-1$ the following inequality holds
\begin{multline}
    \max_{(s_{v+1},\ldots,s_n) \in \M^{n-v}} \abs{\sum_{\substack{k \leq k_{v+1} < \cdots <k_n \leq N \\  d_j(k_j+\alpha_j) {b_j}^{-1} \in \cA_j( N_j) }} \prod_{j=v+1}^n \frac{a_j(k_j+\alpha_j)}{ (k_j+\alpha_j)^{s_j}}} \\ \ll_{\varepsilon}  \p{(k+b_n)^{\varepsilon-\ddd-1/2}+b_n^{\varepsilon-1/2-\ddd} N_n^{\varepsilon-\ddd} } \prod_{j=v+1}^{n-1}  b_j^{\varepsilon-1/2-\ddd}.
\end{multline} 
\end{lem}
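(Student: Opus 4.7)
My plan is to process the variables from innermost to outermost: first bound the sum over $k_n$ using the Riemann hypothesis (via Lemma \ref{RHV2}), then bound each remaining sum over $k_{n-1},k_{n-2},\dots,k_{v+1}$ by iterated application of the smooth-number sparsity bound of Lemma \ref{gy2}. The hypothesis $k\geq \exp(N_j)$ for $j=v+1,\dots,n-1$ is precisely the ``$M\geq \exp(P)$'' input that Lemma \ref{gy2} requires, and it propagates automatically since each $k_j\geq k$ along the summation.

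Concretely, for fixed $k_{v+1}<\dots<k_{n-1}$, applying Lemma \ref{RHV2} with $N_1=k_{n-1}$, $N_2=N$, $b=b_n$, and $\cA=\cA_n(N_n)$ (so $M_1=N_n$) gives
\begin{gather*}
\max_{s_n\in\M}\left|\sum_{\substack{d_n(k_n+\alpha_n)b_n^{-1}\in\cA_n(N_n)\\ k_{n-1}<k_n\leq N}}\frac{a_n(k_n+\alpha_n)}{(k_n+\alpha_n)^{s_n}}\right|\ll_\varepsilon (k_{n-1}+b_n)^{\varepsilon-\ddd-1/2}+b_n^{\varepsilon-\ddd-1/2}N_n^{\varepsilon-\ddd}.
\end{gather*}
Bounding each factor for $j\leq n-1$ trivially by $|a_j(k_j+\alpha_j)(k_j+\alpha_j)^{-s_j}|\leq (k_j+\alpha_j)^{-1/2-\ddd}$ (using $a_j$ unimodular and $s_j\in\M$), the triangle inequality splits the original quantity into $S_1+S_2$, where $S_1$ carries the factor $(k_{n-1}+b_n)^{\varepsilon-\ddd-1/2}$ and $S_2$ carries the $k_j$-independent prefactor $b_n^{\varepsilon-\ddd-1/2}N_n^{\varepsilon-\ddd}$, both multiplied by the common iterated sum
\begin{gather*}
\Sigma=\sum_{\substack{k\leq k_{v+1}<\cdots<k_{n-1}\leq N\\ d_j(k_j+\alpha_j)b_j^{-1}\in\cA_j(N_j)}}\prod_{j=v+1}^{n-1}(k_j+\alpha_j)^{-1/2-\ddd}.
\end{gather*}

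For $S_1$, since $\varepsilon-\ddd-1/2<0$ and $k_{n-1}\geq k$, I use the monotonicity $(k_{n-1}+b_n)^{\varepsilon-\ddd-1/2}\leq (k+b_n)^{\varepsilon-\ddd-1/2}$ to pull this factor out in front of $\Sigma$, which cleanly decouples the $b_n$-dependence from the remaining smooth-number sums. It then remains to bound $\Sigma$ from above. I apply Lemma \ref{gy2} once per variable, starting with the innermost ($k_{n-1}$) and moving outward; for each fixed choice of $k_{v+1},\dots,k_{j-1}$ the $k_j$-sum starts at an index $\geq k\geq \exp(N_j)$, so Lemma \ref{gy2} applies and produces a factor $b_j^{\varepsilon-1/2-\ddd}$ independent of the remaining variables. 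Telescoping yields $\Sigma\ll_\varepsilon \prod_{j=v+1}^{n-1}b_j^{\varepsilon-1/2-\ddd}$, and multiplying by the two prefactors gives exactly the stated bound.

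I do not expect a serious obstacle. The only nontrivial observation is the monotonicity trick that lets $(k+b_n)^{\varepsilon-\ddd-1/2}$ come out in front of the smooth-number iterated sum in $S_1$; without it the $b_n$ would remain entangled with $k_{n-1}$ and the iteration over $k_{n-1},\dots,k_{v+1}$ would have to be performed more delicately. Routine bookkeeping consists of using $\varepsilon/(n-v)$ in each application of Lemmas \ref{RHV2} and \ref{gy2} so that the $n-v$ accumulated $\varepsilon$-losses combine into a single $\varepsilon$ in the final bound.
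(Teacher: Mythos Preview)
Your proposal is correct and follows essentially the same route as the paper: bound the innermost $k_n$-sum via Lemma \ref{RHV2}, use the monotonicity $(k_{n-1}+b_n)^{\varepsilon-\ddd-1/2}\leq (k+b_n)^{\varepsilon-\ddd-1/2}$ to decouple from $k_{n-1}$, and then handle the remaining variables with Lemma \ref{gy2}. The only cosmetic difference is that the paper factors the remaining $(n-1-v)$-fold sum into a product of single sums before applying Lemma \ref{gy2} to each factor, whereas you phrase this as an iterated peeling of variables; the two are equivalent.
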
 

\begin{proof}
By the triangle inequality we may estimate the sum
\begin{gather} \label{ara} \begin{split}
     (*)&= \abs{\sum_{\substack{k \leq k_{v+1} < \cdots <k_n \leq N \\ d_j(k_j+\alpha_j) {b_j}^{-1} \in \cA_j( N_j) }} \prod_{j=v+1}^{n} \frac{a_j(k_j+\alpha_j)}{ (k_j+\alpha_j)^{s_j}}} \\ &\leq \sum_{\substack{k \leq k_{v+1} < \cdots <k_{n-1}  \\  d_j(k_j+\alpha_j) b_j^{-1} \in \cA_j( N_j)  }}   \p{\prod_{j=v+1}^{n-1} \abs{k_j+\alpha_j}^{-\Re(s_j)}}   S_{k_{n-1}}(s_n), \end{split} \\ \intertext{where}
  S_{k_{n-1}}(s_n) = \abs{ \sum_{\substack{k_{n-1}<k_n \leq N  \\   d_n(k_n+\alpha_n) {b_n}^{-1} \in \cA_n( N_n)    }}  \frac{a_n(k_n+\alpha_n)} {(k_n+\alpha_n)^{s_n}}}. 
\end{gather}
It follows  by Lemma \ref{RHV2} together with the assumption $k \leq k_{n-1}$   that \begin{gather} \label{yrtr}  S_{k_{n-1}}(s_n) \ll  (k+b_n)^{\varepsilon-1/2-\ddd}+b_n^{\varepsilon-1/2-\ddd} N_n^{\varepsilon-\ddd}, \qquad (s_n \in \M). \end{gather} 
It follows  by \eqref{ara} and \eqref{yrtr} that  for $s_j  \in \M$ we have that
\begin{align}
  (*) &\ll_{\varepsilon}  ((k+b_n)^{\varepsilon-\ddd-1/2}+N_n^{\varepsilon-\ddd} )  \sum_{\substack{k \leq k_{v+1} < \cdots <k_{n-1} \leq N \\  d_j(k_j+\alpha_j) b_j^{-1} \in \cA_j( N_j)  }}  \prod_{j=v+1}^{n-1}\abs{k_j+\alpha_j}^{-\Re(s_j)},  \\ &\leq  ((k+b_n)^{\varepsilon-\ddd-1/2}+N_n^{\varepsilon-\ddd} )  \prod_{j=v+1}^{n-1}  \p{\sum_{\substack{k_j \geq k \\ d_j(k_j+\alpha_j) b_j^{-1} \in \cA_j( N_j)}}  \abs{k_j+\alpha_j}^{-\Re(s_j)}}. 
\end{align}
The conclusion follows by using Lemma \ref{gy2} on the factors in the final product. 
\end{proof}

\begin{lem} \label{EEV3}
  Assume that $a_j:\N+\alpha_j \to \C$ is of type $(N_0,\chi_j)$ where $\alpha_j$ are rational and transcendental positive numbers, and where $\alpha_n=\frac c d$ with $(c,d)=1$ is rational, and where 
 $k \geq \exp(N_j)$ for $j=1,\ldots,n-1$ and $k \geq \exp(P)$, where $P$ is the largest prime in the set $\Pri$.  Then for any $\ddddd>0$ we have that 
\begin{gather}
     \abs{\sum_{\substack{k \leq k_{v+1} < \cdots <k_n \leq N \\  d_j(k_j+\alpha_j) {b_j}^{-1} \in \cA_j( N_j) }} \prod_{j=v+1}^n \frac{a_j(k_j+\alpha_j)}{ (k_j+\alpha_j)^{s_j}}} \ll_{\ddddd} k^{n\ddddd-A} \prod_{j=v+1}^n  b_j^{-1/2-\ddddd},  \\ \intertext{where}
 A=\sum_{j=v+1}^n \p{\Re(s_j)-\frac 1 2}, 
\end{gather}
for  $(s_{v+1},\ldots,s_n) \in \M^{n-v}$.
\end{lem}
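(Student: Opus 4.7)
The plan is to drop the ordering $k_{v+1}<\cdots<k_n$ by the triangle inequality, reducing the nested sum to a product of one-variable sums, and to bound each factor by combining Lemma~\ref{RHV} with the sparsity estimate~\eqref{nwlpf} for smooth numbers. The structure parallels the proof of Lemma~\ref{EEV2}, but with Lemma~\ref{RHV} in place of the GRH-dependent Lemma~\ref{RHV2}; the resulting weakening of the per-variable bound is exactly what produces the extra $k^{n\ddddd}$ factor in the conclusion.

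I first observe that if any $\alpha_j$ with $v+1\leq j\leq n-1$ were transcendental then $\cA_j(N_j)=\{m+\alpha_j:0\leq m\leq N_j\}$ and $d_j=b_j=1$, so the membership condition would force $k_j\leq N_j<\exp(N_j)\leq k$, contradicting $k_j\geq k$. Hence the sum is empty in this case, and I may assume every $\alpha_j$ with $v+1\leq j\leq n$ is rational (the case $j=n$ is so by hypothesis). After the triangle inequality it suffices to prove the per-variable estimate
\begin{equation*}
T_j:=\sum_{\substack{k_j\geq k\\ d_j(k_j+\alpha_j)b_j^{-1}\in\cA_j(N_j)}}(k_j+\alpha_j)^{-\Re(s_j)}\;\ll_\ddddd\;k^{\ddddd+1/2-\Re(s_j)}\,b_j^{-1/2-\ddddd},
\end{equation*}
since multiplying over $j=v+1,\ldots,n$ then yields $k^{(n-v)\ddddd-A}\prod_j b_j^{-1/2-\ddddd}\leq k^{n\ddddd-A}\prod_j b_j^{-1/2-\ddddd}$.

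To establish the per-variable estimate I split on the size of $b_j$. If $b_j\leq k$, I substitute $m=d_j(k_j+\alpha_j)/b_j\in\cA_j(N_j)$ with $m\geq M_0\asymp k/b_j\geq 1$; the hypothesis $N_j\leq\log k$ (from $k\geq\exp(N_j)$) together with~\eqref{nwlpf} gives $\Psi(X,N_j)\ll_\ddddd X^\ddddd$ on the relevant range, so partial summation produces $\sum_{m\in\cA_j(N_j),\,m\geq M_0}m^{-\Re(s_j)}\ll_\ddddd M_0^{\ddddd-\Re(s_j)}$, whence $T_j\ll_\ddddd k^{\ddddd-\Re(s_j)}b_j^{-\ddddd}\leq k^{\ddddd+1/2-\Re(s_j)}b_j^{-1/2-\ddddd}$ by $(b_j/k)^{1/2}\leq 1$. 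If instead $b_j>k$, I apply Lemma~\ref{RHV} to obtain $T_j\ll b_j^{-\Re(s_j)}\exp(\sqrt{P_j})$; the hypothesis $k\geq\exp(P_j)$ (which holds because $P_j\leq N_j\leq\log k$ for $j<n$ and $k\geq\exp(P)$ for $j=n$) lets me replace $\exp(\sqrt{P_j})\leq\exp(\sqrt{\log k})$ by $\ll_\ddddd k^\ddddd$, and combining $b_j>k$ with the negative exponent $1/2+\ddddd-\Re(s_j)$ (for $\ddddd<\ddd$) yields $b_j^{-\Re(s_j)}\leq b_j^{-1/2-\ddddd}k^{1/2+\ddddd-\Re(s_j)}$, so $T_j\ll_\ddddd k^{1/2+2\ddddd-\Re(s_j)}b_j^{-1/2-\ddddd}$, exceeding the per-variable target by at most a factor~$k^\ddddd$.

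The main technical obstacle is ensuring that the partial-summation estimate of the first case applies uniformly down to $M_0=O(1)$: \eqref{nwlpf} requires $X$ to exceed a $\ddddd$-dependent threshold of order $\exp(N_j^{1-\ddddd})$, and it is exactly the hypothesis $k\geq\exp(N_j)$ that keeps $N_j\leq\log k$ small enough for this threshold to sit below $M_0\asymp k/b_j$ for all $b_j\leq k$. Any residual $k^\ddddd$ slack from the second case accumulates to at most $k^{(n-v)\ddddd}$ across the $n-v$ factors and is absorbed into the $k^{n\ddddd}$ of the stated bound by a suitable rescaling of~$\ddddd$.
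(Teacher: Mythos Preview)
Your overall plan---drop the ordering by the triangle inequality and bound each one-variable factor $T_j$ separately---is exactly what the paper does. The difference is in how each $T_j$ is estimated. The paper first uses the pointwise inequality
\[
(k_j+\alpha_j)^{-\Re(s_j)} \leq k^{1/2-\Re(s_j)+\varepsilon+\ddddd}\,(k_j+\alpha_j)^{-1/2-\ddddd-\varepsilon}
\qquad (k_j\geq k)
\]
to replace the exponent by the fixed value $1/2+\ddddd+\varepsilon$, pulls out the factor $k^{(n-v)(\ddddd+\varepsilon)-A}$, and then applies Lemma~\ref{gy2} once to each remaining sum, obtaining $\ll b_j^{-1/2-\ddddd}$ uniformly in $b_j$. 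No case split on $b_j$ is needed, because Lemma~\ref{gy2} already packages both the smooth-number partial summation and the Euler-product bound you invoke separately.

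Your route has a gap in the last paragraph. The claim that the threshold $\exp(N_j^{1-\ddddd})$ ``sits below $M_0\asymp k/b_j$ for all $b_j\leq k$'' is false: when $b_j$ is close to $k$ (say $b_j=k$) one has $M_0=O(1)$, far below the threshold, so the partial-summation estimate $\sum_{m\geq M_0,\,m\in\cA_j(N_j)}m^{-\sigma}\ll M_0^{\ddddd-\sigma}$ is not justified there. The hypothesis $k\geq\exp(N_j)$ controls the size of the threshold, not the size of $M_0$. This can be repaired by moving the case boundary from $b_j=k$ down to $b_j=k\exp\bigl(-(\log k)^{1-\ddddd}\bigr)$ and observing that your case-2 estimate via Lemma~\ref{RHV} still gives the target bound (with an extra absorbable $k^{O(\ddddd)}$) throughout the enlarged range $b_j>k\exp\bigl(-(\log k)^{1-\ddddd}\bigr)$. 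But the cleanest fix is simply to adopt the paper's exponent-shift trick and quote Lemma~\ref{gy2}, which removes the need for any case split.
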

\begin{proof}
We will use the inequality
\begin{gather}
\frac{1}{\abs{k_j+\alpha_j}^{\Re(s_j)}} \leq k^{1/2-\Re(s_j)+\varepsilon+\ddddd} \frac{1}{\abs{k_j+\alpha_j}^{1/2+\ddddd+\varepsilon}} 
\end{gather}
valid for $\varepsilon,\ddddd>0$ and $k_j \geq k$ and the triangle inequality 
\begin{align} 
\abs{\sum_{\substack{k \leq k_{v+1} < \cdots <k_n \leq N \\ d_j(k_j+\alpha_j) {b_j}^{-1} \in \cA_j( N_j) }} \prod_{j=v+1}^{n-1} \frac{a_j(k_j+\alpha_j)}{ (k_j+\alpha_j)^{s_j}}}   &\leq \prod_{j=v+1}^n \p{\sum_{k \leq k_{j}} \frac{1}{ {\abs{k_j+\alpha_j}^{\Re(s_j)}}}} \\  
   &\leq k^{(n-v)(\ddddd+\varepsilon) -A}  \prod_{j=v+1}^n \p{\sum_{k \leq k_{j}} \frac{1}{ {\abs{k_j+\alpha_j}^{1/2+\ddddd+\varepsilon}}}}.
\end{align} 
The result now follows from  the choice $\varepsilon=v\ddddd/n$ and Lemma \ref{gy2}.
\end{proof}

\noindent {{\em Proof of Lemma  \ref{lele121}.}} 
We assume that $R$, $\ddd$ in \eqref{Kdef} are chosen so that $K \subset \M^n$. It is sufficient to show that there exist some $\ddddd>0$ and $C>0$  such that for some  sufficiently large $\LL \in \Z^+$ there exist some  ${\bf N}=(N_1,\ldots,N_n)$ with $N_j \geq \LL$ for $j=1,\ldots,n$ and a finite set of primes $\Pri$  containing all primes less than $N_n$ such that
\begin{gather}
 \lim_{N \to \infty}  \max_{\vs \in K}  \abs{\zeta_\va^{[N]}(\vs;\val)-\zeta_{\va,N}^{\bf{1},\bf{N},\Pri}(\vs;\val)} <\varepsilon.
 \label{visa}
\end{gather}
and such that
\begin{gather}
  \label{tr33} 
 \limsup_{N \to \infty}
\max_{\vs \in K}  \abs{\zeta_{\va,N}^{\vb,{\bf N},\Pri} (\vs;\val)} \leq C \prod_{j=1}^n b_j^{-1/2-\ddddd}, \qquad (b_j \in \cB_j(N_j)),
\end{gather}
since then 
\begin{align}
\Delta_N(K,\va, {\bf N},\Pri)&=  
   \sum_{\substack{b_1 \cdots b_n \geq 2 \\ b_j \in \cB_j(N_j) }} \, \int_{K} \abs{\zeta_{\va,N}^{\vb,{\bf N},\Pri} (\vs;\val)}^2 d \vs, \\ 
&\leq C^2 \operatorname{meas}(K) \sum_{\substack{b_1 \cdots b_n \geq 2 \\ b_j =1 \text{ or } b_j \geq N_j}} \, \prod_{j=1}^n b_j^{-1-2\ddddd}, \\ &\ll \min(N_1,\ldots,N_j)^{-2\ddddd} \leq \LL^{-2\ddddd} <\varepsilon, 
\end{align}
provided  $\LL$ is sufficiently large.  
We now divide the proof according to the conditions on $\val$  of Theorem  \ref{TH1}. 
\begin{description}[leftmargin=18pt]
  \item[$\alpha_n$ is transcendental:] This is the easiest case to treat, since if we choose $N_n=\LL$ and $N_j=Q(\LL+\lceil \alpha_n \rceil)$ for $1 \leq j \leq n-1$, where $Q$ denote the least common denominator of the parameters $\alpha_j$ that are rational (where $Q=1$ if they are all transcendental) then the condition $b_1 \cdots b_n \geq 2$ in the sum in \eqref{ret} forces at least one $k_j> L$ for $1 \leq j \leq n-1$  which  implies that the sum in \eqref{ret2} is empty.  Thus $\zeta_{\va,N}^{\vb,{\bf N},\Pri} (\vs;\val)$ and also $\Delta_N(K,\va,{\bf N},\Pri)$ will in fact be identically zero. Similarly in this case  $\zeta_{\va}^{[N]} (\vs;\val)=\zeta_{\va,N}^{\bf{1},\bf{N},\Pri}(\vs;\val)$ so that \eqref{visa} is trivially true. Note that when $\alpha_n$ is transcendental, these quantities will in fact not depend on $\Pri$, so $\Pri$ may be chosen arbitrarily.
  \item[$\alpha_{n-1}$ is transcendental and $\alpha_n$ rational:]  Let $N_n=\LL$ and use Lemma \ref{gy} to choose a finite set of primes $\Pri$ containing all primes less than $N_n$ such that \eqref{OPP9} and \eqref{OPP10} holds for $M$ given with $\alpha=\alpha_n$.  We now choose $N_{j}= Q (P+ \lfloor \alpha_n \rfloor)$ for $1\leq j \leq n-1$ where $Q$ is defined as the case above when $\alpha_n$ is transcendental and $P$ is the largest prime in $\Pri$. This forces $b_1 = \cdots =  b_{n-1}=1$ if we want the sum in \eqref{ret2} to be non empty. In case the sum is empty then \eqref{tr33} is trivially true. Thus we assume that $b_1 = \cdots =  b_{n-1}=1$ and that 
\begin{gather}
  \zeta_{\va,N}^{\vb,\bf{N},\Pri} (\vs;\val) = \sum_{1 \leq k_1 < \cdots <k_{n-1} \leq N} \, \sum_{\substack{d_n(k_n+\alpha_n) b_n^{-1} \in \cA_n( N_n) \\ k_{n-1} \leq k_n \leq N}} \prod_{j=1}^n \frac{a_j(k_j+\alpha_j)}{ (k_j+\alpha_j)^{s_j}},
\end{gather}
 which by Lemma \ref{TTV}  and $K \subseteq \M^n$ implies \eqref{tr33}. We also have that
\begin{gather} \label{ktet}
\zeta_{\va}^{[N]} (\vs)- \zeta_{\va,N}^{\bf{1},\bf{N},\Pri}(\vs)  =
  \sum_{\substack{0 \leq k_n \leq N \\ d_n(k_n+\alpha_n) \not \in \cA_n( N_n)  }}  \frac{a_n(k_n+\alpha_n)}{ (k_n+\alpha_n)^{s_n}}   \sum_{1 \leq k_1<\cdots<k_{n-1} < k_n}    \prod_{j=1}^{n-1} \frac{a_j(k_j+\alpha_j)}{ (k_j+\alpha_j)^{s_j}}.  \\  \intertext{By letting $k=k_n$ and defining $A(\vs)$ and $B_k(\vs)$ as in Lemma \ref{TTV} by \eqref{Mtineq} this can be written as (compare with \eqref{urt5})} \label{krtret}
A(\vs)   \sum_{\substack{0 \leq k \leq N \\ d_n(k+\alpha_n) \not \in \cA_n( N_n)  }}  \frac{a_n(k+\alpha_n)}{ (k+\alpha_n)^{s_n}}   +   \sum_{\substack{0 \leq k \leq N \\ d_n(k+\alpha_n) \not \in \cA_n( N_n)  }}  B_{k}(\vs) \frac{ a_n(k+\alpha_n)}{ (k+\alpha_n)^{s_n}}.
\end{gather}
The first term in \eqref{krtret} gives the contribution $O(M^{-1/2})$  by \eqref{Errorineq0} and \eqref{OPP10} and the second term  in \eqref{krtret}  is bounded by $O(M^{\varepsilon-\ddd})$ by \eqref{Errorineq} and by estimating the sum by its absolute values and using the fact that $\cA_n(N_n)$ contains all natural numbers  less than $M$ so that the sum is only over integers greater than $M$. If $M$ is sufficiently large this implies \eqref{visa}.

\item[$\alpha_n$ and $\alpha_{n-1}$ both rational:]
 If we assume the Riemann hypothesis condition we let $\Pri$ be the set of primes less than $\LL$. If instead we assume the condition \eqref{Kkond} on the set $K$ we use Lemma \ref{gy} to find a finite set of primes $\Pri$ containing all primes less than $\LL$ such that 
\eqref{OPP9} and \eqref{OPP10} holds with $\alpha=\alpha_n$ and for $\LL$ given.  
 We let $P$ be the largest prime in the set $\Pri$  and define
\begin{gather}
  \begin{split}
 N_{n-1}&=\exp(\exp(\LL)), \\  
 N_{j}&=\exp(\exp(N_{j+1})) \, \, \, \, (j=n-2,\ldots,1)  \\
 N_n &= \exp(\exp(N_1))
  \end{split}
      \, \, \, \, \, \, \, \ \begin{pmatrix} \text{the Riemann hypothesis is} \\ \text{true for } L(s,\chi_n\chi^*), \chi^* \mod d, \end{pmatrix}, \\ 
  \begin{split}
 N_{n}&=\exp(\exp(P)), \\   N_{j}&=\exp(\exp(N_{j+1})) \qquad (j=n-1,\ldots,1) 
  \end{split}
    \, \, \, \, \, \, \, \, \, \, \, \, \begin{pmatrix} \text{No Riemann hypothesis  } \\ \text{is assumed true }\end{pmatrix}. 
 \end{gather}
Finally we assume that $N \geq N_j$ for all $j=1,\ldots,n$ and define the intervals
\begin{gather}
  I_v= \begin{cases} [N_1,N] & v=1 \\ [N_{v},N_{v-1}] & 2 \leq v \leq n-1 \text { or (RH condition and }v=n) \\ [N_n,N] & v=n \text{ and no RH condition}
           \end{cases}
\end{gather}   
 It is clear that we can write
\begin{align}
 \zeta_{\va,N}^{\vb,\bf{N},\Pri} (\vs;\val)  &= \left(\sum_{k_v \not \in I_v \text{ for }  1\leq v \leq n-1} +\sum_{v=1}^{n-1} \sum_{k_v \in I_v}  \right)  \sum_{\substack{0 \leq k_1 < \cdots <k_n \\  d_j(k_j+\alpha_j) b_j^{-1} \in \cA_j( N_j) }} \prod_{j=1}^n \frac{a_j(k_j+\alpha_j)}{ (k_j+\alpha_j)^{s_j}}  \\ &=\qquad A(\vs,\vb) \, \, \, + \, \, \sum_{v=1}^{n-1} A_v(\vs,\vb), \label{yrte3} 
\end{align}
by the inclusion-exclusion principle since if $k_v \in I_v$ and $k_w \in I_w$  and $1 \leq v  <  w \leq n-1$ then $k_w \geq k_v$ so that the sum is empty. The following pictures illustrates how we do the summation when $n=5$.

\definecolor{zzttqq}{rgb}{0.27,0.27,0.27}
\definecolor{cqcqcq}{rgb}{0.75,0.75,0.75}
\begin{tikzpicture}[line cap=round,line join=round,>=triangle 45,x=0.4cm,y=0.4cm]
\draw [color=cqcqcq,dash pattern=on 2pt off 2pt, xstep=0.8cm, ystep=0.8cm]   (2,8) grid (12,18);
\draw [color=cqcqcq,dash pattern=on 2pt off 2pt, xstep=1.6cm, ystep=0.8cm] (12,8) grid (16,18);
\draw [color=cqcqcq,dash pattern=on 2pt off 2pt, xstep=0.8cm, ystep=0.8cm] (20,8) grid (30,18);
\draw [color=cqcqcq,dash pattern=on 2pt off 2pt, xstep=13.6cm, ystep=0.8cm] (30,8) grid (34,18);
\draw [color=zzttqq] (4,18)-- (4,18.4);
\draw [color=zzttqq] (6,18)-- (6,18.4);
\draw [color=zzttqq] (8,18)-- (8,18.4);
\draw [color=zzttqq] (10,18)-- (10,18.4);
\draw [color=zzttqq] (12,18)-- (12,18.4);
\clip(1.5,5) rectangle (37.5,21);
\fill[color=zzttqq,fill=zzttqq,fill opacity=0.1] (4,10) -- (6,10) -- (6,8) -- (4,8) -- cycle;
\draw[color=black] (5,9) node {$I_5$}; \draw[color=black] (31,9) node {$I_5$};
\fill[color=zzttqq,fill=zzttqq,fill opacity=0.1] (6,12) -- (6,10) -- (8,10) -- (8,12) -- cycle;
\draw[color=black] (7,11) node {$I_4$};\draw[color=black] (23,11) node {$I_4$};
\fill[color=zzttqq,fill=zzttqq,fill opacity=0.1] (8,14) -- (8,12) -- (10,12) -- (10,14) -- cycle;
\draw[color=black] (9,13) node {$I_3$}; \draw[color=black] (25,13) node {$I_3$};
\fill[color=zzttqq,fill=zzttqq,fill opacity=0.1] (10,16) -- (10,14) -- (12,14) -- (12,16) -- cycle;
\draw[color=black] (11,15) node {$I_2$}; \draw[color=black] (27,15) node {$I_2$};
\fill[color=zzttqq,fill=zzttqq,fill opacity=0.1] (12,18) -- (12,16) -- (16,16) -- (16,18) -- cycle;
\draw[color=black] (13,17) node {$I_1$};\draw[color=black] (30,17) node {$I_1$};
\draw[color=black] (4,19) node {$N_5$};
\draw[color=black] (6,19) node {$N_4$};
\draw[color=black] (8,19) node {$N_3$};
\draw[color=black] (10,19) node {$N_2$};
\draw[color=black] (12,19) node {$N_1$};
\draw[color=black] (16,19) node {$N$};
\draw[color=black] (22,19) node {$N_4$};
\draw[color=black] (24,19) node {$N_3$};
\draw[color=black] (26,19) node {$N_2$};
\draw[color=black] (28,19) node {$N_1$};
\draw[color=black] (30,19) node {$N_5$};
\draw[color=black] (34,19) node {$N$};
\draw[color=black] (3,17) node {$k_1$};
\draw[color=black] (3,9) node {$k_5$}; 
\draw[color=black] (3,11) node {$k_4$};
\draw[color=black] (3,13) node {$k_3$};
\draw[color=black] (3,15) node {$k_2$};
\draw[color=black] (21,17) node {$k_1$};
\draw[color=black] (21,9) node {$k_5$};
\draw[color=black] (21,11) node {$k_4$};
\draw[color=black] (21,13) node {$k_3$};
\draw[color=black] (21,15) node {$k_2$};
\draw [color=zzttqq] (22,18)-- (22,18.4);
\draw [color=zzttqq] (24,18)-- (24,18.4);
\draw [color=zzttqq] (26,18)-- (26,18.4);
\draw [color=zzttqq] (28,18)-- (28,18.4);
\draw [color=zzttqq] (30,18)-- (30,18.4);
\fill[color=zzttqq,fill=zzttqq,fill opacity=0.1] (30,10) -- (30,8) -- (34,8) -- (34,10) -- cycle;
\fill[color=zzttqq,fill=zzttqq,fill opacity=0.1] (22,12) -- (24,12) -- (24,10) -- (22,10) -- cycle;
\fill[color=zzttqq,fill=zzttqq,fill opacity=0.1] (24,14) -- (24,12) -- (26,12) -- (26,14) -- cycle;
\fill[color=zzttqq,fill=zzttqq,fill opacity=0.1] (26,16) -- (26,14) -- (28,14) -- (28,16) -- cycle;
\fill[color=zzttqq,fill=zzttqq,fill opacity=0.1] (28,16) -- (34,16) -- (34,18) -- (28,18) -- cycle;
\draw[color=black] (9.1,6.8) node {nothing assumed about RH} ;\draw[color=black] (27,6.8) node {RH holds for $L(s,\chi_n \chi^*), \chi^* \mod d$};
\draw (4,18)-- (14,18);
\draw (4,18)-- (4,8);
\draw (4,8)-- (16,8);
\draw [color=zzttqq] (4,10)-- (6,10);
\draw [color=zzttqq] (6,10)-- (6,8);
\draw [color=zzttqq] (6,8)-- (4,8);
\draw [color=zzttqq] (4,8)-- (4,10);
\draw [color=zzttqq] (6,12)-- (6,10);
\draw [color=zzttqq] (6,10)-- (8,10); 
\draw [color=zzttqq] (8,10)-- (8,12);
\draw [color=zzttqq] (8,12)-- (6,12);
\draw [color=zzttqq] (8,14)-- (8,12);
\draw [color=zzttqq] (8,12)-- (10,12);
\draw [color=zzttqq] (10,12)-- (10,14);
\draw [color=zzttqq] (10,14)-- (8,14);
\draw [color=zzttqq] (10,16)-- (10,14);
\draw [color=zzttqq] (10,14)-- (12,14);
\draw [color=zzttqq] (12,14)-- (12,16);
\draw [color=zzttqq] (12,16)-- (10,16);
\draw [color=zzttqq] (12,18)-- (12,16);
\draw [color=zzttqq] (12,16)-- (16,16);
\draw [color=zzttqq] (16,18)-- (12,18);
\draw (22,18)-- (22,8);
\draw (22,8)-- (34,8);
\draw (22,18)-- (34,18);
\draw [color=zzttqq] (30,10)-- (30,8);
\draw [color=zzttqq] (30,8)-- (34,8);
\draw [color=zzttqq] (34,10)-- (30,10);
\draw [color=zzttqq] (22,12)-- (24,12);
\draw [color=zzttqq] (24,12)-- (24,10);
\draw [color=zzttqq] (24,10)-- (22,10);
\draw [color=zzttqq] (22,10)-- (22,12);
\draw [color=zzttqq] (24,14)-- (24,12);
\draw [color=zzttqq] (24,12)-- (26,12);
\draw [color=zzttqq] (26,12)-- (26,14);
\draw [color=zzttqq] (26,14)-- (24,14);
\draw [color=zzttqq] (26,16)-- (26,14);
\draw [color=zzttqq] (26,14)-- (28,14);
\draw [color=zzttqq] (28,14)-- (28,16);
\draw [color=zzttqq] (28,16)-- (26,16);
\draw [color=zzttqq] (28,16)-- (34,16);
\draw [color=zzttqq] (30,18)-- (28,18);
\draw [color=zzttqq] (28,18)-- (28,16);
\draw (2,20)-- (2,8);
\draw (4,8)-- (2,8);
\draw (2,20)-- (16,20);
\draw (20,20)-- (20,8);
\draw (22,8)-- (20,8);
\draw (20,20)-- (34,20);
\end{tikzpicture}

From the illustration we see that we may write the first case as
\begin{multline} \label{uy6666}
 A(\vs;\vb)= \Delta(\vs;\vb)+ \sum_{k_v \not \in I_v \text{ for }  1\leq v \leq n-1}\sum_{\substack{0 \leq k_1 < \cdots <k_n \\   d_j(k_j+\alpha_j) b_j^{-1} \in \cA_j( N_j) }}
 \prod_{j=1}^n \frac{a_j(k_j+\alpha_j)}{ (k_j+\alpha_j)^{s_j}} \\ = \Delta(\vs;\vb) + 
\sum_{v=1}^{n-1} \Delta_v
\p{\sum_{\substack{0 \leq k_1 < \cdots <k_v  \leq N_v }}  \prod_{j=1}^v \frac{a_j(k_j+\alpha_j)}{ (k_j+\alpha_j)^{s_j}}} \times \\ \times \p{\sum_{\substack{N_v \leq k_{v+1} < \cdots <k_n \\   d_j(k_j+\alpha_j) b_j^{-1} \in \cA_j( N_j)  }}  \prod_{j=v+1}^n \frac{a_j(k_j+\alpha_j)}{ (k_j+\alpha_j)^{s_j}}}, 
\end{multline}
where
\begin{gather} \label{DV3}
  \Delta_v=\begin{cases} 1, & b_1=\cdots=b_v=1, \\ 0, & \text{otherwise,} \end{cases}
\end{gather} 
and
 \begin{align}
   \Delta(\vs;\vb)&=    \sum_{\substack{0 \leq k_1 < \cdots <k_{n-1} \leq N_{n-1} \\  k_{n-1} \leq k_{n} \leq N \\ d_n(k_n+\alpha_n) b_n^{-1} \in \cA_n( N_n) }} \prod_{j=1}^n \frac{a_j(k_j+\alpha_j)}{ (k_j+\alpha_j)^{s_j}} \qquad &(\text{no RH condition or } b_n=1 ) \qquad \label{norhc}  \\
\Delta(\vs;\vb) &= 0  \qquad &(\text{RH condition and } b_n \neq 1)    \qquad
\label{rhc}
 \end{align}
By Lemma \ref{TTV}   we may estimate
\begin{gather}
 \label{DE3} 
 \max_{\vs \in K} \abs{\Delta(\vs;\vb)} \ll_{\varepsilon} b_n^{\varepsilon-1/2-\ddd} \leq  \prod_{j=1}^n b_j^{\varepsilon-1/2-\ddd},
\end{gather}
where the last inequality follows from the fact that for  the sum to be non empty\footnote{This is similar to the case when $\alpha_{n-1}$ is transcendental.} it is neccessary that $b_1=\cdots=b_{n-1}=1$.
 By Lemma \ref{EEEV} the second factors in the sum \eqref{uy6666} are bounded and by Lemma \ref{EEV2} with the Riemann hypothesis condition, and Lemma \ref{EEV3} if we assume the condition \eqref{Kkond} on the set $K$,  the third factors  in the sum \eqref{uy6666} are bounded in absolute values by 
\begin{gather}  \prod_{j=v+1}^n b_j^{\varepsilon-1/2-\ddd} \end{gather}
By \eqref{DV3} it follows that each term in the sum \eqref{uy6666} may be estimated by
\begin{gather}
\prod_{j=1}^n b_j^{\varepsilon-\ddd-1/2},
\end{gather}
for any $\varepsilon>0$. Together with \eqref{DE3} and the triangle inequality this implies that 
\begin{gather} \label{rer30}
 \max_{\vs \in K} \abs{A(\vs;\vb)} \ll_\ddddd \prod_{j=1}^n b_j^{-1/2-\ddddd}. \qquad (b_j \in \cB_j(N_j), 0 <\ddddd<\ddd)
\end{gather}
We now proceed to treat the case 
when $k_v \in I_v$ for some $1 \leq v \leq n-1$. Let us write $k=k_v$. For the sum to be non-empty it is neccessary for $b_1=\cdots=b_{v-1}=1$. We may write \begin{multline} \label{Av}
 A_v(\vs;\vb)= \sum_{\substack{ d_v(k+\alpha_v) b_v^{-1} \in \cA_v( N_v) \\ k \in I_v}}   \frac{a_{v}(k+\alpha_{v})}{ (k+\alpha_{v})^{s_{v}}} \times \\ \times \p{\sum_{0 \leq k_1 < \cdots <k_{v-1}<k}  \prod_{j=1}^{v-1} \frac{a_j(k_j+\alpha_j)}{ (k_j+\alpha_j)^{s_j}}}   \p{\sum_{\substack{k < k_{v+1}< \cdots <k_{n} \leq N \\   d_j(k_j+\alpha_j) b_j^{-1} \in \cA_j( N_j) }}  \prod_{j=v+1}^{n} \frac{a_j(k_j+\alpha_j)}{ (k_j+\alpha_j)^{s_j}}},
     \end{multline} 
where if $v=1$ the second factor in the product is replaced by $1$.  Since the second factor is bounded by Lemma \ref{EEEV} (say have absolute value less than $B$ on $\M^{v}$) and $|k+\alpha_v|^{-\Re(s_v)} \leq k^{-\Re(s_v)}$
it follows by the triangle inequality that
\begin{gather} \label{Av2}
  \abs{A_v(\vs;\vb)} \ll \sum_{\substack{ d_v(k+\alpha_v) b_v^{-1} \in \cA_v( N_v) \\ k \in I_v}}  k^{-\Re(s_v)}  \abs{\sum_{\substack{k < k_{v+1}< \cdots <k_{n} \leq N \\   d_j(k_j+\alpha_j) b_j^{-1} \in \cA_j( N_j) }}  \prod_{j=v+1}^{n} \frac{a_j(k_j+\alpha_j)}{ (k_j+\alpha_j)^{s_j}}} \qquad 
     \end{gather}  
If we assume the Riemann hypothesis condition  we may use Lemma \ref{EEV2} to estimate  the final factor in \eqref{Av2} for $1 \leq v \leq n-1$ and we obtain together with the fact that $k^{-\Re(s)} \leq k^{-1/2}$  if $s \in \M$ that
\begin{gather} \label{rer312a} 
\max_{\vs \in K} \max_{1 \leq v \leq n-1} 
\abs { A_v(\vs;\vb)} \ll
 \prod_{j=v+1}^n b_j^{\varepsilon-\ddd-1/2} \sum_{ \substack{d_v(k+\alpha_v) b_v^{-1} \in \cA_v( N_v) \\ k \geq \LL}} 
  k^{-1/2 }(k^{\varepsilon-\ddd-1/2} + N_n^{\varepsilon-\ddd}). \qquad
\end{gather}
Similarly if instead of  assuming the Riemann hypothesis condition, we assume the condition \eqref{Kkond} on the set $K$  and $1 \leq v \leq n-1$ we may instead use Lemma  \ref{EEV3} to show that the terms in the sum in \eqref{Av2} may be bounded  by  $$k^{-1-\ddddd} \prod_{j=v+1}^n b_j^{-1/2-\ddddd}$$  whenever 
\begin{gather} \label{XID} 0<\ddddd<\frac 1 n \p{\min_{s \in K } \Re(s_n+s_{n-1})-\frac 3 2}, \end{gather}
so that
\begin{gather} \label{rer312b} 
\max_{\vs \in K} \max_{1 \leq v \leq n-1} 
\abs { A_v(\vs;\vb)} \ll
 \prod_{j=v+1}^n b_j^{-1/2-\ddddd} \sum_{ \substack{d_v(k+\alpha_v) b_v^{-1} \in \cA_v( N_v) \\ k \geq \LL}}  k^{-1-\ddddd}
\end{gather}
for some $\ddddd>0$. First we note that if  $b_v=1$, then inner sum in  both \eqref{rer312a} and \eqref{rer312b} can be estimated by $\LL^{-\ddddd}$ for some $\ddddd>0$, and by estimating the first products trivially we obtain
\begin{gather}
 \label{rer333}  \max_{\vs \in K}   \abs { A_v(\vs;\vb)} \ll   \LL^{-\ddddd}
\end{gather}
This will be useful to prove \eqref{visa} but not be sufficient to prove \eqref{tr33}.  By Lemma \ref{gy2} we may instead estimate the first part of the sum in \eqref{rer312a} and the sum in \eqref{rer312b} by \begin{gather}\label{rer313}
\sum_{ d_v(k+\alpha_v) b_v^{-1}  \in \cA_v( N_v)}  k^{-1-\xi} \ll_{\varepsilon} b_v^{\varepsilon-\xi-1}, 
\end{gather}
and the second part of the sum in \eqref{rer312a} may be estimated by absolute summation and Lemma \ref{RHV} by 
\begin{gather} \label{rer314}
N_n^{\varepsilon-\ddd} \sum_{ d_v(k_v+\alpha_v) b_v^{-1} \in \cA_v( N_v)} k^{\varepsilon-\ddd-1/2}  \ll  b_v^{\varepsilon-\ddd-1/2}\exp(\sqrt{N_v})N_n^{\varepsilon-\ddd} \ll b_v^{\varepsilon-\ddd-1/2}, \qquad
\end{gather} 
since in the Riemann hypothesis case we  have that $N_n \geq \exp(\exp(N_v))$ for any $1 \leq v \leq n-1$. 
From \eqref{rer312a}, \eqref{rer313}, \eqref{rer314} when the Riemann hypothesis condition holds, and \eqref{rer312b}, \eqref{rer313}, when condition \eqref{Kkond} holds we have that
\begin{gather} \label{rer31a} \begin{split}
\max_{\vs \in K} \max_{1 \leq v \leq n-1} 
\abs { A_v(\vs;\vb)} &\ll_\varepsilon
 \prod_{j=v+1}^n b_j^{\varepsilon-\ddd-1/2} \p{ b_v^{\varepsilon-\ddd-1/2}+\sum_{ d_v(k_v+\alpha_v) b_v^{-1} \in \cA_v( N_v)} 
  k^{\varepsilon-1-\ddd}}
 \\ &\ll_{\varepsilon} \prod_{j=v}^n b_j^{\varepsilon-\ddd-1/2}. 
\end{split}  \, \, \, \, \, 
\end{gather}
From the observation that unless $b_1=\cdots=b_{v-1}=1$ then $A_v(\vs;\vb)=0$, the inequality \eqref{rer31}  implies that
\begin{gather} \label{rer31} 
\max_{\vs \in K} \max_{1 \leq v \leq n-1} 
\abs { A_v(\vs;\vb)} \ll \prod_{j=1}^n b_j^{-\ddddd-1/2}, \qquad (b_j \in \cB_j(N_j), 0 <\ddddd<\ddd). \qquad
\end{gather}
which implies  \eqref{rer31} when $\ddddd$ satisfies \eqref{XID}. 
The equation \eqref{yrte3}, the inequalities \eqref{rer30} and \eqref{rer31} and the triangle inequality imply the inequality \eqref{tr33}. It remains to prove \eqref{visa}.  By \eqref{yrte3}, \eqref{uy6666}, \eqref{norhc}, \eqref{rhc} and \eqref{rer333}    we get that 
\begin{gather}
   \label{hytr}
  \zeta_{\va,N}^{\bf{1},\bf{N},\Pri} (\vs;\val)  =\sum_{\substack{0 \leq k_1 < \cdots <k_{n-1} \leq N_{n-1} \\  k_{n-1} \leq k_n \leq N \\ d_n(k_n+\alpha_n)  \in \cA_n( N_n) }} \prod_{j=1}^n \frac{a_j(k_j+\alpha_j)}{ (k_j+\alpha_j)^{s_j}} + O(\LL^{\varepsilon-\ddd}).
\\ \intertext{We can rewrite this in the following manner}  \label{hytr4}
\begin{multlined} \zeta_\va^{[\LL]}(\vs;\val)  + \sum_{\substack{ M \leq k_n \leq N \\ d_n(k_n+\alpha_n)  \in \cA_n( N_n) }} \frac{a_n(k_n+\alpha_n)}{ (k_n+\alpha_n)^{s_n}} \times \\ \times
  \sum_{0 \leq k_1 < \cdots <k_{n-1} \leq \min(k_n,N_{n-1})}   \prod_{j=1}^{n-1} \frac{a_j(k_j+\alpha_j)}{ (k_j+\alpha_j)^{s_j}} + O(\LL^{\varepsilon-\ddd}). \end{multlined}
\end{gather}
Similarly as in the proof of Lemma \ref{TTV} and  \eqref{krtret}  we define $A(\vs)$ and $B_k(\vs)$  by \eqref{Mtineq}
and we may with $k=k_n$ rewrite the inner summation in \eqref{hytr4} so that
\begin{gather}  \label{hytr9} \begin{multlined}
    \zeta_{\va,N}^{\bf{1},\bf{N},\Pri} (\vs;\val)  = \zeta_\va^{[\LL]}(\vs;\val)  +  A(s)   \sum_{\substack{ M \leq k \leq N \\ d_n(k+\alpha_n)  \in \cA_n( N_n) }}  \frac{a_n(k+\alpha_n)}{ (k+\alpha_n)^{s_n}}   \\ +  \sum_{\substack{ M \leq k \leq N_{n-1} \\ d_n(k+\alpha_n)  \in \cA_n( N_n) }}  B_{k}(s)  \frac{a_n(k+\alpha_n)}{ (k+\alpha_n)^{s_n}}   \\  +  B_{N_{n-1}} (\vs)  \sum_{\substack{ N_{n-1}  \leq k \leq N \\ d_n(k+\alpha_n)  \in \cA_n( N_n) }}   \frac{a_n(k+\alpha_n)}{ (k+\alpha_n)^{s_n}} + O(\LL^{\varepsilon-\ddd}).  \end{multlined}
\end{gather}
The first sum on the right hand side of \eqref{hytr9}  may be estimated by $O(\LL^{\varepsilon-\eta})$   (by \eqref{OPP10} if we assume no Riemann hypothesis condition and by Lemma \ref{RHV2} if we assume the Riemann hypothesis condition)  and \eqref{Errorineq0}. The second sum can be estimated by $O(\LL^{-\ddd})$  by using \eqref{Errorineq} and estimating the sum by its absolute values. For a sufficiently large $N$ the third sum may be estimated by $N_{n-1}^{1/2+\varepsilon-\ddd}$ and since the factor $\max_{\vs \in \M^n} \abs{B_{N_{n-1}} (\vs)} \ll {(N_{n-1})}^{-1/2}$, by utilizing the fact that $N_{n-1} \geq M$  the corresponding term in \eqref{hytr9} may be estimated by $O(\LL^{\varepsilon-\ddd})$.   By    Lemma \ref{EEEV} the first term equals  $\zeta_\va(\vs;\val)+O(\LL^{-1/2})$. Combining these estimates we find that 
\begin{gather}
  \zeta_{\va,N}^{\bf{1},\bf{N},\Pri} (\vs;\val)  = \zeta_\va(\vs;\val)+O(\LL^{-\ddddd})
\end{gather}
for some $\ddddd>0$. This implies \eqref{visa}.
 \end{description} 
\qed

\section{A mean square result and a weak approximate functional equation \label{sec5}}

For the  Hurwitz zeta-function there exists a naive approximate functional equation. Just approximate the zeta-function by a Dirichlet polynomial ($\sigma \geq 1/2$):
\begin{gather} \label{appfeq}
 \zeta(\sigma+it; \alpha)=\sum_{n=0}^{\lfloor T \rfloor} (n+\alpha)^{-\sigma-it} +O\left(\frac{T^{1-\sigma}} t\right), \qquad (|t| \leq  T).
\end{gather}
This is sufficient to prove the right order in the mean square formula, and also it is sufficient to prove universality. In order to get a better error estimate a useful trick is to consider a smoothed version of this equality. Instead of the truncated version we will consider the smoothed Dirichlet polynomial
\begin{gather}
\zeta_1^{[\phi,T]} (\sigma+it; \alpha)  = \sum_{n=0}^\infty \phi\p{\frac {n+\alpha} T} (n+\alpha)^{-\sigma-it},
 \end{gather}
 where $\phi$ fulfill \eqref{phidef}. We remark that this is the one-dimensional specialization of   \eqref{zetaphidef}.
\begin{lem} \label{appfeq2} Let $\phi$ fulfill \eqref{phidef}. Then for any $\alpha,\ddddd,A>0$ and $-A<\sigma<A$ we have uniformly for $|t| \leq T$ that
\begin{gather*} 
 \zeta(\sigma+it; \alpha) = \zeta_1^{[\phi,T]} (\sigma+it; \alpha) + \begin{cases} O \p{T^{-A}}, & (T^\ddddd \leq |t| \leq  T) \\ O \p{ \p{1+\frac 1 {|\sigma+it-1|}}  \, T^{1-\sigma}}  , & \text{otherwise.} \end{cases}
\end{gather*}
\end{lem}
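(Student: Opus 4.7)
The plan is a Mellin inversion / contour shift argument. Define $\tilde\phi(w) = \int_0^\infty \phi(x) x^{w-1}\,dx$. Because $\phi \in C_0^\infty(\R)$ with $\phi \equiv 1$ on $[0,2]$, the splitting $\tilde\phi(w) = 2^w/w + \int_2^\infty \phi(x) x^{w-1}\,dx$ shows that $\tilde\phi$ is meromorphic on $\C$ with its only singularity a simple pole at $w=0$ of residue $\phi(0)=1$, and iterated integration by parts in $x$ yields the Schwartz-type decay $|\tilde\phi(\sigma+i\tau)| \ll_{\sigma,N} (1+|\tau|)^{-N}$ for every $N$ on each vertical strip. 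Shifting the standard inversion $\phi(y) = \frac{1}{2\pi i}\int_{(c')}\tilde\phi(w) y^{-w}\,dw$ past the pole at $w=0$ gives, for any $c<0$,
$$1 - \phi(y) = -\frac{1}{2\pi i}\int_{(c)}\tilde\phi(w) y^{-w}\,dw.$$

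Inserting this with $y = k/T$ into $\zeta(s,\alpha) - \zeta_1^{[\phi,T]}(s,\alpha) = \sum_{k\ge 1}(k+\alpha)^{-s}(1-\phi(k/T))$ (the $k=0$ term vanishes since $\phi(0)=1$), the interchange being legitimate first for $\Re(s)>1$ and then extended by analytic continuation, produces
$$\zeta(s,\alpha) - \zeta_1^{[\phi,T]}(s,\alpha) = -\frac{1}{2\pi i}\int_{(c)}\tilde\phi(w)\, T^w F(s,w,\alpha)\,dw, \qquad F(s,w,\alpha) := \sum_{k=1}^\infty (k+\alpha)^{-s} k^{-w}.$$
The function $F$ admits a meromorphic continuation in $w$: the Barnes-type expansion $F(s,w,\alpha) = \sum_{j\ge 0}\binom{-s}{j}\alpha^j \zeta(s+w+j)$ exhibits its pole structure — in any bounded strip left of $\Re(w)=0$ the only pole is the simple one at $w=1-s$ with residue $1$ (the higher poles $w=1-s-j$ lie further left for $j\ge 1$) — together with polynomial growth $|F(s,-A_1+i\tau,\alpha)| \ll (1+|t|+|\tau|)^{C_0}$ for a $C_0 = C_0(A_1)$ coming from Phragmén–Lindelöf applied term by term. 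Fixing $A_1 > A + C_0$ and shifting the contour to $\Re(w) = -A_1$ yields, by analytic continuation throughout $-A < \Re(s) < A$,
$$\zeta(s,\alpha) - \zeta_1^{[\phi,T]}(s,\alpha) = -\tilde\phi(1-s) T^{1-s} - \frac{1}{2\pi i}\int_{(-A_1)}\tilde\phi(w)\, T^w F(s,w,\alpha)\,dw + O(T^{-A}),$$
where the $O(T^{-A})$ absorbs the residues at $w=1-s-j$, $j\ge 1$ (each smaller than the main term by a factor $T^{-j}$ and still rapidly decaying in $|t|$).

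The bound then follows by estimating the two remaining pieces. The shifted-contour integral is bounded, by rapid decay of $\tilde\phi$ and the choice of $A_1$, by $T^{-A_1}\int |\tilde\phi(-A_1+i\tau)| (1+|t|+|\tau|)^{C_0}\,d\tau \ll T^{C_0-A_1}(1+|t|)^{C_0} \ll T^{-A}$ for $|t|\le T$. The leading residue $\tilde\phi(1-s) T^{1-s}$ splits into two regimes. When $T^\ddddd \le |t| \le T$, rapid decay gives $|\tilde\phi(1-s)| \ll_N |t|^{-N} \le T^{-N\ddddd}$, so the term is $\ll T^{1-\sigma-N\ddddd} \ll T^{-A}$ for $N$ chosen large; this is the first branch. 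Otherwise the weaker bound $|\tilde\phi(1-s)| \ll 1 + |s-1|^{-1}$ coming from the Laurent expansion $\tilde\phi(w) = 1/w + (\text{entire})$ at the pole yields the second branch $(1+|\sigma+it-1|^{-1}) T^{1-\sigma}$.

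The main technical point is that \eqref{zetaphidef} smooths by $\phi(k/T)$ rather than $\phi((k+\alpha)/T)$, which is why the auxiliary $F(s,w,\alpha)$ appears instead of a clean $\zeta(s+w,\alpha)$; the nonroutine work lies in checking that $F$ has the same pole and growth qualities as a Hurwitz zeta so that the contour shift and the residue collection go through as above. Apart from this bookkeeping the rest is a textbook Mellin–contour manipulation.
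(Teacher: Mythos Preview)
Your overall strategy --- Mellin inversion for $\phi$, contour shift, collect the residue at $w=1-s$, and split on whether $|t|\ge T^{\ddddd}$ --- is exactly the paper's approach. However, the step ``Fixing $A_1 > A + C_0$'' does not go through, and this is a genuine gap.

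On the line $\Re(w)=-A_1$ one has $\Re(s+w)=\sigma-A_1<0$, and the functional equation for the Hurwitz zeta-function (equivalently, term-by-term in your expansion of $F$) forces
\[
|F(s,-A_1+i\tau,\alpha)|\;\ll\;(1+|t+\tau|)^{\,1/2-\sigma+A_1},
\]
so $C_0(A_1)=\tfrac12-\sigma+A_1$. Hence $A_1-C_0=\sigma-\tfrac12$ is \emph{bounded independently of $A_1$}, and your inequality $A_1>A+C_0$ is impossible for any $A>\sigma-\tfrac12$. Concretely, your shifted integral is
\[
T^{-A_1}\!\int|\tilde\phi(-A_1+i\tau)|\,(1+|t+\tau|)^{1/2-\sigma+A_1}\,d\tau
\;\ll_{A_1}\;T^{-A_1}(1+|t|)^{1/2-\sigma+A_1}
\;\le\;T^{\,1/2-\sigma}
\]
for $|t|\le T$, which is not $O(T^{-A})$ when $|t|$ is near $T$.

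The paper repairs exactly this point by shifting to $\Re(z)=-2A\log T$, a line that moves with $T$. The extra input you are missing is that $\tilde\phi$ carries a factor $2^{\Re(w)}$ on the left half-plane (coming from $\phi\equiv 1$ on $[0,2]$, so $\tilde\phi(w)=2^{w}/w+\int_2^M\phi\,x^{w-1}\,dx$ with both pieces $\ll 2^{\Re(w)}$). With $B:=-\Re(w)=2A\log T$ this yields an additional $2^{-B}=T^{-2A\log 2}$, which --- combined with the functional-equation bound $(1+|t|)^{1/2-\sigma+B}\le T^{1/2-\sigma+B}$, the factor $T^{-B}$, and the rapid decay of $\tilde\phi$ in $\tau$ --- gives
\[
T^{-B}\cdot 2^{-B}\cdot T^{1/2-\sigma+B}\;=\;T^{\,1/2-\sigma-2A\log 2}\;\ll\;T^{-A}
\]
after adjusting the constant in front of $\log T$. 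Your treatment of the residue $\tilde\phi(1-s)T^{1-s}$ in the two regimes is correct; it is only the shifted-line estimate that needs this $T$-dependent contour (equivalently: Stirling and the functional equation, as the paper phrases it) to close. A secondary remark: your Barnes-type expansion of $F$ needs $0<\alpha<1$ to converge; this is harmless but should be said (or work directly with $\zeta(s+w,\alpha)$, using the paper's one-variable smoothing $\phi((n+\alpha)/T)$, which avoids $F$ altogether).
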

The way this can be proved is by using the normal Mellin-Barnes integral (here $c$ is large enough so the Dirichlet series is absolutely convergent)
\begin{gather*}
  \zeta^{[\phi,T]}(s; \alpha) =
	\frac 1 {2 \pi i} \int_{c-\infty i}^{c+\infty} \zeta(z+s;\alpha ) T^s \, \int_0^\infty x^{z-1}\phi (x) dx dz,
\end{gather*}
shifting the integration line from $\Re(z)=c$ to $\Re(z)= -2A \log T$, picking up the residue at $z=0$, applying the Stirling formula and the  functional equation to
estimate the Hurwitz zeta-function on that line. In the process we pick up the residue of the Hurwitz zeta-function at $z=1-s$ which if $s$ is bounded away from 1 can be estimated  by the first estimate and if $s$ is close to 1 can be estimated by the second one. Similarly we will prove the analogue in several complex variables. We recall \eqref{zetaphidef} that
\begin{gather*}
\zeta_{n}^{[\phi,T]} (\vs;\va) = \sum_{0=k_0 \leq k_1<k_2<\cdots<k_n} 
\prod_{j=1}^n (k_j+\alpha_j)^{-s_j} \phi\p{\frac{k_{j}-k_{j-1}} T}.
\end{gather*}
As in the one-variable case, since $\phi$ has compact support this is a Dirichlet polynomial in $n$ variables. We note that the approximate functional equation below holds for the restricted variables $s_j$. In particular we assume that the sign of the imaginary parts of the $s_j$ coincide. The reason for this is simplicity and it is sufficient for our purposes. It allows us to avoid the set where $\zeta_n(\vs; \val)$ has singularities.

\begin{lem} (Weak approximate functional equation) \label{LE9999}
  For any $A,\ddddd>0$ and $\phi$ that fulfills \eqref{phidef} we have that there exists some $B$ so that for $T \geq 1$, $T^\ddddd \leq \Im(s_i) \leq T$, $-A \leq \Re(s_j) \leq A$ for all $j=1,\ldots, n$ the following inequality holds 
	   $$\abs{\zeta_n(\vs;\val)-\zeta_n^{[\phi,T]}(\vs;\val)} <B T^{-A}.$$
\end{lem}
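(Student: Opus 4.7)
The plan is to prove the bound by induction on $n$. For the base case $n=1$, the estimate $|\zeta(\sigma+it;\alpha) - \sum_{k\geq 0}\phi(k/T)(k+\alpha)^{-s-it}| \ll T^{-A}$ on the range $T^\ddddd\leq |t|\leq T$ follows by a Mellin-Barnes argument directly modeled on the proof of Lemma \ref{appfeq2}: substitute $\phi(k/T)=\frac{1}{2\pi i}\int_{(c)}\widetilde\Phi(z)T^z k^{-z}\,dz$ for $k\geq 1$, interchange sum and integral to obtain an integrand proportional to a Hurwitz-Tornheim-type Dirichlet series, shift the contour down to $\Re z = -A-1$, and use the super-polynomial decay of $\widetilde\Phi$ on vertical strips together with $|t|\geq T^\ddddd$ to bound the residual integral.

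For the inductive step, I would switch on the smoothing factors $\phi((k_j-k_{j-1})/T)$ one variable at a time: for $0 \leq j \leq n$ let $\zeta_n^{\langle j \rangle}(\vs;\val)$ denote the partial sum in which only the first $j$ gap factors carry $\phi$, so that $\zeta_n^{\langle 0 \rangle}=\zeta_n$ and $\zeta_n^{\langle n \rangle}=\zeta_n^{[\phi,T]}$, and telescope
\begin{gather*}
\zeta_n - \zeta_n^{[\phi,T]} = \sum_{j=1}^n \bigl(\zeta_n^{\langle j-1\rangle} - \zeta_n^{\langle j\rangle}\bigr).
\end{gather*}
Each consecutive difference inserts a single new factor $1-\phi((k_j-k_{j-1})/T)$ into an otherwise partially smoothed sum. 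Fixing variables other than $m=k_j-k_{j-1}$, the inner sum over $k_{j+1}<\cdots<k_n$ equals, after reindexing, a shifted multiple Hurwitz zeta $\zeta_{n-j}(s_{j+1},\ldots,s_n;k_j+1+\alpha_{j+1},\ldots,k_j+1+\alpha_n)$, while the outer summation over $k_1<\cdots<k_{j-1}$ is a finite smoothed Dirichlet polynomial (compact support of $\phi$ bounds $k_{j-1}\leq (j-1)MT$). The heart of the matter is to bound, uniformly in $k_{j-1}$, the one-variable sum
\begin{gather*}
F_j(k_{j-1};\vs) = \sum_{m\geq 1}(k_{j-1}+m+\alpha_j)^{-s_j}\bigl(1-\phi(m/T)\bigr) G_j(k_{j-1}+m;\vs),
\end{gather*}
where $G_j$ is the shifted $(n-j)$-fold multiple Hurwitz zeta.

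I would attack $F_j$ via the Mellin representation $1-\phi(x) = -\frac{1}{2\pi i}\int_{(c)}\widetilde\Phi(z)x^{-z}dz$, exchange sum and integral, and then shift the contour from $\Re z=c$ down to $\Re z=-A-1$. The residues at $z=0$ (the simple pole of $\widetilde\Phi$) and at $z=1-s_j$ (inherited from the Hurwitz-type pole of the resulting inner Dirichlet series after summation in $m$) are designed to reassemble across the telescoping and cancel against companion pieces, leaving only the residual contour integral. That integral is bounded by $T^{-A-1}$ times the super-polynomial decay of $\widetilde\Phi$ times polynomial bounds on $G_j$ along $\Re z = -A-1$, giving the required $O(T^{-A})$.

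The main obstacle is controlling $G_j$ and its analytic continuation uniformly in the parameter $k_{j-1}+m$ along the shifted contour, together with tracking the residue at $z=1-s_j$. For this I would apply the induction hypothesis (Lemma \ref{LE9999} at depth $n-j$) to replace $G_j$ by a smoothed Dirichlet polynomial plus an error of size $O(T^{-A'})$ for $A'$ taken large, and note that the hypothesis $\Im(s_j)\geq T^\ddddd$ keeps the shifted argument $s_j+z$ away from the singular hyperplanes of the inner multiple zeta when $\Re z=-A-1$, and forces the residue contribution $\widetilde\Phi(1-s_j)T^{1-s_j}$ to be $O(T^{-A})$ by the rapid decay of $\widetilde\Phi$. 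The super-polynomial decay of $\widetilde\Phi$ on vertical lines then absorbs any polynomial dependence in $|\Im(s_j)|\leq T$ and in $k_{j-1}\leq(j-1)MT$, yielding the uniform $O(T^{-A})$ bound on each $F_j$ and hence on the telescoping sum.
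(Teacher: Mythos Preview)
Your outline shares the right high-level ingredients with the paper (induction on $n$, Mellin--Barnes, base case Lemma~\ref{appfeq2}), but the telescoping-in-gaps decomposition has a genuine gap that the paper's approach avoids.

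The problem is with the inner factor $G_j(k_{j-1}+m)=\zeta_{n-j}(s_{j+1},\ldots,s_n;k_{j-1}+m+1+\alpha_{j+1},\ldots)$. Since $1-\phi(m/T)$ is supported on $m>2T$, the $\alpha$-parameters in $G_j$ are shifted by an amount of order $T$ or larger and unbounded. The induction hypothesis (Lemma~\ref{LE9999} at depth $n-j$) is stated for \emph{fixed} $\val$; the constant $B$ may depend on $\val$, so you cannot invoke it uniformly across these shifts. Relatedly, in the stated range $-A\le\Re(s_j)\le A$ your sum $F_j$ need not converge at all (both $(k_{j-1}+m+\alpha_j)^{-s_j}$ and $G_j(k_{j-1}+m)$ can grow polynomially in $m$), so writing the difference as $F_j$ and then Mellin-transforming term by term is formal. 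After you shift to $\Re z=-A-1$ you would need the meromorphic continuation of $\sum_m m^{-z}(k_{j-1}+m+\alpha_j)^{-s_j}G_j(k_{j-1}+m)$ in $z$ together with polynomial bounds there; this is itself a multiple-zeta continuation statement of the same strength as what you are trying to prove. The claim that the residues at $z=0$ and $z=1-s_j$ ``reassemble and cancel'' is not substantiated and does not obviously hold, because the companion telescoping pieces live at different depths $n-j$.

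The paper's route sidesteps all of this by using Matsumoto's recursion \eqref{gammainthepath}: one Mellin--Barnes integral in the \emph{last} variable factors $\zeta_n$ into $\zeta_1(z;1+\alpha_n-\alpha_{n-1})$ times $\zeta_{n-1}(s_1,\ldots,s_{n-2},s_{n-1}+s_n-z;\alpha_1,\ldots,\alpha_{n-1})$, and the same formula holds for $\zeta_n^{[\phi,T]}$ with the smoothed factors. Crucially, the $(n-1)$-fold factor carries the \emph{unshifted} parameters $\alpha_1,\ldots,\alpha_{n-1}$, so the induction hypothesis applies with a fixed constant. The difference $\zeta_n-\zeta_n^{[\phi,T]}$ then splits as $\int_\Gamma B(z,s_n)(\Delta_1\Delta_{n-1}+\Delta_1\zeta_{n-1}^*+\zeta_1^*\Delta_{n-1})\,dz$ over an explicit deformed contour, and each piece is bounded directly using Stirling, Lemma~\ref{appfeq2}, the induction hypothesis, and trivial Dirichlet-polynomial bounds. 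If you want to salvage your telescoping idea, you would first need a uniform-in-parameter version of the induction hypothesis (with explicit polynomial dependence on $\max_j\alpha_j$), and a separate argument for the analytic continuation and growth of the mixed series in $z$; at that point you have essentially reproved Matsumoto's recursion.
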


While not our main object of study in this paper it is nevertheless of some interest to see which mean square results Lemma \ref{LE9999} gives on the critical line. While there is some related work in the case $n=2$ starting with Matsumoto-Tsumora \cite{MatHir}, the following result is as far as we know new\footnote{The result might be new also for $n=2$ since the results  in \cite{MatHir} are related, but not the same. We have not yet checked this thoroughly though (especially the related papers). We should do that.} for $n \geq 3$. 
\begin{cor} Let $0<\ddddd<1$. Then for $T \geq 2$
 \begin{gather*}
     \int_{[T^{\ddddd},T]^n} \abs{\zeta_n \p{\frac 1 2 +i\vt;\val}}^2 d \vt = \frac {(T \log T)^n}{n!} \p{1+O_\ddddd \p{\frac 1 {\log T}}}. 
\end{gather*}
\end{cor}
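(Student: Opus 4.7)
The plan is to reduce the mean square of $\zeta_n(\tfrac{1}{2}+i\vt;\val)$ to that of the smoothed Dirichlet polynomial $\zeta_n^{[\phi,T]}$ via the weak approximate functional equation of Lemma \ref{LE9999}, and then evaluate the latter by a diagonal/off-diagonal dissection. I will fix $\phi$ satisfying \eqref{phidef} with, additionally, $0\le\phi\le 1$ and $\mathrm{supp}\,\phi\subseteq[0,C]$. Applying Lemma \ref{LE9999} with $A=2n$ will give
\[\zeta_n(\tfrac{1}{2}+i\vt;\val)=\zeta_n^{[\phi,T]}(\tfrac{1}{2}+i\vt;\val)+O(T^{-2n}),\qquad\vt\in[T^{\ddddd},T]^n.\]
Because $\phi$ is compactly supported, every gap $k_j-k_{j-1}$ in $\zeta_n^{[\phi,T]}$ is $O(T)$, so each $k_j=O(T)$, and the triangle inequality yields the trivial bound $|\zeta_n^{[\phi,T]}(\tfrac{1}{2}+i\vt;\val)|\ll T^{n/2}$. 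Expanding $|\zeta_n|^2=|\zeta_n^{[\phi,T]}|^2+2\Re\bigl(\zeta_n^{[\phi,T]}\cdot\overline{O(T^{-2n})}\bigr)+O(T^{-4n})$ and integrating over $[T^{\ddddd},T]^n$ then shows that replacing $\zeta_n$ by $\zeta_n^{[\phi,T]}$ introduces only an error of $O(T^{-n/2})$, negligible compared with the target $(T\log T)^n$.

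Next I shall expand $|\zeta_n^{[\phi,T]}(\tfrac{1}{2}+i\vt;\val)|^2$ as a double Dirichlet sum over $(\vk,\vm)$ and interchange summation with integration to obtain
\[\int_{[T^{\ddddd},T]^n}\bigl|\zeta_n^{[\phi,T]}\bigr|^2 d\vt=\sum_{\vk,\vm}\frac{w_\vk w_\vm}{\prod_j\sqrt{(k_j+\alpha_j)(m_j+\alpha_j)}}\prod_{j=1}^n\int_{T^{\ddddd}}^T\!\!\left(\frac{m_j+\alpha_j}{k_j+\alpha_j}\right)^{it_j}\!dt_j,\]
where $w_\vk=\mathbf{1}[k_1<\cdots<k_n]\prod_j\phi((k_j-k_{j-1})/T)$. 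The diagonal $\vk=\vm$ will produce $(T-T^{\ddddd})^n S_n^\phi(T)$ with
\[S_n^\phi(T):=\sum_{0\le k_1<\cdots<k_n}\prod_{j=1}^n\frac{\phi^2((k_j-k_{j-1})/T)}{k_j+\alpha_j}.\]
Since $\phi\equiv 1$ on $[0,2]$ and $0\le\phi\le 1$, this sum is sandwiched between $S_n(T)$ and $S_n(nCT)$, where $S_n(X):=\sum_{0\le k_1<\cdots<k_n\le X}\prod 1/(k_j+\alpha_j)$; a routine induction on $n$ from $\sum_{k\le X}1/(k+\alpha)=\log X+O(1)$ gives $S_n(X)=(\log X)^n/n!+O((\log X)^{n-1})$, so $S_n^\phi(T)=(\log T)^n/n!+O((\log T)^{n-1})$, and together with $(T-T^{\ddddd})^n=T^n(1+O(T^{\ddddd-1}))$ the diagonal contribution will be $(T\log T)^n/n!\,(1+O(1/\log T))$.

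For the off-diagonal I plan to apply the Montgomery--Vaughan mean-value inequality iteratively in the variables $t_1,\dots,t_n$. Viewing $\zeta_n^{[\phi,T]}(\tfrac{1}{2}+i\vt)$ in each $t_j$ as a one-dimensional Dirichlet polynomial of length $O(T)$ with coefficients depending on the remaining $t_i$, a single application produces a Hilbert-type error of size $O\!\left(\sum_\vk|w_\vk|^2/\prod_{i\ne j}(k_i+\alpha_i)\right)=O(T(\log T)^{n-1})$; after accounting for the integrations in the remaining $n-1$ variables, the total off-diagonal contribution comes out to $O(T^n(\log T)^{n-1})=O((T\log T)^n/\log T)$, which is absorbed into the stated error term. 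The main obstacle will be making this iterated Montgomery--Vaughan estimate rigorous despite the fact that the coefficients $w_\vk$ do not separate as a product over the individual $k_j$ (the ordering indicator $\mathbf{1}[k_1<\cdots<k_n]$ and the gap weights couple the variables); the bounds $|w_\vk|\le 1$ and $k_j=O(T)$ should nonetheless be enough to push the argument through.
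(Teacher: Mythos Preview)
Your proposal is correct and follows essentially the same route as the paper's own proof, which is only a one-paragraph sketch: apply Lemma~\ref{LE9999} to pass to $\zeta_n^{[\phi,T]}$, expand the square, take the diagonal as the main term, and control the off-diagonal by Montgomery--Vaughan. Your write-up supplies considerably more detail than the paper does; the only place that remains genuinely sketchy is the iterated Montgomery--Vaughan step for the off-diagonal, which you yourself flag as the main obstacle, but the bounds $|w_{\vk}|\le 1$ and $k_j=O(T)$ are indeed enough to push it through once one applies the inequality successively in $t_n,\dots,t_1$ and tracks the error $\sum_{k_j}|E_{k_j}|^2$ at each stage.
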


\begin{proof}
  This follows from applying the weak approximate functional equation, Lemma  \ref{LE9999} and then  taking the mean square. Interchange the integration and summation. The main term will come from  the diagonal terms and the error estimate comes from the off-diagonal terms and the Montgomery-Vaughan inequality. 
\end{proof}

\noindent{\em Proof of Lemma \ref{LE9999}.}
We first consider the multiple Hurwitz zeta-function proper following Matsumoto \cite[p.3]{Matsumoto2}. For $n \geq 2$ the idea is to use the Mellin-Barnes formula ($0<c<\Re(s)$)
   $$\Gamma(s)(1+\lambda)^{-s}=\frac 1 {2 \pi i}   \int_{\Re(z)=c} \Gamma(s-z) \Gamma(z) \lambda^{-z} dz,  $$
  first used by Katsurada \cite{Katsurada} on these types of problems. By recursion formula \cite[p.3, Eq (2.4)]{Matsumoto2} of Matsumoto we get
\begin{multline} \label{gammainthepath}
    \zeta_n(\vs;\val)=\frac 1 {2 \pi i} \int_{\Gamma}  \frac{\Gamma(z) \Gamma(s_n-z) }{\Gamma(s_n)}\zeta_1(z; 1+\alpha_n-\alpha_{n-1}) \times \\ \times \zeta_{n-1}( s_1,\ldots,s_{n-2},s_{n-1}+s_n-z;  \alpha_1,\ldots,\alpha_{n-1}) dz, \qquad
 \end{multline}
which is valid if $\Gamma=c+i\R$ for $c>1$ if $\Re(s_n)$ is sufficiently large so that $\zeta_{n-1}$ is absolutely convergent. Matsumoto then used this to analytically continue $\zeta_n$ to $\C^n$ by  doing analytic continuation in  the variable $s_n$. When we take care not to cross any singularities of the integrand  when $\Re(s_n)$ goes from a sufficiently large real number to any real number, our integration path $\Gamma$ must deform and for any $s_n=\sigma_n+it_n$ where $T^\ddddd \leq t_n \leq T$ we can choose the integration path 
\begin{gather} \label{gammapath} \Gamma= I_1 \cup I_2 \cup I_3 \cup I_4 \cup I_5, \\ \intertext{where} \label{hejsan}
\begin{split}
    I_1&=  [-N-\infty i,-N-T^{\ddddd/2} i], \qquad I_2= [-N-T^{\ddddd/2} i,N-T^{\ddddd/2} i], \, \,\\  I_3&=[N-T^{\ddddd/2} i, N+T^{\ddddd/2} i],    \qquad  \, \, \,  \, \, \, \,  I_4= [N+T^{\ddddd/2} i, -N+T^{\ddddd/2} i], \\  I_5&= [-N+T^{\ddddd/2} i,-N+\infty i], 
 \end{split}  
\end{gather}
for a sufficiently large positive $N$ such that the factor $\zeta_{n-1}$ in the integrand is absolutely convergent on $I_1$ and $I_5$.

\vskip 8pt

\begin{tikzpicture}[line cap=round,line join=round,>=triangle 45,x=0.89cm,y=1.78cm]
\draw[->,color=black] (-8,0) -- (8,0);
\foreach \x in {-8,-7,-6,-5,-4,-3,-2,-1,1,2,3,4,5,6,7}
\draw[shift={(\x,0)},color=black] (0pt,2pt) -- (0pt,-2pt);
\draw[color=black] (7.2,0.04) node [anchor=south west] { Re};
\draw[->,color=black] (0,-2.61) -- (0,2.61);
\foreach \y in {-2.5,-2,-1.5,-1,-0.5,0.5,1,1.5,2,2.5}
\draw[shift={(0,\y)},color=black] (2pt,0pt) -- (-2pt,0pt);
\draw[color=black] (0.09,2.42) node [anchor=west] { Im};
\clip(-8,-2.61) rectangle (8,2.61);
\draw (-6,1) -- (-6,2.61);
\draw [->] (-6,-1) -- (-3,-1);
\draw [->] (-3,-1) -- (3,-1);
\draw  (3,-1) -- (6,-1);
\draw [->] (-6,-3.61) -- (-6,-2);
\draw  (-6,-2) -- (-6,-1);
\draw [->] (6,-1) -- (6,-0.5);
\draw [->] (6,-0.5) -- (6,0.6);
\draw  (6,0.5) -- (6,1);
\draw [->] (6,1) -- (3,1);
\draw [->] (3,1) -- (-3,1);
\draw  (-3,1) -- (-6,1);
\draw [->] (-6,1) -- (-6,2);
\draw [->] (-6,2) -- (-6,2.81);
\draw [->] (-6,-3.18) -- (-6,-3);
\draw (-5.7,-1.88) node {$I_1$};
\draw (-2.73,-1.18) node {$I_2$};
\draw (3.2,-1.18) node {$I_2$};
\draw (6.3,0.40) node {$I_3$};
\draw (6.3,-0.67) node {$I_3$};
\draw (-2.7,1.18) node {$I_4$};
\draw (3.2,1.18) node {$I_4$}; 
\draw (-0.58,1.17) node {$T^{\ddddd/2} i$};
\draw (-0.88,-0.83) node {$-T^{-\ddddd/2} i$};
\draw (-5.71,2.12) node {$I_5$};
\draw (6.3,-0.15) node {$N$};
\draw (-6,-0.15) node {$-N$};
\end{tikzpicture}

We remark that because of the conditions $T^\ddddd<\Im(s_j)<T$, the integration path becomes simpler than in Matsumoto's treatment since less care avoiding the possible singularities of $\zeta_{n-1}$ is needed. 
We now apply the same method to get a recursion formula for the  modified version $\zeta_n^{[\phi,T]}(\vs;\va)$ of the multiple Hurwitz zeta-function
\begin{multline} \label{gammainthepath2}
    \zeta_n^{[\phi,T]}(\vs;\val)=\frac 1 {2 \pi i} \int_{\Gamma}  \frac{\Gamma(z) \Gamma(s_n-z) }{\Gamma(s_n)}\zeta_1^{[\phi,T]}(z; 1+\alpha_n-\alpha_{n-1}) \times \\ \times \zeta_{n-1}^{[\phi,T]}( s_1,\ldots,s_{n-2},s_{n-1}+s_n-z;  \alpha_1,\ldots,\alpha_{n-1}) dz. \qquad
 \end{multline}
In fact at each step it will be somewhat simpler since each modified multiple zeta-function is absolute convergent so we could use the integration path $\Gamma=c+i\R$ for any $c>0$. However nothing stops us from doing the same modification to the path as for the multiple Hurwitz  zeta function proper so we will use $\Gamma$ defined by \eqref{gammapath} and \eqref{hejsan} in both equations \eqref{gammainthepath},\eqref{gammainthepath2}. From the equations  \eqref{gammainthepath},\eqref{gammainthepath2} it now follows that
\begin{gather} \label{ai3}
 \zeta_n(\vs;\val) -  \zeta_n^{[\phi,T]}(\vs;\val) =\frac 1 {2 \pi i} \int_\Gamma B(z,s_n)\p{\zeta_{1} (z) \zeta_{n-1} (z) - \zeta_{1}^*(z) \zeta_{n-1}^{*}(z)} dz, \qquad
\end{gather}
where $\Gamma$ is defined by \eqref{gammapath} and where we have written the Gamma-factors as the Beta-function and the zeta-factors
\begin{align}
 \zeta_{1} (z)&=\zeta_1(z; 1+\alpha_n-\alpha_{n-1}), \\  \zeta_{n-1}(z)&= \zeta_{n-1}( s_1,\ldots,s_{n-2},s_{n-1}+s_n-z;  \alpha_1,\ldots,\alpha_{n-1}), \\
\intertext{and} 
 \zeta_{1}^* (z) &= \zeta_1^{[\phi,T]}(z; 1+\alpha_n-\alpha_{n-1}), \\ \zeta_{n-1}^{*}(z)&=\zeta_{n-1}^{[\phi,T]}( s_1,\ldots,s_{n-2},s_{n-1}+s_n-z;  \alpha_1,\ldots,\alpha_{n-1}), 
\end{align}
   are abbreviated versions of the corresponding factors that occurs in \eqref{gammainthepath},\eqref{gammainthepath2}. The proof of Lemma \ref{LE9999} now follows by the principle of induction. The base case  $n=1$  follows from Lemma \ref{appfeq2}. Let us now assume it is true for $n-1$ (the induction hypothesis).
 By letting $\Delta_1(z)=\zeta_{1}(z) -  \zeta_{1}^{*}(z)$ and $\Delta_{n-1}(z)=\zeta_{n-1}(z)-\zeta_{n-1}^{*}(z)$, the second two factors in the integrand can be rewritten as
\begin{gather}
 \zeta_{1}  (z) \zeta_{n-1}(z) - \zeta_{1}^{*}(z) \zeta_{n-1}^{*}(z) = A_1(s)+A_2(s)+A_3(s), \\ \intertext{where} 
A_1(s)= \Delta_1(z)\Delta_{n-1}(z), \qquad A_2(s)= \Delta_1(z) \zeta_{n-1}^{*}(z),   \qquad  A_3(s)  =  \zeta_{1}^{*}(z) \Delta_{n-1}(z).
\end{gather}
Thus
\begin{gather}
 \zeta_n(\vs;\val) -  \zeta_n^{[\phi,T]}(\vs;\val) = \sum_{k=1}^5 \sum_{j=1}^3 \frac 1 {2 \pi i} \int_{I_k} B(z,s_n) A_j(z) dz,
\end{gather}
and by using the triangle inequality on \eqref{ai3} it is sufficient to to prove that 
\begin{gather} \label{ineq}
  \abs{\frac 1 {2 \pi i}\int_{I_k} B(z,s_n) A_j(z) dz}\ll_C  T^{-C}, 
\end{gather}
for each $1 \leq k \leq 5$ and $1 \leq j \leq 3$.
We will now state some inequalities for the factors that will be used in the proof. By the induction hypothesis we have that
\begin{gather} \label{eq48}
  |\Delta_{n-1}(z)| \ll_C T^{-C} \, \, \, \, \, \, (-N \leq \Re(z) \leq N, \, \,   -T^{\ddddd/2}<\Im(z)<\Re(s_n)+T^{\ddddd/2}) \qquad \\ \intertext{
By Lemma \ref{appfeq2} it follows that that}  \label{eq49}
 |\Delta_1(z)| \ll_C T^{-C}, \qquad  (-N \leq \Re(z) \leq N, \, \, \,  T^{\ddddd/2}<|\Im(z)|<T). 
\end{gather}
The following is a standard estimate for the Hurwitz zeta-function that is a consequence of its functional equation,  which is weaker than the above but sufficient for our purposes, and also holds when $|\Im(z)|>T$.
\begin{gather} \label{eq50}
 |\Delta_1(z)|, |\zeta_1(z)|, |\zeta_1^*(z)| \ll  1+ (\log (1+|\Im z|)) |\Im(z)|^{1-\Re(z)} \, \, \, \, \, \,  (|\Im(z)| \geq 1) \, \, \, \, \, \, \, \, \, \,  \\
\intertext{By approximating the Dirichlet-polynomials trivially by their absolute values we have that}
 \abs{\zeta_{n-1}^*(z)} \ll T^{n(A+1)+N},          \qquad         (-N \leq \Re(z) \leq N). \, \, \, \, \, \, \label{eq51}
\\ \intertext{Finally because  $\zeta_{n-1}(z)$ is absolutely convergent  for $\Re(z)=-N$  we have}
 |\Delta_{n-1}(z)|, |\zeta_{n-1}(z)|, |\zeta_{n-1}^*(z)| \ll 1,                   \qquad (\Re(z)=-N).  \label{eq52}
\end{gather}
We are now ready to apply these estimates to prove the estimate \eqref{ineq} which by the principle of induction concludes our proof.
\begin{description}
  \item[$k=1,2$:] In this case Stirling's formula implies that $|B(z,s_n)| \ll T^{N} \exp\p{-|z|}$ which is sufficiently small on $I_1,I_2$ and furthermore exponentially decreasing as $\Im(z) \to -\infty$. For $k=1$ it is sufficient to estimate $\zeta_{n-1}(z)$, $\Delta_{n-1}(z)$ by \eqref{eq52}, and $\zeta_1^*(z)$, $\Delta_1(z)$ by \eqref{eq50} to obtain \eqref{ineq} for $1 \leq j \leq 3$. For $k=2$ we may instead estimate $\Delta_{n-1}(z)$ by \eqref{eq48} and $\zeta_{n-1}^*(z)$ by \eqref{eq51} to prove \eqref{ineq} for $1 \leq j \leq 3$.
  \item[$k=3:$]	 We note that the Stirling formula in this case gives $|B(z,s_n)| \ll T^{-\ddddd N/2}$. By using this and estimating $\Delta_{n-1}(z)$ with \eqref{eq48} and $\Delta_1(z),\zeta_1^*(z) \ll 1$ by the trivial estimate, the inequality \eqref{ineq} follows for $j=1,3$. The most difficult case to consider is possibly $k=3$, $j=2$.  Here we use the second case of  Lemma \ref{appfeq2} to estimate  $\Delta_1(z)$ and the  \eqref{eq51} to estimate $\zeta_n^*(z)$. Together these estimates gives us
$$\abs{\frac 1 {2 \pi i}\int_{I_3} B(z,s_n) A_2(z) dz}\ll T^{n(A+1)+1-\ddddd(N-1)/2},$$	
which is $\ll T^{-C}$ provided we choose $N$ sufficiently large (depending on $\ddddd,n,A$).
 	\item[$k=4$:] In this case Stirling's formula implies that $|B(z,s_n)| \ll T^{N}$. It is sufficient to estimate $\Delta_1(z),\Delta_{n-1}(z)$ by estimates \eqref{eq49} and \eqref{eq48}, and $\zeta_1^*(z)$ and $\zeta_{n-1}^*(z)$ by \eqref{eq50} and \eqref{eq51} to prove \eqref{ineq} for  $1 \leq j \leq 3$.
	\item[$k=5$:] Here we divide the range into two parts. The first part is when $\Re(z) \leq \Re(s_n)+T^{\ddddd/2}$. In this interval it is sufficient to estimate $\Delta_{n-1}(z)$ by \eqref{eq48} and $\Delta_{1}(z)$ by \eqref{eq49}, $\zeta_{n-1}^*(z)$ by \eqref{eq52}, $\zeta_1^*(z)$ by \eqref{eq50} and $|B(z;s_n)| \ll T^N$ by Stirling's formula. If $\Re(z) \geq \Re(s_n)+T^{\ddddd/2}$ we estimate $\Delta_{1}(z)$ by \eqref{eq52} instead, but in this range the Stirling formula gives us $|B(z;s_n)|\ll  T^N \exp \p{-|z-s_n|} \ll T^N \exp \p{-T^{\ddddd/2}}$ which is a sufficiently good estimate to give us what we need. Together these two cases gives us \eqref{ineq} for $1 \leq j \leq 3$.
\end{description}
\qed  

\section{Proofs of Theorems \ref{TH4} and \ref{TH5}}

Also for this case we follow the proof of Theorem \ref{TH1}. The fundamental lemma, Lemma \ref{LE4} may be used as is, but we need a new version of Lemma \ref{LE6}.

\begin{lem} \label{LE6v2}
 Let  $n\geq 2$ and $1 \leq m < n$ and the conditions on $\val$, $K$ and $\{v_1,\ldots,v_m\}$ of Theorem \ref{TH4} and Theorem \ref{TH5} respectively be satisfied.   Then for any $\varepsilon>0$ and polynomial $p$ in $n$ complex variables we have that 
\begin{multline}
     \liminf_{N \to \infty} \liminf_{T \to \infty}  \frac 1 {T^m} \operatorname{meas} {\Bigg\{ }\vt \in [0,T]^m: 
\\ \left. \max_{\vs \in K} \abs{\zeta_{\ett}^{[N]}\p{\sigma +i\sum_{j=1}^m t_j v_j;\val} -\zeta_{n}^{[N]}\p{\sigma +i\sum_{j=1}^m t_j v_j}} < \varepsilon  \right \} >0.
  \end{multline}
\end{lem}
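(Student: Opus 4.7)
\smallskip

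\noindent\textbf{Proof proposal for Lemma \ref{LE6v2}.} The plan is to mirror the structure used in the proof of Lemma \ref{LE6}, but with the Kronecker / Weyl equidistribution step carried out in the $m$-dimensional parameter space $[0,T]^m$ instead of $[0,T]^n$, using the extra algebraic hypotheses on $\vv_1,\ldots,\vv_m$ and $\val$ to compensate for the loss of $n-m$ free directions. As in the original proof, it suffices to establish an $L^2$-approximation on a slightly fattened neighbourhood $K^\ddddd \subset D^n$ with $\ddddd>0$ small; standard complex-analytic smoothing then upgrades this to the sup-norm statement on $K$.

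First I would introduce an analogue of the set $\BC_{{\bf N},\Pri,T}(\dddd)$ from \eqref{BCdef}--\eqref{BCD}, namely a subset $\BC^\vv_{{\bf N},\Pri,T}(\dddd)\subset[0,T]^m$ cut out by simultaneously rounding the quantities $(t_1 v_{k,1}+\cdots+t_m v_{k,m})/(2\pi)\log(n+\alpha_k)$ (and analogously $/(2\pi)\log p$ for primes $p$) to within $\dddd/2$ of the unimodular phases $\theta_{k,n}$ (respectively $\theta_{k,p}$) that define the type--$(N,\chi)$ function $\va$. Exactly as in Lemma \ref{lele11prelem} and Lemma \ref{lele11}, expanding the Dirichlet polynomial $\zeta_{\ett}^{[N]}(\vs+i\sum_j t_j\vv_j;\val)$, taking absolute squares, and integrating term--by--term reduces the whole question to evaluating limits of the form
\begin{gather*}
\lim_{\dddd\to 0}\lim_{T\to\infty}\frac{1}{\meas\BC^{\vv}_{{\bf N},\Pri,T}(\dddd)}\int_{\BC^{\vv}_{{\bf N},\Pri,T}(\dddd)}\prod_{k=1}^n\p{\frac{m_k+\alpha_k}{j_k+\alpha_k}}^{i\sum_{l=1}^m t_l v_{k,l}}d\vt,
\end{gather*}
which factor into one-dimensional Weyl limits in each $t_l$ with effective frequency $\omega_l(\vj,\vm)=\sum_{k=1}^n v_{k,l}\,\log\p{(m_k+\alpha_k)/(j_k+\alpha_k)}$.

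The main obstacle, and the only genuinely new content beyond Lemma \ref{LE6}, is to show that these $m$ frequencies can vanish simultaneously \emph{only} on the diagonal $\vj=\vm$ (together with the admissible symmetries accounted for by the $\cA_j(N_j)$/$\cB_j(N_j)$ bookkeeping in Lemma \ref{lele11}). If some $m_k\neq j_k$ the vector $\vL=(\log((m_k+\alpha_k)/(j_k+\alpha_k)))_{k=1}^n$ is nonzero, and the simultaneous vanishing of all $\omega_l$ says $V^{\!\top}\vL=0$ where $V=(v_{k,l})$ has full column rank $m$. Under the hypothesis of Theorem \ref{TH4} all $\alpha_k$ are rational, so each coordinate of $\vL$ is the logarithm of a nonzero rational, and along any fixed column $l$ the numbers $v_{1,l},\ldots,v_{n,l}$ are algebraic and $\Q$-linearly independent; hence Baker's theorem \cite[Theorem~2.1]{baker} (applied after passing to a $\Q$-basis of the relevant $\log$'s of rationals, exactly as in the standard Kronecker lemma for Hurwitz zeta) forces $\vL=0$, i.e.\ $\vj=\vm$. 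Under the hypothesis of Theorem \ref{TH5} the $v_{k,l}$ are rational while the $\alpha_k$ are algebraically independent (with at most one rational exception), and now each linear relation $\sum_k v_{k,l}\log((m_k+\alpha_k)/(j_k+\alpha_k))=0$ with rational coefficients translates, after exponentiating, to a multiplicative dependence among the algebraically independent quantities $j_k+\alpha_k$ and $m_k+\alpha_k$, which again collapses onto the diagonal. With this diagonal reduction in hand, the analogues of Lemmas \ref{lele11prelem}, \ref{lele11} and \ref{lele121} go through verbatim: the limit integral equals $\int_{K^\ddddd}|\zeta_\va^{[N]}(\vs;\val)-\zeta_{\va,N}^{\ett,{\bf N},\Pri}(\vs;\val)|^2\,d\vs+\Delta$ with $\Delta\to 0$ as $N\to\infty$, and the Lebesgue measure of $\BC^{\vv}_{{\bf N},\Pri,T}(\dddd)$ in $[0,T]^m$ is bounded below by a positive multiple of $T^m\dddd^A$ for some $A$ depending only on ${\bf N}$ and $\Pri$.

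The remaining step is to pair Lemma \ref{LE6v2} with Lemma \ref{LE4} (Fundamental lemma), Lemma \ref{LE7} (weak approximate functional equation) and a hybrid analogue of Lemma \ref{LE8}, exactly as in the proof of Theorem \ref{TH1} on p.\ \pageref{here}, to conclude Theorems \ref{TH4} and \ref{TH5}. The restriction $\Re(s)>1-m/(2n)$ in the Runge domain $E$ is forced precisely at this last stage: with only $m$ integration variables, the off-diagonal contribution in the mean-square estimate
$\frac{1}{T^m}\int_{[0,T]^m}\int_{\M^n}|\zeta_\ett^{[N]}-\zeta_n^{[\phi,T]}|^2\,d\vs\,d\vt$
loses a factor $T^{n-m}$ compared with Lemma \ref{LE8}, and only averages out to acceptable size when $\min_{\vs\in K}\sum_{k=1}^n\Re(s_k)>n-m/2$, which on a product-type $K$ amounts to $\Re(s_k)>1-m/(2n)$ for each $k$. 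The core technical difficulty of the whole argument therefore really is the algebraic/transcendence step establishing non-vanishing of $\omega_l(\vj,\vm)$, the rest being a faithful transcription of the proof of Lemma \ref{LE6}.
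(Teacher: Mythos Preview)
Your proposal is correct and follows essentially the same approach as the paper's own proof, which is itself only a brief sketch: both reduce to the structure of Lemma~\ref{LE6} and isolate as the sole new ingredient the $\Q$-linear independence of the relevant frequencies, obtained via Baker's theorem \cite[Theorem~2.1]{baker} for Theorem~\ref{TH4} and via algebraic independence of the $\alpha_k$ for Theorem~\ref{TH5}. Your account of the diagonal reduction (writing $\omega_l=V^{\!\top}\vL$ and arguing that $\omega_l=0$ forces $\vL=0$) is in fact more explicit than the paper's, which merely records the two linear-independence statements without spelling out how they feed into the analogue of Lemma~\ref{lele11}. Your closing paragraph on the $\Re(s)>1-m/(2n)$ restriction and the mean-square estimate goes beyond Lemma~\ref{LE6v2} itself and anticipates Lemma~\ref{LE8v2}; that material is not part of the present lemma's proof but is correctly placed in the overall argument for Theorems~\ref{TH4} and~\ref{TH5}.
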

\begin{proof}
This has a similar proof as Lemma \ref{LE6} although with some additional arithmetic input.  The arithmetic information we need in order to prove this is the following
\begin{enumerate}
 \item For Theorem \ref{TH4}:  If $v_1,\ldots,v_m$ are algebraic numbers linearly independent over $\Q$, then the set $$\{v_j \log p: 1\leq j \leq m \text{ and } p \text{ is a prime}\}$$ is linearly independent over $\Q$. This follows from  Baker's \cite[p. 10]{baker} Theorem 2.1.
\item For Theorem \ref{TH5}: If $\alpha_1,\ldots,\alpha_m$ are algebraically independent then the set $$\{\log(k+\alpha_j): 1\leq j \leq m: k \in \N\}\cup \{\log p: p \text{ is a prime} \}$$ is linearly independent over $\Q$.
\end{enumerate}
\end{proof}
We then need a suitable version of Lemma \ref{LE8}.

\begin{lem} \label{LE8v2}
 Let  $\phi$ fulfill \eqref{phidef} and let $0<\ddd,\ddddd<1$.  Let 
 $\{v_1,\ldots,v_m\}$ be linearly independent vectors such that  $v_j \in (\R^+)^n$. Then for any $1-m/(2n)+\ddd<\sigma_0<1$  there exist some $C>0$ such that if $1 \leq N \leq T$ then
\begin{gather}
    \frac 1 {T^n}\int_{\sigma_0}^1 \int_{{[T^{\ddddd},T]}^m} \abs{\zeta_{\ett}^{[N]} \p{\sigma +i\sum_{j=1}^m t_j v_j;\val}-\zeta_n^{[\phi,T]}\p{\sigma +i\sum_{j=1}^m t_j v_j;\val}}^2 d\vt d \sigma \leq \frac{C}  {N^{\ddd}} . 
\end{gather}
\begin{proof} This follows by using the approximate functional equation, Lemma \ref{LE7} in the much the same manner as we use in the proof of Lemma \ref{LE8}. By classical methods, squaring and treating the non-diagonal part in an elementary way. We get a smaller region  where we have the mean square estimate than in Lemma \ref{LE8}  since we take the mean square with respect to fewer variables.
 \end{proof}
\end{lem}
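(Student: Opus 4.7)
The proof follows the blueprint of Lemma \ref{LE8}, with the main new ingredient being the handling of the restricted imaginary slice $i\sum_{l=1}^m t_l \vv_l$ instead of the full $n$-dimensional imaginary axis. First I would rewrite $D(\vs) := \zeta_{\ett}^{[N]}(\vs;\val) - \zeta_n^{[\phi,T]}(\vs;\val)$ as a finite generalized Dirichlet polynomial $\sum_{\vk} d(\vk) \prod_{j=1}^n (k_j+\alpha_j)^{-s_j}$ with coefficients $d(\vk) = O(1)$ supported on $\{k_n > N\}$: whenever $k_n \leq N \leq T$, every consecutive gap $k_j - k_{j-1}$ is $\leq T$, so $\prod_j \phi((k_j-k_{j-1})/T) = 1$ by \eqref{phidef}, giving exact cancellation against the sharp truncation. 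Substituting $\vs = (\sigma,\dots,\sigma) + i\sum_{l=1}^m t_l \vv_l$ recasts $D$ as
$$D(\sigma,\vt) = \sum_{\vk} d(\vk) \prod_{j=1}^n (k_j+\alpha_j)^{-\sigma} \exp\p{-i\sum_{l=1}^m t_l\,\omega_l(\vk)},$$
where $\omega_l(\vk) := \sum_{j=1}^n v_{j,l} \log(k_j+\alpha_j)$.

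Expanding $|D|^2 = D\overline{D}$ and interchanging summation with integration reduces the mean square to a weighted sum over pairs $(\vk,\vk')$ with oscillatory weight $\int_{[T^\ddddd,T]^m} \exp(i\sum_l t_l\beta_l(\vk,\vk'))\,d\vt$, where $\beta_l := \omega_l(\vk') - \omega_l(\vk)$. This integral equals $(T-T^\ddddd)^m$ on the diagonal $\{\beta_l = 0 \text{ for all } l\}$ and is bounded by $T^{|S|}\prod_{l\notin S} 2|\beta_l|^{-1}$ otherwise, where $S$ is the vanishing set of the $\beta_l$. The arithmetic content is crucial here: in the setting of Theorem \ref{TH4}, Baker's theorem combined with the $\Q$-linear independence of $v_{1,l},\dots,v_{n,l}$ shows that $\beta_l=0$ for even a single $l$ already forces, upon writing $\log((k'_j+\alpha_j)/(k_j+\alpha_j))$ as $\Q$-linear combinations of $\log p$ and invoking the $\bar\Q$-linear independence of the $\log p$, that $\vk = \vk'$; the same conclusion follows in the setting of Theorem \ref{TH5} from the algebraic independence of the $\alpha_j$'s. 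Consequently, for $\vk\neq\vk'$ \emph{all} $\beta_l$ are nonzero and are bounded below quantitatively by Baker's theorem, so $S=\emptyset$ in every off-diagonal pair.

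The diagonal contribution is then $(T-T^\ddddd)^m \int_{\sigma_0}^1 \sum_{\vk:\, k_n > N} \prod_{j} (k_j+\alpha_j)^{-2\sigma}\, d\sigma$; since $\sigma_0 > 1/2$ the inner sums over $k_1<\cdots<k_{n-1}$ are absolutely convergent and the tail sum is $\ll N^{1-2\sigma_0}$, yielding a diagonal bound $\ll T^{m-n} N^{1-2\sigma_0}$ after division by $T^n$. The off-diagonal pairs (where the $\vt$-integration gives no $T$-factor) are handled by a Montgomery–Vaughan type mean value estimate, in which the quantitative Baker lower bound on $|\beta_l|$ is combined with the $\sigma$-averaging to extract the required decay. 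The main technical obstacle is precisely this off-diagonal bookkeeping: the reduced range $[T^\ddddd,T]^m$ (rather than $[T^\ddddd,T]^n$ as in Lemma \ref{LE8}) removes the $T^{n-m}$ of savings that was otherwise available in the purely $\vt$-direction, and it is only by paying for this deficit via the $\sigma$-integration over $(\sigma_0,1)$ that one obtains the final bound $C N^{-\ddd}$ — this is exactly the step which forces the threshold $\sigma_0 > 1 - m/(2n) + \ddd$ and is the qualitative novelty of the present lemma compared to Lemma \ref{LE8}.
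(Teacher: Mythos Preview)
Your overall blueprint---expand the square, isolate the diagonal $\vk=\vk'$, and estimate the off-diagonal via the oscillatory $\vt$-integral---matches what the paper sketches. The diagonal bound $T^{m-n}N^{1-2\sigma_0}$ is correct and easy; note however that it holds already for any $\sigma_0>\tfrac12$ and does \emph{not} explain the threshold $1-m/(2n)$. That threshold must come from the off-diagonal, and this is where your proposal departs from the paper and where there is a genuine gap.

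The paper explicitly calls its off-diagonal treatment ``elementary'' and, in its overall architecture, reserves the arithmetic input (Baker for Theorem~\ref{TH4}, algebraic independence for Theorem~\ref{TH5}) for Lemma~\ref{LE6v2}, not for the mean-square Lemma~\ref{LE8v2}; the hypotheses you import (rational $\alpha_j$, $\Q$-linearly independent algebraic $v_{j,l}$) are not part of the lemma as stated. More seriously, even granting those hypotheses, your off-diagonal mechanism does not close. Baker's theorem in its qualitative form does give $\beta_l\neq 0$ for every $l$ once $\vk\neq\vk'$, but the \emph{effective} lower bound is only $|\beta_l|\gg(\max_j k_j)^{-C}$ with $C$ depending on the degree and the number of logarithms; since $k_j\ll T$, this yields merely $|\beta_l|\gg T^{-C}$. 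Any Montgomery--Vaughan or large-sieve type inequality then produces a correction term of size $T^{C}$, swamping the main term, and an extra $\sigma$-integration over an interval of length $<1$ cannot recover a loss that is polynomial in $T$. So ``quantitative Baker combined with $\sigma$-averaging'' is not a workable route to the stated bound; the off-diagonal must be controlled by a genuinely elementary spacing/counting argument on the frequencies $\omega_l(\vk)=\sum_j v_{j,l}\log(k_j+\alpha_j)$, which is what the paper gestures at but does not spell out. Your proposal would benefit from replacing the Baker step with such an argument (or at least acknowledging that the lemma, as stated, carries no arithmetic hypotheses and therefore cannot rely on them).
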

The proof of Theorems \ref{TH4} and \ref{TH5} now follows from Lemma \ref{LE4}, Lemma \ref{LE6v2}, Lemma \ref{LE7} and Lemma \ref{LE8v2} in much the same manner as Theorem \ref{TH1} follows from Lemma \ref{LE4}, Lemma \ref{LE6}, Lemma \ref{LE7} and Lemma \ref{LE8}. 

\section{Questions, generalizations and future work}

\subsection{What about discrete universality}
A discrete universality version of Theorem \ref{TH1} can certainly be proved without any extra difficulty.  For details see v1 of this paper on arXiv \cite{Aold}. In order to keep the current paper focused we have removed the discussion of this and expect to publish this more properly in a sequel.

\subsection{What about joint universality?}
The theorems of this paper may also be proved in joint universality versions for $\zeta_n(\vs,\val_1), \ldots, \zeta_n(\vs,\val_m)$ under certain conditions on  the $m$-tuple $(\val_1,\ldots,\val_m)$ where $\val_j \in (\R^+)^n$. The conditions become rather technical and we have therefore decided to remove the joint universality theorems from this version of the current paper. We expect to publish these results in a sequel. The interested reader can however find the results in v1 of the current paper on arXiv \cite{Aold}.

\subsection{What multiple zeta functions/$L$-functions/Dirichlet series are universal in several complex variables?}
Our first question is the following. There are a multitude of zeta functions and Dirichlet series in several complex variables. For what zeta-functions can we prove universality?
While we have decided to focus this paper on the Multiple Hurwitz zeta-function case, our method is quite general and should probably work for the multiple Dirichlet series
\begin{gather}
  Z(\vs)= \sum_{1 \leq k_1 < \cdots <k_n}  \prod_{j=1}^n a_{j,k_j}  \lambda_{j,k_j}^{-s_j} \label{rrtt} 
\end{gather}
provided the multiple Dirichlet series has some meromorphic continuation  beyond its region of absolute convergence and  that we can prove universality for the one variable Dirichlet series
\begin{gather}
  Z_j(s)= \sum_{k=1}^\infty a_{j,k} \lambda_{j,k}^{-s} 
\end{gather}
for each $1 \leq j \leq n$ and some region of the variable $s$.   Our method   should also be able to handle
\begin{gather} \label{rrtt2}
 \sum_{1 \leq k_1 < \cdots <k_n}   \prod_{j=1}^n a_{j,k_j}  (\lambda_{1,k_1}+\cdots+ \lambda_{j,k_j})^{-s_j}
\end{gather}
where for example $\lambda_{j,k}=k$ and $a_{j,k}=a_j(k)$ could be some multiplicative functions, such as for example Fourier coefficients of Hecke-Mass cusp forms. This type of  multiple zeta-function   has been studied by Matsumoto-Tanigawa \cite{Matsumoto5}  and is not of type \eqref{rrtt}.  There are however plenty of Dirichlet series in several complex variables that are neither of type \eqref{rrtt} nor \eqref{rrtt2}. Whether the methods of this paper are applicable should ultimately depend on how similar the given Dirichlet series/zeta-function in several complex variables is to these two examples.

 In particular there is  a  family of Dirichlet series called the  Weyl group multiple Dirichlet series \cite{WMD2}, that is quite different from the multiple zeta-functions. While the methods of this paper can not handle the Weyl group multiple Dirichlet series we are developing (starting in 2013 when visiting an ICERM program) a different method that can handle some of the simplest cases of Weyl group multiple Dirichlet series. The details are actually somewhat less complex than for this paper and the result will appear in a forthcoming paper \cite{Andersson8}. In contrast to what is stated in v1 of the current paper \cite{Aold} our argument will require the Generalized Riemann hypothesis rather than the weaker assumption of no Landau-Siegel zero. In recent papers \cite{Anew1,Anew2,Anew3} we developed universality results with scaling that are true in the  half-plane of absolute convergence. This will also allow us to prove corresponding unconditional universality result  for some Weyl group multiple Dirichlet series in several complex variables. Cases we can treat (both conditional and unconditional results) include
    the simplest case of a Weyl group multiple Dirichlet series, the Double Dirichlet series  originally intruduced by Siegel \cite{siegel} and further studied by Goldfeld-Hoffstein \cite{GolHof},   the Double Dirichlet series introduced by Friedberg-Hoffman-Lieman \cite{FriHoLi} and some Weyl group multiple Dirichlet series in more than two variables, such as the triple Dirichlet series from Brubaker's thesis \cite{Brubaker}.

\subsection{Can we obtain effective results?}  
Even in one variable the problem of obtaining effective results is quite difficult and largely unsolved. While the other steps in the proofs of universality can be made effective, the Pechersky rearrangement theorem is inherently ineffective\footnote{By the use of the Riesz representation theorem in its proof.}. There is however another method of Garunk\v{s}tis \cite[Proposition 3]{garunkstis} building on ideas of Good \cite{Good} that can be used as a replacement of the Pechersky rearrangment theorem. We have to pay a price though, and in this case the price is that the sets $K$ where we can prove universality will be much smaller.  The following \cite[Corollary 2]{garunkstis} result
		  illustrates the problem 
\begin{garunkstis} Let $0 <\varepsilon \leq 1/2$. Let the function $g(s)$ be analytic in the disc $\abs{s} \leq 0.06$ and assume $\max_{|s| \leq 0.06 } \abs{g(s)}\leq 1.$ Let $K=\{ s \in \C :|s-3/4| \leq 0.0001\}$. Then 
$$\liminf_{T \to \infty} \frac 1 T \mathop{\rm meas} \left \{t \in [0,T]:\max_{s \in K} \abs{\log \zeta(s+it)-g(s)}<\varepsilon \right \} \geq \exp \p{-\varepsilon^{-13}}. $$
Furthermore there exist some $0 \leq t \leq \exp \p{\exp\p{10 \varepsilon^{-13}}}$ such that the approximation$$\max_{s \in K} \abs{\log \zeta(s+it)-g(s)}<\varepsilon$$ holds.
\end{garunkstis}
In the same manner as in one variable it should be possible  to make Lemma \ref{LE7} and Lemma \ref{LE8} effective. It should also be possible to make Lemma \ref{LE6} effective if either $\alpha_n$  is transcendental or we assume the Generalized Riemann hypothesis. However if $\alpha_n$ is rational then  Lemma \ref{BNDDIR} is needed in the proof of Lemma \ref{LE6}, and since a version of the Pechersky rearrangement theorem is used to prove Lemma \ref{BNDDIR},  we must find some other approach such as the one of Garunk\v{s}tis. Also in order to obtain an effective version of our fundamental Lemma, Lemma \ref{LE4} we must use an effective replacement of the Pechersky rearrangement theorem such as the one found in  Garunk\v{s}tis. This may be possible, although perhaps cumbersome, assuming as in the one variable case, that we consider approximations on a much smaller set $K$.  Also note that since in our proof we use the Pechersky rearrangement theorems repeatedly, we must also use Garunk\v{s}tis' replacement  repeatedly and at the end our effective result should be very weak. Towers of exponentials should appear in such a result, where the length of the tower depends on $\varepsilon$ and the function we want to approximate.

In another direction Lamzouri-Lester-Radziwill \cite[Theorem 1.1]{LaLeRa} recently proved a result that may also be viewed as an effective version of a universality 
\begin{lam}
 Let $0<r<\frac 1 4$. Let $f$ be a non-vanishing continuous function on $|z| \leq (r+1/4)/2$ that is holomorphic in $|z|<(r+1/4)/2$. Let $\omega$ be a real-valued continuously differentiable function with compact support. Then, we have
\begin{align}
\frac 1 T \int_T^{2T} \omega \p{\max_{|z| \leq r} \abs{\zeta \p{\frac 3 4+it+z}-f(z)}}
=& \mathbb E\p{\omega \p{\max_{|z| \leq r} \abs{\zeta \p{\frac 3 4+z,X}-f(z)}}} \\&+ O \p{(\log T)^{-\frac{(3/4-r)}{11}+o(1)}},
\end{align}
where the constant in the $O$ depends on $f,\omega$ and $r$ and
\begin{gather} \label{ytr88} \zeta(s,X)=\prod_p \p{1 - \frac {X(p)}{p^s}}^{-1}
\end{gather}
where $X(p)$ are uniformly distributed independent random variables on the unit circle.
\end{lam}
Can we obtain some analogue of  Lamzouri-Lester-Radziwill theorem for multiple zeta-functions of several complex variables?  In several variables one may introduce something corresponding to \eqref{ytr88}. However, unlike in the one variable case, it does not seem easy to prove that this object has a limit distribution,  so it does not seem as their method is applicable on the corresponding problem for multiple zeta-functions in several complex variables.

\bibliographystyle{hplain}

\end{document}